\numberwithin{equation}{section}
\newtheorem{theorem}{Theorem}[section]
\newtheorem{proposition}[theorem]{Proposition}
\newtheorem{lemma}[theorem]{Lemma}
\newtheorem{corollary}[theorem]{Corollary}
\theoremstyle{definition}
\newtheorem{remark}[theorem]{Remark}
\newtheorem{definition}[theorem]{Definition}
\newtheoremstyle{customNumber}
     {}          % Space above (empty = default)
     {}          % Space below
     {\itshape}  % Body font
     {}          % Indent amount (empty = no indent)
     {\bfseries} % Thm head font
     {.}         % Punctuation after thm head
     { }         % Space after thm head (\newline = linebreak)
     {\thmname{#1}\thmnumber{ #2}\thmnote{ #3}}
\theoremstyle{customNumber}
\renewcommand{\phi}{\varphi}
\renewcommand{\rho}{\varrho}
\newcommand{\norm}[1]{\lVert#1\rVert}
\newcommand{\abs}[1]{\lvert#1\rvert}
\newcommand{\diam}{\operatorname{diam}}
\newcommand{\on}{\:\mbox{\rule{0.1ex}{1.2ex}\rule{1.1ex}{0.1ex}}\:}
\newcommand{\bb}[1]{\llbracket #1\rrbracket}
\DeclareMathOperator{\CAT}{CAT}
\DeclareMathOperator{\EI}{EI}
\DeclareMathOperator{\CI}{CI}
\DeclareMathOperator{\LC}{LC}
\DeclareMathOperator{\N}{\mathbb{N}}
\DeclareMathOperator{\R}{\mathbb{R}}
\DeclareMathOperator{\Z}{\mathbb{Z}}
\DeclareMathOperator{\Hh}{\mathbb{H}}
\DeclareMathOperator{\Lip}{Lip}
\DeclareMathOperator{\lip}{lip}
\DeclareMathOperator{\spt}{spt}
\DeclareMathOperator{\st}{st}
\DeclareMathOperator{\interior}{int}
\DeclareMathOperator{\mass}{\mathbf{M}}
\DeclareMathOperator{\bI}{\mathbf{I}}
\DeclareMathOperator{\bM}{\mathbf{M}}
\DeclareMathOperator{\bN}{\mathbf{N}}
\DeclareMathOperator{\id}{id}
\DeclareMathOperator{\Fillvol}{Fillvol}
\DeclareMathOperator{\Hull}{Hull}
\DeclareMathOperator{\poly}{\mathscr{P}}
\DeclareMathOperator{\flatnorm}{\mathscr{F}} 
\DeclareMathOperator{\Haus}{\mathscr{H}}
\title{Undistorted fillings in subsets of metric spaces}
\author{Giuliano Basso}
\address{Department of Mathematics\\ University of Fribourg\\ Chemin du Mus\'ee 23\\ 1700 Fribourg, Switzerland}
\email{giuliano.basso@unifr.ch}
\author{Stefan Wenger}
\address{Department of Mathematics\\ University of Fribourg\\ Chemin du Mus\'ee 23\\ 1700 Fribourg, Switzerland}
\email{stefan.wenger@unifr.ch}
\author{Robert Young}
\address{Courant Institute of Mathematical Sciences\\ New York University\\ 251 Mercer St.\\ New York, NY  10012, USA}
\email{ryoung@cims.nyu.edu}
\thanks{G.~B.~and S.~W.~were supported by Swiss National Science Foundation grant 182423. R.~Y.~was supported by National Science Foundation grant 2005609.}
\date{\today}
\begin{document}

\pdfbookmark[0]{Undistorted fillings in subsets of metric spaces}{titleLabel}

\begin{abstract}
 We prove that if a quasiconvex subset $X$ of a metric space $Y$ has finite Nagata dimension and is Lipschitz $k$-connected or admits Euclidean isoperimetric inequalities up to dimension $k$ for some $k$ then $X$ is isoperimetrically undistorted in $Y$ up to dimension $k+1$. This generalizes and strengthens a recent result of the third named author and has several consequences and applications. It yields for example that in spaces of finite Nagata dimension, Lipschitz connectedness implies Euclidean isoperimetric inequalities, and Euclidean isoperimetric inequalities imply coning inequalities. It furthermore allows us to prove an analog of the Federer-Fleming deformation theorem in spaces of finite Nagata dimension admitting Euclidean isoperimetric inequalities.
\end{abstract}

\maketitle

%
%\tableofcontents
\section{Introduction}

\subsection{Overview}

Isoperimetric inequalities measure how difficult it is to fill (Lipschitz) cycles in a given space by (Lipschitz) chains of one dimension higher. They are important in many branches of mathematics and play a crucial role in particular in asymptotic geometry and geometric group theory, where they appear as Dehn functions and higher filling functions and are quasi-isometry invariants of the underlying space or group. 
In this article, we study the relationship between isoperimetric inequalities and extension properties, especially Lipschitz connectivity and coning inequalities. 

A space $X$ is Lipschitz $k$-connected if Lipschitz maps from $k$-spheres to $X$ can be extended to Lipschitz maps from $(k+1)$-balls. This is a key ingredient in constructing Lipschitz extensions in general; see \cite{MR146835}, \cite{MR2200122}. A $1$-dimensional Lipschitz cycle is a sum of closed Lipschitz curves, so if $X$ is Lipschitz $1$-connected, it can be filled by filling each curve by a Lipschitz disc. Higher-dimensional Lipschitz cycles, however, can have complicated topology and it may not be possible to decompose them as sums of Lipschitz spheres. For this reason, it is often much more difficult to prove higher dimensional isoperimetric inequalities, and it is open in general whether Lipschitz $k$-connectedness implies a $k$-dimensional isoperimetric inequality of Euclidean type.

Coning inequalities bound the filling volume of a cycle in terms of its mass and diameter. They were introduced by Gromov in his seminal article \cite{MR697984}, where it was shown that Riemannian manifolds with coning inequalities admit Euclidean isoperimetric inequalities. This result was later generalized to complete metric spaces by the second named author in \cite{MR2153909}. Coning inequalities have also played a crucial role in recent articles on higher rank hyperbolicity \cite{MR4121159}, Morse quasiflats \cite{KleinerStadler2019MoresQuasiflatsI}, \cite{KleinerStadler2020MoresQuasiflatsII}, and the equivalence of flat and weak convergence of currents \cite{MR2284563}. It is open in general whether spaces with $k$-dimensional Euclidean isoperimetric inequalities admit $k$-dimensional coning inequalities. This is clear in dimension $1$, where one can fill a Lipschitz closed curve with diameter $d$ and length $L$ by decomposing it into $L/d$ closed curves of length at most $3d$, but it is not clear whether higher-dimensional Lipschitz cycles can be decomposed in the same way.

In this paper, we will show that Lipschitz connectivity implies Euclidean isoperimetric inequalities and that Euclidean isoperimetric inequalities imply coning inequalities when the underlying space has finite Nagata dimension, thus establishing a partial converse to the results mentioned above. Nagata dimension can be thought of as a quantitative metric version of topological dimension and is closely related to Gromov's asymptotic dimension \cite{MR1253544}. These results are consequences of a more general theorem about isoperimetric subspace distortion. Isoperimetric distortion of subspaces was briefly addressed by Gromov in \cite{MR1253544} and has recently been studied in the articles \cite{MR2270455}, \cite{MR3268779}, \cite{LeuzingerYoung-distortionDimension-2017}, \cite{MR4250389} in connection with conjectures of Thurston, Gromov, and Bux-Wortman. Roughly speaking, a subspace $X$ of a metric space $Y$ is (isoperimetrically) undistorted up to dimension $k+1$ if $m$-cycles in $X$ with $0\leq m\leq k$ can be filled almost as efficiently by $(m+1)$-chains in the subspace $X$ as they can be filled in the ambient space $Y$. Our theorem about isoperimetric distortion asserts that if a quasiconvex metric space $X$ of finite Nagata dimension is Lipschitz $k$-connected or has Euclidean isoperimetric inequalities up to dimension $k$ then $X$ is undistorted up to dimension $k+1$ in any ambient space $Y$. This generalizes and strengthens a recent result of the third named author \cite{MR3268779}. Since any metric space embeds isometrically in a Banach space and Banach spaces admit coning inequalities and hence Euclidean isoperimetric inequalities the relationships between Lipschitz connectedness, Euclidean isoperimetric inequalities, and coning inequalities described above follow.   

Besides these structural results, our theorem on isoperimetric subspace distortion also allows for a Federer-Fleming type deformation theorem in quasiconvex metric spaces of finite Nagata dimension that admit Euclidean isoperimetric inequalities. The deformation theorem and its variants approximate chains or currents in Euclidean space or a simplicial complex by polyhedral chains in a cubical lattice or triangulation. Metric spaces of finite Nagata dimension need not admit a triangulation, but we show that any quasiconvex space \(X\) of finite Nagata dimension can be approximated by simplicial complexes. When $X$ has a Euclidean isoperimetric inequality, we can use these approximations to define substitutes for polyhedral chains in $X$. Moreover, we obtain a Federer-Fleming type deformation theorem in $X$ by combining the classical deformation theorem in the simplicial approximation and the isoperimetric subspace distortion theorem in $X$. As an application, we show that when $X$ has finite Nagata dimension and is Lipschitz connected, integral currents in $X$ can be approximated in total mass by Lipschitz chains.

\subsection{Undistorted fillings and first applications}
Given a complete metric space $X$ and $k\geq 0$, we denote by $\bI_k(X)$ the abelian group of $k$-dimensional metric integral currents in $X$ in the sense of Ambrosio-Kirchheim \cite{MR1794185}. See Sections~\ref{sec:prelims} and \ref{sec:currents} for the definitions and concepts used throughout this introduction. Every Lipschitz $k$-chain in $X$ induces a $k$-dimensional metric integral current in $X$ and we encourage the geometrically minded reader to think of integral currents as suitable limits of Lipschitz chains. The filling volume in $X$ of $T\in\bI_k(X)$ is defined by
\[
\Fillvol_X(T)=\inf\big\{ \mass(S) : S\in \bI_{k+1}(X) \text{ and } \partial S=T \big\},
\] 
where $\mass(S)$ denotes the mass of $S$ and $\partial S$ is its boundary. 

We say that $X$ has $(\EI_k)$ for some $k\geq 0$ if there exists $D>0$ such that for every $0\leq m\leq k$ and every cycle $T\in\bI_m(X)$  we have $$\Fillvol_X(T)\leq D\mass(T)^{\frac{m+1}{m}}.$$ This means that $X$ has Euclidean isoperimetric inequalities in dimensions $m=1,\dots, k$ whenever $k\geq 1$. Condition $(\EI_0)$ is always true and is only included to shorten the statements of our theorems.

We say that $X$ has $(\LC_k)$ or that $X$ is Lipschitz $k$-connected if there exists $c\geq 1$ such that for every $0\leq m\leq k$ each \(L\)-Lipschitz map from the Euclidean $m$-sphere to $X$ extends to a $c L$-Lipschitz map defined on the $(m+1)$-ball.
% Here, $S^m$ and $B^{m+1}$ denote the unit sphere and closed unit ball in $\R^{m+1}$, respectively. 
Examples of Lipschitz $k$-connected metric spaces are compact $k$-connected Riemannian manifolds, ${\rm CAT}(0)$-spaces and, more generally, spaces with a convex bicombing. This class also includes some Carnot groups such as the $(2k+1)$-dimensional Heisenberg group.
% equipped either with a left-invariant Riemannian or subRiemannian metric.

A closed subset \(X \subset Y\) of a complete metric space \(Y\) is said to be (isoperimetrically) \textit{undistorted in} \(Y\) \textit{up to dimension} \(k+1\), for some \(k\geq 0\), if there exists \(C>0\) such that for all cycles \(T\in \bI_m(X)\) with  \(0\leq m\leq k\) one has
\[
\Fillvol_X(T)\leq C \Fillvol_Y(T).
\]
Our definition of undistorted does not involve additive error terms and thus differs slightly from the definitions introduced in \cite{MR2270455}, \cite{MR3268779}, \cite{MR4250389}.
The following theorem is one of the main results in the present paper.

\begin{theorem}\label{thm:intro-undistorted-main}
 Let $Y$ be a complete metric space and $X\subset Y$ a closed quasiconvex subset of finite Nagata dimension. If $X$ has \((\LC_k)\) or \((\EI_k)\) for some $k\geq 0$, then $X$ is undistorted in $Y$ up to dimension $k+1$. The distortion constant only depends on the data of $X$.
\end{theorem}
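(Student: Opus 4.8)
The plan is to convert, at a uniform cost, an almost‑optimal filling of a cycle $T\in\bI_m(X)$ in the ambient space $Y$ into a filling in $X$. Fix $m$ with $0\le m\le k$, a cycle $T\in\bI_m(X)$, and $S\in\bI_{m+1}(Y)$ with $\partial S=T$ and $\mass(S)\le(1+\eta)\Fillvol_Y(T)$; it suffices to produce $S'\in\bI_{m+1}(X)$ with $\partial S'=T$ and $\mass(S')\le C\mass(S)$, where $C$ depends only on the quasiconvexity constant, the Nagata dimension, and the $(\LC_k)$ or $(\EI_k)$ constant of $X$. First I would dispose of the case $m=0$: a $0$‑cycle in $X$ is a finite integer combination of point masses summing to zero, its filling volume in any space is the optimal transport cost between its positive and negative parts, and since $X$ is $L$‑quasiconvex and isometrically embedded in $Y$, the cost computed in $X$ is at most $L$ times the one in $Y$. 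Assume henceforth $1\le m\le k$, and record that an elementary density estimate for almost‑minimizing fillings places $\spt S$ in a neighborhood $N_{R_0}(X)$ of $X$ in $Y$ with $R_0\le C\mass(S)^{1/(m+1)}$.

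The second ingredient is intrinsic and is essentially the simplicial‑approximation machinery announced in the introduction for the deformation theorem. Using that $X$ has finite Nagata dimension, I would build for every scale $\lambda>0$ a simplicial complex $\Sigma_\lambda$ of dimension bounded by the Nagata dimension of $X$ (only its $(k+1)$‑skeleton is needed), metrized so that each simplex has diameter $\asymp\lambda$, together with: (i) a Lipschitz map $\kappa_\lambda\colon X\to\Sigma_\lambda$ with Lipschitz constant independent of $\lambda$, coming from a Lipschitz partition of unity subordinate to a Nagata cover of $X$ at scale $\lambda$, which moreover extends to a Lipschitz map $\kappa'_\lambda\colon N_{c\lambda}(X)\to\Sigma_\lambda$ (one thickens the cover, and finite Nagata dimension keeps multiplicity and Lebesgue number under control); and (ii) a chain‑level ``simplicial filling'' $\beta_\lambda$ assigning to each $j$‑simplex of $\Sigma_\lambda$, $j\le k+1$, an element of $\bI_j(X)$ of mass $\lesssim\lambda^j$ supported in a set of diameter $\lesssim\lambda$, compatibly with the face maps --- the simplices being filled inductively over skeleta either by Lipschitz extension when $X$ has $(\LC_k)$, or by Euclidean isoperimetric fillings together with the standard diameter bound for such fillings when $X$ has $(\EI_k)$. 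Composing ``push to the complex via $\kappa'_\lambda$ / Federer-Fleming deform onto the $(m+1)$‑skeleton / apply $\beta_\lambda$'' yields a scale‑$\lambda$ projection $\Pi_\lambda$ of currents supported in $N_{c\lambda}(X)$ into $X$, which is mass non‑increasing up to a uniform factor, commutes with $\partial$ up to the projection of the boundary, agrees with the identity on cycles in $X$ up to homotopies of mass $\lesssim\lambda\cdot(\mathrm{mass})$, and whose outputs at two comparable scales differ by homotopies of mass $\lesssim\lambda\cdot(\mathrm{mass})$. Letting $\lambda\to0$ this already gives $X$ a scale‑uniform coning inequality up to dimension $k+1$.

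With these in hand, the transfer of $S$ is a single multi‑scale ladder. Slice $S$ by $u=d(\cdot,X)$ into geometric shells $S\on\{u\in(t_{i-1},t_i]\}$ with $t_i\asymp 2^i\tau$, where $\tau>0$ is a parameter to be sent to $0$; project the $i$‑th shell into $X$ by $\Pi_{\lambda_i}$ with $\lambda_i\asymp 2^i\tau$ (chosen so the shell lies where $\Pi_{\lambda_i}$ is defined); each interface cycle $\langle S,u,t_i\rangle$ occurs in two consecutive shells, and its two projections, at scales $\lambda_i$ and $\lambda_{i+1}$, are reconciled by gluing in the corresponding small homology chain in $X$. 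Choosing each $t_i$ inside its dyadic window so that the slice mass is as small as the coarea inequality allows, the masses of all reconciliation chains sum to $\lesssim\mass(S)$, while the projected shells have total mass $\le\mass(S)$; since $R_0<\infty$ only finitely many shells are nonzero, and the outermost interface cycle vanishes. The resulting chain $\Xi\in\bI_{m+1}(X)$ thus satisfies $\mass(\Xi)\lesssim\mass(S)$, and by the commutation of $\Pi_\lambda$ with $\partial$ all the interface terms cancel, so $\partial\Xi$ equals the scale‑$\tau$ projection of $T$, which differs from $T$ by the boundary of a chain $H$ in $X$ with $\mass(H)\lesssim\tau\mass(T)$. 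Hence $\Fillvol_X(T)\le\mass(\Xi-H)\lesssim\mass(S)+\tau\mass(T)$ for every $\tau>0$; letting $\tau\to0$ gives $\Fillvol_X(T)\lesssim\mass(S)\le(1+\eta)\Fillvol_Y(T)$, and then $\eta\to0$ finishes the proof.

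The main obstacle is that there is in general no Lipschitz retraction of any neighborhood of $X$ in $Y$ onto $X$, so $S$ cannot simply be pushed inward; the family of scale‑$\lambda$ simplicial approximations $\kappa_\lambda,\beta_\lambda$ is the substitute, and finite Nagata dimension is exactly what makes these exist with constants uniform in $\lambda$ and keeps the nerves finite‑dimensional, so that the Federer-Fleming deformation applies with controlled constants. I expect most of the work to lie in (a) constructing $\beta_\lambda$ and controlling its interaction with the Federer-Fleming deformation and with $\kappa_\lambda$ --- in particular producing the homotopies of mass $\lesssim\lambda\cdot(\mathrm{mass})$ both between $\Pi_\lambda$ and the identity on $X$ and between projections at comparable scales, and checking this uniformly in the $(\LC_k)$ and the $(\EI_k)$ cases --- and (b) the bookkeeping of the ladder, i.e. verifying via the coarea inequality that the reconciliation chains really sum to a bounded multiple of $\mass(S)$ and that none of the many constants secretly depend on the scale.
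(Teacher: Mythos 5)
Your outline reassembles most of the paper's actual ingredients (nerves of Nagata covers at scale matched to the distance from $X$, Federer--Fleming deformation with dimension-controlled constants, and chain maps $\beta_\lambda$ into $X$ built inductively over skeleta from $(\LC_k)$ or from $(\EI_k)$ plus the diameter-controlled filling lemma), but organizes the transfer differently: a dyadic ladder of shells $S\on\{u\in(t_{i-1},t_i]\}$ projected by fixed-scale operators $\Pi_{\lambda_i}$ with reconciliation chains between consecutive scales, instead of the paper's single Whitney-type complex over $Y\setminus X$ (factorization $f=h\circ g$ with $g$ of Lipschitz constant $\sim r^{-1}$ at distance $r$ and $h$ of Lipschitz constant $\sim r_\sigma$ on each simplex), one deformation, and a limit $r_i\to 0$. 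Two small points first: the initial density estimate for an almost-minimal filling requires replacing $Y$ by a space with Euclidean isoperimetric inequalities (e.g.\ $\ell_\infty(Y)$) before it can be run, and your scale-to-scale reconciliation is plausible because both sides are $\beta$-images of polyhedral cycles, so the prism-type homotopies can indeed be built inside $X$ with mass $\lesssim\lambda\cdot\mathrm{mass}$.

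The genuine gap is the bottom rung: the claim that $\Pi_\lambda$ agrees with the identity on cycles in $X$ up to a homotopy \emph{in $X$} of mass $\lesssim\lambda\cdot\mathrm{mass}$ (equivalently, your parenthetical assertion that this machinery ``already gives $X$ a scale-uniform coning inequality up to dimension $k+1$''). The natural comparisons between $T$ and $\Pi_\lambda(T)$ — the straight-line homotopy from $\id$ to $\varphi\circ\kappa_\lambda$, and the chain homotopy between $\beta_\lambda$ and $\varphi_\#$ — all produce chains in the ambient space $Y$, not in $X$; an arbitrary integral cycle $T\in\bI_m(X)$ is not carried by any complex, so the prism construction that works for the inter-scale reconciliation does not apply, and $(\EI_m)$ only yields fillings of mass $\mass^{(m+1)/m}$, not $\lambda\cdot\mass$, unless one first decomposes the defect cycle into pieces of size $\lambda$ inside $X$ — which is precisely the deformation theorem in $X$ (Theorem~\ref{thm:Def-thm-finite-Nagata-dim}) and the strong coning inequality (Corollary~\ref{cor:isop-lip-implies-cone}), both of which the paper derives \emph{from} Theorem~\ref{thm:intro-undistorted-main}. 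So as written the argument assumes a consequence of the statement being proved. The alternative of dropping this claim and letting $\tau\to 0$ forces you to extract a limit of the fillings $\Xi_\tau\subset X$ with merely weakly convergent boundaries, and since $X$ is not assumed proper there is no compactness theorem available; the paper handles exactly this difficulty with a dedicated Cauchy argument (Lemma~\ref{lem:cauchy-seq}) exploiting integer multiplicities on one fixed complex whose simplices come in all scales, an argument that does not transfer verbatim to your $\tau$-dependent family of complexes and that your proposal does not address.
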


For the definition of Nagata dimension see Section~\ref{subsec:Nagata} and \cite{MR2200122}. Examples of metric spaces of finite Nagata dimension include compact Riemannian manifolds, homogeneous or negatively pinched Hadamard manifolds, Carnot groups and, more generally, equiregular sub-Riemannian manifolds, doubling metric spaces and many more. Moreover, products, finite unions, and subsets of spaces of finite Nagata dimension have finite Nagata dimension.

In Section~\ref{Sec:variants-main-theorem} we will discuss variants and generalizations of Theorem~\ref{thm:intro-undistorted-main} in which we relax the condition that $X$ have finite Nagata dimension and in addition obtain control on the support of fillings in $X$.
Theorem~\ref{thm:intro-undistorted-main} generalizes and strengthens  \cite[Theorem 1.3]{MR3268779} by the third named author, which was used to prove isoperimetric inequalities for subsets of symmetric spaces. Notice that in \cite{MR3268779} the ambient space $Y$ is assumed to have finite Nagata dimension. In contrast, we do not pose any restrictions on $Y$. Since $X$ embeds isometrically into a Banach space and since every Banach space admits Euclidean isoperimetric inequalities and coning inequalities in all dimensions, see \cite{MR2153909}, we in particular obtain the following consequences.

\begin{corollary}\label{cor:lip-implies-EI}
 Let $X$ be a complete metric space of finite Nagata dimension. If $X$ has $(\LC_k)$ for some $k\geq 1$, then $X$ has $(\EI_k)$.
\end{corollary}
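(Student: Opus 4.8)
The plan is to deduce this directly from Theorem~\ref{thm:intro-undistorted-main} by placing $X$ inside a suitable ambient Banach space. First I would invoke the Kuratowski (or Fréchet) embedding to realize $X$ isometrically as a closed subset of a Banach space $Y$; since $X$ is complete, its image is closed in $Y$. Quasiconvexity of $X$ is needed to apply the theorem, but it comes for free from the hypothesis $(\LC_k)$ with $k\geq 1$: Lipschitz $1$-connectedness means every pair of points, viewed as a $0$-sphere, bounds a Lipschitz path whose length is controlled by the distance of the two points, so $X$ is quasiconvex with a constant depending only on the $(\LC_k)$-constant $c$. (Alternatively one first passes to the injective hull or to a quasiconvexification, but the direct argument via $(\LC_1)$ is cleanest.)

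Next I would record that the ambient space $Y$ satisfies Euclidean isoperimetric inequalities in all dimensions. This is the content of the second named author's result \cite{MR2153909}: every Banach space admits coning inequalities in every dimension, and coning inequalities imply Euclidean isoperimetric inequalities. Hence for every cycle $T\in\bI_m(X)\subset\bI_m(Y)$ with $1\leq m\leq k$ we have
\[
\Fillvol_Y(T)\leq D'\,\mass(T)^{\frac{m+1}{m}}
\]
for a constant $D'$ independent of $T$ and of $m$ (depending only on $m$, hence bounded over the finite range $1\leq m\leq k$).

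Now I would apply Theorem~\ref{thm:intro-undistorted-main} to the pair $X\subset Y$: since $X$ is closed, quasiconvex, of finite Nagata dimension, and has $(\LC_k)$, it is undistorted in $Y$ up to dimension $k+1$, so there is a constant $C$ (depending only on the data of $X$) with $\Fillvol_X(T)\leq C\,\Fillvol_Y(T)$ for all cycles $T\in\bI_m(X)$, $0\leq m\leq k$. Chaining the two estimates gives $\Fillvol_X(T)\leq CD'\,\mass(T)^{\frac{m+1}{m}}$ for all such $T$, which is precisely $(\EI_k)$ with constant $D=CD'$. I would close by noting that $D$ depends only on $k$, the Nagata dimension data of $X$, and the $(\LC_k)$-constant — all part of ``the data of $X$'' — so no dependence on the ambient embedding remains.

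The only step requiring care, and the one I would expect to be the main (though minor) obstacle, is verifying that the Banach space $Y$ indeed carries uniform Euclidean isoperimetric inequalities for metric integral currents in the sense of Ambrosio--Kirchheim, with a constant valid across the finite range of dimensions $m=1,\dots,k$; this is exactly what \cite{MR2153909} supplies, so the corollary follows with no new work. Everything else is a bookkeeping composition of the isometric embedding, the ambient isoperimetric inequality, and Theorem~\ref{thm:intro-undistorted-main}.
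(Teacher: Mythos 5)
Your proposal is correct and follows essentially the same route as the paper, which deduces the corollary from Theorem~\ref{thm:intro-undistorted-main} by embedding $X$ isometrically as a closed subset of a Banach space and invoking \cite{MR2153909} for the Euclidean isoperimetric (coning) inequalities of the ambient space. Your remark that quasiconvexity is automatic from the $m=0$ case of $(\LC_k)$ is precisely the implicit reason the corollary can omit that hypothesis, so no gap remains.
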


When $k=1$ the corollary holds without the assumption that $X$ have finite Nagata dimension and is much easier to prove. This is because integral $1$-cycles can be decomposed into the sum of closed Lipschitz curves, see \cite{MR2358060}. The corollary is furthermore known when $X$ has $(\LC_k)$ and is of Nagata dimension at most $k$ because in this case $X$ is an absolute Lipschitz retract by \cite{MR2200122}. Thus, the corollary is most interesting for spaces which are not Lipschitz connected up to the Nagata dimension. Typical examples with such a behavior are Carnot groups such as the higher Heisenberg groups; see the paragraph after the next corollary. We mention that the $(\LC_k)$ property is preserved under various constructions such as taking products, or passing to ultralimits or asymptotic cones. In comparison, it is not known whether the $(\EI_k)$ condition is preserved under these constructions.

In order to formulate the second consequence of the theorem, recall that a complete metric space $X$ is said to admit coning inequalities up to dimension $k$, or has $(\CI_k)$ for short, if there exists $C>0$ such that for every $0\leq m\leq k$ and every cycle $T\in\bI_m(X)$ of bounded support $\spt T$ we have $$\Fillvol_X(T)\leq C\diam(\spt T)\mass(T).$$ It is well-known that in a complete metric space $(\CI_k)$ implies $(\EI_k)$. This was proved by Gromov \cite{MR697984} for Riemannian manifolds and extended to metric spaces in \cite{MR2153909}. The following corollary gives a partial converse.

\begin{corollary}\label{cor:EI-implies-CI}
 Let $X$ be a complete quasiconvex metric space of finite Nagata dimension. If $X$ has $(\EI_k)$ for some $k\geq 1$, then $X$ has $(\CI_k)$.
\end{corollary}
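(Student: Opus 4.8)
The plan is to obtain the corollary by combining Theorem~\ref{thm:intro-undistorted-main} with the fact that every Banach space satisfies coning inequalities in all dimensions. First I would fix an isometric embedding $\iota\colon X\hookrightarrow Y$ of $X$ into a Banach space $Y$, for instance the Kuratowski embedding into $Y=\ell^\infty(X)$ after choosing a basepoint. A Banach space is complete, and since $X$ is complete its image $\iota(X)$ is a closed subset of $Y$; moreover $\iota(X)$ is quasiconvex of finite Nagata dimension because $X$ is, and it has $(\EI_k)$ because $(\EI_k)$ is an isometry invariant. Thus Theorem~\ref{thm:intro-undistorted-main} applies to the pair $\iota(X)\subset Y$ and yields a constant $C_0>0$, depending only on the data of $X$, such that $\Fillvol_X(T)\le C_0\,\Fillvol_Y(T)$ for every cycle $T\in\bI_m(X)$ with $0\le m\le k$, where $T$ is regarded as a current in $Y$ via $\iota$.

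The second ingredient is the coning inequality in $Y$. By \cite{MR2153909}, every complete normed space has $(\CI_k)$ for all $k$ with a constant $C_1$ depending only on the dimension: concretely, a cycle $T\in\bI_m(Y)$ with bounded support can be filled by coning it off from a point of $\spt T$ using the linear structure of $Y$, which produces a filling of mass at most $C_1\,\diam(\spt T)\,\mass(T)$. Hence for a cycle $T\in\bI_m(X)$ of bounded support, $0\le m\le k$, we get $\Fillvol_Y(T)\le C_1\,\diam(\spt T)\,\mass(T)$, with $T$ again viewed in $Y$.

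Finally I would combine the two estimates. Because $\iota$ is an isometric embedding, pushing $T$ forward to $Y$ does not change its mass, keeps it a cycle, and maps $\spt T$ to an isometric copy, so the mass and the diameter of the support are the same whether $T$ is considered in $X$ or in $Y$. Chaining the inequalities gives
\[
\Fillvol_X(T)\ \le\ C_0\,\Fillvol_Y(T)\ \le\ C_0 C_1\,\diam(\spt T)\,\mass(T),
\]
which is precisely $(\CI_k)$ with constant $C=C_0C_1$, and this constant depends only on the data of $X$ (through $C_0$) and on the dimension.

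I do not expect a serious obstacle, since the real content sits in Theorem~\ref{thm:intro-undistorted-main}; the only steps requiring (routine) care are verifying that the isometric copy $\iota(X)$ genuinely meets the hypotheses of that theorem and that the coning inequality for Banach spaces from \cite{MR2153909} is applied within the same Ambrosio--Kirchheim current framework used throughout the paper. If one wished to avoid the embedding, an alternative would be to feed $(\EI_k)$ and finite Nagata dimension directly into the machinery behind Theorem~\ref{thm:intro-undistorted-main}, but routing through a Banach space is by far the cleanest.
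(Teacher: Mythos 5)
Your proposal is correct and follows essentially the same route as the paper: the corollary is deduced from Theorem~\ref{thm:intro-undistorted-main} by embedding $X$ isometrically into a Banach space, invoking the coning inequality there (which holds with constant $1$ by the corollary in Section~\ref{subsec:homotopy-formula}), and chaining the two estimates. The paper's Corollary~\ref{cor:isop-lip-implies-cone} later strengthens this by also controlling $\diam(\spt S)$ via Theorem~\ref{thm:technical-version-main-thm}, but for the statement as given your argument is the intended one.
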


Using a suitable variant of Theorem~\ref{thm:intro-undistorted-main} established in Section~\ref{Sec:variants-main-theorem} we will actually get a stronger version of the coning inequality in which we also obtain control over the diameter of the filling chain; see Corollary~\ref{cor:isop-lip-implies-cone}. Coning inequalities play an important role in the recent articles \cite{KleinerStadler2019MoresQuasiflatsI},  \cite{KleinerStadler2020MoresQuasiflatsII}, \cite{MR4121159}, and the stronger version of the coning inequality established in Corollary~\ref{cor:isop-lip-implies-cone} prominently appears in \cite{KleinerStadler2019MoresQuasiflatsI},  \cite{KleinerStadler2020MoresQuasiflatsII}, where it is called strong coning inequality.

We now briefly discuss some examples of spaces to which Corollaries~\ref{cor:lip-implies-EI} and \ref{cor:EI-implies-CI} apply. Compact $k$-connected Riemannian manifolds have \((\LC_k)\) and finite Nagata dimension by \cite{MR2200122} and therefore have $(\CI_k)$ and $(\EI_k)$. The $n$-th Heisenberg group $\Hh^n$ of topological dimension $2n+1$, equipped with a left-invariant sub-Riemannian or sub-Finsler metric $d_c$, has Nagata dimension $2n+1$ by \cite{MR3320519} and satisfies \((\LC_{n-1})\) by \cite{MR2729637}. It thus follows from the corollaries above that $(\Hh^n, d_c)$ has $(\CI_{n-1})$ and  $(\EI_{n-1})$. Previously, it was known that $\Hh^n$, when equipped with a left-invariant Riemannian metric, admits Euclidean isoperimetric inequalities up to dimension $n-1$ for Lipschitz cycles, see  \cite{MR3134033}. Together with \cite{MR2783380} this also yields that $(\Hh^n, d_c)$ has $(\EI_{n-1})$ for compactly supported integral currents.

We finally mention that Theorem~\ref{thm:intro-undistorted-main} can be reformulated in terms of the {\it absolute filling volume} of a cycle $T\in\bI_k(X)$ in a complete metric space $X$ defined by 
$$
\Fillvol_\infty(T)\coloneqq \inf\bigl\{ \Fillvol_Y(T): \text{$Y$ complete metric space containing $X$} \bigr\},
$$ compare with Gromov's filling volume of abstract Riemannian manifolds in \cite{MR697984}. Our theorem then says that if $X$ is a complete, quasiconvex metric space of finite Nagata dimension which has $(\LC_k)$ or $(\EI_k)$ for some $k\geq 0$ then every cycle $T\in\bI_k(X)$ satisfies $$\Fillvol_X(T)\leq C\Fillvol_\infty(T)$$ for some $C>0$ only depending on the data of $X$. 
It is easy to see that if $Y$ is an injective metric space containing $X$ then 
$\Fillvol_\infty(T) = \Fillvol_Y(T)$ for every cycle $T\in\bI_k(X)$. Particular examples of such spaces $Y$ are the Banach space $\ell_\infty(X)$ of bounded functions on $X$ equipped with the sup norm or the injective hull of $X$; see \cite{Isbell-injective-1964}.

\subsection{Deformation theorem in spaces of finite Nagata dimension}
The classical deformation theorem of Federer-Fleming, see \cite{MR123260} or \cite{MR0257325}, shows that integral $k$-currents in Euclidean $\R^n$ can be approximated by polyhedral $k$-chains in the $k$-skeleton of the standard cubical subdivision of $\R^n$. This result can be generalized to the setting of Riemannian manifolds with a geometric group action and, more generally, to metric spaces admitting a bilipschitz triangulation. The deformation theorem and its variants have been of fundamental importance in geometric measure theory and other fields, for example in the context of Dehn functions and higher filling functions;  see, for example, ~\cite{MR1161694}, \cite{MR1934388}, \cite{MR3268779}, \cite{MR3134033}, \cite{Abrams-et-al-Artin-groups-fillings-2013}. Metric spaces of finite Nagata dimension do not  in general admit a bilipschitz triangulation. Nevertheless, using a variant of Theorem~\ref{thm:intro-undistorted-main} and the methods developed in its proof, we can establish an analogue of the Federer-Fleming deformation theorem in spaces of finite Nagata dimension. 

We first introduce a substitute for a cubical subdivision in our setting. 
Given a metric simplicial complex $\Sigma$ and $m\geq 0$, let $\poly_m(\Sigma)$ be the abelian group of polyhedral $m$-chains in $\Sigma$, that is, finite sums of the form $\sum \theta_i \llbracket \sigma_i\rrbracket$, where $\theta_i\in\Z$ and $\llbracket \sigma_i\rrbracket$ is the current induced by the (oriented) $m$-simplex $\sigma_i\subset \Sigma$. Let $X$ be a complete metric space and let $k\in\N$ and $C, \varepsilon>0$. Suppose there exist a finite-dimensional metric simplicial complex $\Sigma$ and a map $\varphi\colon\Sigma^{(0)}\to X$, defined on the $0$-skeleton $\Sigma^{(0)}$ of $\Sigma$, with the following properties:
\begin{enumerate}
    \item the metric on $\Sigma$ is a length metric and each simplex in $\Sigma$ is a Euclidean simplex of side length $\varepsilon$;
    \item $\varphi$ is a quasi-isometry; more precisely, the image of $\varphi$ is $(C\varepsilon)$-dense in $X$ and
    $$d(z,w) - C\varepsilon \leq d(\varphi(z),\varphi(w)) \leq Cd(z,w)\quad\text{ for all $z,w\in\Sigma^{(0)}$.}$$
\end{enumerate}
Suppose also that there are homomorphisms $\Lambda_m\colon\poly_m(\Sigma)\to \bI_m(X)$, $m=0,\dots, k$, subject to: 
\begin{enumerate}
        \item[(3)] $\partial\circ\Lambda_{m+1}=\Lambda_m\circ\partial$ and $\Lambda_0(\llbracket z\rrbracket) = \llbracket \varphi(z)\rrbracket$ for every $z\in\Sigma^{(0)}$; 
          \item[(4)] $\Lambda_m(\bb{\sigma})$ has support in $B\bigl(\phi(\sigma^{(0)}), C\varepsilon\bigr)$ and $\mass(\Lambda_m(\llbracket\sigma\rrbracket))\leq C \varepsilon^m$
     for every $m$-simplex $\sigma$ in $\Sigma$.
   
\end{enumerate} 
Any triple $(\Sigma, \varphi,\Lambda_*)$, where $\Sigma$, $\varphi$ and $\Lambda_* = \{\Lambda_0,\dots, \Lambda_k\}$ satisfy the properties listed above, is called $(k,\varepsilon)$-polyhedral structure on $X$ with constant \(C\).
If such a polyhedral structure is given we set $\poly_m(X)\coloneqq \bigl\{\Lambda_m(Q): Q\in\poly_m(\Sigma)\bigr\}$ and call its elements {\it polyhedral $m$-chains in $X$}.
 
\begin{theorem}\label{thm:Def-thm-finite-Nagata-dim}
Let $X$ be a complete quasiconvex metric space of finite Nagata dimension which has $(\LC_k)$ or $ (\EI_k)$ for some $k\geq 1$. Then there exists $C>0$ such that for every $\varepsilon>0$ there is a $(k,\varepsilon)$-polyhedral structure on $X$ with constant $C$ such that the following holds. For every  $m=1,\dots, k$ and every $T\in\bI_m(X)$ there exist $P\in\poly_m(X)$, $R\in\bI_m(X)$, and $S\in\bI_{m+1}(X)$ with $T=P+R+\partial S$ and such that 
 \begin{alignat*}{2}
  \mass(P)&\leq C \mass(T),&\quad \mass(\partial P)&\leq C\mass(\partial T),\\
 \mass(S) &\leq \varepsilon\,C\mass(T) + \varepsilon^2\, C\mass(\partial T), & \quad   \mass(R)&\leq \varepsilon\,C\mass(\partial T),
 \end{alignat*}\
 as well as $\spt P$, $\spt S\subset B(\spt T, C\varepsilon)$ and $\spt \partial P$, $\spt R\subset B(\spt\partial T, C\varepsilon)$.
\end{theorem}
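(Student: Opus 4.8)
The plan is to approximate $X$ at scale $\varepsilon$ by a simplicial complex coming from a bounded-multiplicity cover, run the classical Federer--Fleming deformation theorem inside that complex, and transport the resulting decomposition into $X$ using Theorem~\ref{thm:intro-undistorted-main} together with its support-controlled strengthening from Section~\ref{Sec:variants-main-theorem}. For the polyhedral structure, finite Nagata dimension $n$ furnishes at scale $\varepsilon$ a cover $\mathcal U$ of $X$ by sets of diameter $\lesssim\varepsilon$ meeting every $\varepsilon$-ball in at most $n+1$ members; I take $\Sigma$ to be the nerve of $\mathcal U$, realized so that each simplex is a regular Euclidean simplex of side $\varepsilon$ with the induced length metric, and $\varphi\colon\Sigma^{(0)}\to X$ to select a point of each member, so that conditions (1) and (2) hold by construction. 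The homomorphisms $\Lambda_m$ I build by induction on the skeleta: $\Lambda_0\bb z:=\bb{\varphi(z)}$; on a $1$-simplex, quasiconvexity joins the two endpoint images, which lie at distance $\lesssim\varepsilon$, by a curve of length $\lesssim\varepsilon$; and, inductively, $\Lambda_{m-1}(\di\bb\sigma)$ is an $(m-1)$-cycle of mass $\lesssim\varepsilon^{m-1}$ supported in $B(\varphi(\sigma^{(0)}),C\varepsilon)$, which I fill by a current of mass $\lesssim\varepsilon^{m}$ --- by coning a Lipschitz parametrization if $X$ has $(\LC_k)$, and by invoking $(\EI_k)$ together with the support-controlled form of Theorem~\ref{thm:intro-undistorted-main} if $X$ has $(\EI_k)$ --- declaring $\Lambda_m\bb\sigma$ to be that filling and extending $\Z$-linearly. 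The support-controlled theorem is what keeps each filling in $B(\varphi(\sigma^{(0)}),C\varepsilon)$, which gives condition (4); condition (3) is immediate.

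Now fix $T\in\bI_m(X)$ with $1\le m\le k$. A bounded-multiplicity cover at scale $\varepsilon$ carries a Lipschitz partition of unity $\{\rho_U\}$ with each $\rho_U$ being $O(\varepsilon^{-1})$-Lipschitz; since the simplices of $\Sigma$ have diameter $\lesssim\varepsilon$ and only $n+1$ of the $\rho_U$ are nonzero near any point, the map $\pi\colon X\to|\Sigma|$ sending $x$ to the point with barycentric coordinates $(\rho_U(x))_U$ is $O(1)$-Lipschitz, with constant depending only on $n$, and $\varphi$ sends the vertices of the carrier of $\pi(x)$ into $B(x,C\varepsilon)$. Applying the classical Federer--Fleming deformation theorem in the finite-dimensional complex $|\Sigma|$, rescaled to cell size $\varepsilon$, to $\pi_\#T\in\bI_m(|\Sigma|)$ produces $\pi_\#T=P'+R'+\di S'$ with $P'$ polyhedral in $\Sigma^{(m)}$, with $\mass(P')\lesssim\mass(T)$, $\mass(\di P')\lesssim\mass(\di T)$, $\mass(R')\lesssim\varepsilon\mass(\di T)$, $\mass(S')\lesssim\varepsilon\mass(T)+\varepsilon^{2}\mass(\di T)$, and with all supports within $C\varepsilon$ of $\spt\pi_\#T$, respectively $\spt\di\pi_\#T$. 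I set $P:=\Lambda_m(P')\in\poly_m(X)$; since an $m$-simplex of side $\varepsilon$ has $m$-volume $\asymp\varepsilon^{m}$, condition (4) turns the estimates on $P'$ into $\mass(P)\lesssim\mass(T)$, $\mass(\di P)\lesssim\mass(\di T)$ and the required containments of $\spt P$ and $\spt\di P$.

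To compare $T$ with $P$, fix an injective metric space $Z\supset X$. Building, as above, a Lipschitz extension $\bar g$ of $\varphi$ over the relevant skeleta (into $X$ when $X$ has $(\LC_k)$, into $Z$ otherwise), a cellwise chain homotopy $E$ between $\bar g_\#$ and $\Lambda_*$ --- each $E(\bb\sigma)$ a cheap filling, of mass $\lesssim\varepsilon^{m+1}$ and support in $B(\varphi(\sigma^{(0)}),C\varepsilon)$, of the cycle $\bar g_\#\bb\sigma-\Lambda_m\bb\sigma-E(\di\bb\sigma)$ --- and, from the convex geodesic bicombing of $Z$, an $O(1)$-Lipschitz homotopy $H\colon[0,1]\times X\to Z$ of height $\le C\varepsilon$ between the inclusion $X\hookrightarrow Z$ and $\bar g\circ\pi$, the homotopy formula combined with the decomposition of $\pi_\#T$ yields, in $\bI_*(Z)$,
\[
T-P=\bigl(E(\di P')+\bar g_\#R'-H_\#(\bb{0,1}\times\di T)\bigr)+\di\bigl(E(P')+\bar g_\#S'-H_\#(\bb{0,1}\times T)\bigr),
\]
where the first bracket has mass $\lesssim\varepsilon\mass(\di T)$, support in $B(\spt\di T,C\varepsilon)$ and is $m$-dimensional, while the second bracket is an $(m+1)$-current of mass $\lesssim\varepsilon\mass(T)+\varepsilon^{2}\mass(\di T)$ and support in $B(\spt T,C\varepsilon)$. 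When $\di T=0$ this is clean: the first bracket vanishes, $T-P$ is a cycle in $X$, the identity gives $\Fillvol_\infty(T-P)\lesssim\varepsilon\mass(T)$, and the support-controlled Theorem~\ref{thm:intro-undistorted-main} provides the required $S\in\bI_{m+1}(X)$ with $T=P+\di S$.

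For general $T$ the scheme is the same but one must realize the first bracket as an honest current in $X$, and I expect this to be the main obstacle. The induction-on-$m$ route is: apply the already-proved statement in dimension $m-1$ to the cycle $\di T$, which --- since $\di\di T=0$ kills the corresponding residual --- produces only a subtracted $S_0\in\bI_m(X)$ of mass $\lesssim\varepsilon\mass(\di T)$ supported near $\spt\di T$ and renders $\di(T-S_0)$ polyhedral; deforming $T-S_0$ as above then yields $T-S_0=P+R_1+\di S$ with $\mass(R_1)\lesssim\varepsilon\mass(\di T)$, $\mass(S)\lesssim\varepsilon\mass(T)+\varepsilon^{2}\mass(\di T)$ and the correct supports, and, since $S_0$ has the right size and location, it is absorbed into the residual rather than into $S$: one sets $R:=R_1+S_0$, so that $T=P+R+\di S$ with all the stated bounds. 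The technical heart is exactly the production of $R_1$ in $X$: relating $T-S_0$ to its nerve image requires the homotopy $H$, which $X$ itself need not carry, so the difference a priori lives in $Z$; extracting from it an honest $m$-current in $X$ lying within $C\varepsilon$ of $\spt\di T$ --- by exploiting the polyhedrality of $\di(T-S_0)$ and Theorem~\ref{thm:intro-undistorted-main} in its support-controlled form --- and keeping every auxiliary neighborhood of size $O(\varepsilon)$ throughout is where the real work lies.
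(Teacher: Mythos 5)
Your construction of the polyhedral structure, the deformation of the pushed-forward current in $\Sigma$, and the identity $T-P=\hat R+\partial\hat S$ (with $\hat R$ of mass $\lesssim\varepsilon\mass(\partial T)$ supported near $\spt\partial T$, and $\hat S$ of mass $\lesssim\varepsilon\mass(T)+\varepsilon^{2}\mass(\partial T)$ supported near $\spt T$) all run parallel to the paper's argument, and your treatment of the cycle case is complete. The genuine gap is exactly the step you flag yourself: converting the $m$-dimensional bracket $\hat R$, which a priori lives only in the ambient injective space, into an honest $R\in\bI_m(X)$. Your proposed remedy --- an induction on $m$ that first corrects $\partial T$ by an $S_0$ and then tries to extract $R_1$ from a difference ``living in $Z$'' via polyhedrality of the boundary --- is left unfinished, and it is also unnecessary.

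The missing observation is that $\partial\hat R=\partial T-\partial P$ is \emph{automatically} supported in $X$, indeed in $B(\spt\partial T,C\varepsilon)\cap X$, since both $T$ and $P$ live in $X$. Taking the ambient space to be the Banach space $\ell_\infty(X)$, the support-controlled undistortion theorem for unbounded sets (Theorem~\ref{thm:undistorted-unbounded-set}) therefore applies \emph{directly} to $\hat R$ at dimension $m$ (using $(\LC_{m-1})$ or $(\EI_{m-1})$), with $Z=\spt\partial T\cup\spt\partial P$ and $\eta=C\varepsilon$: it produces $R\in\bI_m(X)$ with $\partial R=\partial\hat R$, $\mass(R)\leq C\mass(\hat R)\lesssim\varepsilon\mass(\partial T)$ and $\spt R\subset B(\spt\partial T,C\varepsilon)$, and --- this is the role of the ``moreover'' clause, valid because the ambient space is Banach --- a current $\bar S\in\bI_{m+1}(Y)$ with $\partial\bar S=\hat R-R$, $\mass(\bar S)\leq C\varepsilon\,\mass(\hat R)\lesssim\varepsilon^{2}\mass(\partial T)$ and support near $\spt\partial T$. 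Then $T=P+R+\partial(\bar S+\hat S)$, and since $\partial(\bar S+\hat S)=T-P-R$ is supported in $X$, a second application of Theorem~\ref{thm:undistorted-unbounded-set} at dimension $m+1$ (using $(\LC_m)$ or $(\EI_m)$, available as $m\leq k$) replaces $\bar S+\hat S$ by $S\in\bI_{m+1}(X)$ with $\mass(S)\lesssim\varepsilon\mass(T)+\varepsilon^{2}\mass(\partial T)$ and $\spt S\subset B(\spt T,C\varepsilon)$. This closes the general case with no induction on $m$; note also that the $\varepsilon^{2}\mass(\partial T)$ term in the bound on $\mass(S)$ comes precisely from the ``moreover'' filling $\bar S$, for which your sketch provides no substitute.
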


The bounds on the mass of $P$, $\partial P$, and $R$ in Theorem~\ref{thm:Def-thm-finite-Nagata-dim} are as in the deformation theorems of Simon \cite{MR756417} and White \cite{MR1738045}. In particular, this gives a slightly sharper bound on the mass of \(P\) than the original version of Federer and Fleming, which only gives a bound $\mass(P) \leq C[\mass(T) + \varepsilon \mass(\partial T)]$. The bound on the mass of $S$ includes an additional error term coming from a use of Theorem~\ref{thm:intro-undistorted-main}. This error term disappears however when $T$ is a cycle. If $X$ has $(\LC_k)$ then the map $\varphi$ in the polyhedral structure $(\Sigma, \varphi, \Lambda_*)$ in Theorem~\ref{thm:Def-thm-finite-Nagata-dim} is even defined on $\Sigma^{(k)}$ and the homomorphisms $\Lambda_m$ are simply given by the pushforward under $\varphi$. Hence, in this case, elements of $\poly_k(X)$ are (currents induced by) Lipschitz $k$-chains. It thus follows that if a metric space $X$ has finite Nagata dimension and has \((\LC_k)\) for some $k\geq 1$ then integral $k$-currents in $X$ can be approximated in the flat norm by Lipschitz chains; see Corollary~\ref{cor:flat-approximation}. The classical deformation theorem yields a stronger approximation result in Euclidean space which asserts that integral currents can be approximated by Lipschitz chains in total mass. Using the tools developed to prove Theorem~\ref{thm:Def-thm-finite-Nagata-dim} we can generalize this stronger approximation result as follows.

\begin{corollary}\label{cor:strong-approximation}
Let \(X\) be a complete metric space of finite Nagata dimension and let \(T\in \bI_k(X)\) for some \(k\geq 1\). If \(X\) has \((\LC_{k-1})\) then there exists a sequence of Lipschitz \(k\)-chains such that the induced integral currents \(T_i\) converge to \(T\) in total mass, that is, \(\mass(T-T_i)\to 0\) and \(\mass(\partial T-\partial T_i)\to 0\) as \(i\to \infty\).
\end{corollary}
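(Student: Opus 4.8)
The plan is to transpose the classical derivation of the total‑mass approximation theorem from the Federer--Fleming deformation theorem into the present setting, with the simplicial approximation of $X$ constructed in the proof of Theorem~\ref{thm:Def-thm-finite-Nagata-dim} playing the role of the ambient cubical lattice and Theorem~\ref{thm:intro-undistorted-main} playing the role of the cheap fillings available in $\R^n$. Since $k\geq 1$, the hypothesis $(\LC_{k-1})$ contains $(\LC_0)$, so $X$ is quasiconvex, and together with the finite Nagata dimension Corollary~\ref{cor:lip-implies-EI} yields $(\EI_{k-1})$ (the trivial statement when $k=1$). It clearly suffices to construct, for each $\eta>0$, a Lipschitz $k$‑chain whose induced current $T'$ satisfies $\mass(T-T')\leq\eta$ and $\mass(\partial T-\partial T')\leq\eta$; taking $\eta=1/i$ then produces the sequence.

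I would argue by induction on $k$, strengthening the statement so that it also asserts that $\partial T'$ can be taken equal to $\partial T$ whenever $\partial T$ is itself a Lipschitz chain. For $k=1$ this follows from the standard decomposition of an integral $1$‑current into Lipschitz curves together with quasiconvexity, which lets one correct the finitely many endpoints by short arcs and so match $\partial T$ exactly. For the inductive step, one first applies the case $k-1$ to the $(k-1)$‑cycle $\partial T$ --- whose boundary, being $0$, is trivially Lipschitz --- to obtain a Lipschitz $(k-1)$‑cycle $B$ with $\mass(\partial T-B)$ as small as desired; then one fills the small $(k-1)$‑cycle $\partial T-B$ by $U\in\bI_k(X)$ with $\mass(U)\leq D\,\mass(\partial T-B)^{k/(k-1)}$ using $(\EI_{k-1})$, and sets $W:=T-U$, so that $\partial W=B$ is a Lipschitz cycle. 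It then suffices to approximate $W$ in mass, rel its Lipschitz boundary, by a Lipschitz $k$‑chain $P$ with $\partial P=B$: indeed $T':=P$ works, because $\mass(T-P)\leq\mass(U)+\mass(W-P)$ and $\mass(\partial T-\partial P)=\mass(\partial T-B)$, and when $\partial T$ is already Lipschitz one takes instead $B:=\partial T$, $U:=0$, $W:=T$, which delivers the strengthened clause.

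This core step is where the machinery behind Theorem~\ref{thm:Def-thm-finite-Nagata-dim} enters. The mass of $W$ is caught by its rectifiable structure: up to an arbitrarily small amount of mass, $W$ is carried by finitely many disjoint bi‑Lipschitz images of Borel subsets of $\R^k$, and the classical polyhedral mass approximation in $\R^k$, pushed forward along these charts, yields Lipschitz $k$‑chains matching those pieces in mass up to any prescribed error. The delicate point is that the boundaries of these pieces do not a priori cancel, and to force them to, one carries out the whole construction relative to a simplicial approximation $\Sigma$ of $X$ at a small scale $\varepsilon$ together with the Lipschitz map $\varphi\colon\Sigma^{(k)}\to X$ obtained by extending the vertex map over the $k$‑skeleton using $(\LC_{k-1})$ (so that pushforwards under $\varphi$ are Lipschitz chains): the covering of $W$ and the polyhedral approximations are chosen subordinate to the pull‑back of the cell structure of $\Sigma$, whence, exactly as in the Euclidean deformation theorem, the boundary mismatches sit on the $(k-1)$‑cells and cancel in pairs, the residual small cycles being filled in $X$ by means of $(\EI_{k-1})$ and, so as to keep the fillings inside $X$ and of small mass, the undistorted‑filling estimate of Theorem~\ref{thm:intro-undistorted-main}, which holds for $X$ up to dimension $k$. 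Near $B$ the part of $W$ in a thin neighborhood is replaced by a Lipschitz collar of $B$ matching it exactly on the boundary, so that the resulting Lipschitz $k$‑chain $P$ has $\partial P=B$; that $\mass(W-P)\to 0$ as $\varepsilon\to 0$ and the covering is refined is again a consequence of the rectifiability of $W$.

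The principal obstacle is the classical \emph{boundary‑cancellation} problem: flattening chunks of $W$ is free as far as mass is concerned, but their boundaries must be arranged to cancel, which compels all the local replacements to be synchronized with a single combinatorial structure. In $\R^n$ this structure is the ambient lattice; here it has to be the simplicial approximation $\Sigma$, and the price of working with $\Sigma$ in place of $X$ itself is paid by Theorem~\ref{thm:intro-undistorted-main}. Since only $k$‑dimensional currents are moved between $X$ and $\Sigma$, one needs $X$ undistorted merely up to dimension $k$, and this is precisely what $(\LC_{k-1})$ --- equivalently, what $(\EI_{k-1})$ --- provides; this is why the hypothesis $(\LC_{k-1})$, rather than $(\LC_k)$, is sufficient. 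The remaining technicalities, such as verifying that the errors introduced by the transport and by the residual fillings have small boundary, are disposed of by further applications of $(\EI_{k-1})$ and the bookkeeping described above.
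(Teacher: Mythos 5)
Your outer reduction is essentially the one the paper uses: approximate the $(k-1)$-cycle $\partial T$ by a Lipschitz cycle, fill the (small) discrepancy using $(\EI_{k-1})$ (available by Corollary~\ref{cor:lip-implies-EI}), and thereby reduce to an ``approximation rel a Lipschitz boundary'' statement -- this is exactly the role of Lemma~\ref{lem:moreover} in the paper, and your strengthened induction hypothesis is that lemma in disguise. The problem is that this rel-boundary statement is where all the work sits, and your sketch of it has a genuine gap. First, the residual cycles that arise from your local polyhedral replacements are proposed to be filled ``by means of $(\EI_{k-1})$ and Theorem~\ref{thm:intro-undistorted-main}''; those fillings are integral currents, not Lipschitz chains, so once you add them your object $P$ is no longer (induced by) a Lipschitz chain and you are back where you started. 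To repair a boundary at the chain level you need \emph{Lipschitz} (or polyhedral, pushed forward by $\varphi$) fillings with mass control, and neither $(\EI_{k-1})$ nor Theorem~\ref{thm:intro-undistorted-main} supplies these.

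Second, the claimed cancellation ``subordinate to the pull-back of the cell structure of $\Sigma$'' has no content in $X$: $X$ carries no cell structure, the sets $\psi^{-1}(\interior\sigma)$ are merely Borel, and pieces of $W$ lying over adjacent cells have no reason to have literally matching boundaries, so nothing ``cancels in pairs'' as in the Euclidean deformation theorem. The underlying difficulty is that the mismatch between the boundary of a mass-approximating Lipschitz chain and the prescribed Lipschitz cycle $\delta$ can be tiny as a current yet enormous as a chain (massive cancellation), so one cannot simply fill it cheaply and remain in the category of Lipschitz chains. The paper's Lemma~\ref{lem:moreover} resolves precisely this: it first gets a mass-dense Lipschitz chain $\tilde\alpha$ (Lemma~\ref{lem:m-dense}, via rectifiability and $(\LC_{k-1})$-extension of charts), then invokes the erratum lemma \cite[Lemma~3]{MR3548470} to interpolate, at the level of chains and with current-mass control, between the mismatch $\alpha=\partial\tilde\alpha-\delta$, its image $\psi_\#\alpha$ in $\Sigma$, and a polyhedral cycle $\alpha'$; the deformation theorem in $\Sigma$ together with the chain map $q\colon\poly_*(\Sigma)\to C_*^{\Lip}(\Sigma)$ and the map $\varphi\colon\Sigma^{(k)}\to X$ then produces a genuine Lipschitz filling of $\phi_\#(\alpha')$ in $X$, and only this lets one set $\rho=\tilde\alpha-\lambda-\phi_\#(\beta)$ with $\partial\rho=\delta$ exactly. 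Your proposal asserts the conclusion of this chain-level bookkeeping without providing a mechanism for it, so as written the argument does not go through.
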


For related approximation results in a different setting see for example \cite{DePauw-approx-2014}, \cite{Goldhirsch-approx-2021}. We do not know whether Corollary~\ref{cor:strong-approximation} holds without the assumption that $X$ have finite Nagata dimension.

The proof of Theorem~\ref{thm:Def-thm-finite-Nagata-dim} crucially relies on following approximation result which should be of independent interest and whose proof uses methods and arguments developed by Lang-Schlichenmaier \cite{MR2200122} in the context of Lipschitz extensions. 

\begin{theorem}\label{thm:factorization-side-length-r}
 Let \(X\), \(Y\) be metric spaces such that \(X\) is quasiconvex and of Nagata dimension \(\leq n\) and \(Y\) has $(\LC_{n-1})$ and contains \(X\). Then there exists a constant \(C\) depending only on the data of \(X\) and \(Y\) such that for every \(\varepsilon>0\) there is a metric simplicial complex \(\Sigma\) and \(C\)-Lipschitz maps \(\psi\colon X\to\Sigma\) and \(\varphi\colon\Sigma\to Y\) with the following properties:
 \begin{enumerate}
     \item\label{it:thm-1.6-1} \(\Sigma\) has dimension \(\leq n\), the metric on \(\Sigma\) is a length metric, and every simplex in \(\Sigma\) is a Euclidean simplex of side length \(\varepsilon\);
     \item\label{it:thm-1.6-2} $\varphi(\Sigma^{(0)})\subset X$, $\Hull(\psi(X)) = \Sigma$ and  \(d\bigl(x,\varphi(\psi(x))\bigr)\leq C\varepsilon\) for all \(x\in X\).
 \end{enumerate}
 If $X$ has $(\LC_{k-1})$ for some $k\geq 1$, then $\varphi(\Sigma^{(k)})\subset X$.
\end{theorem}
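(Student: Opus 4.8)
The plan is to build $\Sigma$ as the nerve of a suitably chosen cover of $X$ and to use the finite Nagata dimension of $X$ to control the combinatorics of this cover, following the Lang--Schlichenmaier strategy. First I would fix $\varepsilon>0$ and, using that $X$ has Nagata dimension $\leq n$, produce a cover $\mathcal{B}=\{B_i\}$ of $X$ by sets of diameter $\lesssim\varepsilon$ with multiplicity $\leq n+1$ that is ``$\varepsilon$-Lebesgue'': every ball of radius $c\varepsilon$ in $X$ is contained in some $B_i$. (Concretely one can pass to a $\delta$-net for $\delta\sim\varepsilon$ and invoke the Nagata-dimension covering lemma, so that the cover is also uniformly locally finite and has the property that each point lies in a bounded number of the $B_i$.) Let $\Sigma$ be the geometric realization of the nerve of $\mathcal{B}$, with each $m$-simplex metrized as a regular Euclidean simplex of side length $\varepsilon$ and $\Sigma$ given the induced length metric; the multiplicity bound forces $\dim\Sigma\leq n$, giving property~\eqref{it:thm-1.6-1}. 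The map $\psi\colon X\to\Sigma$ is the canonical partition-of-unity map associated to $\mathcal{B}$: using Lipschitz bump functions $\rho_i$ subordinate to the cover with $\sum_i\rho_i\equiv 1$ and $\operatorname{Lip}(\rho_i)\lesssim\varepsilon^{-1}$, set $\psi(x)=\sum_i\rho_i(x)\,e_i$ in barycentric coordinates. A standard computation (using the local finiteness of $\mathcal{B}$ and that at each point only $\leq n+1$ of the $\rho_i$ are nonzero) shows $\psi$ is $C$-Lipschitz once $\Sigma$ carries the length metric of edge length $\varepsilon$, and by construction $\operatorname{Hull}(\psi(X))=\Sigma$.

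Next I would construct $\varphi\colon\Sigma\to Y$ skeleton by skeleton. On $\Sigma^{(0)}$, for each vertex $e_i$ pick a point $\varphi(e_i)=x_i\in B_i\subset X$; since adjacent vertices correspond to overlapping $B_i$'s, $d(x_i,x_j)\lesssim\varepsilon$ whenever $e_ie_j$ is an edge. Now extend inductively over the skeleta: given $\varphi$ defined and controlled on $\Sigma^{(m-1)}$, each $m$-simplex $\sigma$ has boundary $\partial\sigma\cong S^{m-1}$ mapped into $Y$ by an $L$-Lipschitz map with $L\lesssim 1$ (in the rescaled metric where $\sigma$ has unit size, so $L\lesssim\varepsilon\cdot\varepsilon^{-1}=1$, i.e.\ the map has Lipschitz constant $\lesssim 1$ with respect to the $\varepsilon$-scaled simplex); because $Y$ has $(\LC_{n-1})$ and $m-1\leq n-1$, this sphere map extends to a $c'$-Lipschitz map on $\sigma$. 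Doing this for all $m$-simplices and all $m\leq n$ yields a globally defined map whose Lipschitz constant on each simplex is $\lesssim 1$; since $\Sigma$ has the length metric and bounded geometry (each simplex meets boundedly many others), $\varphi$ is $C$-Lipschitz globally. The image of each simplex sits in a $\lesssim\varepsilon$-neighborhood of $\{x_i\}$, which combined with the covering property gives $d(x,\varphi(\psi(x)))\leq C\varepsilon$ for all $x\in X$: indeed $\psi(x)$ lies in a simplex whose vertices $e_i$ all have $x\in B_i$, so all the corresponding $x_i$ are within $\lesssim\varepsilon$ of $x$, and then $\varphi(\psi(x))$ is within $\lesssim\varepsilon$ of those $x_i$.

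For the final clause, suppose $X$ itself has $(\LC_{k-1})$ for some $k\geq 1$. Then I would redo the skeletal extension of $\varphi$ but targeting $X$ rather than $Y$ on the low skeleta: having already arranged $\varphi(\Sigma^{(0)})\subset X$ with controlled edge lengths, I extend over $\Sigma^{(1)},\dots,\Sigma^{(k)}$ using $(\LC_{k-1})$ of $X$ exactly as above (the boundary of each $m$-simplex, $m\leq k$, is an $S^{m-1}$-map into $X$ with Lipschitz constant $\lesssim 1$, extendable over the simplex into $X$), and only above dimension $k$ do I fall back on $(\LC_{n-1})$ of $Y$ to finish; this gives $\varphi(\Sigma^{(k)})\subset X$ while keeping everything $C$-Lipschitz. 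The main obstacle I anticipate is the Lipschitz bookkeeping: one must verify that assembling the per-simplex extensions into a map that is globally Lipschitz for the \emph{length} metric on $\Sigma$ works with a constant depending only on $n$ and the Nagata/quasiconvexity data — this requires the cover to be chosen with uniformly bounded ``degree'' (each simplex adjacent to boundedly many others) and the bump functions to have uniformly controlled Lipschitz constants, which is precisely where finite Nagata dimension, as opposed to mere finite topological dimension, is essential. Quasiconvexity of $X$ enters to ensure that the metric balls $B_i$ can be chosen so that the $\varepsilon$-net and the resulting length-metric identification are compatible (geodesic-like control), so that the inequality $d(x,\varphi(\psi(x)))\leq C\varepsilon$ is genuinely metric and not just topological.
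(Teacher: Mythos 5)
Your construction follows the same route as the paper: the nerve of a Nagata cover with the associated partition-of-unity map $\psi$, vertices of the nerve sent to points of the cover members, a skeleton-by-skeleton Lipschitz extension of the vertex map using $(\LC_{n-1})$ of $Y$ (and $(\LC_{k-1})$ of $X$ on the skeleta up to dimension $k$), and finally the passage to the length metric with side length $\varepsilon$; this is exactly Proposition~\ref{prop:factorization2} plus the rescaling step. The genuine gap is in the Lipschitz estimate for $\psi$ with respect to the \emph{length} metric on $\Sigma$. The ``standard computation'' with the bump functions only yields $\abs{\psi(x)-\psi(x')}\lesssim \varepsilon^{-1}d(x,x')$ in the barycentric ($\ell_2$) metric; after rescaling this controls the $\ell_2$-metric of side length $\varepsilon$, not the length metric, which can be vastly larger when $\psi(x)$ and $\psi(x')$ do not lie in adjacent simplices (think of a circle with a small arc removed, in the induced chordal metric: two points across the gap are close, but their images in the nerve are a full circumference apart in the length metric). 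This is precisely where quasiconvexity of $X$ must enter: join $x$ to $x'$ by a curve $\alpha$ with $\ell(\alpha)\leq c\,d(x,x')$, push it forward by $\psi$, and use that curve lengths are the same for the $\ell_2$-metric and its induced length metric, so $d_{\mathrm{length}}(\psi(x),\psi(x'))\leq \ell(\psi\circ\alpha)\leq C\varepsilon^{-1}\ell(\alpha)\leq Cc\,\varepsilon^{-1}d(x,x')$, which rescales to a uniform Lipschitz bound. In your write-up quasiconvexity is invoked only vaguely, and for the estimate $d(x,\varphi(\psi(x)))\leq C\varepsilon$, which in fact does not need it; as written, the large-scale Lipschitz bound for $\psi\colon X\to(\Sigma,d_{\mathrm{length}})$ is unproved.

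A secondary issue is your claim that global Lipschitzness of $\varphi$ requires, and that one can arrange, a uniform bound on the degree of the nerve (each simplex meeting boundedly many others). Nagata dimension bounds the $s$-multiplicity, hence $\dim\Sigma\leq n$, but not the degree, and such a bound is unobtainable in general (a hedgehog of infinitely many unit segments has Nagata dimension $1$, yet at the scale of the center every nerve has a vertex of infinite degree). Fortunately it is also unnecessary: since $\varphi$ is $C\varepsilon$-Lipschitz on each simplex, its pointwise Lipschitz constant is at most $C\varepsilon$ everywhere on the hull (Proposition~\ref{prop:h-general-little-lip}), so it distorts lengths of curves by at most the factor $C\varepsilon$ and is therefore globally Lipschitz for the length metric by Lemma~\ref{lem:littleLip}, with no bounded-geometry hypothesis. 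With these two repairs---the curve-pushing argument for $\psi$ and the pointwise-Lipschitz argument for $\varphi$---your proposal coincides with the paper's proof.
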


 If $X$ is compact then $\Sigma$ is a finite simplicial complex and, in particular, the metric on $\Sigma$ is a geodesic metric. The hull of a subset $A\subset\Sigma$ is the smallest subcomplex of $\Sigma$ containing $A$ and is denoted $\Hull(A)$.  Theorem~\ref{thm:factorization-side-length-r} is used as follows to construct a $(k,\varepsilon)$-polyhedral structure on a metric space $X$ as in Theorem~\ref{thm:Def-thm-finite-Nagata-dim}. Let $\Sigma$, $\psi$, $\varphi$ be as in Theorem~\ref{thm:factorization-side-length-r}, when applied to $X$ and $Y=\ell_\infty(X)$. It is not difficult to see that $\Sigma$ and the restriction $\varphi|_{\Sigma^{(0)}}\colon \Sigma^{(0)}\to X$ satisfy properties (1) and (2) in the definition of a polyhedral structure. Furthermore, using $\varphi|_{\Sigma^{(0)}}$ and the $(\LC_k)$ or $(\EI_k)$ condition on $X$ one easily builds homomorphisms $\Lambda_m\colon\poly_m(\Sigma)\to\bI_m(X)$ for $m=0,\dots, k$ satisfying properties (3) and (4) in the definition of a polyhedral structure. This is accomplished in Proposition~\ref{prop:homomorphisms-transporting}. We finally mention that Theorem~\ref{thm:factorization-side-length-r} has found another application in the recent article \cite{meier2021quasiconformal}, where it was used to prove the existence of a quasiconformal parametrization of $2$-dimensional metric surface under minimal conditions. 
 
 \subsection{Outlines of proof}\label{sec:proof-outline}
 The idea behind the proof of Theorem~\ref{thm:intro-undistorted-main} is inspired by the arguments used to prove \cite[Theorem 1.3]{MR3268779}. Let $X$ and $Y$ be as in the statement of the theorem.
 After possibly embedding $Y$ into a bigger space (for example a Banach space) we may assume that $Y$ has $(\LC_n)$, where $n$ is the Nagata dimension of $X$. 
 
 One of the main ingredients in the proof is a factorization theorem (see Theorem~\ref{thm:factorization}), which is established using arguments from \cite{MR2200122} and roughly says the following. There exists a Lipschitz map $f\colon Y\to Y$ such that $f$ restricts to the identity on $X$ and $f|_{Y\setminus X}$ factors through an $(n+1)$-dimensional simplicial complex $\Sigma$. More precisely, on $Y\setminus X$ we have $f=h\circ g$ for suitable maps $g\colon Y\setminus X\to \Sigma$ and $h\colon \Sigma\to Y$ satisfying the following properties. Let \(\varrho\colon Y\to \R\) be the distance function from the set \(X\) and let $r>0$. Then $g$ is \(Cr^{-1}\)-Lipschitz on the set $\{\varrho>r\}$ and $h$ is $Cr$-Lipschitz on every simplex $\sigma\subset \Sigma$ for which $g^{-1}(\st \sigma)\cap \{\varrho\leq r\}\not= \varnothing$, where $C$ is independent of $r$ and \(\st \sigma\) denotes the open star of \(\sigma\) in \(\Sigma\). Moreover, $h$ maps the $1$-skeleton of $\Sigma$ to $X$. The simplicial complex $\Sigma$ is equipped with the so-called $\ell_2$-metric and every simplex is isometric to a standard Euclidean simplex.
 
We now outline how this factorization theorem is used to prove Theorem~\ref{thm:intro-undistorted-main}. Let $T\in\bI_k(X)$ be a cycle in $X$ and $S\in\bI_{k+1}(Y)$ a filling of $T$ in $Y$. After possibly modifying $Y$ and $S$ slightly we may achieve that $S$ meets the set $X$ only on the boundary $T$ and not too much mass of $S$ is concentrated near $X$.  For suitable $r>0$ (small) we then send the restriction $S\on \{\varrho>r\}$ to an integral current $S_r'\in\bI_{k+1}(\Sigma)$ via the map $g$. Notice that the mass distortion of $g$ is proportional to $s^{-(k+1)}$ on the part of $S$ located at a distance around $s$. Next, we use a refined version of the classical Federer-Fleming deformation theorem in $\Sigma$ to ``push'' the integral current $S_r'$ to the $(k+1)$-skeleton $\Sigma^{(k+1)}$ of $\Sigma$ and obtain a polyhedral chain $P'_r\in\poly_{k+1}(\Sigma)$. This refined version of the deformation theorem, a detailed proof of which is given in the Appendix, includes local mass estimates and allows us to control the mass of $P_r'$ on every simplex. Finally, we map $P_r'$ back to $X$ via a chain homomorphism $\Lambda_{k+1}\colon \poly_{k+1}(\Sigma)\to \bI_{k+1}(X)$. Here, $\Lambda_1$ is the map induced by the restriction $h|_{\Sigma^{(1)}}\colon \Sigma^{(1)}\to X$. For $m=2,\dots, k$, we construct $\Lambda_{m+1}$ from $\Lambda_m$ by applying the $(\LC_m)$ or $(\EI_m)$ property of $X$. For a suitable sequence $r_i\to 0$, the currents $P_i\coloneqq \Lambda_{k+1}(P'_{r_i})\in\bI_{k+1}(X)$ converge to a filling $P\in\bI_{k+1}(X)$ of \(T\) with the desired bound on mass, thus concluding the proof of Theorem~\ref{thm:intro-undistorted-main}.

 The proof of Theorem~\ref{thm:Def-thm-finite-Nagata-dim} is similar but somewhat easier because all the involved maps are globally Lipschitz. Instead of using the factorization theorem sketched above we apply Theorem~\ref{thm:factorization-side-length-r} with $Y=\ell_\infty(X)$ to obtain a finite dimensional metric simplicial complex $\Sigma$ and Lipschitz maps $\psi\colon X\to \Sigma$ and $\varphi\colon \Sigma \to Y$ as in the theorem. As described after the theorem, this leads to a $(k,\varepsilon)$-polyhedral structure $(\Sigma, \varphi, \Lambda_*)$ on $X$. Now, let $T\in\bI_k(X)$ and consider the pushforward current $T'=\psi_\# T\in\bI_k(\Sigma)$. The classical Federer-Fleming deformation theorem in $\Sigma$ yields a decomposition $T' = P' + R' +\partial S'$ for currents $P'\in\poly_k(\Sigma)$ and $R'\in\bI_k(X)$, $S'\in\bI_{k+1}(\Sigma)$ with suitable bounds on mass. We then use the fact that $\varphi\circ\psi$ is close to the identity on $X$ together with further properties of $\Lambda_m$ to show that $T$ can be decomposed as $T= P + \hat{R} + \partial \hat{S}$, where $P=\Lambda_k(P')$ and where $\hat{R}\in\bI_k(Y)$ and $\hat{S}\in\bI_{k+1}(Y)$ are suitable currents. We finally use Theorem~\ref{thm:intro-undistorted-main} to ``push'' $\hat{R}$ and $\hat{S}$ from $Y$ to $X$ in order to obtain a decomposition as in Theorem~\ref{thm:Def-thm-finite-Nagata-dim}.

 \subsection{Structure of the paper} 
 In Section~\ref{sec:prelims} we introduce notation and definitions concerning concepts used in the article, including simplicial complexes, Nagata dimension, Lipschitz connectedness, and pointwise Lipschitz constants. Section~\ref{sec:currents} contains the necessary definitions from the Ambrosio-Kirchheim theory of metric currents. We furthermore prove Proposition~\ref{prop:homomorphisms-transporting} which allows us to construct suitable homomorphisms from the group of polyhedral chains in a simplicial complex to the group of integral currents in a metric space. This technical result will be needed in the proofs of Theorems~\ref{thm:intro-undistorted-main} and \ref{thm:Def-thm-finite-Nagata-dim}. The main result in Section~\ref{sec:factorization} is the factorization theorem sketched in Section~\ref{sec:proof-outline} above; see Theorem~\ref{thm:factorization}. As already mentioned, this is one of the main ingredients in the proof of Theorem~\ref{thm:intro-undistorted-main}. In Section~\ref{sec:isoperimetric-subspace-distortion} we discuss and prove variants and generalizations of Theorem~\ref{thm:intro-undistorted-main} and deduce Theorem~\ref{thm:intro-undistorted-main} from them. The main aim of  Section~\ref{sec:approx-simplicial-complexes} is to establish  Theorem~\ref{thm:factorization-side-length-r}, whose proof is similar to that of Theorem~\ref{thm:factorization}. Section~\ref{sec:def-thm-finite-Nagata} contains the proof of Theorem~\ref{thm:Def-thm-finite-Nagata-dim}. Finally, in the Appendix we give a detailed proof of a refined version of the Federer-Fleming deformation theorem in metric spaces admitting a bilipschitz triangulation.  Our refined version includes local mass estimates which are of particular importance in the proof of our isoperimetric subspace distortion theorem.

\section{Preliminaries}\label{sec:prelims}

\subsection{Notation and definitions} We recall standard notation and definitions from metric geometry. Suppose that \(A\), \(B\subset X\) are subsets of a metric space \(X\). We write 
\[
d(A, B)\coloneqq \inf\bigl\{ d(a, b) : a\in A,\, b \in B\bigr\} \quad \text{and} \quad \diam(B)\coloneqq \sup\{ d(x,x^\prime) : x, x^\prime\in B\}.
\] 
We use the convention that \(\inf \varnothing=+\infty\) and \(\sup \varnothing=-\infty\). For every \(\varepsilon >0\) we let \(U(A, \varepsilon)\) and \(B(A, \varepsilon)\) denote the open and closed \(\varepsilon\)-neighborhood of \(A\), respectively. 
We say that a subset \(Z\subset X\) is an \textit{\(\varepsilon\)-net} if \(d(z,z^\prime)>\varepsilon\) for all distinct \(z\), \(z^\prime\in Z\) and if the family of all balls \(B(z, \varepsilon)\) with \(z\in Z\) covers \(X\).

A metric space \(X\) is called \(c\)-\textit{quasiconvex}, \(c\geq 1\), if all \(x\), \(x'\in X\) can be connected by a curve \(\gamma\colon [0,1]\to X\) such that \(\ell(\gamma)\leq c d(x,x')\). 
For us a curve is always continuous and we use \(\ell(\gamma)\in [0, \infty]\) to denote the \textit{length} of a curve  \(\gamma\). 
Let \(k\geq 0\) and denote by \(\Haus^k\) the \(k\)-dimensional Hausdorff measure on \(X\) and let \(\mathscr{L}^k\) denote the Lebesgue measure on \(\R^k\). 
A subset \(A\subset X\) is said to be \textit{countably \(\Haus^k\)-rectifiable} if it is \(\Haus^k\)-measurable and there exist subsets \(A_i\subset \R^k\) and Lipschitz maps \(f_i\colon A_i\to X\) such that
\(\Haus^k\bigl( A \setminus \bigcup_{i=1}^\infty f_i(A_i)\bigr)=0\). A finite Borel measure \(\mu\) on \(X\) is said to be concentrated on a Borel set \(B\subset X\) if \(\mu(X\setminus B)=0\). The set \(\spt \mu\) consisting of all \(x \in X\) such that \(\mu(U(x, \varepsilon)) > 0\) for all \(\varepsilon>0\) is called the support of \(\mu\). If $X$ is separable then \(\mu\) is concentrated on \(\spt \mu\). This is true more generally if the cardinality of $X$ is an Ulam number; see \cite[2.1.6]{MR0257325}. As is done for example in \cite{MR1794185} we will assume throughout this paper that the cardinality of any set is an Ulam number.

\subsection{Simplicial complexes} 

For the concepts appearing below we refer e.g.\ to \cite{MR1325242}. Let $\Sigma$ be a simplicial complex. We denote by $\mathcal{F}_k(\Sigma)$ the collection of closed $k$-simplices in $\Sigma$ and by $\mathcal{F}(\Sigma)$ the collection of closed simplices of any dimension in $\Sigma$. We also write $\mathcal{F}_k$ and $\mathcal{F}$ if there is no danger of ambiguity. We say that \(\Sigma\) is \(n\)-dimensional if \(\mathcal{F}_n\neq \varnothing\) and \(\mathcal{F}_{n+1}=\varnothing\). The $k$-skeleton of $\Sigma$ is denoted by $\Sigma^{(k)}$. Notice that if \(\Sigma\) is \(n\)-dimensional, then \(\Sigma^{(m)}=\Sigma^{(n)}\) for every \(m\geq n\). The smallest subcomplex of $\Sigma$ containing a given subset $A\subset \Sigma$ is called the hull of $A$ and is denoted $\Hull(A)$. The open star of \(\sigma\in \mathcal{F}\) is defined by
\[\st \sigma\coloneqq \bigcup \Big\{ \interior \tau : \tau\in \mathcal{F} \text{ and }\sigma \subset \tau \Big\},
\]
where $\interior{\tau}$ is the interior of $\tau$. We will use the following two specific choices of a metric on a given simplicial complex. For a nonempty index set $I$ define a simplicial complex by 
\begin{equation*}
\Sigma(I)\coloneqq 
\Bigl\{ x\in \ell_2(I) : x_i\geq 0 \text{ and } \sum_{i\in I} x_i =1 \Bigr\}
\end{equation*}
and equip $\Sigma(I)\subset \ell_2(I)$ with the metric induced by the norm of \(\ell_2(I)\).
A given simplicial complex $\Sigma$ can be naturally realized as a subset of $\Sigma(I)$, where $I=\mathcal{F}_0$ is the collection of vertices in $\Sigma$. The induced metric on $\Sigma\subset \Sigma(I)$  will be referred to as the $\ell_2$-metric on $\Sigma$. The diameter of $\Sigma$ with respect to the $\ell_2$-metric is always at most $\sqrt{2}$. If $\Sigma$ is path-connected then the associated length metric on $\Sigma$ will be called the {\it length metric} on $\Sigma$. Notice that with respect to either metric each simplex of \(\Sigma\) is isometric to a standard Euclidean simplex. 
We close this subsection with the following lemma which collects various results that will be used later.

\begin{lemma}\label{lem:GeometryOfDeltaN}
Let $\Sigma$ be a finite-dimensional simplicial complex, equipped with the $\ell_2$-metric. Then the following holds:

\begin{enumerate}
    \item\label{it:epsilon_x} For every \(x\in \Sigma\), there is \(\varepsilon_x>0\) such that if \(y\in U(x, \varepsilon_x)\) then \(x\) and \(y\) lie in a common simplex; 
    \item If \(A\subset \Sigma\) is separable, then \(\Sigma^\prime\coloneqq\Hull(A)\) is a countable simplicial complex, that is, the set of vertices of \(\Sigma^\prime\) is countable; 
    \item If \(\Sigma\) is path-connected, then the length metric and the \(\ell_2\)-metric induce the same topology. 
 \end{enumerate} 
\end{lemma}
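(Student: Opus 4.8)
The plan is to work with $\Sigma$ realized inside $\Sigma(I)$, $I=\mathcal{F}_0$, so that a point $x\in\Sigma$ is a finitely supported vector in $\ell_2(I)$ with $x_i\geq 0$, $\sum_i x_i=1$, whose support $\{i:x_i>0\}$ spans a simplex of $\Sigma$, the \emph{carrier} $\sigma_x$ of $x$ (the unique simplex containing $x$ in its relative interior). All three assertions will follow from one elementary local fact, which is essentially (1), so I would prove (1) first in a slightly reusable form. For (1): given $x$ with carrier on vertices $v_1,\dots,v_m$, set $\varepsilon_x:=\min_j x_{v_j}>0$. If $\|y-x\|_2<\varepsilon_x$ then $|y_{v_j}-x_{v_j}|\leq\|y-x\|_2<\varepsilon_x\leq x_{v_j}$, hence $y_{v_j}>0$ for every $j$; thus $\operatorname{supp}x\subset\operatorname{supp}y$, so $\sigma_x$ is a face of the carrier $\sigma_y$, and $\tau:=\sigma_y$ is a closed simplex containing both $x$ and $y$. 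I would also record explicitly the byproduct that $\|y-x\|_2<\varepsilon_x$ forces $\mathcal{F}_0(\sigma_x)\subset\mathcal{F}_0(\sigma_y)$, since this is what the other two parts reuse.

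For (2): first I would identify $\Hull(A)$ with $\bigcup_{a\in A}\sigma_a$ (the union of the closed carriers of points of $A$): this set is closed under passing to faces, hence a subcomplex, it contains $A$, and it is contained in every subcomplex containing $A$; therefore its vertex set is $\bigcup_{a\in A}\mathcal{F}_0(\sigma_a)$. Now pick a countable dense set $D\subset A$. For each $a\in A$, density gives $d\in D$ with $\|a-d\|_2<\varepsilon_a$, and the byproduct from (1) yields $\mathcal{F}_0(\sigma_a)\subset\mathcal{F}_0(\sigma_d)$. Hence the vertex set of $\Hull(A)$ is contained in $\bigcup_{d\in D}\mathcal{F}_0(\sigma_d)$, a countable union of finite sets, so it is countable; since each simplex of $\Hull(A)$ is a finite subset of this vertex set, $\Hull(A)$ is a countable simplicial complex.

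For (3): the $\ell_2$-metric is dominated by the length metric, so the identity from the length metric to the $\ell_2$-metric is $1$-Lipschitz and the $\ell_2$-topology is coarser. For the reverse inclusion I would fix $x$ and invoke (1): every $y$ in the $\ell_2$-ball $U(x,\varepsilon_x)$ lies with $x$ in a common closed simplex $\tau$, which is a convex subset of $\ell_2(I)$ contained in $\Sigma$; hence the straight segment $[x,y]$ is a curve in $\Sigma$ of length $\|y-x\|_2$, so $d_{\mathrm{len}}(x,y)\leq\|y-x\|_2$. Combined with the domination, $d_{\mathrm{len}}$ and the $\ell_2$-metric agree on $U(x,\varepsilon_x)$; thus the identity into the length metric is continuous at $x$, and since $x$ is arbitrary the two topologies coincide. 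Along the way I would note that $d_{\mathrm{len}}$ is finite-valued: path-connectedness makes the $1$-skeleton a connected graph, every edge has length $\sqrt2$, and each point is joined to a vertex of its carrier by a segment of length $\leq\sqrt2$.

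I do not expect a genuine obstacle; the whole lemma rests on the carrier-domination observation in (1), and the only steps deserving a sentence of justification are the identification of the vertex set of $\Hull(A)$ in (2) and the convexity of a closed simplex inside $\ell_2(I)$ used in (3).
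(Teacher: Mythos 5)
Your proof is correct and follows essentially the same route as the paper: the key observation $\varepsilon_x=\min_i x_i$ forcing carrier containment, a countable dense subset of $A$ for (2), and the agreement of small balls for (3). One cosmetic point: in (3) what you actually establish (and what is needed) is that $d_{\mathrm{len}}(x,y)=\|x-y\|_2$ for $y\in U(x,\varepsilon_x)$ with $x$ the center, not that the two metrics agree on all pairs of points of that ball, which can fail.
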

An analogous statement holds when the $\ell_2$-metric is replaced by the length metric.
\begin{proof}
For \(x\in \Sigma\) denote by \(\sigma(x)\) the unique simplex \(\sigma\) such that \(x\in \interior \sigma\). Given \(x=(x_i)_{i\in I}\), we put \(I(x)\coloneqq\{ i\in X : x_i\neq 0\}\) and \(\varepsilon_x\coloneqq \min\{ x_i : i\in I(x)\}\). Clearly, \(\varepsilon_x>0\) and if \(\abs{x-y} < \varepsilon_x\), then \(y_i\neq 0\) for all \(i\in I(x)\). Hence, \(\sigma(x)\subset \sigma(y)\) whenever \(\abs{x-y}< \varepsilon_x\). This gives \eqref{it:epsilon_x}. Next, we show (2). Let \(Z\) be a countable dense subset of \(A\). We claim that 
 \begin{equation}\label{eq:hull-countable}
 \Hull(A)=\bigcup_{z\in Z} \sigma(z),
 \end{equation}
 Fix \(x\in A\). As \(Z\) is a dense subset of \(A\), there exists \(z\in Z\) such that \(\abs{x-z}<\varepsilon_{x}\) and thus \(\sigma(x)\subset \sigma(z)\). This yields \eqref{eq:hull-countable} and (2) follows. 
 
 Finally, we prove (3). Suppose now that \(\Sigma\) is path-connected. By virtue of (1), the open balls \(U(x, \varepsilon) \), where \(\varepsilon \leq \varepsilon_x\),  with respect to the length metric and the \(\ell_2\)-metric coincide with each other. This shows in particular that both metrics induce the same topology on \(\Sigma\), as desired.
 \end{proof}

\subsection{Nagata Dimension}\label{subsec:Nagata}
In the following, we recall the definition of Nagata dimension and some of its basic properties. A covering \(\mathcal{B}=(B_i)_{i\in I}\) of a metric space \(X\) is said to be \textit{\(D\)-bounded}, for a real number \(D\geq 0\), if every set \(B\in \mathcal{B}\) has diameter less than or equal to \(D\). 
Let \(s>0\) be a real number and \(n\geq 0\) an integer. 
A covering \(\mathcal{B}\) has \textit{\(s\)-multiplicity at most} \(n\) if every subset \(B\subset X\) with \(\diam(B)\leq s\) meets at most \(n\) members of \(\mathcal{B}\). 

\begin{definition}\label{def:nagata}
We say that a metric space \(X\) has \textit{Nagata dimension \(\leq n\) with constant \(c\)} if for every real number \(s>0\), \(X\) 
admits a \(cs\)-bounded covering \(\mathcal{B}_s\) having \(s\)-multiplicity at most \(n+1\).
\end{definition}
Equivalent definitions can be found in \cite[Proposition 2.5]{MR2200122}. 
The infimum of those \(n\geq 0\) for which \(X\) has Nagata dimension \(\leq n\) is denoted by \(\dim_N(X)\) and is called the \textit{Nagata dimension} of \(X\). 
Nagata dimension was introduced by Assouad in \cite{MR651069}.
One has \(\dim(X) \leq \dim_N(X)\) for any metric space \(X\), where \(\dim(X)\) denotes the topological dimension of \(X\).
If \(X\) is a compact Riemannian manifold or a Carnot group equipped with the Carnot-Carathéodory distance, then \(\dim_N(X)=\dim(X)\). See \cite[p. 3635]{MR2200122} and \cite[Theorem 4.2]{MR3320519}. Furthermore, \(\dim_N X\times Y \leq \dim_N X+\dim_N Y\) and if \(X=A\cup B\), then \(\dim_N X = \sup\{ \dim_N A, \dim_N B\}\). For further results concerning Nagata dimension we refer to the articles \cite{BellDranishnikov-asymptotic-2008}, \cite{MR2520117}, \cite{MR2418302},  \cite{MR2200122},  \cite{MR3320519} and the references therein.

\subsection{Lipschitz connectedness}\label{sec:Lip-connected}
Let \(X\) and \(Y\) be metric spaces. A map \(f\colon X\to Y\) is called \textit{\(L\)-Lipschitz}, where \(L\geq 0\) is a real number, if \(d(f(x), f(x^\prime))\leq L\,d(x, x^\prime)\)
for all \(x,x^\prime \in X\).
The quantity 
\[
\Lip(f)\coloneqq \inf\big\lbrace L\geq 0 : f \text{ is \(L\)-Lipschitz}\big\rbrace
\] 
is called the \textit{Lipschitz constant} of \(f\).
We say that \(f\) is \textit{\(C\)-bilipschitz}, \(C\geq 1\), if 
 \[
 C^{-1} d(x,y) \leq d(f(x), f(y)) \leq C d(x,y)
 \]
 for all \(x\), \(y\in X\). In the following, we recall the definition of Lipschitz \(k\)-connected metric spaces and state several examples. Let \(B^{m+1}\coloneqq \bigl\{ x\in \R^{m+1} : \abs{x}\leq 1\bigr\}\), for \(m\geq 0\), denote
the closed \((m+1)\)-dimensional Euclidean unit ball and put \(S^m\coloneqq \partial B^{m+1}\). We equip \(S^m\) and \(B^{m+1}\) with the induced Euclidean metric. 

\begin{definition}A metric space $X$ is said to be Lipschitz $k$-connected if there exists $c\geq 1$ such that every \(L\)-Lipschitz map $f\colon S^m\to X$ with $0\leq m\leq k$ has a $c L$-Lipschitz extension $\bar{f}\colon B^{m+1}\to X$.  We abbreviate this by saying that $X$ has property $(\LC_k)$. 
\end{definition}

In particular, Banach spaces and \(\CAT(0)\) spaces or, more generally,  metric spaces admitting a convex bicombing have \((\LC_k)\) with constant \(3\); see \cite[Proposition 6.2.2.]{20.500.11850/148886}.
 Every \(k\)-connected compact Riemannian manifold has \((\LC_k)\). This follows from \cite[Theorem 5.1]{MR2200122} which states that if a compact metric space \(X\) is \(k\)-connected and Lipschitz \(k\)-connected in the small, then \(X\) has \((\LC_k)\). Moreover, certain horospheres in symmetric spaces of noncompact type of rank \(k\) or in the product of \(k\) proper \(\CAT(0)\) spaces have \((\LC_{k-2})\). This has been established in \cite{LeuzingerYoung-distortionDimension-2017} and \cite{Link-HiherOrder-2019}, respectively. 
The jet space Carnot groups \((J^{s}(\R^k), d_c)\) have \((\LC_{k-1})\); see \cite{MR2729637}. Examples of Carnot groups which can be realized as jet space Carnot groups include the \(n\)th Heisenberg group, the Engel group and the model filiform groups. We refer to \cite{Warhurst-jetspaces-2005} for more information on these groups.
There are many examples of metric groups which have \((\LC_1)\); see e.g. \cite{Cohen-Lipschitz-2019}, \cite{Magnani-conact-2010}.

\subsection{Pointwise Lipschitz constants}
Let \(f\colon X\to Y\) be a map between metric spaces $X$ and $Y$. The \textit{pointwise (lower) Lipschitz constant} of \(f\) at \(x\) is defined by
\[\lip f(x)\coloneqq \lim_{r\to 0+} \ell_r f(x), \quad \text{ where } \, \ell_r f(x)\coloneqq  \inf_{\substack{\\[0.1em]0<s<r}} \,\sup_{d(x,y)<s} \frac{d(f(x),f(y))}{s}.\]
If $f$ is Lipschitz then the function \(\lip f\) is real-valued and Borel; see \cite[Lemma 4.1.2]{MR2041901}.

\begin{lemma}\label{lem:littleLip}
Let \(f\colon X\to Y\) be a map such that \(\lip f(x)\leq C\) for all \(x\in X\). Then \(f\) is continuous and \(\ell(f\circ \gamma)\leq C \ell(\gamma) \)
for every curve \(\gamma\) in \(X\). In particular,  if \(X\) is \(\lambda\)-quasiconvex, then \(f\) is \(C\lambda\)-Lipschitz. 
\end{lemma}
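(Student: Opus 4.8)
The plan is to prove the three assertions in turn: continuity of $f$; the inequality $\ell(f\circ\gamma)\le C\,\ell(\gamma)$ for every curve $\gamma$ in $X$; and then the quasiconvex consequence, which drops out immediately. For continuity, fix $x\in X$ and $\varepsilon>0$. Since $\ell_r f(x)$ is monotone in $r$ with $\ell_r f(x)\to \lip f(x)\le C$ as $r\to 0^+$, we have $\ell_r f(x)\le C$ for every $r>0$. Applying the definition of the infimum with $r=\varepsilon/(C+1)$ produces a scale $s\in(0,r)$ with $\sup_{d(x,y)<s}\frac{d(f(x),f(y))}{s}<C+1$, so that $d(f(x),f(y))<(C+1)s<\varepsilon$ whenever $d(x,y)<s$. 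Hence $f$ is continuous, and in particular $f\circ\gamma$ is again a curve for every curve $\gamma$ in $X$.

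The core of the proof is the length estimate. If $\ell(\gamma)=+\infty$ there is nothing to prove, so I assume $\gamma$ is rectifiable and reparametrize it by arc length, obtaining a $1$-Lipschitz curve $\bar\gamma\colon[0,L]\to X$ with $L\coloneqq\ell(\gamma)$ and the same image; since length is invariant under monotone reparametrization, $\ell(f\circ\gamma)=\ell(f\circ\bar\gamma)$. Put $g\coloneqq f\circ\bar\gamma$. Since $\bar\gamma$ is $1$-Lipschitz, every $t'$ with $|t'-t|<s$ satisfies $d(\bar\gamma(t),\bar\gamma(t'))<s$, so $\ell_r g(t)\le \ell_r f(\bar\gamma(t))$ and therefore $\lip g(t)\le \lip f(\bar\gamma(t))\le C$ for all $t\in[0,L]$. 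Thus it suffices to prove the one-dimensional statement: \emph{any map $g\colon[0,L]\to Y$ with $\lip g\le C$ everywhere is $C$-Lipschitz}. Granting this, $g$ is continuous with $\ell(g)\le CL$, hence $\ell(f\circ\gamma)\le C\,\ell(\gamma)$.

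For the one-dimensional statement I would run a barrier argument. Fix $\varepsilon>0$ and $\alpha<\beta$ in $[0,L]$, set $u(t)\coloneqq d(g(\alpha),g(t))$ and $v(t)\coloneqq u(t)-(C+\varepsilon)(t-\alpha)$, and let $T\coloneqq\sup\{t\in[\alpha,\beta]:v(t)\le 0\}$; by continuity $v(T)\le 0$. Suppose $T<\beta$. Using $\lip g(T)\le C$ at a sufficiently small scale, I select $s\in(0,\beta-T)$ such that $d(g(T),g(t'))\le(C+\tfrac{\varepsilon}{2})s$ for \emph{all} $t'\in[T,T+s]$. Taking $t'=T+s$, the triangle inequality together with $u(T)\le(C+\varepsilon)(T-\alpha)$ gives $u(T+s)\le(C+\varepsilon)(T-\alpha)+(C+\tfrac{\varepsilon}{2})s$, whence $v(T+s)\le-\tfrac{\varepsilon}{2}s<0$, contradicting the maximality of $T$. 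So $T=\beta$, i.e.\ $d(g(\alpha),g(\beta))\le(C+\varepsilon)(\beta-\alpha)$, and letting $\varepsilon\to 0$ shows $g$ is $C$-Lipschitz. Finally, if $X$ is $\lambda$-quasiconvex, any $x,x'\in X$ are joined by a curve $\gamma$ with $\ell(\gamma)\le\lambda\, d(x,x')$, and then $d(f(x),f(x'))\le\ell(f\circ\gamma)\le C\,\ell(\gamma)\le C\lambda\, d(x,x')$.

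The step I expect to need the most care is extracting the local estimate from $\lip g(t)\le C$: because this pointwise constant is built from an infimum over scales, one does \emph{not} get a bound $d(g(t),g(t'))\le(C+\varepsilon)|t-t'|$ at all small distances, but only that at \emph{arbitrarily small} radii $s$ there is an estimate $d(g(t),g(t'))\le(C+\tfrac{\varepsilon}{2})s$ holding uniformly for $t'\in[t,t+s]$. The barrier argument is arranged precisely so that a single such good scale at the point $T$ suffices---it only forces $v$ back down to $\le 0$ at the one point $T+s$---which is why one compares against the line of slope $C+\varepsilon$ rather than $C$.
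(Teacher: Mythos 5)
Your proof is correct and follows essentially the same route as the paper: continuity from the pointwise bound, arc-length reparametrization, and a supremum/barrier ("continuous induction") argument against a comparison slope strictly larger than $C$, with the slack sent to zero at the end. The only cosmetic difference is that you package the barrier step as a one-dimensional lemma for $g=f\circ\bar\gamma$ (and should, as the paper does, invoke continuity to close the estimate at the endpoint $t'=T+s$, since the scale bound is stated for $d<s$ strictly), whereas the paper runs the same argument directly on the arc-length parametrized curve and reduces the length bound to the endpoint estimate.
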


\begin{proof}
Let \(x\in X\). Since \(\lip f(x)\leq C\), for every \(r>0\) there is \(s\in (0,r)\) such that
\[
\sup_{d(x,y)<s} d(f(x), f(y)) \leq 2Cs;
\]
hence, \(f\) is continuous at \(x\). In order to prove the second statement it is enough to show that 
\begin{equation}\label{eq:ineq-dist-length-lemma-lip}
    d\bigl(f(\gamma(0)), f(\gamma(l))\bigr)\leq C\ell(\gamma)
\end{equation}
for every rectifiable curve $\gamma\colon[0,l]\to X$. Fix a rectifiable curve $\gamma$, which we may assume to be parametrized by arc-length. Let $C'>C$ and consider the set
\[
A\coloneqq \bigl\{ t\in [0,l] : d\big(f\big(\gamma(0)\big), f\big(\gamma(t)\big)\big)\leq C' t  \bigr\}.
\]
Clearly, \(A\) is nonempty and closed, so \(t\coloneqq \sup A\) is contained in \(A\). We claim that \(t=l\). We argue by contradiction and assume that \(t<l\). Choose \(s>0\) sufficiently small such that \(t+s<l\) and
\[
\sup_{d(\gamma(t),y)<s} \frac{d(f(\gamma(t)),f(y))}{s} \leq C'.
\]
Since $d(\gamma(t),\gamma(t+s')) \leq \ell(\gamma|_{[t,t+s']}) = s'$ for all $0<s'<s$, we obtain from the above and the continuity of $f$ that $d\bigl(f(\gamma(t)), f(\gamma(t+s))\bigr)\leq C's$ and hence 
\[
d\bigl(f(\gamma(0)), f(\gamma(t+s))\bigr)
\leq C'(t+s).
\]
This shows that \(t+s\in A\), which is impossible.
We therefore have \(t=l\) and thereby \(d(f(\gamma(0)), f(\gamma(l)))\leq C' l = C'\ell(\gamma)\). Since $C'>C$ was arbitrary this shows \eqref{eq:ineq-dist-length-lemma-lip} and proves the second statement of the lemma. 
\end{proof}

\section{Currents in metric spaces}\label{sec:currents}
We recall relevant notions from Ambrosio-Kirchheim's theory of currents of finite mass in metric spaces \cite{MR1794185}. See \cite{MR2810849} for a variant of this theory.

\subsection{Currents of finite mass, push-forwards and restrictions}\label{sec:3.1-ref-nagata}

Let $X$ be a complete metric space, $k\geq 0$ an integer, and let \(\mathcal{D}^k(X)\) be the set of tuples \((\pi_0, \dots, \pi_k)\) of Lipschitz functions \(\pi_i\colon X\to\R\) with \(\pi_0\) bounded.

\begin{definition}\label{def:currents}
A multi-linear functional \(T\colon\mathcal{D}^k(X)\to X\) is called metric $k$-current (of finite mass) in $X$ if the following properties hold:
\begin{enumerate}
    \item[(i)]\label{it:currents-1} $T(\pi_0, \pi_1^{j}, \dots, \pi_k^{j})\to T(\pi_0, \pi_1, \dots, \pi_k)$
    whenever \(\sup_{i,j} \Lip\bigl(\pi_i^{j}\big)<\infty\) and \(\pi_i^{j}\) converges pointwise to \(\pi_i\).
    \item[(ii)]\label{it:currents-2} if \(\{ x\in X : \pi_0(x)\neq 0\}\) is contained in the union \(\bigcup_{i=1}^k B_i\) of Borel sets \(B_i\) and if \(\pi_i\) is constant on \(B_i\), then \(T(\pi_0, \dots, \pi_k)=0\).
    % should this also be \pi_0^j? ---R
    \item[(iii)] there exists a finite Borel measure \(\mu\) on \(X\) such that for all \((\pi_0, \dots, \pi_k)\in \mathcal{D}^{k}\bigl(X\bigr)\)
    \begin{equation}\label{eq:def-of-mass}
    \abs{T(\pi_0, \dots, \pi_k)}\leq \prod_{i=1}^k \Lip(\pi_i) \int_X \abs{\pi_0(x)} \, d\mu(x).
    \end{equation}
\end{enumerate}
\end{definition}

The minimal Borel measure \(\mu\) satisfying \eqref{eq:def-of-mass} is denoted \(\lVert T \rVert\). The closed set 
\[
\spt T\coloneqq \bigl\{x\in X : \lVert T \rVert\bigl( U(x,r) \bigr) >0 \text{ for all \(r>0\)}\bigr\}
\]
is called the support of $T$, and \(\mass(T)= \lVert T \rVert(X)\) is the mass of $T$. Let \(\bM_k(X)\) be the space of  metric \(k\)-currents in \(X\). When equipped with the mass norm $\mass(\cdot)$, this becomes a Banach space. If \(T\in \bM_k(X)\) and \(T\neq 0\), then \(\dim_N( \spt T)\geq k\); see \cite[Proposition 2.5]{zuest-currents-2011}. In particular, one has \(\bM_{k}(X)=\{0\}\) whenever \(k > \dim_N(X)\).

Let $Y$ be another complete metric space and let $f\colon X\to Y$ be a Lipschitz map. The push-forward  of \(T\in \bM_k(X)\) under $f$ is the element $f_\#T\in\bM_k(Y)$ defined by
\[
f_\# T(\pi_0,\dots, \pi_k)=T(\pi_0\circ f, \dots, \pi_k \circ f)
\]
for all \((\pi_0, \dots, \pi_k)\in \mathcal{D}^k(Y)\). We have \(\spt f_\# T\subset f(\spt T)\) and $\mass(f_\# T) \leq \Lip(f)^k \mass(T)$.
If \(T\in \bM_k(X)\) then the \textit{restriction} \(T\on B\) to a Borel set \(B\subset X\) is defined by
\[
T\on B\,(\pi_0, \dots, \pi_k)\coloneqq T(\mathbbm{1}_B\, \pi_0, \pi_1, \dots, \pi_k)
\]
for all \((\pi_0, \dots, \pi_k)\in \mathcal{D}^k(X)\), where \(\mathbbm{1}_B\) is the characteristic function of \(B\). This is well-defined since \(T\) admits a unique extension to a functional on \((k+1)\)-tuples for which the first argument is a bounded Borel function. One has \(T\on B \in \bM_{k}(X)\) and \(\lVert T\on B \rVert=\lVert T \rVert \on B\).
Moreover, we have \((f_\# T)\on B=f_\#\big( T\on f^{-1}(B)\big)\) whenever $f\colon X\to Y$ is Lipschitz and $B\subset Y$ is Borel. 

A sequence $(T_i)\subset\bM_k(X)$ is said to \textit{converge weakly} to $T\in\bM_k(X)$ if $(T_i)$ converges pointwise to $T$, thus for all \((\pi_0, \dots, \pi_k)\in \mathcal{D}^{k}\bigl(X\bigr)\) we have $T_i(\pi_0,\dots, \pi_k)\to T(\pi_0,\dots, \pi_k)$ as $i\to\infty$.

 \subsection{Boundaries, slicings and integral currents}
When \(k\geq 1\), the boundary of \(T\in \bM_k(X)\) is the function \(\partial T \colon \mathcal{D}^{k-1}(X)\to \R\) given by 
\[
\partial T(\pi_0, \dots, \pi_{k-1})\coloneqq T(1, \pi_0, \pi_1, \dots, \pi_{k-1}).
\]
Any \(T\in \bM_{k}(X)\), \(k\geq 1\), satisfying \(\partial T=0\) is called a cycle. If \(T\in \bM_0(X)\) then we say that \(T\) is a cycle provided that \(T(1)=0\).
Notice that \(\partial T\) always satisfies \((\text{i})\) and \((\text{ii})\) of Definition~\ref{def:currents}.
If \(\partial T\in \bM_{k-1}(X)\) then \(T\) is called \textit{normal} current. For such $T$ we have $\spt(\partial T) \subset \spt T$. We denote by \(\bN_k(X)\) the space of normal $k$-currents for $k\geq 1$, and we set $\bN_0(X)\coloneqq \bM_0(X)$. Notice that $\partial(\partial T) = 0$ for all $T\in\bN_k(X)$ with $k\geq 2$. 
If \(\theta\in L^1(\R^k)\) then the function \(\llbracket \theta \rrbracket \colon \mathcal{D}^{k}(\R^k)\to \R\) given by
\[
\llbracket \theta \rrbracket(\pi_0,\dots, \pi_k)\coloneqq \int_{\R^k} \theta\, \pi_0 \det\Bigl( \bigl[\partial_i \pi_j\bigr]_{i,j=1}^k\Bigr)\, d\mathscr{L}^k.
\]
defines an element of \(\bM_k(\R^k)\) and satisfies \(\lVert \, \llbracket \theta \rrbracket \, \rVert= \abs{\theta} \, \mathscr{L}^k\). If $\theta$ has bounded variation then $\llbracket \theta \rrbracket\in\bN_k(\R^k)$. In particular, if $B\subset\R^k$ is a bounded Borel set of finite perimeter then \(\bb{B}\coloneqq \bb{\mathbbm{1}_B}\) belongs to \(\bN_k(\R^k)\).

\begin{definition}
An element \(T\in \bN_k(X)\) with \(k\geq 1\) is called an \emph{integral current} if \(\lVert T \rVert\) is concentrated on a countably \(\Haus^k\)-rectifiable set and if for any Lipschitz map \(f \colon X\to \R^k\) and any open set \(U\subset X\) there exists \(\theta\in L^1(\R^k, \Z)\) such that \(f_\# (T\on U)= \llbracket \theta \rrbracket\).
\end{definition}

Furthermore, $T\in\bN_0(X)$ is called an integral current if there exist $x_1,\dots, x_m\in X$ and $\theta_1,\dots,\theta_m\in\Z$ such that $T = \sum_{i=1}^m \theta_i\bb{x_i}$. Here, for $x\in X$ the current $\bb{x}\in \bN_0(X)$ is defined by $\bb{x}(\pi) = \pi(x)$ for every bounded Lipschitz function \(\pi\colon X \to \R\). The family of integral $k$-currents in $X$ is an additive subgroup of $\bN_k(X)$ and is denoted by $\bI_k(X)$. If $f\colon X\to Y$ is Lipschitz and $T\in\bI_k(X)$ then $f_\# T\in\bI_k(Y)$. In particular, if $\Delta\subset\R^k$ is a $k$-simplex and $\varphi\colon \Delta\to X$ a Lipschitz map then $\varphi_\#\llbracket\Delta\rrbracket \in\bI_k(X)$; consequently, every Lipschitz chain in $X$ induces an integral current in $X$. The following lemma shows that the mass estimate $\mass(f_\# T) \leq \Lip(f)^k \mass(T)$ can be improved if \(T\) is an integral current.

\begin{lemma}\label{lem:rajala-wenger}
If \(f\colon X\to Y\) is a Lipschitz map then 
\begin{equation*}
\lVert f_\# T \rVert(B) \leq \int_{f^{-1}(B)} \bigl[\lip f(x)\bigr]^k \, d\Vert T \Vert(x)
\end{equation*}
for every integral current \(T\in \bI_k(X)\) and every Borel set \(B\subset Y\). 
\end{lemma}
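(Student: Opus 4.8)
The plan is to prove the local mass estimate $\lVert f_\# T\rVert(B)\leq \int_{f^{-1}(B)}[\lip f(x)]^k\,d\lVert T\rVert(x)$ for integral currents by reducing to the corresponding statement for currents of the form $\llbracket\theta\rrbracket$ with $\theta\in L^1(\R^k,\Z)$, using the structure theory of integral currents. First I would record the elementary (global) bound $\mass(f_\#T)\leq\Lip(f)^k\mass(T)$ that follows directly from the definition of push-forward, and note that the content of the lemma is to replace the global Lipschitz constant by the pointwise constant $\lip f$ localized via $\lVert T\rVert$. The natural route is to invoke the representation of an integral current $T\in\bI_k(X)$ as coming from a countably $\Haus^k$-rectifiable set with integer multiplicity and orientation; equivalently, one can cover $\spt T$ (up to an $\Haus^k$-null set) by countably many biLipschitz pieces $g_j\colon A_j\subset\R^k\to X$ so that $T=\sum_j (g_j)_\#\llbracket\theta_j\rrbracket$ with the pieces essentially disjoint in the sense that $\lVert T\rVert = \sum_j \lVert (g_j)_\#\llbracket\theta_j\rrbracket\rVert$. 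This reduces the claim, by countable additivity of both sides in $B$ and a decomposition argument, to the Euclidean case: it suffices to show, for a Lipschitz map $h=f\circ g\colon A\subset\R^k\to Y$ and $\theta\in L^1(\R^k,\Z)$ supported in $A$, that $\lVert h_\#\llbracket\theta\rrbracket\rVert(B)\leq\int_{h^{-1}(B)}[\lip h(x)]^k\,|\theta(x)|\,d\mathscr{L}^k(x)$ (using that $\lip(f\circ g)(x)\leq \lip f(g(x))\cdot \lip g(x)$ at points where $g$ is metrically differentiable, together with the area formula to pull the estimate back from $A$ to $\spt T$).

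The key steps, in order, would be: (1) reduce to the case $Y=\ell_\infty^N$ or, better, keep $Y$ general but factor through a Lipschitz map into $\R^N$ — actually the cleanest reduction is to note that the mass of a current in $Y$ is the supremum of $\lVert (\operatorname{proj})_\# T\rVert$ over finite tuples of $1$-Lipschitz functions, so it suffices to treat $Y=\R^N$; (2) for $T\in\bI_k(\R^N)$, use the rectifiability to write $T$ as a sum of pushforwards of $\llbracket\theta_j\rrbracket$ along biLipschitz charts, with masses adding up; (3) on each chart apply the classical area formula together with the metric/Euclidean differentiability of Lipschitz maps to identify $\lVert h_\#\llbracket\theta\rrbracket\rVert$ with the measure $x\mapsto |\theta(x)|\cdot J h(x)\,\mathscr{L}^k$ pushed forward, where $Jh(x)\leq [\lip h(x)]^k$ pointwise since $Jh(x) = |\Lambda_k\,md_x h|\leq \lVert md_x h\rVert^k$ and the operator norm of the metric differential is bounded by $\lip h(x)$; (4) reassemble via $\lip(f\circ g)(x)\leq\lip f(g(x))\,\lip g(x)$ and the change of variables $x=g(y)$ to land on the stated inequality with $\lVert T\rVert$ on the right.

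The main obstacle I expect is step (3) and the careful handling of the Jacobian: one must make sure the area-formula Jacobian of the composition $f\circ g$ is genuinely controlled by $[\lip f]^k$ composed with $g$ times the Jacobian of $g$, which requires the chain rule for metric differentials of Lipschitz maps (Kirchheim's theorem on metric differentiability) and the elementary linear-algebra fact that the $k$-dimensional Jacobian of a linear map $\R^k\to$ (seminormed space) is at most the $k$-th power of its operator norm. A secondary technical point is justifying that the decomposition $T=\sum_j(g_j)_\#\llbracket\theta_j\rrbracket$ can be chosen so that masses add without overlap, i.e.\ $\lVert T\rVert=\sum_j\lVert(g_j)_\#\llbracket\theta_j\rrbracket\rVert$ as measures; this is standard from Ambrosio--Kirchheim but should be cited precisely. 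Everything else — the reduction to $\R^N$, countable additivity in $B$, and the final change of variables — is routine. I would cite \cite{MR1794185} for the structure theory and metric differentiability and \cite{MR2041901} for the pointwise Lipschitz constant being Borel, and present the argument in three short paragraphs: reduction, the Euclidean estimate via the area formula, and reassembly.
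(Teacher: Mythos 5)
Your route is genuinely different from the paper's, but as written it has a real gap at its central step (3). Kirchheim's area formula computes the Hausdorff measure $\Haus^k$ of the image (with multiplicity), not the Ambrosio--Kirchheim mass of $h_\#\llbracket\theta\rrbracket$, and in the metric setting these differ by the area factor of the approximate tangent norm, which can be strictly larger than $1$. Concretely, let $h$ be the identity from Euclidean $\R^k$ to $\R^k$ equipped with the sup-norm and $\theta=\mathbbm{1}_{[0,1]^k}$: then $\lip h\equiv 1$, the metric differential is $md_xh=\lVert\cdot\rVert_\infty$, and the area-formula Jacobian is $\omega_k/\mathscr{L}^k(\{\lVert v\rVert_\infty\le 1\})=\omega_k 2^{-k}<1$ for $k\ge 2$, whereas $\mass\bigl(h_\#\llbracket\theta\rrbracket\bigr)\ge 1$ (test with the coordinate functions, which are $1$-Lipschitz for the sup-norm). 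So the measure $|\theta|\,Jh\,\mathscr{L}^k$ pushed forward is strictly smaller than $\lVert h_\#\llbracket\theta\rrbracket\rVert$, and bounding $Jh\le[\lip h]^k$ bounds the wrong quantity; the stated inequality (which in this example is an equality) does not follow from your steps. The identification "mass $=$ area-formula measure" is exactly the point where the AK area factor enters, and you never account for it.

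The strategy is repairable: either invoke the AK mass formula $\lVert T\rVert=\lambda\,\theta\,\Haus^k$ on the rectifiable set, with the area factor $\lambda$, and verify that $\lambda$ times the area-formula Jacobian of $md_xh$ is at most $\lVert md_xh\rVert^k$, or, more directly, estimate the mass at chart level by duality: a.e.\ one has $|\nabla(\pi_i\circ h)(x)\cdot v|\le\Lip(\pi_i)\,md_xh(v)\le\Lip(\pi_i)\,\lip h(x)\,|v|$, so Hadamard's inequality gives $|\det(\nabla(\pi_1\circ h),\dots,\nabla(\pi_k\circ h))|\le\prod_i\Lip(\pi_i)\,[\lip h(x)]^k$, and the minimality of the mass measure concludes. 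Note, however, that at that point you have essentially rederived, chart by chart, the estimate the paper simply quotes: its proof applies \cite[Lemma 3.10]{MR3125271} to get $|f_\#T(\pi_0,\dots,\pi_k)|\le\int|\pi_0\circ f|\prod_i\lip(\pi_i\circ f)\,d\lVert T\rVert$, uses $\lip(\pi_i\circ f)\le\Lip(\pi_i)\lip f$, and invokes the minimality of $\lVert f_\#T\rVert$ --- no rectifiable decomposition, area formula, or reduction to $\R^N$ is needed. Two smaller caveats: the reduction ``it suffices to treat $Y=\R^N$'' is shaky as stated, since the sup characterization of $\lVert f_\#T\rVert(B)$ is in terms of Borel partitions of $B$ inside $Y$ (restrictions of the current), not images under finite projections; and the chain rule for the liminf-type pointwise constant $\lip$ fails pointwise in general, so it must indeed be routed through a.e.\ metric differentiability, as you indicate.
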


\begin{proof}
The inequality is obviously true when $k=0$. If $k\geq 1$ then \cite[Lemma 3.10]{MR3125271} implies that
\[
\abs{f_\# T(\pi_0, \dots, \pi_k)} \leq \int_X \abs{(\pi_0\circ f)(x)} \,\prod_{i=1}^k \lip (\pi_i \circ f)(x) \, d\lVert T \rVert(x)
\]
for all \((\pi_0, \dots, \pi_k)\in \mathcal{D}^{k}(Y)\).
Since \(\lip (\pi_i\circ f)(x)\leq \Lip(\pi_i)\cdot\lip f(x)\) for all \(x\in X\), we have
\[
\abs{f_\# T(\pi_0, \dots, \pi_k)} \leq \prod_{i=1}^k \Lip(\pi_i) \int_X \abs{\pi_0\circ f(x)}\cdot \bigl[\lip f(x)\bigr]^k \, d\Vert T \Vert(x). 
\]
Hence, the claim follows from the definition of $\lVert f_\#T\rVert$. 
\end{proof}

If \(T\in \bN_k(X)\) with \(k\geq 1\) and \(u\colon X\to \R\) is Lipschitz then for almost every \(r\in \R\),
\[
\langle T, u, r \rangle\coloneqq \partial\bigl( T\on \{ u \leq r\} \bigr)-(\partial T)\on \{u \leq r\}.
\]
defines an element of $\bN_{k-1}(X)$, which is called a slice of $T$. By the slicing theorem, the measure \(\lVert \langle T, u, r\rangle \rVert\) is supported on \(\spt T \cap \{ u = r\}\), and 
\[
\int_{a}^b \mass( \langle T, u, r\rangle ) \, dr \leq \Lip(u) \, \lVert T \rVert ( \{ a < u <b \}) 
\]
whenever \(a <b\). Moreover, if \(T\in \bI_k(X)\) then \(\langle T, u, r\rangle \in \bI_{k-1}(X)\) for almost all \(r\in \R\).

\subsection{Homotopy formula and coning inequality}\label{subsec:homotopy-formula}

Let \(\varepsilon >0\) and let \([0,\varepsilon]\times X\) be equipped with the Euclidean product metric. For a function $f\colon[0,\varepsilon]\times X\to\R$ and $t\in[0,\varepsilon]$ we let $f_t\colon X\to \R$ be defined by $f_t(x)\coloneqq f(t,x)$. If \(T\in \bI_k(X)\) for some $k\geq 0$ then the function on \(\mathcal{D}^{k}\bigl([0,\varepsilon] \times X\bigr)\) given by
\[
\llbracket t \rrbracket \times T (\pi_0, \dots, \pi_k)\coloneqq T(\pi_{0t}, \dots, \pi_{kt})
\]
defines an element of  \(\bI_k([0,\varepsilon]\times X)\). 
Moreover, the functional \(\bb{0,\varepsilon}\times T\) assigning
\[
(\pi_0, \dots, \pi_{k+1})\mapsto \sum_{i=1}^{k+1} (-1)^{i+1} \int_{0}^\varepsilon T\Bigl( \pi_{0t} \frac{\partial \pi_{it}}{\partial t}, \pi_{1t}, \dots, \pi_{(i-1)t}, \pi_{(i+1)t}, \dots, \pi_{(k+1)t} \Bigr)\, dt
\]
is a multi-linear functional on \(\mathcal{D}^{k+1}\bigl([0,\varepsilon] \times X\bigr)\) and has the following property.

\begin{proposition}\label{prop:homotopy}
If \(T\in \bI_k(X)\) then \(\bb{0,\varepsilon}\times T \in \bI_{k+1}\bigl([0, \varepsilon] \times X\bigr)\) and
\begin{equation}\label{eq:ProductFormula}
\partial \bigl(\bb{0,\varepsilon}\times T \bigr)+\bb{0,\varepsilon}\times \partial T= \llbracket \varepsilon \rrbracket \times T-\llbracket 0 \rrbracket \times T.
\end{equation}
For $k=0$ the second term on the left-hand side is zero.
\end{proposition}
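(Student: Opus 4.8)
The plan is to verify the homotopy formula \eqref{eq:ProductFormula} by checking it on a generating class of currents and then extending by linearity and continuity, mirroring the classical argument in $\R^n$. First I would reduce to the case of a current of the form $T = g_\#\bb{\theta}$ with $\theta\in L^1(\R^k)$ of bounded variation and $g\colon \R^k\to X$ Lipschitz: since $T\in\bI_k(X)$, it can be approximated in mass (together with its boundary) by finite sums of such pushforwards, and both sides of \eqref{eq:ProductFormula} are continuous with respect to this convergence because the functionals $\bb{t}\times(\cdot)$ and $\bb{0,\varepsilon}\times(\cdot)$ are built by integrating multilinear expressions against $\|T\|$ and $\|\partial T\|$. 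Compatibility of all the constructions with pushforward is the key bookkeeping point: if $\tilde g = \id_{[0,\varepsilon]}\times g\colon [0,\varepsilon]\times\R^k\to[0,\varepsilon]\times X$, then one checks directly from the defining formulas that $\bb{t}\times g_\#S = \tilde g_\#(\bb{t}\times S)$ and $\bb{0,\varepsilon}\times g_\#S = \tilde g_\#(\bb{0,\varepsilon}\times S)$ for $S\in\bI_k(\R^k)$, and pushforward commutes with $\partial$. Hence it suffices to prove the identity in the model space $[0,\varepsilon]\times\R^k$ for $S = \bb{\theta}$.

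In the model case the statement is essentially the classical product/homotopy formula for normal currents in Euclidean space, and I would prove it by an explicit computation. For $S=\bb{\theta}$ with $\theta$ of bounded variation, $\bb{0,\varepsilon}\times S$ is (up to the sign conventions in the statement) the current $\bb{[0,\varepsilon]\times(\cdot)}$ applied to $\theta$, i.e. integration of the top form against $\theta$ extended constantly in the $t$-direction over the cylinder; one then differentiates under the integral sign and applies the Euclidean Stokes/divergence theorem on $[0,\varepsilon]\times\R^k$ to the vector field obtained from the test tuple $(\pi_0,\dots,\pi_{k+1})$. The boundary of the cylinder contributes exactly the two end slices $\bb{\varepsilon}\times S - \bb{0}\times S$ (the lateral boundary is handled by the bounded-variation/finite-perimeter hypothesis on $\theta$, which is what guarantees $\llbracket\theta\rrbracket\in\bN_k(\R^k)$ and makes $\bb{0,\varepsilon}\times S$ normal), while the term $\bb{0,\varepsilon}\times\partial S$ accounts for the contribution of $\partial\bb{\theta}$ coming from the variation of $\theta$. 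This is a finite-dimensional computation with the Leibniz rule and integration by parts; for $k=0$ the current $\bb{0,\varepsilon}\times S$ is just the segment $\bb{[0,\varepsilon]}$ weighted by the finite sum defining $S$, $\partial S=0$, and the formula is immediate.

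Separately I must confirm the integrality and normality claim $\bb{0,\varepsilon}\times T\in\bI_{k+1}([0,\varepsilon]\times X)$. Finiteness of mass and the normality bound follow from the explicit defining formula together with the slicing inequality $\int_0^\varepsilon\mass(\cdot)\,dr$-type estimates already recorded in the preliminaries, giving $\mass(\bb{0,\varepsilon}\times T)\le \varepsilon\,\mass(T)$ and, via \eqref{eq:ProductFormula} itself, a mass bound on the boundary in terms of $\mass(T)$ and $\mass(\partial T)$. That $\|\bb{0,\varepsilon}\times T\|$ is concentrated on a countably $\Haus^{k+1}$-rectifiable set and has integer multiplicities follows from the rectifiability of $\|T\|$: writing $T$ again as a mass-limit of pushforwards $g_\#\bb{B}$ of finite-perimeter sets, the cylinder $[0,\varepsilon]\times g(\cdot)$ is countably $\Haus^{k+1}$-rectifiable, and the integer-multiplicity slicing criterion for integral currents passes to the limit since $\bI_{k+1}$ is closed under mass convergence of normal currents with uniformly bounded boundary mass.

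The main obstacle I expect is the careful handling of the lateral (in-$t$) boundary of the cylinder in the model computation and verifying that the bounded-variation hypothesis on $\theta$ is exactly what is needed to kill it and to justify differentiating under the integral sign; once the Euclidean case is pinned down cleanly, the passage to $[0,\varepsilon]\times X$ via pushforward and mass-approximation is routine.
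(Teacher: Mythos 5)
The reduction in your first paragraph is where the argument breaks down. You need to approximate an arbitrary $T\in\bI_k(X)$ in mass \emph{and} boundary mass by finite sums of pushforwards $g_\#\bb{\theta}$ with $g\colon\R^k\to X$ Lipschitz and $\theta$ of bounded variation, and in a general complete metric space no such approximation is available. The Ambrosio--Kirchheim decomposition $T=\sum_i f_{i\#}\bb{\theta_i}$ does converge in mass, but the $f_i$ are bi-Lipschitz maps defined only on compact subsets of $\R^k$ (they need not admit Lipschitz extensions to all of $\R^k$ with values in an arbitrary $X$), the $\theta_i$ are merely $L^1$, and the boundaries of the partial sums are uncontrolled: they need not even have finite mass, let alone converge to $\partial T$. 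What you are invoking is in effect the strong (mass plus boundary mass) approximation by Lipschitz-chain--type currents; in this paper that is Corollary~\ref{cor:strong-approximation}, which is proved only for spaces of finite Nagata dimension with $(\LC_{k-1})$, is stated by the authors as unknown in general, and whose proof relies on the deformation machinery that itself uses the homotopy formula you are proving, so it cannot be used here without circularity. Without that approximation your scheme fails at two concrete points: the term $\bb{0,\varepsilon}\times\partial T_i$ does not pass to the limit, because its defining formula feeds bounded Borel (not Lipschitz) first arguments of the form $\pi_{0t}\,\partial\pi_{it}/\partial t$ into $\partial T_i$, so weak convergence of the boundaries is insufficient and mass convergence of $\partial T_i$ is genuinely needed; and the closure-theorem argument for integrality of $\bb{0,\varepsilon}\times T$ needs uniform bounds on $\mass\bigl(\partial(\bb{0,\varepsilon}\times T_i)\bigr)$, which again requires control of $\mass(\partial T_i)$.

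The Euclidean model computation and the compatibility of the product with pushforwards are fine, but they are not the hard part. The proof the paper points to (Wenger's Theorem~2.9, following Ambrosio--Kirchheim) works directly with the defining multilinear functional and needs no global approximation of $T$: one verifies the axioms of Definition~\ref{def:currents} and the mass bound $\mass(\bb{0,\varepsilon}\times T)\leq (k+1)\,\varepsilon\,\mass(T)$ straight from the formula, obtains \eqref{eq:ProductFormula} by smoothing the test functions in the $t$-variable and integrating by parts in $t$, and deduces rectifiability and integer multiplicity of the product from those of $T$, since its mass measure is dominated by $\mathscr{L}^1\times\lVert T\rVert$ and concentrated on $[0,\varepsilon]\times S$ with $S$ countably $\Haus^k$-rectifiable, integrality being checked through slicing and projections. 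If you rework your argument along these direct lines, the approximation step you cannot justify disappears.
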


This is analogous to \cite[Theorem 2.9]{MR2153909} and \cite{MR1794185}. Notice that the assumption made in \cite[Theorem 2.9]{MR2153909} that \(T\) has bounded support is not needed.

Given a map $h\colon[0,\varepsilon]\times X\to Y$ and $t\in[0,\varepsilon]$, $x\in X$, we let $h_t\colon X\to Y$ and $h_x\colon [0,\varepsilon]\to Y$ be the maps given by $h_t(z)\coloneqq h(t,z)$ and $h_x(s)\coloneqq h(s,x)$.

\begin{lemma}\label{lem:mass-of-pushforward-of-product}
If \(h\colon [0,\varepsilon]\times X\to Y\) is Lipschitz and \(T\in\bI_k(X)\), then
\[
\lVert\, h_\#\bigl(\bb{0,\varepsilon}\times T\bigr)\,\rVert(B) \leq (k+1)\int_{h^{-1}(B)} \lip h_x(t)\cdot\bigl[\lip h_t(x)\bigr]^{k} \, d(\mathscr{L}^1 \times \lVert T \rVert)(t, x) 
\]
for every Borel set \(B\subset Y\).
\end{lemma}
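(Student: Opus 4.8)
The plan is to combine the mass estimate for push-forwards of integral currents under Lipschitz maps (Lemma \ref{lem:rajala-wenger}) with an estimate for the pointwise Lipschitz constant of the map $h$ restricted to the product current $\bb{0,\varepsilon}\times T$. First I would observe that $\bb{0,\varepsilon}\times T\in\bI_{k+1}([0,\varepsilon]\times X)$ by Proposition~\ref{prop:homotopy}, and that its mass measure is $\mathscr{L}^1\times\lVert T\rVert$; more precisely one has $\lVert\bb{0,\varepsilon}\times T\rVert=\mathscr{L}^1\times\lVert T\rVert$, which follows from the product structure (this is the metric analogue of the classical product formula for currents). Applying Lemma~\ref{lem:rajala-wenger} to the Lipschitz map $h$ and the integral current $\bb{0,\varepsilon}\times T$ gives
\[
\lVert h_\#(\bb{0,\varepsilon}\times T)\rVert(B)\leq \int_{h^{-1}(B)}\bigl[\lip h(t,x)\bigr]^{k+1}\,d(\mathscr{L}^1\times\lVert T\rVert)(t,x)
\]
for every Borel set $B\subset Y$.

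The key step is therefore to show the pointwise bound
\[
\bigl[\lip h(t,x)\bigr]^{k+1}\leq (k+1)\,\lip h_x(t)\cdot\bigl[\lip h_t(x)\bigr]^{k}
\]
for $(\mathscr{L}^1\times\lVert T\rVert)$-almost every $(t,x)$. Here the point is that the pointwise Lipschitz constant of $h$ on the product space $[0,\varepsilon]\times X$, equipped with the Euclidean product metric, decomposes into a ``time'' direction controlled by $\lip h_x$ and a ``space'' direction controlled by $\lip h_t$. Indeed, for $(s,y)$ near $(t,x)$ one estimates $d(h(t,x),h(s,y))\leq d(h(t,x),h(s,x))+d(h(s,x),h(s,y))$; the first term is comparable to $\lip h_x(t)\cdot|t-s|$ and the second to $\lip h_t(x)\cdot d(x,y)$ as $(s,y)\to(t,x)$, so that $\lip h(t,x)\leq \max\{\lip h_x(t),\,\lip h_t(x)\}$ — actually one gets a bound by the sum, but since we raise to the power $k+1$ and want the mixed form, the cleanest route is to bound $\lip h(t,x)\leq C_1+C_2$ with $C_1=\lip h_x(t)$, $C_2=\lip h_t(x)$ and then use the elementary inequality $(C_1+C_2)^{k+1}\leq (k+1)\sum_{j=0}^{k}C_1^{\,?}C_2^{\,?}$ — more carefully, one uses that on a product of two intervals the directional nature of the increments lets one replace $(C_1+C_2)^{k+1}$ by $(k+1)C_1 C_2^{k}$ after absorbing the purely spatial part; I would instead argue directly at the level of the defining functional, writing $h_\#(\bb{0,\varepsilon}\times T)$ using the explicit formula for $\bb{0,\varepsilon}\times T$ acting on $(\pi_0\circ h,\dots,\pi_{k+1}\circ h)$ and applying the chain-rule estimate $\lip((\pi_i\circ h)_t)(x)\leq\Lip(\pi_i)\lip h_t(x)$ together with $|\partial_t(\pi_i\circ h)(t,x)|\leq\Lip(\pi_i)\lip h_x(t)$, exactly as in the proof of Lemma~\ref{lem:rajala-wenger}; summing the $k+1$ terms in the definition of $\bb{0,\varepsilon}\times T$ then produces the factor $k+1$ and the asymmetric product $\lip h_x(t)\cdot[\lip h_t(x)]^{k}$ directly.

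I expect the main obstacle to be the measurability and the ``almost everywhere'' bookkeeping: one needs $t\mapsto\lip h_x(t)$ and $x\mapsto\lip h_t(x)$ to be Borel (which follows from \cite[Lemma 4.1.2]{MR2041901} applied to the Lipschitz maps $h_x$ and $h_t$) and one must justify the pointwise differentiation $\partial_t(\pi_i\circ h)(t,x)$ for $\mathscr{L}^1$-a.e.\ $t$ and the interchange of integration in $t$ with the current evaluation; this is where one invokes that $T$ is an integral current and uses the same Fubini-type argument as in the metric-currents literature (analogous to \cite[Theorem 2.9]{MR2153909} and the cited Lemma~3.10 of \cite{MR3125271}). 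Once the pointwise estimate is in place the conclusion is immediate from the definition of $\lVert h_\#(\bb{0,\varepsilon}\times T)\rVert$ as the minimal Borel measure dominating the functional.
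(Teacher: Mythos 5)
Your final, ``instead'' argument is exactly the paper's proof: expand $h_\#(\bb{0,\varepsilon}\times T)$ using the explicit formula for $\bb{0,\varepsilon}\times T$ applied to $(\pi_0\circ h,\dots,\pi_{k+1}\circ h)$, bound each of the $k+1$ terms via \cite[Lemma 3.10]{MR3125271} (respectively the defining mass inequality when $k=0$), use $\lip(\pi_j\circ h_t)(x)\leq\Lip(\pi_j)\lip h_t(x)$ and $\bigl|\partial_t(\pi_i\circ h)(t,x)\bigr|=\lip(\pi_i\circ h_x)(t)\leq\Lip(\pi_i)\lip h_x(t)$ for a.e.\ $t$, and conclude by minimality of the mass measure; the factor $k+1$ comes from the sum, as you say. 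So the proposal, as finally stated, is correct and coincides with the paper's route.

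It is worth being explicit, though, about why your first route had to be abandoned rather than merely reorganized: the pointwise inequality $[\lip h(t,x)]^{k+1}\leq(k+1)\,\lip h_x(t)\,[\lip h_t(x)]^{k}$ is false in general. If $h$ is nonconstant but depends only on $t$, then $\lip h_t(x)=0$ while $\lip h(t,x)=\lip h_x(t)>0$, so the right-hand side vanishes and the left-hand side does not; the lemma itself survives in that case only because the pushforward has low-dimensional image and hence zero mass for $k\geq 1$, a cancellation that the crude bound obtained by applying Lemma~\ref{lem:rajala-wenger} to the $(k+1)$-current $\bb{0,\varepsilon}\times T$ cannot detect. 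In general one only has $\lip h(t,x)\leq\lip$-type control by a sum of the two directional constants, and $(C_1+C_2)^{k+1}$ is not dominated by $(k+1)C_1C_2^{k}$ --- the question marks in your own sketch mark precisely this gap. Likewise, the identity $\lVert\bb{0,\varepsilon}\times T\rVert=\mathscr{L}^1\times\lVert T\rVert$ is not available to be quoted without proof in this setting. Fortunately the direct functional argument you settle on needs neither claim, so the gap is confined to the discarded first half.
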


\begin{proof}
Let \((\pi_0, \dots, \pi_{k+1})\in \mathcal{D}^{k+1}(Y)\). We have
\begin{align*}
\bigl\lvert &h_\#\bigl(\bb{0,\varepsilon}\times T\bigr)(\pi_0, \dots, \pi_{k+1})\bigr\rvert \\
&\leq \sum_{i=1}^{k+1} \Bigl\lvert\int_{0}^\varepsilon T\Bigl( \pi_0 \circ h_t \, \frac{\partial (\pi_i\circ h_t)}{\partial t}, \pi_{1}\circ h_t, \dots, \pi_{(i-1)}\circ h_t,\, \pi_{(i+1)}\circ h_t, \dots, \pi_{(k+1)}\circ h_t \Bigr)\, dt\Bigr\rvert \\
&\leq \sum_{i=1}^{k+1} \int_{0}^\varepsilon \int_X \,\Bigl\lvert \pi_0 \circ h_t \, \frac{\partial (\pi_i\circ h_t)}{\partial t} \Big\rvert\, \prod_{j=1, j\neq i}^{k+1} \lip (\pi_j \circ h_t)(x) \, d\lVert T \rVert(x) \,dt,
\end{align*}
where the second inequality follows from the definition of $\lVert T\rVert$ when $k=0$ and from \cite[Lemma 3.10]{MR3125271} when $k\geq 1$.
Since
\[
\bigl\lvert \frac{\partial (\pi_i\circ h_t(x))}{\partial t} \bigr\rvert= \lip(\pi_i\circ h_x)(t)
\]
for almost all \(t\in [0,\varepsilon]\) and since \(\lip (\pi_j \circ h_t)(x)\leq \Lip(\pi_j)\lip h_t(x)\) we obtain
\begin{align*}
\bigl\lvert& h_\#\bigl(\bb{0,\varepsilon}\times T\bigr)(\pi_0, \dots, \pi_{k+1})\bigr\rvert\\
&\leq (k+1) \prod_{i=1}^{k+1} \Lip(\pi_i) \int_0^1 \int_X \abs{ (\pi_0\circ h)(t,x)}\, \lip h_x(t)\cdot\bigl[\lip h_t(x)\bigr]^{k} \, d\lVert T \rVert \,dt.
\end{align*}
Hence, the claim follows from the definition of $\lVert h_\#\bigl(\bb{0,\varepsilon}\times T\bigr)\rVert$.
\end{proof}

With the homotopy formula from Proposition~\ref{prop:homotopy} and the mass estimate from Lemma~\ref{lem:mass-of-pushforward-of-product} at hand it is not hard to see that Banach spaces admit coning inequalities for any \(k\geq 1\).

\begin{corollary}
 Banach spaces have $(\CI_k)$ with constant \(C=1\) for every $k\geq 0$.
\end{corollary}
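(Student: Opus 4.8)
The plan is to prove this by an explicit coning construction. Fix a Banach space $V$, an integer $k\geq 1$, and a cycle $T\in\bI_k(V)$ with bounded support; set $d\coloneqq\diam(\spt T)$. We may assume $d<\infty$ and $\mass(T)<\infty$, otherwise there is nothing to prove. Pick a basepoint $p\in\spt T$ and define the cone map $h\colon[0,1]\times V\to V$ by $h(t,x)\coloneqq (1-t)p + tx$, which is Lipschitz (indeed affine in each variable). The candidate filling is $S\coloneqq h_\#\bigl(\bb{0,1}\times T\bigr)\in\bI_{k+1}(V)$, which is a well-defined integral current by Proposition~\ref{prop:homotopy}.

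The first step is to verify $\partial S = T$. Applying $h_\#$ to the homotopy formula \eqref{eq:ProductFormula} with $\varepsilon=1$ and using that $\partial T=0$, we get
\[
\partial S = \partial\bigl(h_\#(\bb{0,1}\times T)\bigr) = h_\#\partial\bigl(\bb{0,1}\times T\bigr) = h_\#\bigl(\llbracket 1\rrbracket\times T - \llbracket 0\rrbracket\times T\bigr).
\]
Now $h_1 = \id$ on $V$, so $h_\#(\llbracket 1\rrbracket\times T) = (h_1)_\# T = T$; and $h_0$ is the constant map to $p$, so $(h_0)_\# T$ is a $0$-dimensional current (in fact $\bb{p}$ times the "total mass with sign", but more to the point it lives in dimension $\leq 0$ and since $k\geq 1$ one checks $(h_0)_\#(\llbracket 0\rrbracket\times T)=0$ — this follows because $h_0$ factors through a point, so the pushforward of any $k$-current with $k\geq 1$ vanishes by property (ii) of Definition~\ref{def:currents}). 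Hence $\partial S = T$.

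The second step is the mass estimate. Here I invoke Lemma~\ref{lem:mass-of-pushforward-of-product} with $B=V$:
\[
\mass(S) \leq (k+1)\int_{[0,1]\times V} \lip h_x(t)\cdot[\lip h_t(x)]^k \, d(\mathscr{L}^1\times\lVert T\rVert)(t,x).
\]
For fixed $x$, the curve $t\mapsto h_x(t) = (1-t)p+tx$ has $\lip h_x(t) = \lVert x-p\rVert \leq d$ for all $x\in\spt T$. For fixed $t$, the map $h_t(y) = (1-t)p + ty$ is a homothety with ratio $t$, so $\lip h_t(x) = t$. Therefore the integrand is at most $d\cdot t^k$, and integrating,
\[
\mass(S) \leq (k+1)\,\mass(T)\int_0^1 d\cdot t^k\,dt = (k+1)\cdot\frac{1}{k+1}\cdot d\cdot\mass(T) = d\,\mass(T) = \diam(\spt T)\,\mass(T).
\]
Thus $\Fillvol_V(T)\leq\mass(S)\leq\diam(\spt T)\mass(T)$, which is exactly $(\CI_k)$ with constant $C=1$. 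Since $(\CI_0)$ is vacuous (or trivial), this covers all $k\geq 0$.

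I do not expect a serious obstacle here; the only point requiring a little care is the vanishing of $(h_0)_\#(\llbracket 0\rrbracket\times T)$ and, relatedly, confirming that the homotopy formula pushes forward correctly — but pushforward commutes with $\partial$ for Lipschitz maps, and $h$ is globally Lipschitz on $[0,1]\times V$ since $\lVert h(t,x)-h(s,y)\rVert \leq \lVert x-y\rVert + (|t-s|)(\lVert y-p\rVert)$, which is Lipschitz on any region where $\spt T$ is bounded after noting that $\bb{0,1}\times T$ is supported in $[0,1]\times\spt T$. (One can either restrict $h$ to $[0,1]\times B(p,d)$ or observe directly that only the values of $h$ on the support of $\bb{0,1}\times T$ matter.)
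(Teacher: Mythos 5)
Your proof is correct and is essentially the paper's own argument: the same cone map $h(t,x)=(1-t)p+tx$, the homotopy formula of Proposition~\ref{prop:homotopy}, and the mass bound of Lemma~\ref{lem:mass-of-pushforward-of-product} with $\lip h_x(t)\leq\diam(\spt T)$ and $\lip h_t(x)=t$, yielding $\mass(S)\leq\diam(\spt T)\mass(T)$. The only slight imprecision is calling $(\CI_0)$ vacuous: a $0$-cycle of bounded support still needs a filling, but the identical cone construction (where the bottom $(h_0)_\#T=T(1)\bb{p}=0$ since $T(1)=0$) handles it, exactly as in the paper.
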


\begin{proof}
 Let $X$ be a Banach space and $T\in\bI_k(X)$ a cycle of bounded support for some $k\geq 0$. Fix $x_0\in \spt T$. The map $h\colon[0,1]\times X\to X$ given by $h(t,x)= (1-t)x_0 + tx$ is Lipschitz and satisfies $\lip h_x(t) \leq \diam(\spt T)$ and $\lip h_t(x) = t$ for all $t\in [0,1]$ and all $x\in \spt T$. Thus, by Proposition~\ref{prop:homotopy} and Lemma~\ref{lem:mass-of-pushforward-of-product} the current defined by $S\coloneqq h_\#\bigl(\bb{0,1}\times T\bigr)$ belongs to $\bI_{k+1}(X)$ and satisfies $\partial S=T$ as well as $$\mass(S)\leq (k+1)\diam(\spt T)\int_{[0,1]\times X}t^k\,d(\mathscr{L}^1 \times \lVert T \rVert) = \diam(\spt T)\mass(T),$$
 as desired. Since this holds for every $k\geq 0$ the proof is complete.
\end{proof}

We conclude this subsection with the following lemma which will be used in the proof of Proposition~\ref{prop:homomorphisms-transporting} and also later in the article.

\begin{lemma}\label{lem:wenger-bounded-distance-to-support}
Let \(X\) be a complete metric space and \(T\in \bI_k(X)\) a cycle, where $k\geq 1$. If $X$ has \((\EI_k)\), then for every $\varepsilon>0$ there exists \(S\in \bI_{k+1}(X)\) with $\partial S=T$ and such that 
\begin{equation}\label{eq:good-filling-bound-on-spt-EI}
\mass(S)\leq \Fillvol_X(T) +\varepsilon\quad\text{and}\quad \spt S \subset B\bigl( \spt T, D \mass(T)^{\frac{1}{k}}\bigr)
\end{equation}
for some $D$ only depending on $k$ and on the constant in $(\EI_k)$. All minimal fillings of $T$ satisfy \eqref{eq:good-filling-bound-on-spt-EI}. 
\end{lemma}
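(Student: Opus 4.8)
The plan is to use a stopping-time/slicing argument at dyadic scales to build the filling $S$ with good support control, starting from a near-minimal filling and cutting off the part that strays too far from $\spt T$. First I would fix $\varepsilon>0$ and choose $S_0\in\bI_{k+1}(X)$ with $\partial S_0=T$ and $\mass(S_0)\leq\Fillvol_X(T)+\varepsilon$. Let $M\coloneqq\mass(T)$ and set $u\colon X\to\R$, $u(x)\coloneqq d(x,\spt T)$, which is $1$-Lipschitz. For a radius $\rho>0$ to be chosen, I want to replace $S_0\on\{u\leq\rho\}$ by a filling of the slice $\langle S_0,u,\rho\rangle$ of controlled mass, so that the resulting $S$ has support in $B(\spt T,\rho+\text{(error)})$. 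The key quantitative input is the coarea/slicing inequality $\int_0^\infty\mass(\langle S_0,u,r\rangle)\,dr\leq\lVert S_0\rVert(\{u>0\})\leq\mass(S_0)$, which is finite; combined with an inductive decay estimate it will force slices to be small at some scale comparable to $M^{1/k}$.

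The main step is the iteration. Along a geometric sequence of radii $\rho_j=2^{-j}\rho_0$ (or rather, increasing radii $\rho_j$), I would use the isoperimetric inequality $(\EI_k)$ to fill a well-chosen slice $\langle S_0,u,r_j\rangle\in\bI_k(X)$, which is a cycle supported in $\{u=r_j\}\subset B(\spt T,r_j)$, by a current $V_j\in\bI_{k+1}(X)$ with $\mass(V_j)\leq D_0\mass(\langle S_0,u,r_j\rangle)^{(k+1)/k}$. Setting $S\coloneqq S_0\on\{u\leq r_j\} - V_j$ gives $\partial S=T$ (since $\partial(S_0\on\{u\leq r_j\})=T+\langle S_0,u,r_j\rangle$ using that $T=\partial S_0$ is supported in $\{u=0\}$ and $\partial T = 0$). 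The support of $S$ lies in $B(\spt T,r_j)\cup\spt V_j$; one then needs to bound $\diam(\spt V_j)$ or rather $d(\spt V_j,\spt T)$ using a second application of Lemma~\ref{lem:wenger-bounded-distance-to-support} in its weaker already-proved form — but to avoid circularity I would instead invoke the version of $(\EI_k)$ with support control that follows from an isoperimetric filling construction in a Banach space: embed $X$ isometrically in $\ell_\infty(X)$, cone off in the Banach space (Corollary following Lemma~\ref{lem:mass-of-pushforward-of-product}), getting support in $B(\spt\langle S_0,u,r_j\rangle,\diam)\subset B(\spt T, r_j + C\mass(\langle\cdot\rangle)^{1/k})$, then push back into $X$ using that $X$ itself has $(\EI_k)$ only for the mass bound. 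Hmm — more cleanly: I would prove the support bound directly by the standard argument that a \emph{minimal} filling $S$ of a cycle $T$ in a space with $(\EI_k)$ satisfies $\lVert S\rVert(B(x,r))\geq c\,r^{k+1}$ for $x\in\spt S$ and $r\leq d(x,\spt T)$ (a density lower bound from the isoperimetric inequality applied to slices $\langle S,d(x,\cdot),r\rangle$), whence if $x\in\spt S$ with $d(x,\spt T)=:t$ then $c\,t^{k+1}\leq\lVert S\rVert(X)=\mass(S)\leq\Fillvol_X(T)+\varepsilon\leq 2\mass(T)^{(k+1)/k}\cdot(\text{const})$ for small $\varepsilon$, giving $t\leq D\mass(T)^{1/k}$.

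Concretely, here is the structure I would write: (1) reduce to minimal (or near-minimal) fillings — note $\Fillvol_X(T)\leq D'\mass(T)^{(k+1)/k}$ by $(\EI_k)$, so any near-minimal $S$ has $\mass(S)\leq 2D'\mass(T)^{(k+1)/k}$; (2) prove the density estimate: for $x\in\spt S$ and a.e.\ $r\in(0,d(x,\spt T))$, the slice $\langle S,v,r\rangle$ with $v(y)=d(x,y)$ is a cycle (since $\partial S=T$ is supported away from $B(x,r)$) lying in $B(x,r)$, so by $(\EI_k)$ it has a filling of mass $\leq D_0\mass(\langle S,v,r\rangle)^{(k+1)/k}$, and minimality of $S$ forces $\lVert S\rVert(B(x,r))\leq D_0\mass(\langle S,v,r\rangle)^{(k+1)/k}$; (3) set $g(r)=\lVert S\rVert(B(x,r))$, so $g(r)\leq D_0(g'(r))^{(k+1)/k}$ for a.e.\ $r$ by the coarea bound $\mass(\langle S,v,r\rangle)\leq g'(r)$ a.e., and integrate this ODE inequality from $0$ to get $g(r)\geq c\,r^{k+1}$; (4) conclude $c\,d(x,\spt T)^{k+1}\leq g(d(x,\spt T))\leq\mass(S)\leq 2D'\mass(T)^{(k+1)/k}$, hence $d(x,\spt T)\leq D\mass(T)^{1/k}$ with $D=(2D'/c)^{1/(k+1)}$ depending only on $k$ and the $(\EI_k)$ constant. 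The main obstacle is step (3)–(4): making the differential inequality argument rigorous requires knowing $g$ is absolutely continuous with $g'(r)\geq\mass(\langle S,v,r\rangle)$ for a.e.\ $r$ (which follows from the slicing inequality $\int_a^b\mass(\langle S,v,r\rangle)\,dr\leq\Lip(v)\lVert S\rVert(\{a<v<b\})=g(b)-g(a)$ stated in the slicing theorem above), and handling the possibility $g(r)=0$ on an initial interval — but there $x\notin\spt S$, contradiction, so $g(r)>0$ for all $r>0$, and a standard Gronwall-type argument on $g^{-1/(k+1)}$ closes it.
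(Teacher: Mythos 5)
Your argument for the final sentence of the lemma (the statement about exactly minimal fillings) is correct and is the standard one: cut-and-paste minimality gives $\lVert S\rVert(B(x,r))\leq D_0\mass(\langle S,v,r\rangle)^{\frac{k+1}{k}}$ for a.e.\ $r<d(x,\spt T)$, and the slicing inequality plus the monotone-function version of the Gronwall argument yields the density bound $\lVert S\rVert(B(x,r))\geq c\,r^{k+1}$ and hence the support estimate. The gap is in the main, existential part of the lemma. There you ``reduce to minimal (or near-minimal) fillings,'' but exact minimizers need not exist in a complete metric space, and for a filling that is only $\varepsilon$-minimal the comparison gives $\lVert S\rVert(B(x,r))\leq D_0\mass(\langle S,v,r\rangle)^{\frac{k+1}{k}}+\varepsilon$, which is not enough: a genuinely $\varepsilon$-minimal filling can fail the support bound for every fixed $D$. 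Concretely, add to a near-minimal filling the boundary of a tiny $(k+2)$-dimensional ball placed arbitrarily far from $\spt T$; this changes neither $\partial S$ nor near-minimality (the added mass is less than $\varepsilon$), but the support escapes any neighborhood $B(\spt T, D\mass(T)^{1/k})$. In your differential inequality this shows up as the uncontrolled initial interval on which $g\leq 2\varepsilon$, which your write-up silently drops when it says ``minimality of $S$ forces\dots''. So the statement ``there exists $S$ with $\mass(S)\leq\Fillvol_X(T)+\varepsilon$ and the support bound'' cannot be obtained by bounding an arbitrary near-minimal filling; one must construct a good filling.

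Your first thread did head toward such a construction (cut $S_0$ along level sets of $d(\cdot,\spt T)$ and refill the slice), and you correctly identified the obstruction: $(\EI_k)$ alone gives no control on where the filling of the slice sits, and that is essentially the lemma itself. The proposed fix --- cone the slice off in $\ell_\infty(X)$ and then ``push back into $X$'' --- is not a valid operation, since there is no Lipschitz retraction of $\ell_\infty(X)$ onto $X$ in general. The paper does not reprove the lemma; it cites Lemma 3.4 of \cite{MR2153909}, whose proof resolves exactly this point by an iterated trimming: one shows via the cut-and-paste comparison that the ``far mass'' $h(r)=\lVert S_0\rVert(\{d(\cdot,\spt T)>r\})$ decays and is at most of order $\varepsilon$ beyond radius $\sim\mass(T)^{1/k}$, cuts there at a slice of very small mass, refills it with a near-minimal filling, and then trims that refill around the slice in the same way, with scales and error tolerances chosen to decay geometrically so that both the added mass and the added support radii sum to the required bounds. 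Alternatively, your density approach can be rescued without exact minimizers by applying Ekeland's variational principle to the mass functional on $\{S:\partial S=T\}$ with the mass distance: the resulting quasi-minimizer satisfies a clean density inequality with a slightly worse constant, and then your steps (2)--(4) go through. As written, however, the existence part --- which is the part actually used elsewhere in the paper --- is not established.
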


The lemma follows directly from \cite[Lemma 3.4]{MR2153909} and its proof. By choosing $\varepsilon>0$ small enough we may assume that the filling $S$ satisfies the isoperimetric inequality $\mass(S)\leq D\mass(T)^{\frac{k+1}{k}}$.

\subsection{Mapping polyhedral currents to spaces with \texorpdfstring{$(\LC_k)$}{TEXT} or \texorpdfstring{$(\EI_k)$}{TEXT}}
The aim of this subsection is to prove the technical Proposition~\ref{prop:homomorphisms-transporting} below which will be important in the proofs of Theorems~\ref{thm:intro-undistorted-main} and \ref{thm:Def-thm-finite-Nagata-dim}. 

Let \(\Sigma\) be a simplicial complex, equipped with the length metric or the $\ell_2$-metric. We define the set \(\poly_k(\Sigma)\subset \bI_k(\Sigma)\) of polyhedral \(k\)-chains in $\Sigma$ as follows. One has \(P\in \poly_k(\Sigma)\) if and only if there exist finitely many \(\sigma_i\in \mathcal{F}_k\), \(\theta_i\in \Z\), and isometries \(\phi_i\colon \Delta \to \sigma_i\),
where $\Delta$ denotes the standard Euclidean $k$-simplex,
 such that \[
P=\sum \theta_i \,\phi_{i\,\#} \bb{\Delta}.
\]
Since \(\Delta\) can be embedded isometrically into \(\R^k\), the integral current \(\bb{\Delta}\) is well-defined. 
By construction, \(\poly_k(\Sigma)\subset \bI_k(\Sigma)\) is an additive subgroup with generating set \(\bigl\{ \bb{\sigma} : \sigma\in \mathcal{F}_k\bigr\}\). We use the convention that \(\bb{\sigma}\) denotes any of the currents \(\phi_{\#} \bb{\Delta}\), where \(\phi\colon \Delta \to \sigma\) is an isometry. Notice that \(\bb{\sigma}\) is uniquely determined up to a sign.

Given a Lipschitz map $h: \Sigma \to Y$ such that $h(\Sigma^{(0)})\subset X$, the following proposition constructs a chain homomorphism $\Lambda_* : \poly_*(\Sigma) \to \bI_*(X)$ and a chain homotopy $\Gamma_* : \poly_*(\Sigma) \to \bI_{*+1}(X)$ from $\Lambda_*$ to $h_\#$. This chain homotopy connects $\Lambda_*$ to $h_\#$ in the sense that if $T\in \poly_m(\Sigma)$ is a $m$-cycle, then $\Gamma_k(T)$ is an $(m+1)$-chain such that $\partial \Gamma_m(T) = \Lambda_m(T) - h_\#T$.

\begin{proposition}\label{prop:homomorphisms-transporting}
 Let $Y$ be complete metric space, \(X\subset Y\) a closed quasiconvex subset and \(\Sigma\) a simplicial complex equipped with the length metric or \(\ell_2\)-metric, both rescaled by a factor \(\varepsilon >0\). Suppose that $h\colon \Sigma\to Y$ is a map such that $h(\Sigma^{(0)})\subset X$ and $h|_\sigma$ is Lipschitz for every $\sigma\in \mathcal{F}$. Let $k\geq 0$ and suppose $Y$ has $(\EI_{k+1})$ and $X$ has \((\LC_k)\) or $(\EI_k)$. Then there exist homomorphisms $\Lambda_m\colon\poly_m(\Sigma)\to\bI_m(X)$ and $\Gamma_m\colon\poly_m(\Sigma)\to\bI_{m+1}(Y)$ for $m=0,\dots, k+1$ with the following properties:
 \begin{enumerate}
     \item $\Lambda_0 = h_\#$ and $\Gamma_0 = 0$,
     \item $\Lambda_m\circ\partial = \partial \circ \Lambda_{m+1}$ for $m=0,\dots, k$,
     \item $\partial\circ\Gamma_m = \Lambda_m - h_\# - \Gamma_{m-1}\circ \partial$ for $m=1,\dots, k+1$,
     \item For each $m=0,\dots, k+1$ and each \(\sigma\in \mathcal{F}_m\),  $$\mass\bigl(\Lambda_m(\bb{\sigma})\bigr)\leq C\cdot \bigl[\varepsilon \Lip(h|_\sigma)\bigr]^m$$ and $$\mass\bigl(\Gamma_m(\bb{\sigma})\bigr)\leq C\cdot\bigl[\varepsilon\Lip(h|_\sigma)\bigr]^{m+1},$$ 
     \item For each $m=0,\dots, k+1$ and \(\sigma\in \mathcal{F}_m\) the currents $\Lambda_m(\bb{\sigma})$ and $\Gamma_m(\bb{\sigma})$ have support in $B\bigl(h(\sigma^{(0)}),  C\varepsilon\Lip(h|_\sigma)\bigr)$.
 \end{enumerate}
 The constant $C$ depends only on $k$ and the data of $X$ and $Y$.
\end{proposition}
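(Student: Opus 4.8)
The plan is to construct the homomorphisms $\Lambda_m$ and $\Gamma_m$ by induction on $m$, extending them skeleton by skeleton, mimicking the classical construction of a simplicial approximation together with a chain homotopy, but replacing the use of cone constructions in Euclidean space by the $(\LC_k)$ or $(\EI_k)$ hypotheses on $X$ and the $(\EI_{k+1})$ hypothesis on $Y$. Since $\poly_m(\Sigma)$ is free abelian on the generators $\bb{\sigma}$, $\sigma\in\mathcal F_m$, it suffices to define $\Lambda_m(\bb{\sigma})$ and $\Gamma_m(\bb{\sigma})$ for each $m$-simplex $\sigma$ and then extend $\Z$-linearly; the compatibility identities (2) and (3) then only need to be checked on generators. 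We may also assume without loss of generality, by rescaling, that $\varepsilon$ is absorbed into the metric, so that each simplex has side length $\varepsilon$ and $\Lip(h|_\sigma)\le$ a fixed multiple of the relevant constants; the mass bounds (4) will then come out with the stated powers of $\varepsilon\Lip(h|_\sigma)$ by tracking how masses scale.

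Here is the inductive scheme. For $m=0$ set $\Lambda_0=h_\#$, which indeed maps $\poly_0(\Sigma)$ into $\bI_0(X)$ since $h(\Sigma^{(0)})\subset X$, and set $\Gamma_0=0$; properties (1), (4), (5) for $m=0$ are immediate. Now suppose $\Lambda_0,\dots,\Lambda_m$ and $\Gamma_0,\dots,\Gamma_m$ have been constructed with properties (2)--(5) for indices up to $m$, where $m\le k$. Given an $(m+1)$-simplex $\sigma$, the chain $\Lambda_m(\partial\bb{\sigma})\in\bI_m(X)$ is a cycle (using (2) and $\partial\partial=0$), supported near $h(\sigma^{(0)})$, with $\mass\le C[\varepsilon\Lip(h|_\sigma)]^m$. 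If $X$ has $(\EI_k)$ we apply Lemma~\ref{lem:wenger-bounded-distance-to-support} to fill $\Lambda_m(\partial\bb{\sigma})$ by a current $U_\sigma\in\bI_{m+1}(X)$ with $\mass(U_\sigma)\le D\mass(\Lambda_m(\partial\bb{\sigma}))^{\frac{m+1}{m}}\le C'[\varepsilon\Lip(h|_\sigma)]^{m+1}$ and $\spt U_\sigma\subset B(\spt\Lambda_m(\partial\bb{\sigma}),D\mass(\cdots)^{1/m})\subset B(h(\sigma^{(0)}),C\varepsilon\Lip(h|_\sigma))$; set $\Lambda_{m+1}(\bb{\sigma}):=U_\sigma$. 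If instead $X$ has $(\LC_k)$ one first observes that, because $\partial\bb{\sigma}$ is the boundary of a Lipschitz simplex, $\Lambda_m(\partial\bb{\sigma})$ can be realized (after the inductive construction is arranged accordingly) as the pushforward of the fundamental class of an $m$-sphere under a Lipschitz map of controlled Lipschitz constant, which then extends over the $(m+1)$-ball by $(\LC_k)$; pushing forward the fundamental class of the ball gives $\Lambda_{m+1}(\bb{\sigma})$ with the required mass and support bounds. Either way, $\partial\Lambda_{m+1}(\bb{\sigma})=\Lambda_m(\partial\bb{\sigma})$, which is (2) at level $m$.

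For the chain homotopy, note that for an $(m+1)$-simplex $\sigma$ the current
\[
Z_\sigma:=\Lambda_{m+1}(\bb{\sigma})-h_\#\bb{\sigma}-\Gamma_m(\partial\bb{\sigma})\in\bI_{m+1}(Y)
\]
is a cycle: applying $\partial$ and using (2) at level $m$, the identity (3) at level $m$ applied to $\partial\bb{\sigma}$, and $\partial\partial\bb{\sigma}=0$, all terms cancel. Moreover $Z_\sigma$ is supported in $B(h(\sigma^{(0)}),C\varepsilon\Lip(h|_\sigma))$ and has mass $\le C[\varepsilon\Lip(h|_\sigma)]^{m+1}$ by the inductive mass bounds (4) and $\mass(h_\#\bb{\sigma})\le \Lip(h|_\sigma)^{m+1}\cdot\mass(\bb{\sigma})$. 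Since $m+1\le k+1$ and $Y$ has $(\EI_{k+1})$, Lemma~\ref{lem:wenger-bounded-distance-to-support} produces a filling $\Gamma_{m+1}(\bb{\sigma})\in\bI_{m+2}(Y)$ of $Z_\sigma$ with $\mass(\Gamma_{m+1}(\bb{\sigma}))\le C'[\varepsilon\Lip(h|_\sigma)]^{m+2}$ and support in $B(h(\sigma^{(0)}),C\varepsilon\Lip(h|_\sigma))$; by construction $\partial\Gamma_{m+1}(\bb{\sigma})=\Lambda_{m+1}(\bb{\sigma})-h_\#\bb{\sigma}-\Gamma_m(\partial\bb{\sigma})$, which is (3) at level $m+1$. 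Extending $\Z$-linearly and iterating up to $m=k+1$ completes the construction; the final constant $C$ depends only on $k$, the quasiconvexity constant of $X$, and the constants in $(\LC_k)$, $(\EI_k)$, $(\EI_{k+1})$.

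The main obstacle I anticipate is the $(\LC_k)$ case of step (2): unlike $(\EI_k)$, which fills an arbitrary integral cycle, Lipschitz $k$-connectedness only fills spheres, so one must ensure that each cycle $\Lambda_m(\partial\bb{\sigma})$ arising in the induction is literally (the pushforward of) a Lipschitz $m$-sphere with controlled Lipschitz constant — equivalently, that the homomorphisms are built from the start as pushforwards of Lipschitz maps on the skeleta of $\sigma$, glued along faces. Managing this bookkeeping, and making sure the Lipschitz constants and support sizes of the sphere-parametrizations compound only by bounded factors as one moves up the skeleta, is the delicate part; the $(\EI)$ estimates for the support radius from Lemma~\ref{lem:wenger-bounded-distance-to-support} and the scaling behaviour of mass under the relevant pushforwards then make the quantitative bounds (4) and (5) fall into place.
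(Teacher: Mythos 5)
Your construction follows essentially the same route as the paper's proof: in the $(\EI_k)$ case, $\Lambda_{m+1}(\bb{\sigma})$ is obtained by filling the cycle $\Lambda_m(\partial\bb{\sigma})$ in $X$ via Lemma~\ref{lem:wenger-bounded-distance-to-support}; in the $(\LC_k)$ case, $\Lambda_m$ is a pushforward under a controlled Lipschitz extension of $h|_{\Sigma^{(0)}}$ (the paper simply builds one map $\bar h\colon\Sigma^{(k+1)}\to X$ agreeing with $h$ on $\Sigma^{(0)}$ and $C\Lip(h|_\sigma)$-Lipschitz on each simplex, and sets $\Lambda_m=\bar h_\#$, which makes the boundary compatibility automatic and disposes of the sphere-parametrization bookkeeping you flag as delicate); and $\Gamma_m(\bb{\sigma})$ fills the cycle $\Lambda_m(\bb{\sigma})-h_\#\bb{\sigma}-\Gamma_{m-1}(\partial\bb{\sigma})$ in $Y$ using $(\EI_{k+1})$, exactly as in the paper.

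The one step that fails as written is the first inductive step of the $(\EI_k)$ branch, $m=0$: for a $1$-simplex $\sigma$ with vertices $e_\pm$ the cycle $\Lambda_0(\partial\bb{\sigma})=\bb{h(e_+)}-\bb{h(e_-)}$ is $0$-dimensional, $(\EI_0)$ is vacuous, the exponent $\frac{m+1}{m}$ degenerates, and Lemma~\ref{lem:wenger-bounded-distance-to-support} applies only to cycles of dimension $\geq 1$, so the isoperimetric filling cannot be invoked there. This is precisely where the quasiconvexity of $X$ (which you mention only when listing the dependencies of the constant, but never use) is needed: define $\Lambda_1(\bb{\sigma})$ as the current induced by a Lipschitz curve in $X$ from $h(e_-)$ to $h(e_+)$ of length at most $C_0\, d(h(e_-),h(e_+))\leq C\varepsilon\Lip(h|_\sigma)$, as the paper does; from $m=1$ onward your induction, and your construction of the chain homotopy $\Gamma_*$, go through as stated.
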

We use the convention that \(0^0=1\). 
\begin{proof}
By a simple scaling argument, it suffices to consider the case when \(\varepsilon=1\). We first construct the homomorphisms $\Lambda_m$. If $X$ has \((\LC_k)\), then there exists a map $\bar{h}\colon \Sigma^{(k+1)}\to X$ which agrees with $h$ on $\Sigma^{(0)}$ and which is $C\Lip(h|_\sigma)$-Lipschitz on every simplex \(\sigma\subset \Sigma^{(k+1)}\). In this case we define $\Lambda_m(\bb{\sigma})\coloneqq \bar{h}_\#\bb{\sigma}$ for every oriented $m$-simplex $\sigma\subset\Sigma$ with $0\leq m\leq k+1$ and extend $\Lambda_m$ linearly to $\poly_m(\Sigma)$. This yields a homomomorphism $\Lambda_m$ with the desired properties.

 Next, we construct $\Lambda_m$ in the case when $X$ has $(\EI_k)$. Set $\Lambda_0\coloneqq h_\#$. If  $\sigma$ is an oriented $1$-simplex in $\Sigma$ with vertices $e_-$ and $e_+$ we let $\Lambda_1(\bb{\sigma})$ be the integral current induced by a Lipschitz curve $\gamma$ in $X$ from $h(e_-)$ to $h(e_+)$ of length $\ell(\gamma)\leq C_0\, d(h(e_-),h(e_+))$, where $C_0$ is the quasiconvexity constant of $X$. In particular, we have $\partial\Lambda_1 (\bb{\sigma})= \Lambda_0(\bb{\partial \sigma})$ and 
 $$
 \mass(\Lambda_1(\bb{\sigma})\leq \ell(\gamma)\leq C_1 \Lip(h|_\sigma)
 $$ 
 and $\spt(\Lambda_1(\bb{\sigma}) \subset B\bigl(h(\sigma^{(0)}), C_1\Lip(h|_{\sigma^{(0)}})\bigr)$, where \(C_1\coloneqq \sqrt{2} C_0\). 
 Doing this for every oriented $1$-simplex $\sigma$ and extending linearly we obtain the desired homomorphism $\Lambda_1\colon\poly_1(\Sigma)\to\bI_1(X)$. If $k=0$ this finishes the construction of the homomorphisms $\Lambda_m$. If $k\geq 1$, then suppose we have defined $\Lambda_m$ with properties (2), (4), (5) for some $1\leq m\leq k$. Let $\sigma$ be an oriented $(m+1)$-simplex in $\Sigma$. It follows from (2) that $\Lambda_m(\partial\bb{\sigma})$ is a cycle in $\bI_m(X)$. Thus, by Lemma~\ref{lem:wenger-bounded-distance-to-support}, there exists $S\in\bI_{m+1}(X)$ such that $\partial S = \Lambda_m(\partial\bb{\sigma})$ and 
 $$
 \mass(S)\leq D \mass(\Lambda_m(\partial\bb{\sigma}))^{\frac{m+1}{m}}\leq D\Bigl[(m+2)C_m\Lip(h|_{\sigma})^m\Bigr]^{\frac{m+1}{m}}\leq C_{m+1} \Lip(h|_{\sigma})^{m+1}
 $$
 and 
 $$
 \spt(S) \subset B\Bigl(\spt(\Lambda_m(\partial\bb{\sigma})), D \mass\bigl(\Lambda_m(\partial \bb{\sigma})\bigr)^{\frac{1}{m}}\Bigr)\subset B\bigl(h(\sigma^{(0)}), C_{m+1}\Lip(h|_\sigma)\bigr),
 $$ 
 where \(C_{m+1}\coloneqq C_m+D\bigl( (m+2) C_m \bigr)^{\frac{m+1}{m}}\). 
 We set $\Lambda_{m+1}(\bb{\sigma})\coloneqq S$ and by defining $\Lambda_{m+1}(\bb{\sigma})$ like this for every oriented $(m+1)$-simplex and extending linearly we obtain a homomorphism $\Lambda_{m+1}\colon\poly_{m+1}(\Sigma)\to\bI_{m+1}(X)$ satisfying properties (2), (4), (5).
  
 We proceed analogously in order to construct $\Gamma_m$. Set $\Gamma_0\coloneqq 0$ and suppose we have already constructed $\Gamma_{m-1}$ with the desired properties for some $1\leq m\leq k+1$ . Let $\sigma$ be an oriented $m$-simplex in $\Sigma$. The current $P\coloneqq \Lambda_m(\bb{\sigma}) - h_\#\bb{\sigma} -\Gamma_{m-1}(\bb{\sigma})$ is a cycle and satisfies 
 $$
 \mass(P) \leq \mass(\Lambda_m(\bb{\sigma})) + \mass(h_\#\bb{\sigma}) + \mass(\Gamma_{m-1}(\bb{\sigma}))\leq C_m^{\prime\prime}\Lip(h|_\sigma)^m,
 $$ 
 where \(C_m^{\prime\prime}\coloneqq C_m+1+C_{m-1}^{\prime}\). Therefore, by  Lemma~\ref{lem:wenger-bounded-distance-to-support}, there exists $S\in\bI_{m+1}(Y)$ with $\partial S = P$ and 
 $$
 \mass(S) \leq D'\mass(P)^{\frac{m+1}{m}} \leq D' C^{\prime\prime}_m{^{\frac{m+1}{m}}}\Lip(h|_\sigma)^{m+1}\leq C^{\prime}_m \Lip(h|_\sigma)^{m+1}
 $$ 
 and 
 $$\spt(S)\subset B\bigl(\spt(P), D^\prime \mass(P)^{\frac{1}{m}}\bigr)\subset B\bigl(h(\sigma^{(0)}), C^{\prime}_m\Lip(h|_\sigma)\bigr),
 $$ 
 where \(C^{\prime}_m\coloneqq C_m^{\prime\prime}+D^\prime C^{\prime\prime}_m{^{\frac{m+1}{m}}}\). We put \(\Gamma_m(\bb{\sigma})\coloneqq S\).
 Doing this for all $\sigma$ and extending linearly yields the desired homomorphism $\Gamma_m$. It is clear that it satisfies all the desired properties. We set \(C\coloneqq \max\{ C_m \cdot C_m^{\prime} : m=0, \dots, k+1\}\). This completes the proof. 
\end{proof}

\section{Factorization through simplicial complexes}\label{sec:factorization}

In this section, we prove the factorization theorem sketched in Section~\ref{sec:proof-outline}. The theorem is obtained by a close inspection of the proofs of Lang and Schlichenmaier's results in \cite{MR2200122}.

\begin{theorem}\label{thm:factorization} 
Let \(Z\subset Y\) be complete metric spaces such that \(Z\) has Nagata dimension \(n\) and \(Y\) is Lipschitz \(n\)-connected for some integer \(n\geq 0\). Then there exist \(C\geq 1\), an \((n+1)\)-dimensional simplicial complex \(\Sigma\) equipped with the $\ell_2$-metric, and
maps \(g\colon Y\setminus Z \to \Sigma\)  and \(h\colon \Sigma\to Y\) with \(h\bigl(\Sigma^{(0)}\bigr)\subset Z\) such that the following holds:

\begin{enumerate}
\item\label{it:PropOfF}  \(f\colon Y \to Y\), defined by \(f=h\circ g\) on \(Y\setminus Z\) and \(f=\id_Z\) on \(Z\), is \(C\)-Lipschitz;

\item\label{it:PropOfG} \(g\) is \(C r^{-1} \)-Lipschitz on \(\bigr\{y\in Y : d(y, Z) > r\bigl\}\) for all \(r>0\); 

\item\label{it:PropOfH} \(h\) is \(C r_\sigma\)-Lipschitz on every \(\sigma\in \mathcal{F}\), where \(r_\sigma\coloneqq \inf\big\{ d(y, Z) : y\in g^{-1}(\st \sigma)\big\}\);
\item\label{it:R-sigma}  for every \(\sigma\in \mathcal{F}\) one has \(R_\sigma \leq C r_\sigma\), where \(R_\sigma\coloneqq \sup\big\{ d(y, Z) : y\in g^{-1}(\st \sigma)\big\}\).

\end{enumerate}
Moreover, if \(B\subset Y\) is a bounded subset intersecting \(Z\), then there exists a simplicial complex \(\Sigma^\prime\subset \Sigma\) such that \(\Sigma^\prime\) is \((n+1)\)-dimensional and \(C\)-quasiconvex, \(g(B\setminus Z)\subset \Sigma^\prime\) and \(h\) is \(C\diam(B)\)-Lipschitz on \(\Sigma^\prime\). 
\end{theorem}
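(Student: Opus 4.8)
The plan is to follow the Lang–Schlichenmaier construction from \cite{MR2200122} essentially verbatim, extracting from it the quantitative local Lipschitz bounds asserted in \eqref{it:PropOfF}--\eqref{it:R-sigma}, and then to add the final ``localization'' claim at the end. In \cite{MR2200122}, the nerve $\Sigma$ of a suitable family of coverings $\mathcal{B}_s$ of $Y\setminus Z$ at scales $s$ comparable to $d(\cdot,Z)$ is built, together with a partition of unity $(\varphi_\sigma)$ subordinate to the open stars; the map $g\colon Y\setminus Z\to \Sigma$ is defined via these barycentric coordinates, and $h\colon \Sigma\to Y$ is built skeleton by skeleton using the $(\LC_n)$ property of $Y$ to extend over cells of dimension $1,2,\dots,n+1$, with $h$ of $\Sigma^{(0)}$ landing in $Z$ by construction (the vertices correspond to points of $Z$, or to nearest-point projections of covering elements onto $Z$). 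The key point is that the covering $\mathcal{B}_s$ can be chosen so that members meeting a point $y$ have diameter comparable to $d(y,Z)$ and so that the partition of unity has Lipschitz constant comparable to $d(y,Z)^{-1}$ at $y$; this is exactly what yields \eqref{it:PropOfG}, and combining it with the $(\LC_n)$-extension bounds yields \eqref{it:PropOfH}. Statement \eqref{it:R-sigma} follows because a simplex $\sigma$ with $g^{-1}(\st\sigma)\ni y$ and $g^{-1}(\st\sigma)\ni y'$ forces $y$ and $y'$ to lie in covering elements that intersect a common chain of elements of bounded cardinality (the multiplicity bound from Nagata dimension), so $d(y,Z)$ and $d(y',Z)$ are comparable up to a dimensional constant. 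Finally \eqref{it:PropOfF}: on $Y\setminus Z$ we have $f=h\circ g$ and the scale-dependent bounds combine so that on the annulus $\{d(\cdot,Z)\approx r\}$ the composition is $C$-Lipschitz with $C$ independent of $r$, while $f=\id$ on $Z$; a standard patching argument across the annuli (using that $f$ moves points by at most $C\,d(\cdot,Z)$, which tends to $0$ near $Z$) shows $f$ is globally $C$-Lipschitz.

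For the final assertion, let $B\subset Y$ be bounded with $B\cap Z\neq\varnothing$; fix $z_0\in B\cap Z$ and set $d\coloneqq \diam(B)$, so every $y\in B$ satisfies $d(y,Z)\le d(y,z_0)\le d$. I would take $\Sigma'\coloneqq \Hull\bigl(g(B\setminus Z)\bigr)$, the smallest subcomplex of $\Sigma$ containing $g(B\setminus Z)$. Since $\Sigma$ is $(n+1)$-dimensional so is $\Sigma'$, and $g(B\setminus Z)\subset\Sigma'$ by construction. For every simplex $\sigma\subset\Sigma'$ there is, by definition of the hull, some $y\in B\setminus Z$ with $g(y)\in\st\sigma$ (more precisely $g(y)\in\interior\tau$ for some $\tau\supset\sigma$), so $r_\sigma\le R_\sigma\le d(y,Z)\le d$; hence by \eqref{it:PropOfH}, $h|_\sigma$ is $Cd$-Lipschitz, and since $\sigma$ was arbitrary, $h$ is $Cd$-Lipschitz on each simplex of $\Sigma'$, hence $Cd$-Lipschitz on $\Sigma'$ for the length metric after possibly enlarging $C$ (a map that is $L$-Lipschitz on each simplex of a simplicial complex with the length metric is $L$-Lipschitz on the whole complex). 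It remains to check that $\Sigma'$ is $C$-quasiconvex. For this I would argue that $g(B\setminus Z)$ is ``$C$-dense and connected at scale comparable to the cell size'' in $\Sigma'$: any two vertices of $\Sigma'$ are vertices of simplices meeting $g(B\setminus Z)$, and the uniform multiplicity bound from Nagata dimension together with the fact that $B\setminus Z$ is connected by paths of length $\le d$ through $Y$ (we may pass to $\mathbb{R}$-scale or use quasiconvexity of the ambient space if needed — but in fact we only need that the relevant covering elements form a connected chain) bounds the combinatorial diameter of $\Sigma'$ and shows that geodesics in $\Sigma'$ between points of $g(B\setminus Z)$ do not have to leave a controlled subcomplex; since each simplex has diameter $\le\sqrt2$ in the $\ell_2$-metric, quasiconvexity of $\Sigma'$ with a dimensional constant follows from the finiteness of the valence together with the local structure of $\st\sigma$.

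The main obstacle I expect is not any single step but rather the bookkeeping of constants in extracting \eqref{it:PropOfG}--\eqref{it:R-sigma} from \cite{MR2200122}: one must verify that the covering $\mathcal{B}_s$ in the Lang–Schlichenmaier argument can be chosen with the scale $s$ genuinely coupled to $d(\cdot,Z)$ (rather than globally fixed), which requires a gluing of the coverings across a geometric sequence of scales $s_j\approx 2^{-j}$ while keeping the multiplicity bound $n+1$ and controlling the Lipschitz constant of the combined partition of unity on each annulus. This ``dyadic patching of coverings'' is the technical heart; once it is in place, the map-extension step (building $h$ over skeletons via $(\LC_n)$) and the Lipschitz estimates for $f=h\circ g$ are routine, and the final localization claim is, as sketched above, a direct consequence of the already-established local bounds applied to the subcomplex $\Hull(g(B\setminus Z))$.
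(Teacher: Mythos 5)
The first part of your proposal (items (1)--(4)) follows the same route as the paper: one invokes the Lang--Schlichenmaier covering of \(Y\setminus Z\) by sets \(B_i\) with \(\diam(B_i)\lesssim d(B_i,Z)\) and bounded multiplicity, defines \(g\) via the associated partition of unity, builds \(h\) by choosing near-points \(x_i\in Z\) for the vertices and extending skeleton-by-skeleton using \((\LC_n)\) of \(Y\), and gets the global Lipschitz bound on \(f\) from a pointwise-Lipschitz estimate plus quasiconvexity. The ``dyadic coupling of the covering scale to \(d(\cdot,Z)\)'' that you flag as the technical heart is exactly the content of the results of Lang--Schlichenmaier cited in the paper, so it can be quoted rather than reproved; up to that, your sketch of (1)--(4) is sound.

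The genuine gap is in the ``moreover'' part. Your choice \(\Sigma'\coloneqq\Hull\bigl(g(B\setminus Z)\bigr)\) does not satisfy the required \(C\)-quasiconvexity in general, and the argument you sketch for it cannot be repaired in this form. First, \(B\) is only assumed to be a bounded set meeting \(Z\); the set \(B\setminus Z\) may be disconnected, and points of \(B\setminus Z\) on ``opposite sides'' of \(Z\) activate disjoint families of covering sets, so \(\Hull(g(B\setminus Z))\) can be disconnected (already for \(Z=\{0\}\subset Y=\R\), \(B=[-1,1]\)) and hence not quasiconvex at all. Second, even when \(B\setminus Z\) is connected, your appeal to paths in \(Y\) of length \(\le\diam(B)\) does not control the hull: such paths may pass arbitrarily close to \(Z\), where the Lipschitz constant of \(g\) blows up, so the number of simplices traversed by \(g\circ\gamma\) is not bounded; and a bound on the combinatorial diameter of \(\Sigma'\) would in any case only give connecting paths of length bounded by a constant, not by \(C\abs{x-y}\), which is what quasiconvexity demands at small scales. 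Note also that your Lipschitz claim for \(h\) on \(\Sigma'\) itself needs this quasiconvexity, since the metric on \(\Sigma'\) is the (restricted) \(\ell_2\)-metric, and ``Lipschitz on each simplex'' only upgrades to ``Lipschitz on \(\Sigma'\)'' via a pointwise-Lipschitz lemma together with quasiconvexity of \(\Sigma'\). The paper avoids all of this by taking \(\Sigma'\) much larger than the hull: it sets \(\Sigma'\coloneqq\Sigma^{(n+1)}(J)\), the full \((n+1)\)-skeleton of the simplex on the index set \(J\) of all \(i\) with \(\tau_i>0\) somewhere on \(B\). Then any two simplices of \(\Sigma'\) lie in the \((n+1)\)-skeleton of a single simplex of dimension at most \(2n+3\), which is quasiconvex with a constant depending only on \(n\), and \(h\) is shown to be \(C\diam(B)\)-Lipschitz on \(\Sigma'\) by estimating \(d(h(e_r),h(e_s))\) for \(r,s\in J\) directly through points \(z_r,z_s\in B\) with \(\tau_r(z_r),\tau_s(z_s)>0\), rather than through the star condition (3), which is not available for all simplices of this larger complex. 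You would need to adopt this (or an equivalent) choice of \(\Sigma'\) for the final assertion to go through.
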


The constant \(C\) depends only on the data of \(Z\) and \(Y\).
Theorem~\ref{thm:factorization} is a crucial component of the proof of Theorem~\ref{thm:intro-undistorted-main}. 
If $X\subset Y$  is a closed subset which contains $Z$ and is Lipschitz \(k\)-connected for some $k\geq 0$ then we may choose $h$ to furthermore satisfy $h\bigl(\Sigma^{(k+1)}\bigr)\subset X$. As a result, if \(Z\) is Lipschitz \(n\)-connected, then it follows that \(f\) is a \(C\)-Lipschitz retraction onto \(Z\). 

\begin{proof}
We may suppose that \(Y\setminus Z\neq \varnothing\). Following \cite[Theorems 1.6 and 5.2]{MR2200122}, we find an infinite index set \(I\), a covering \((B_i)_{i\in I}\) of \(Y\setminus Z\) by subsets of \(Y\setminus Z\), and constants \(\delta\in(0,1)\) and \(\alpha >0\) depending only on the data of \(Z\) such that the following holds. For all \(i\in I\), \(\diam(B_i)\leq \alpha d(B_i, Z)\) and for every \(y\in Y\setminus Z\) there are at most \(n+2\) indices \(i\in I\) such that \(\tau_i(y)>0\). Here, \(\tau_i\colon Y\setminus Z \to [0, +\infty)\) is defined by 
\[
\tau_i(y)=\max\bigl\{ \delta d(B_i, Z)-d(y, B_i), 0 \bigr\}.
\]
We set \(\Sigma\coloneqq \Sigma^{(n+1)}(I)\) and  \(\bar{\tau}(y)\coloneqq \sum_{i\in I} \tau_i(y)\). By the above, \(\Sigma\) is \((n+1)\)-dimensional and \(g\colon Y\setminus Z \to \Sigma\) assigning
\(y\mapsto \bar{\tau}(y)^{-1}\bigl(\tau_{i}(y)\bigr)_{i\in I}\)
is well-defined. 
In the following, we prove items \eqref{it:PropOfF} -- \eqref{it:R-sigma}. To begin, we show that \(g\colon Y\setminus Z\to \Sigma\) has the desired properties. Let \(y\), \(y^\prime\in Y \setminus Z\) and denote by \(K\subset I\) the set of all \(i\in I\) such that \(\tau_i(y)>0\) or \(\tau_i(y^\prime)>0\). Notice that \(K\) contains at most \(2(n+2)\) elements.
We estimate
\begin{align*}
\sum_{i\in K} \Big\lvert\frac{\tau_i(y)}{\bar{\tau}(y)}-\frac{\tau_i(y')}{\bar{\tau}(y')}\Bigr\rvert&= \sum_{i\in K} \frac{1}{\bar{\tau}(y)\bar{\tau}(y')} \Big\lvert\sum_{j\in K} \Bigl(\tau_i(y)\tau_j(y')-\tau_i(y')\tau_j(y)\Bigr)\Big\rvert \\
&\leq \frac{2}{\bar{\tau}(y)\bar{\tau}(y')} \sum_{i\in K} \sum_{j\in K} \tau_{j}(y') \abs{\tau_{i}(y)-\tau_i(y')} \\
&\leq \frac{4(n+2)}{\bar{\tau}(y)} d(y,y'),
\end{align*}
where in the last inequality we used that \(\tau_i\) is \(1\)-Lipschitz.
Since \((B_i)_{i\in I}\) covers \(Y\setminus Z\), there exists \(i\in I\) such that \(y\in B_i\). As a result,
\[
d(y,Z)\leq \diam(B_i)+d(B_i, Z)\leq (1+\alpha) d(B_i, Z)
\]
and \(\bar{\tau}(y)\geq \delta d(B_i, Z)\). Thus, \((1+\alpha) \bar{\tau}(y)\geq (1+\alpha) \delta d(B_i, Z) \geq \delta d(y, Z)\). By the above,
\begin{equation*}
\abs{g(y)-g(y')} \leq \frac{4(n+2)(1+\alpha)}{\delta} \frac{1}{d(y, Z)}d(y,y') .
\end{equation*}
This shows that \(g\) is \(C_1r^{-1}\)-Lipschitz on \(\bigl\{ y\in Y : d(y, Z) >r\bigr\}\). Next, we show \eqref{it:PropOfH} and \eqref{it:R-sigma}. For each \(i\in I\) select \(y_i\in B_i\) and \(x_i\in Z\) for which \(d(y_i, x_i) \leq 2 d(B_i, Z)\). Let \(h_{0}\colon \Sigma^{(0)}\to Z\) denote the map that sends \(e_i\) to \(x_i\) for each \(i\in I\). Here, \(e_i\in \ell_2(I)\) is defined by \((e_i)_j=1\) if \(j=i\) and \((e_i)_j=0\) otherwise. Notice that \(\Sigma^{(0)}=\bigl\{ e_i : i\in I\bigr\}\). Using that \(Y\) is Lipschitz \(n\)-connected, we obtain a constant \(C_2>0\) and an extension \(h\colon \Sigma\to Y\) of \(h_{0}\) such that \(h\) is \(C_2\Lip\bigl(h|_{\sigma^{(0)}}\bigr)\)-Lipschitz on each \(\sigma\in \mathcal{F}\). Let \(\sigma\in \mathcal{F}\) such that \(g^{-1}(\st \sigma)\neq \varnothing\). 
Notice that 
\[
r_\sigma=\inf\big\{ d\bigl(Z, g^{-1}(\interior \sigma')\bigr) : \sigma^\prime\in \mathcal{F} \text{ and } \sigma \subset \sigma^\prime\big\}.
\]
Thus, to show that \(h\) is \(C_3 r_\sigma\)-Lipschitz on \(\sigma\) it suffices to show that \(h\) is \(C_3r\)-Lipschitz on each  \(\sigma'\in \mathcal{F}\) with \(\sigma\subset \sigma'\), where \(r\coloneqq d\bigl(Z, g^{-1}(\interior \sigma')\bigr)\). Fix \(\sigma'\in \mathcal{F}\) and let \(e_i\) and \(e_j\) be two vertices of \(\sigma'\). We may suppose that \(d(Z, B_j)\leq d(Z, B_i)\) and there exists some \(y\in Y \setminus Z\) such that 
\(g(y)\in \interior{\sigma'}\). We estimate
\begin{align*}
d(h(e_i), h(e_j))&\leq d(x_i, y)+d(y, x_j) \\
&\leq 2 d(Z, B_i)+ d(y_i, y)+d(y_j,y)+ 2 d(Z, B_j).
\end{align*}
As \(g(y)\in \interior \sigma'\), we infer \(\tau_i(y)>0\) and thus \(d(y, B_i)\leq \delta d(Z, B_i)\). Therefore,
\(d(y_i, y)\leq \diam(B_i)+d(y, B_i)\leq \diam(B_i)+\delta d(Z, B_i)\), 
and we arrive at
\[
d(h(e_i), h(e_j))\leq 2(2+\delta+\alpha) d(Z, B_i).
\]
Since \((1-\delta) d(B_i, Z) \leq d(y, Z)\), we obtain
\[
d(h(e_i), h(e_j))\leq\sqrt{2}\, \biggl(\frac{2+\delta+\alpha}{1-\delta}\biggr)\, d(y,Z) \,\abs{e_i-e_j}.
\]
As \(y\in Y\setminus Z\) with \(g(y)\in \interior(\sigma')\) was arbitrary, this implies that \(h\) is \(C_3r\)-Lipschitz on \(\sigma'\) for \(r\coloneqq d\bigl(Z, g^{-1}(\interior \sigma')\bigr)\), as desired.
This concludes the proof of \eqref{it:PropOfH}. If \(e_i\) is a vertex of \(\sigma\), then we have
\[
d(y, Z)\leq  2d(Z, B_i)+d(y_i, y) \leq (2+\delta+\alpha) d(Z, B_i) \leq  \biggl(\frac{2+\delta+\alpha}{1-\delta}\biggr)\, r_\sigma,
\]
and so \(R_\sigma \leq C_3 r_\sigma\) , as \(y\in g^{-1}(\st \sigma)\) was arbitrary. This yields \eqref{it:R-sigma}.

Next, we show \eqref{it:PropOfF}. Notice that \(Y\) is \(c\)-quasiconvex for some \(c\geq 1\). We want to apply Lemma \ref{lem:littleLip} to show that \(f\) is \(C_4c\)-Lipschitz.  To this end, we prove that \(\lip f(y)\leq C_4\) for all \(y\in Y\). For any \(y\in Y\setminus Z\) there exists \(i\in I\) such that \(y\in B_i\) and thus using \eqref{it:PropOfH} we obtain \(d(f(y), x_i)=d(h(g(y)), h(e_i))\leq C_3\, d(y, Z) \sqrt{2}\), and so
\begin{equation}\label{eq:distanceToX}
d(f(y), y)\leq d(f(y), x_i)+d(x_i, y_i)+d(y_i, y)\leq \bigl(C_3\hspace{-0.2em}\sqrt{2} +2+\alpha\bigr)d(y, Z).
\end{equation}
By combining \(d(f(x), f(y))\leq d(x,y)+d(y, f(y))\), where \(x\in Z\),  with \eqref{eq:distanceToX}, we find that \(\lip f(x) \leq (C_3\hspace{-0.2em}\sqrt{2}+3+\alpha)\) for all \(x\in Z\). 
Now, fix \(y\in Y \setminus Z\) and choose \(\varepsilon>0\) such that \(d(y', Z)\leq 2 d(y, Z)\) for all \(y'\in U_\varepsilon(y)\), and whenever \(y'\in U_\varepsilon(y)\) is contained in \(g^{-1}(\interior(\sigma))\) for some \(\sigma\in \mathcal{F}\), then \(y\in g^{-1}(\sigma)\). Because of \eqref{it:PropOfG} and \eqref{it:PropOfH}, for all \(y'\in U_\varepsilon(y)\),
\[
d(f(y), f(y'))\leq C_3 \,d(y',Z) \, d(g(y), g(y'))\leq 2 C_3 C_1\, d(y, y'),
\]
and so \(\lip f(y) \leq 2 C_3 C_1\). Hence, we have shown that \(\lip f(y) \leq C_4\) for all \(y\in Y\). By Lemma \ref{lem:littleLip}, \(f\) is \(C_4 c\)-Lipschitz, as desired. 

To finish the proof, we show the statements of the moreover part. Let \(B\subset Y\) be a bounded subset intersecting \(Z\) and suppose that \(J\subset I\) is the subset of those indices \(i\in I\) for which there exists \(y\in B\) such that \(\tau_i(y)>0\). By enlarging \(B\) (if necessary) we may assume that \(J\) has infinitely many elements. 
We set \(\Sigma^\prime\coloneqq \Sigma^{(n+1)}(J)\). Clearly, \(\Sigma^\prime\subset \Sigma\) and \(\Sigma^\prime\) is \((n+1)\)-dimensional. By definition of \(g\), we have \(g(B\setminus Z)\subset \Sigma^\prime\). Next, we claim that \(\Sigma^\prime\) is \(C_5\)-quasiconvex for some constant \(C_5\) depending only on \(n\). Let \(\sigma_1\), \(\sigma_2\in \mathcal{F}(\Sigma^\prime)\). Clearly, there is \(\sigma\in\mathcal{F}\bigl(\Sigma(J)\bigr)\) of dimension at most \(2n+3\) such that \(\sigma_1\), \(\sigma_2\subset \sigma^{(n+1)}\). As \(\sigma\) is isometric to a Euclidean standard simplex, it follows that \(\sigma^{(n+1)}\) \(C_5\)-quasiconvex, and so, by using that \(\sigma^{(n+1)}\subset \Sigma^\prime\), we find that \(\Sigma^\prime\) is \(C_5\)-quasiconvex as well. Next, we show that \(h\) is \(C_6 \diam(B)\)-Lipschitz on \(\Sigma^\prime\). For each \(r\in J\) choose \(z_r\in B\) such that \(\tau_r(z_r)>0\). We estimate
\begin{align*}
d(h(e_{r}), h(e_{s})) &\leq d(x_r, y_r)+d(y_r, z_r)+d(z_r,z_s)+d(z_s, y_s)+d(y_s, x_s)\\
&\leq \diam(B)+2(2+\alpha+\delta)\max\bigl\{ d(B_r, Z), d(B_s, Z) \bigr\}, 
\end{align*}
for all \(r\), \(s\in J\). Fix \(y_0\in B\cap Z\). Since \((1-\delta) d(B_s, Z) \leq d(z_s, Z) \) for all \(s\in J\), it follows that
\[
d(h(e_{r}), h(e_{s})) \leq \diam(B)+2\, \biggl(\frac{2+\delta+\alpha}{1-\delta}\biggr)\diam(B);
\]
thus, by construction of \(h\), for each \(\sigma\in \mathcal{F}(\Sigma^\prime)\), the map \(h\) is \(C_6 \diam(B)\)-Lipschitz on \(\sigma\). Now, an argument as in the proof of \eqref{it:PropOfF} above yields \(\lip h(x)\leq C_6 \diam(B)\) for all \(x\in \Sigma^\prime\). Hence, as \(\Sigma^\prime\) is \(C_5\)-quasiconvex, Lemma \ref{lem:littleLip} tells us that \(h\) is \(C_5 C_6 \diam(B)\)-Lipschitz on \(\Sigma^\prime\), as desired. We put \(C\coloneqq \max\bigl\{C_i : i=1, \dots, 6\bigr\}\). 
\end{proof}

If $h$ is as in Theorem~\ref{thm:factorization} above, then $h$ is $Cr$-Lipschitz on every  $\sigma\in \mathcal{F}$ for which $g^{-1}(\st\sigma) \cap  \{ d(y, Z) \leq r\} \neq\varnothing$. Hence, the following proposition tells us that $\lip h|_{\Sigma'}(x)\leq Cr$ for every $x\in \Sigma'$, where $\Sigma'\coloneqq \Hull\big(g(\{y\in Y: 0< d(y, Z) \leq r\})\big)$.

\begin{proposition}\label{prop:h-general-little-lip}
Let $h\colon \Sigma\to Y$ be a map from a simplicial complex $\Sigma$ equipped with the \(\ell_2\)-metric to a metric space $Y$. Let $L>0$ and suppose $A\subset\Sigma$ is such that $h$ is $L$-Lipschitz on every $\sigma\in \mathcal{F}$ for which $\st \sigma \cap A\neq\varnothing$. Then the restriction of $h$ to $\Sigma'\coloneqq \Hull(A)$ satisfies $\lip h|_{\Sigma'}(x)\leq L$ for every $x\in \Sigma'$.
\end{proposition}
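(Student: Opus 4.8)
The plan is to prove the purely local statement that for every $x\in\Sigma'$ there is $\rho>0$ with $d(h(x),h(y))\le L\,d(x,y)$ for all $y\in\Sigma'$ with $d(x,y)<\rho$; by the definition of the pointwise Lipschitz constant this immediately gives $\lip h|_{\Sigma'}(x)\le L$. Throughout, for $z\in\Sigma$ I write $\sigma(z)\in\mathcal{F}$ for the unique closed simplex having $z$ in its relative interior, as in the proof of Lemma~\ref{lem:GeometryOfDeltaN}.

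The first step is to transfer the hypothesis from $A$ to all simplices that could intervene. I would record that $\Hull(A)=\bigcup_{a\in A}\sigma(a)$: each $\sigma(a)$ is itself a subcomplex, so the union on the right is a subcomplex containing $A$, while conversely any subcomplex containing a point $a$ must contain $\sigma(a)$. Consequently every $z\in\Sigma'=\Hull(A)$ lies in $\sigma(a)$ for some $a\in A$, which forces $\sigma(z)$ to be a face of $\sigma(a)$ and hence $a\in\st\sigma(z)$. Thus $\st\sigma(z)\cap A\neq\varnothing$, so the hypothesis yields that $h$ is $L$-Lipschitz on the simplex $\sigma(z)$ for \emph{every} $z\in\Sigma'$.

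For the local estimate I would fix $x\in\Sigma'$ and set $\varepsilon_x\coloneqq\min\{x_i:i\in I(x)\}>0$, where $I(x)=\{i:x_i\neq 0\}$, exactly as in the proof of Lemma~\ref{lem:GeometryOfDeltaN}\eqref{it:epsilon_x}; the argument there uses only that $x$ has finitely many nonzero coordinates, so no finite-dimensionality of $\Sigma$ is required. If $\abs{x-y}<\varepsilon_x$ then $y_i\neq 0$ for each $i\in I(x)$, so $\sigma(x)$ is a face of $\sigma(y)$ and in particular $x$ and $y$ lie in the common simplex $\sigma(y)$. Taking now $y\in\Sigma'$ with $d(x,y)=\abs{x-y}<\varepsilon_x$, the previous step (applied to $z=y$, which is legitimate since $y\in\Sigma'$) gives that $h$ is $L$-Lipschitz on $\sigma(y)$; and since the $\ell_2$-metric of $\Sigma$ restricted to any single simplex is the straight-line distance $\abs{\cdot-\cdot}$, this yields $d(h(x),h(y))\le L\,\abs{x-y}=L\,d(x,y)$. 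Hence $\rho=\varepsilon_x$ works, so $\ell_r h|_{\Sigma'}(x)\le L$ for all $r\le\varepsilon_x$ and therefore $\lip h|_{\Sigma'}(x)\le L$.

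I do not anticipate a genuine obstacle. The only point requiring care is that the hypothesis supplies Lipschitz control on individual simplices only, so the argument must be arranged so that any $x,y\in\Sigma'$ with $y$ close to $x$ are compared \emph{inside a single simplex}; this is precisely what the choice $\rho=\varepsilon_x$ achieves, and it is also the reason the $\ell_2$-metric could be replaced by the length metric here without any change, since on one simplex the two metrics agree.
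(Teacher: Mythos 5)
Your proof is correct and follows essentially the same route as the paper's: you reduce to the observation that $h$ is $L$-Lipschitz on the carrier $\sigma(y)$ of every $y\in\Sigma'$ (the paper asserts this directly, you justify it via $\Hull(A)=\bigcup_{a\in A}\sigma(a)$), and then conclude locally using the radius $\varepsilon_x$ from the proof of Lemma~\ref{lem:GeometryOfDeltaN}, exactly as the paper does. The added remarks (the explicit hull decomposition, and that finite-dimensionality of $\Sigma$ is not needed for this step) are accurate but do not change the argument.
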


\begin{proof}
 For $y\in\Sigma$ denote by $\sigma(y)$ be the unique simplex in $\Sigma$ with \(y\in \interior \sigma(y)\). Notice that if $y\in\Sigma'$ then \(h|_{\sigma(y)}\) is \(L\)-Lipschitz because $\sigma(y)\subset\Sigma'$ and hence $\st \sigma(y)\cap A\neq\varnothing$.

Now, let \(x\in \Sigma'\). By Lemma~\ref{lem:GeometryOfDeltaN} there is a real number \(s_0>0\) such that \(x\in \sigma(y)\) for all \(y\in \Sigma'\) with \(\abs{x-y} <s_0\). Therefore, if $y\in\Sigma'$ satisfies \(0<\abs{x-y} <s<s_0\) then, by the above,
\[
\frac{d(h(x),h(y))}{s} \leq \frac{d(h(x),h(y))}{\abs{x-y}} \leq L.
\]
This implies that \(\lip h|_{\Sigma'}(x) \leq L\) for all \(x\in \Sigma'\), as was to be shown.
\end{proof}

\begin{corollary}\label{cor:triangulation-bilip-length}
Suppose that \(h\colon \Sigma \to X\) is a homeomorphism between a simplicial complex \(\Sigma\) equipped with the \(\ell_2\)-metric and a metric space \(X\).
 If \(h\) is \(D\)-bilipschitz on every \(\sigma\in \mathcal{F}\) and \(X\) is \(c\)-quasiconvex, then \(h\colon \Sigma\to X\) is \(cD\)-bilipschitz when \(\Sigma\) is equipped with the length metric.
\end{corollary}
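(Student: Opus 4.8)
The plan is to prove both bilipschitz inequalities by bounding the pointwise Lipschitz constants of $h$ and of $h^{-1}$ by $D$ and then applying Lemma~\ref{lem:littleLip} twice. Throughout, write $d_2$ for the $\ell_2$-metric on $\Sigma$ and $d_\ell$ for the length metric, which is a genuine metric since $\Sigma$, being homeomorphic to the quasiconvex and hence path-connected space $X$, is path-connected. The elementary fact that drives everything is that $d_\ell\ge d_2$ on all of $\Sigma$, whereas on any single closed simplex $\sigma\in\mathcal F$ the two metrics agree: $\sigma$ is convex in $\ell_2(\mathcal F_0)$, so the affine segment joining two of its points lies in $\sigma$ and realizes their $d_2$-distance. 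Hence, if $x$ and $y$ lie in a common simplex $\sigma$, the hypothesis that $h|_\sigma$ is $D$-bilipschitz reads
\[
D^{-1}d_\ell(x,y)\le d_X\bigl(h(x),h(y)\bigr)\le D\,d_\ell(x,y).
\]

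First I would establish the upper bound. Fix $x\in\Sigma$ and let $\varepsilon_x>0$ be as in Lemma~\ref{lem:GeometryOfDeltaN}\,\eqref{it:epsilon_x}, so that $d_2(x,y)<\varepsilon_x$ forces $x$ and $y$ into a common simplex. If $d_\ell(x,y)<\varepsilon_x$ then also $d_2(x,y)<\varepsilon_x$, so the previous display gives $d_X(h(x),h(y))\le D\,d_\ell(x,y)$; thus $\lip h(x)\le D$ for every $x$ when $\Sigma$ carries $d_\ell$. By Lemma~\ref{lem:littleLip} we then get $\ell(h\circ\gamma)\le D\,\ell(\gamma)$ for every curve $\gamma$ in $(\Sigma,d_\ell)$. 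Given $x,y\in\Sigma$ and $\delta>0$, choosing $\gamma$ from $x$ to $y$ with $\ell(\gamma)\le d_\ell(x,y)+\delta$ and letting $\delta\to0$ yields $d_X(h(x),h(y))\le D\,d_\ell(x,y)\le cD\,d_\ell(x,y)$.

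For the lower bound I would argue symmetrically with $h^{-1}\colon X\to(\Sigma,d_\ell)$. Fix $p=h(x)\in X$. Since $h$ is a homeomorphism, $h^{-1}\colon X\to(\Sigma,d_2)$ is continuous, so there is $\eta>0$ such that $d_X(p,q)<\eta$ implies $d_2\bigl(x,h^{-1}(q)\bigr)<\varepsilon_x$; for such $q$ the points $x$ and $h^{-1}(q)$ lie in a common simplex $\sigma$, and the $D$-bilipschitz bound for $h$ on $\sigma$ gives
\[
d_\ell\bigl(h^{-1}(p),h^{-1}(q)\bigr)=d_2\bigl(h^{-1}(p),h^{-1}(q)\bigr)\le D\,d_X(p,q).
\]
Hence $\lip(h^{-1})(p)\le D$ for every $p\in X$, and since $X$ is $c$-quasiconvex, Lemma~\ref{lem:littleLip} shows that $h^{-1}\colon X\to(\Sigma,d_\ell)$ is $cD$-Lipschitz. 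Evaluating at $h(x)$ and $h(y)$ gives $d_\ell(x,y)\le cD\,d_X(h(x),h(y))$, which together with the upper bound shows that $h\colon(\Sigma,d_\ell)\to X$ is $cD$-bilipschitz.

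The one point that needs genuine care is where the homeomorphism hypothesis enters, namely the continuity of $h^{-1}$ with respect to $d_2$ used in the lower bound: this is exactly what guarantees that points of $X$ close to $p$ are pulled back into a single simplex containing $x$, so that the local identification $d_\ell=d_2$ on that simplex can be used. (If $h$ were only a continuous bijection this could fail when $\Sigma$ is non-compact.) Everything else is routine bookkeeping with the three metrics $d_2$, $d_\ell$, $d_X$ together with Lemmas~\ref{lem:GeometryOfDeltaN} and~\ref{lem:littleLip}.
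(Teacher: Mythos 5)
Your proof is correct and follows essentially the same route as the paper's: bound the pointwise Lipschitz constants of $h$ and of $h^{-1}$ by $D$ using the $\varepsilon_x$ from Lemma~\ref{lem:GeometryOfDeltaN} (and, for $h^{-1}$, the continuity coming from the homeomorphism hypothesis), then apply Lemma~\ref{lem:littleLip} together with quasiconvexity. The only cosmetic difference is that the paper gets the forward bound by citing Proposition~\ref{prop:h-general-little-lip} with $A=\Sigma$, whereas you rederive it directly via the agreement of the $\ell_2$- and length metrics on each simplex.
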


\begin{proof}

Proposition~\ref{prop:h-general-little-lip} tells us that \(\lip h(z)\leq D\) for all \(z\in \Sigma\). Let \(z\), \(z'\in \Sigma\) and let \(\gamma\) be a curve in \(\Sigma\) connecting \(z\) and \(z'\). By Lemma~\ref{lem:littleLip}, it follows that 
\[
d(h(z), h(z'))\leq \ell(h\circ\gamma)\leq D \ell(\gamma).
\]
Consequently, \(h\) is \(D\)-Lipschitz with respect to the length metric \(d_i\) on \(\Sigma\). 
To finish the proof it remains to show that \(g\colon X\to (\Sigma, d_i)\) given by \(x\mapsto h^{-1}(x)\) is \(cD\)-Lipschitz.

For every \(z\in \Sigma\) denote by \(\sigma(z)\) the unique simplex \(\sigma\in \mathcal{F}\) such that \(z\in \interior \sigma\). Given \(x\in X\), we abbreviate \(\sigma(x)\coloneqq h\bigl(\sigma(z)\bigr)\), where \(z=g(x)\).
As \(h\) is a homeomorphism, for every \(x\in X\) there exists \(\varepsilon_x>0\) such that  \(x\in \sigma(x')\) whenever \(x'\in U(x,\varepsilon_x)\). See Lemma~\ref{lem:GeometryOfDeltaN}. Notice that \(g\) is \(D\)-Lipschitz on \(\sigma(x)\) for every \(x\in X\). Hence, exactly the same argument as in the proof of Proposition~\ref{prop:h-general-little-lip} shows that \(\lip g(x) \leq D\) for all \(x\in X\). Now, by invoking Lemma~\ref{lem:littleLip} we get that \(g\) is \(cD\)-Lipschitz, as \(X\) is \(c\)-quasiconvex. This completes the proof. 
\end{proof}

\section{Proof of Theorem~\ref{thm:intro-undistorted-main} and generalizations}\label{sec:isoperimetric-subspace-distortion}

In this section we prove Theorem~\ref{thm:intro-undistorted-main} from the introduction about isoperimetric subspace distortion. We also establish strengthenings and generalizations of this theorem which will be used in Section~\ref{sec:def-thm-finite-Nagata} to establish Theorem~\ref{thm:Def-thm-finite-Nagata-dim}, a deformation theorem in spaces of finite Nagata dimension.

\subsection{Undistorted fillings with controlled support}\label{Sec:variants-main-theorem}
In this subsection, we discuss two theorems on isoperimetric subspace distortion and deduce Theorem~\ref{thm:intro-undistorted-main} from them. The main difference between the following theorem and Theorem~\ref{thm:intro-undistorted-main} is that we do not assume that the whole subspace $X$ has finite Nagata dimension and that in addition we obtain control on the filling chain in the subspace $X$.

\begin{theorem}\label{thm:technical-version-main-thm}
 Let $Y$ be a complete metric space and let $Z\subset X\subset Y$ be closed subsets such that $Z$ is bounded and has finite Nagata dimension, and $X$ is quasiconvex and has \((\LC_k)\) or $(\EI_k)$ for some $k\geq 0$.  If $S\in\bI_{k+1}(Y)$ satisfies
 $\spt(\partial S)\subset Z$ and $\spt S \subset B(Z, \eta)$ for some $\eta>0$, then there exists $\bar{S}\in\bI_{k+1}(Y)$ which has support in $X$ and satisfies $\partial \bar{S} = \partial S$ as well as 
 \[
 \mass(\bar{S})\leq C\mass(S) \quad \text{ and } \quad \spt\bar{S} \subset B(Z, C\eta),
 \]
 where $C$ depends only on the data of $X$ and the Nagata dimension and constant of $Z$. Moreover, if $Y$ is a Banach space then there exists $W\in\bI_{k+2}(Y)$ with $\partial W = \bar{S} - S$ and such that $\mass(W)\leq \eta C\mass(S)$ and $\spt W\subset B(Z,C\eta)$.
\end{theorem}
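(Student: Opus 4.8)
The plan is to reduce Theorem~\ref{thm:technical-version-main-thm} to the factorization theorem (Theorem~\ref{thm:factorization}) combined with a refined Federer–Fleming deformation argument in the simplicial complex $\Sigma$ and the transport homomorphisms of Proposition~\ref{prop:homomorphisms-transporting}. First I would enlarge the ambient space: after isometrically embedding $Y$ into a Banach space (or, if one wants to keep the ``moreover'' clause, just take $Y$ to be a Banach space), we may assume $Y$ has $(\LC_n)$ and $(\EI_m)$ in all dimensions, where $n=\dim_N(Z)$. I would then apply Theorem~\ref{thm:factorization} to the pair $Z\subset Y$ to obtain the $(n+1)$-dimensional simplicial complex $\Sigma$ with the $\ell_2$-metric, the maps $g\colon Y\setminus Z\to\Sigma$ and $h\colon\Sigma\to Y$, and $f=h\circ g$ on $Y\setminus Z$, $f=\id_Z$ on $Z$. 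Since $X$ is a closed set containing $Z$ with $(\LC_k)$ (or we replace $(\LC_k)$ by $(\EI_k)$ later), we may take $h$ so that $h(\Sigma^{(k+1)})\subset X$; and by the moreover part of Theorem~\ref{thm:factorization} applied to a bounded set $B\supset Z$ with $B(Z,\eta)$ in its interior, we may pass to a $C$-quasiconvex, $(n+1)$-dimensional subcomplex $\Sigma'\subset\Sigma$ containing $g(B(Z,\eta)\setminus Z)$, on which $h$ is globally $C\diam(B)$-Lipschitz.

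Next I would do the core construction, which mirrors the proof outline in Section~\ref{sec:proof-outline}. Fix a small $r>0$. Using slicing of $S$ by the distance function $\varrho(\cdot)=d(\cdot,Z)$, choose $r$ (from a positive-measure set) so that $\langle S,\varrho,r\rangle\in\bI_k(Y)$ with controlled mass and supported in $\{\varrho=r\}$; write $S=S\on\{\varrho\le r\}+S\on\{\varrho>r\}$ and note $\partial(S\on\{\varrho>r\})=\langle S,\varrho,r\rangle$ up to sign (since $\spt\partial S\subset Z$). Push the outer piece forward: set $S_r'\coloneqq g_\#(S\on\{\varrho>r\})\in\bI_{k+1}(\Sigma')$. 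The mass estimate for push-forwards of integral currents (Lemma~\ref{lem:rajala-wenger}) together with property \eqref{it:PropOfG} of Theorem~\ref{thm:factorization} gives $\|S_r'\|$ bounded, on the part of $S$ at distance $\sim s$ from $Z$, by $(C/s)^{k+1}\|S\|$; integrating against $\|S\|$ over the annular decomposition of $B(Z,\eta)$ controls the total. Then apply the refined deformation theorem (the Appendix version, with local mass estimates) in $\Sigma'$ to push $S_r'$ to the $(k+1)$-skeleton, obtaining $P_r'\in\poly_{k+1}(\Sigma')$, a remainder, and a homotopy current, with control of the mass of $P_r'$ on each simplex by $C\varepsilon^{k+1}$ times an appropriately weighted count. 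Finally apply $\Lambda_{k+1}$ from Proposition~\ref{prop:homomorphisms-transporting} (built from $h|_{\Sigma^{(1)}}$ and the $(\LC_k)$ or $(\EI_k)$ property of $X$) to land $P_r'$ back in $\bI_{k+1}(X)$: using property (4) of that proposition, $\mass(\Lambda_{k+1}(P_r'))\le C\sum_\sigma(\varepsilon\Lip(h|_\sigma))^{k+1}\cdot|\theta_\sigma|$, and since $h$ is $Cr_\sigma$-Lipschitz on $\sigma$ with $r_\sigma$ comparable to the distance scale, this matches the mass bound for $S_r'$ up to the universal constant. Combining $\Lambda_{k+1}(P_r')$ with a filling of the small ``inner'' pieces (using $(\LC_k)$ or $(\EI_k)$ on $X$ applied to the boundary cycles $\Lambda_k(\cdot)$, $\langle S,\varrho,r\rangle$, etc., which lie near $Z$), one obtains $\bar S\in\bI_{k+1}(Y)$ with $\spt\bar S\subset X$, $\partial\bar S=\partial S$, $\mass(\bar S)\le C\mass(S)$, and $\spt\bar S\subset B(Z,C\eta)$ — the support bound coming from the support statements in Theorem~\ref{thm:factorization}\eqref{it:R-sigma} and Proposition~\ref{prop:homomorphisms-transporting}(5), together with the deformation theorem's support control. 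Since the construction depends on $r$ only through a bounded family of estimates, taking $r_i\to0$ along a good sequence and extracting a weak limit (with mass lower semicontinuity, and using the compactness/closure theorem for integral currents with uniformly bounded mass and boundary mass in a bounded set) yields the final $\bar S$.

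For the ``moreover'' statement, when $Y$ is a Banach space, I would use the chain homotopy $\Gamma_*$ from Proposition~\ref{prop:homomorphisms-transporting}, the homotopy current of the deformation theorem, and linearity of $g_\#$ to write the difference $\bar S-S$ as a boundary explicitly at each stage (each step — push-forward via $g$, deformation to the skeleton, transport via $\Lambda$ versus $h_\#$ — comes with a canonical homotopy chain, and a coning construction in $Y$ fills any residual cycle at scale $\eta$ with the coning inequality constant $1$). Concatenating these homotopies gives $W\in\bI_{k+2}(Y)$ with $\partial W=\bar S-S$; the mass bound $\mass(W)\le\eta C\mass(S)$ follows because each homotopy chain is ``$\eta$-thin'' — its mass carries an extra factor of the scale $\eta$ over the mass of the current being moved — and $\spt W\subset B(Z,C\eta)$ follows from the same support tracking as before. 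Passing $r_i\to0$ here requires a little care to produce a limiting $W$ simultaneously with the limiting $\bar S$, but this is again a compactness argument with uniformly bounded masses.

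The main obstacle I expect is the bookkeeping in the scale-by-scale mass estimate: one must decompose $S$ into dyadic annuli $\{2^{-j-1}\eta<\varrho\le 2^{-j}\eta\}$, track how $g$ expands mass by $s^{-(k+1)}$ on the $j$-th annulus, how the deformation theorem in $\Sigma'$ (whose simplices have side length $\varepsilon$ that one should tune per annulus, e.g. $\varepsilon\sim 2^{-j}\eta$) introduces no net loss thanks to the local mass estimates, and how $\Lambda_{k+1}$ contracts mass back by $(\varepsilon\Lip(h|_\sigma))^{k+1}\sim(2^{-j}\eta)^{k+1}$, so that the two powers of the scale cancel and the geometric series in $j$ converges to a constant multiple of $\mass(S)$ — with no additive error, which is the point of the paper's error-free notion of undistorted. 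The analogous cancellation giving the extra factor $\eta$ for the mass of $W$ (one power of the scale survives rather than cancelling) is the delicate computation behind the ``moreover'' part. Everything else — the slicing, the weak compactness, the embedding into a Banach space, the concatenation of chain homotopies — is routine given the results already established in the excerpt.
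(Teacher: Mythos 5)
Your overall architecture is the paper's: the factorization theorem (Theorem~\ref{thm:factorization}), push-forward by $g$, the refined deformation theorem in $\Sigma$, transport back by $\Lambda_{k+1}$ from Proposition~\ref{prop:homomorphisms-transporting}, and a limit $r_i\to 0$, with the per-simplex cancellation of $r_\sigma^{-(k+1)}$ against $r_\sigma^{k+1}$ doing the work (note, though, that there is no per-annulus tuning of side lengths: $\Sigma$ has unit-size simplices, and the scales enter only through the simplexwise Lipschitz constants of $g$ and $h$). But two steps you treat as routine are genuine gaps. First, you never arrange that $\|S\|$ puts little mass near $Z$. The paper's first move (Lemma~\ref{lem:bigger-space-to-fill}) replaces $S$ by $\llbracket\varepsilon\rrbracket\times S-\llbracket 0,\varepsilon\rrbracket\times\partial S$ in $[0,\varepsilon]\times Y$, which forces $\|S\|(\{\varrho\le r\})\le r(k+1)\mass(\partial S)$ and $\mass(\partial(S\on\{\varrho>r\}))\le\mass(\partial S)$. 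Without this, $\|S\|$ may charge $Z$ itself (so $\|S\|(\{\varrho\le r\})\not\to 0$), the slices $\langle S,\varrho,r\rangle$ need not be uniformly bounded in mass along any sequence $r_i\to 0$, and consequently neither the uniform bound on $\mass(\partial P_i)$ nor the convergence $\Fillvol_Y(\partial S-\partial P_i)\to 0$ (Lemma~\ref{lem:fillvol-T-bdry-P_r}) can be established. Moreover, your plan to ``fill the small inner pieces'' using $(\LC_k)$ or $(\EI_k)$ of $X$ is circular as stated: those boundary cycles ($\langle S,\varrho,r\rangle$, $h_\#$ of the deformation remainder, the $\Gamma_k$-terms) lie in $Y$ near $Z$, not in $X$, and producing fillings of them inside $X$ with controlled mass is essentially the statement being proved; the paper avoids this by letting $r_i\to 0$ and using the inner-mass control to make the discrepancy vanish in the limit.

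Second, the limit extraction: you invoke ``the compactness/closure theorem for integral currents with uniformly bounded mass and boundary mass in a bounded set,'' but $X$ and $Y$ are arbitrary complete metric spaces, so bounded sets are not compact and the Ambrosio--Kirchheim compactness theorem does not apply. The paper replaces it by Lemma~\ref{lem:cauchy-seq}, showing that $(P_i)$ is Cauchy in mass: on the simplices with $r_\sigma>r$ the integer coefficients of $P_i'$ stabilize and take only finitely many values, while the total mass of $\Lambda_{k+1}(P_i'\on\sigma)$ over the simplices with $r_\sigma\le r$ is at most $\bar{C}r\mass(\partial S)$ --- an estimate that again hinges on the preprocessing of Lemma~\ref{lem:bigger-space-to-fill}. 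Both the missing preprocessing and the missing Cauchy-in-mass argument must be supplied before your outline becomes a proof; granted these, the ``moreover'' part would indeed go through along the lines you sketch.
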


Notice that the constant $C$ is independent of the diameter of $Z$. The hypothesis that $Z$ be bounded can be dropped if one assumes that the whole subspace $X$ has finite Nagata dimension, see Theorem~\ref{thm:undistorted-unbounded-set} below. Theorem~\ref{thm:technical-version-main-thm} will be proved in Subsection~\ref{subsec:proof-thm-tech-version-main}. As a consequence of the theorem we obtain the following strengthening of Corollary~\ref{cor:EI-implies-CI}.

\begin{corollary}\label{cor:isop-lip-implies-cone}
 Let $X$ be a complete quasiconvex metric space of finite Nagata dimension. Suppose that $X$ has \((\LC_k)\) or $(\EI_k)$ for some $k\geq 0$. Then every cycle $T\in\bI_m(X)$ of bounded support with $m=0,\dots, k$ admits a filling $S\in\bI_{m+1}(X)$ satisfying $$\mass(S)\leq C\diam(\spt T)\cdot \mass(T)\quad\text{ and }\quad\diam(\spt S)\leq C\diam(\spt T)$$ for some constant $C>0$ depending only on the data of $X$. In particular, $X$ has $(\CI_k)$.
\end{corollary}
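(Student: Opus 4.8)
The plan is to embed $X$ isometrically into a Banach space, produce the cone filling there, and then import it into $X$ by means of Theorem~\ref{thm:technical-version-main-thm}. We may assume $T\neq 0$, for otherwise $S=0$ has all the required properties; then $d\coloneqq\diam(\spt T)\in(0,\infty)$, since a nonzero $m$-current with $m\geq 1$ cannot be supported on a single point (its support has Nagata dimension $\geq m$) and a nonzero $0$-cycle is supported on at least two points. Also, since $m\leq k$, property $(\LC_k)$ (resp.\ $(\EI_k)$) of $X$ includes $(\LC_m)$ (resp.\ $(\EI_m)$) with the same constants, directly from the definitions.

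First I would embed $X$ isometrically into $Y\coloneqq\ell_\infty(X)$; since $X$ is complete it is closed in $Y$, and $\spt T$ is unchanged under this embedding. Put $Z\coloneqq\spt T$, fix $x_0\in Z$, and take the cone of $T$ from $x_0$ inside the Banach space $Y$ exactly as in the proof that Banach spaces have $(\CI_k)$: this yields $S_0\in\bI_{m+1}(Y)$ with $\partial S_0=T$, $\mass(S_0)\leq d\,\mass(T)$, and $\spt S_0\subset B(x_0,d)\subset B(Z,d)$. Now $Z$ is closed, bounded with $\diam(Z)=d$, contained in $X$, and has Nagata dimension and constant bounded by those of $X$; moreover $\spt(\partial S_0)=\spt T=Z$ and $\spt S_0\subset B(Z,d)$. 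Applying Theorem~\ref{thm:technical-version-main-thm} (with $m$ in the role of $k$ and $\eta=d$) gives $\bar S\in\bI_{m+1}(Y)$ with $\spt\bar S\subset X$, $\partial\bar S=\partial S_0=T$, $\mass(\bar S)\leq C_1\mass(S_0)$, and $\spt\bar S\subset B(Z,C_1 d)$, where $C_1$ depends only on $m$ and the data of $X$ (the Nagata data of $Z$ being controlled by that of $X$).

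Since $\spt\bar S\subset X$ and $X$ is closed in $Y$, the current $\bar S$ corresponds to an element $S\in\bI_{m+1}(X)$ with the same mass and support. Then $\partial S=T$, $\mass(S)\leq C_1\mass(S_0)\leq C_1 d\,\mass(T)$, and $\diam(\spt S)\leq\diam(Z)+2C_1 d=(1+2C_1)d$. Taking $C$ to be the maximum of $1+2C_1$ over the finitely many $m\in\{0,\dots,k\}$, a constant depending only on the data of $X$, yields $\mass(S)\leq C\diam(\spt T)\,\mass(T)$ and $\diam(\spt S)\leq C\diam(\spt T)$. The final assertion is then immediate: $\Fillvol_X(T)\leq\mass(S)\leq C\diam(\spt T)\,\mass(T)$ for every bounded-support cycle $T\in\bI_m(X)$ with $0\leq m\leq k$, which is precisely $(\CI_k)$.

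There is no real obstacle left once Theorem~\ref{thm:technical-version-main-thm} is available; the crucial feature is that its distortion constant is independent of $\diam(Z)$. It is exactly this diameter-independence that turns the Banach cone bound $\mass(S_0)\leq d\,\mass(T)$ into a bound of the form $C\,d\,\mass(T)$ in $X$, rather than some uncontrolled function of $d$, so that one recovers the full coning inequality $(\CI_k)$ and not merely $(\EI_k)$. The one routine point worth checking carefully is the identification of a finite-mass current in $Y$ supported in the closed subset $X$ with a current in $X$.
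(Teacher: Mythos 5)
Your proposal is correct and follows essentially the same route as the paper: cone $T$ off from a point of $\spt T$ inside $\ell_\infty(X)$ to get a filling of mass at most $\diam(\spt T)\,\mass(T)$ supported near $\spt T$, then push it into $X$ via Theorem~\ref{thm:technical-version-main-thm} with $Z=\spt T$, using that its constant is independent of $\diam(Z)$. Your extra remarks (the trivial case $T=0$, inheritance of Nagata data by $Z$, and identifying a current of $Y$ supported in the closed set $X$ with a current in $X$) are routine points the paper leaves implicit.
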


The stronger version of the coning inequality asserted in the corollary is called {\it strong coning inequality} in \cite{KleinerStadler2019MoresQuasiflatsI} and \cite{KleinerStadler2020MoresQuasiflatsII} and plays an important role in these papers.

\begin{proof}
Let $m\in\{0,\dots, k\}$ be an integer and suppose that $T\in\bI_m(X)$ is a cycle of bounded support. View $X$ as a subset of $Y\coloneqq \ell_\infty(X)$ via a Kuratowski embedding. Fix $x_0\in \spt T$ and define $h\colon [0,1]\times Y\to Y$ by $h(t,y)\coloneqq (1-t)x_0 + ty$. The integral current $S\coloneqq h_\#(\bb{0,1}\times T)$ satisfies $\partial S = T$ and $\mass(S)\leq \diam(\spt T)\mass(T)$ as well as 
$$
\spt S\subset h\bigl([0,1]\times\spt T\bigr)\subset B\bigl(\spt T, \diam(\spt T)\bigr);
$$ 
see Section~\ref{subsec:homotopy-formula}. Applying Theorem~\ref{thm:technical-version-main-thm} with $Z=\spt T$, we obtain $\bar{S}\in\bI_{m+1}(X)$ with $\partial \bar{S}=T$ and  such that $$\mass(\bar{S}) \leq C\mass(S) \leq C\diam(\spt T)\mass(T)$$ as well as $$\spt\bar{S}\subset B\bigl(\spt T, C\diam(\spt T)\bigr)$$ for some constant $C$ only depending on the data of $X$. In particular, it follows that $\diam(\spt \bar{S})\leq (2C+1)\diam(\spt T)$. 
\end{proof}

Every complete metric space with $(\CI_k)$ also has $(\EI_k)$, as was shown in \cite{MR2153909}. Hence, by Corollary~\ref{cor:isop-lip-implies-cone}, if \(X\) is quasiconvex, has finite Nagata dimension and satisfies \((\LC_k)\) for some \(k\geq 1\), then \(X\) has \((\EI_k)\). In the following, we use this fact in combination with Theorem~\ref{thm:technical-version-main-thm}, to show that when \(X\) has finite Nagata dimension, then the conclusion of Theorem~\ref{thm:technical-version-main-thm} is also valid for currents \(S\in \bI_{k+1}(Y)\) for which the support of \(\partial S\) is unbounded.

\begin{theorem}\label{thm:undistorted-unbounded-set}
 Let $Y$ be a complete metric space and let $X\subset Y$ be a closed quasiconvex subset of finite Nagata dimension. Suppose furthermore that $X$ has \((\LC_k)\) or $(\EI_k)$ for some $k\geq 0$. Then for all subsets $Z\subset X$ and all $S\in\bI_{k+1}(Y)$ with $\spt \partial S\subset Z$ and $\spt S\subset B(Z,\eta)$ for some $\eta>0$, there exists $\bar{S}\in\bI_{k+1}(Y)$  which has support in $X$ and satisfies $\partial \bar{S} = \partial S$ as well as 
 \[
 \mass(\bar{S})\leq C\mass(S)\quad \text{ and } \quad \spt\bar{S} \subset B(Z, C\eta),
 \]
 where $C$ depends only on the data of $X$. Moreover, if $Y$ is a Banach space then there exists $W\in\bI_{k+2}(Y)$ with $\partial W=\bar{S}-S$ and such that $\mass(W)\leq \eta C\mass(S)$ and $\spt W\subset B(Z,C\eta)$.
\end{theorem}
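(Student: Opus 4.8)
The plan is to reduce this to the bounded-support case, Theorem~\ref{thm:technical-version-main-thm}, by cutting $S$ into bounded pieces with the help of the finite Nagata dimension of $X$. First I would reduce to the case that $Y$ is a Banach space: granting the statement (including the current $W$) whenever the ambient space is Banach, the general case follows by embedding $Y$ isometrically as a closed subset of $\ell_\infty(Y)$, applying the Banach version to $S$ viewed there, and observing that the resulting $\bar S$, being supported in $X\subset Y$, already lies in $\bI_{k+1}(Y)$ with all distances and supports unchanged; the current $W$ obtained this way lives a priori only in $\ell_\infty(Y)$, which is why that assertion is made only when $Y$ itself is Banach. I may also assume that $X$ has $(\EI_k)$, by Corollary~\ref{cor:isop-lip-implies-cone}. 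Thus from now on $Y$ is Banach and admits coning inequalities with controlled support in every dimension.

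Next I would chop $S$. Writing $n\coloneqq\dim_N X$, fix $\eta>0$, set $s\coloneqq 4\eta$, and choose a $cs$-bounded covering $\{B_\alpha\}$ of $X$ of $s$-multiplicity $\le n+1$; then $\{B(B_\alpha,2\eta)\}$ covers $B(X,\eta)\supset\spt S$ and still has multiplicity $\le n+1$. Keeping only the members meeting $\spt\|S\|$ and enumerating them as $(B_{\alpha_m})_{m\in\N}$, I would pick for each $m$ a generic radius $t_{\alpha_m}\in(\eta,2\eta)$, using the slicing inequality, so that $c_m\coloneqq\langle S,d(\cdot,B_{\alpha_m}),t_{\alpha_m}\rangle$ is an integral $k$-cycle of mass $\le\eta^{-1}\|S\|(\{\eta<d(\cdot,B_{\alpha_m})<2\eta\})$ and $\partial S$ has no mass on $\{d(\cdot,B_{\alpha_m})=t_{\alpha_m}\}$. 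With $U_m\coloneqq\{d(\cdot,B_{\alpha_m})<t_{\alpha_m}\}$ and $E_m\coloneqq U_m\setminus\bigcup_{l<m}U_l$, an iterated-slicing argument shows that $S_m\coloneqq S\on E_m$ and $(\partial S)\on E_m$ are integral and that $\partial S_m=(\partial S)\on E_m+c_m'$, where $c_m'$ is an integral $k$-cycle supported on the ``active walls'' of $E_m$, each a piece of some sphere $\{d(\cdot,B_{\alpha_l})=t_{\alpha_l}\}$ with $l\le m$. Since the $E_m$ partition $\spt\|S\|$, one has $\sum_m\mass(S_m)\le\mass(S)$; and since each active wall separates exactly two of the pieces $E_m$, the slices along it enter two of the $c_m'$ with opposite signs, which after a bookkeeping argument yields $\sum_m\mass(c_m')\le\tfrac{2(n+1)}{\eta}\mass(S)$. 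Each $E_m$ lies in a bounded set of diameter $\lesssim\eta$ contained in $B(Z,C\eta)$.

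The key point is then to get rid of the slices $c_m'$, which are small $k$-cycles lying slightly \emph{off} $X$, so that neither Theorem~\ref{thm:technical-version-main-thm} nor a coning argument inside $X$ applies to them. I would cancel them by coning fillings computed in the Banach space $Y$: for every wall $w$ fix a coning filling $g_w$ of the slice $\sigma_w$ of $S$ along $w$, with $\mass(g_w)\le\diam(w)\mass(\sigma_w)\lesssim\eta\,\mass(\sigma_w)$ and $\spt g_w$ within $\lesssim\eta$ of $w$, and set $G_m\coloneqq\sum_w\varepsilon_{m,w}g_w$, the signed sum over the walls of $E_m$; this converges in mass, satisfies $\partial G_m=c_m'$, and --- crucially --- $\sum_m G_m=0$, since each wall contributes to the two pieces it separates with cancelling signs. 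Then $\hat S_m\coloneqq S_m-G_m\in\bI_{k+1}(Y)$ has $\partial\hat S_m=(\partial S)\on E_m$ supported in the bounded set $Z_m\coloneqq\spt((\partial S)\on E_m)\subset X$, with $\spt\hat S_m\subset B(Z_m,C\eta)$. When $Z_m\ne\varnothing$ I apply Theorem~\ref{thm:technical-version-main-thm} to $\hat S_m$ with $Z=Z_m$ (whose Nagata data is controlled by that of $X$), obtaining $\bar S_m$ supported in $X$ with $\partial\bar S_m=(\partial S)\on E_m$, $\mass(\bar S_m)\le C'\mass(\hat S_m)$, $\spt\bar S_m\subset B(Z,C'\eta)$, and a $W_m\in\bI_{k+2}(Y)$ with $\partial W_m=\bar S_m-\hat S_m$, $\mass(W_m)\lesssim\eta\,\mass(\hat S_m)$, $\spt W_m\subset B(Z,C'\eta)$; when $Z_m=\varnothing$, $\hat S_m$ is a cycle, so I set $\bar S_m\coloneqq 0$ and take $W_m$ to be a coning filling of $-\hat S_m$ in $Y$. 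Finally $\bar S\coloneqq\sum_m\bar S_m$ and $W\coloneqq\sum_m W_m$: since $\sum_m\mass(\hat S_m)\le\mass(S)+\sum_m\mass(G_m)\lesssim\mass(S)$, both series converge in mass with convergent boundary series, so $\bar S\in\bI_{k+1}(Y)$ is supported in $X$ with $\mass(\bar S)\le C\mass(S)$ and $\spt\bar S\subset B(Z,C\eta)$, while $\partial\bar S=\sum_m(\partial S)\on E_m=\partial S$ and $\partial W=\sum_m(\bar S_m-\hat S_m)=\bar S-\sum_m S_m+\sum_m G_m=\bar S-S$ using $\sum_m G_m=0$, with $\mass(W)\le\eta\,C\mass(S)$, $\spt W\subset B(Z,C\eta)$. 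The general (non-Banach) case then follows from the reduction above. The main obstacle is exactly the one just highlighted --- the chopping unavoidably creates the cycles $c_m'$ slightly outside $X$, and one gets around this by using their coning fillings $G_m$ in the ambient Banach space only as auxiliary objects (Theorem~\ref{thm:technical-version-main-thm} applied to $S_m-G_m$ absorbs them) and by choosing them consistently along shared walls so that $\sum_m G_m=0$, which is what the ``moreover'' part needs; the remaining difficulty is the routine but delicate slicing bookkeeping needed to make all $S\on E_m$, $\sigma_w$, $c_m'$ integral currents (the last two genuine cycles) and to prove $\sum_m\mass(c_m')\lesssim\eta^{-1}\mass(S)$ by exploiting that each wall bounds exactly two pieces, since $Y$ is not assumed doubling and a naive bounded-overlap estimate is unavailable.
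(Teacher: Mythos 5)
Your overall scheme—reduce to a Banach ambient space, chop $S$ into $\eta$-scale pieces via a Nagata covering, and feed the pieces to Theorem~\ref{thm:technical-version-main-thm}—founders on one point that is not bookkeeping: the walls $\{d(\cdot,B_{\alpha_l})=t_{\alpha_l}\}$ cut not only $S$ but also $\partial S$. The support of $\partial S$ lies in $Z\subset X$, and a level set of $d(\cdot,B_{\alpha_l})$ at level $t_{\alpha_l}\in(\eta,2\eta)$ will in general cross it transversally; choosing $t_{\alpha_l}$ so that $\lVert\partial S\rVert$ gives no mass to the wall does not make the slice $\langle \partial S,d(\cdot,B_{\alpha_l}),t_{\alpha_l}\rangle$ vanish. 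Consequently $c'_m=\partial(S\on E_m)-(\partial S)\on E_m$ has boundary $\partial c'_m=-\partial\bigl((\partial S)\on E_m\bigr)\neq 0$ in general: it is \emph{not} a cycle, so the coning fillings $g_w$, $G_m$ with $\partial G_m=c'_m$ need not exist, and no integral current $\hat S_m$ can have $\partial\hat S_m=(\partial S)\on E_m$ at all, since boundaries of integral currents are automatically cycles. A concrete instance: $Y=\R^3$, $X=Z$ the $xy$-plane, $S$ a hemisphere over a circle in the plane of radius comparable to $\eta$; a wall crossing the circle yields for $c'_m$ an arc on the hemisphere with endpoints on the circle. (A smaller, fixable slip: keeping only covering members that meet $\spt\lVert S\rVert$ need not make the $U_m$ cover $\spt S$; one should keep those within distance $\sim\eta$ of $\spt\lVert S\rVert$, which is still a countable family.)

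Even if you repair the construction by additionally filling the wall-slices of $\partial S$ (these \emph{are} cycles, supported in $Z$) inside $X$ via the strong coning inequality, and re-closing each piece accordingly, the corrections have total mass of order $\eta\,\mass(\partial S)$: the slices of $\partial S$ over all walls total about $(n+1)^2\eta^{-1}\mass(\partial S)$ in mass, and coning at scale $\eta$ multiplies this by $\eta$. The conclusion $\mass(\bar S)\leq C\mass(S)$ admits no such additive term, and $\eta\,\mass(\partial S)$ is not controlled by $\mass(S)$; in the application to Theorem~\ref{thm:intro-undistorted-main} the only available $\eta$ may be enormous, so this loss is fatal there. This is exactly why the paper does not chop at scale $\eta$: Theorem~\ref{thm:technical-version-main-thm} only needs $Z$ \emph{bounded}, not of small diameter, so the paper makes a single cut along a sphere $\{\rho=r\}$ of very large radius, with $r$ chosen so that the slices and tails of both $S$ and $\partial S$ have mass at most $\varepsilon$; the small slice of $\partial S$ is filled inside $X$ (using $(\EI_k)$ together with Lemma~\ref{lem:wenger-bounded-distance-to-support}; for $k\leq 1$ it is even zero for mass reasons), the slice of $S$ is corrected by an $\varepsilon$-small filling in $Y$, Theorem~\ref{thm:technical-version-main-thm} is applied to the resulting boundedly supported piece, and all correction terms disappear as $\varepsilon\to 0$. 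Your fine Nagata-scale decomposition is thus both unnecessary for the reduction and the source of the uncontrolled error.
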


\begin{proof}
We first treat the case $k\geq 1$. The space $X$ has $(\CI_k)$ by Corollary~\ref{cor:isop-lip-implies-cone} and hence also $(\EI_k)$ by \cite{MR2153909}. By possibly embedding $Y$ isometrically into $\ell_\infty(Y)$ we may further assume that $Y$ has $(\EI_k)$. Lemma~\ref{lem:wenger-bounded-distance-to-support} thus implies that for every $\eta>0$ the following holds. For every $\varepsilon>0$ sufficiently small and every cycle $U\in\bI_m(Y)$ with $1\leq m\leq k$ and $\mass(U)\leq\varepsilon$ there is a filling $V\in\bI_{m+1}(Y)$ with $\mass(V)\leq \varepsilon$ and $\spt V\subset B(\spt U,\eta)$; if $\spt U\subset X$ then $\spt V\subset X$.

Let $Z$, $S$, and $\eta$ be as in the statement of the theorem and set $T\coloneqq \partial S$. Fix $y_0\in Y$ and let $\rho\colon Y\to\R$ be the distance function from $y_0$. Notice that $\langle T,\rho,r\rangle$ and $\langle S,\rho,r\rangle$ are integral currents for almost every $r>0$ and
$$\partial(T\on \{\rho\leq r\}) = \langle T,\rho,r\rangle = - \partial \langle S,\rho,r\rangle.$$ 
Let $\varepsilon\in(0,1)$ be sufficiently small. There exists $r>0$ arbitrarily large such that the integral currents $\langle T,\rho,r\rangle$ and $\langle S,\rho,r\rangle$ are supported on $\spt T\cap\{\rho = r\}$ and $\spt S\cap \{\rho=r\}$, respectively, and satisfy $\mass(\langle T,\rho,r\rangle)\leq\varepsilon$ and $\mass(\langle S,\rho,r\rangle)\leq\varepsilon$. If $r>0$ was chosen large enough we may also assume that $\mass(T\on\{\rho>r\})\leq\varepsilon$ and $\mass(S\on\{\varrho>r\})\leq \varepsilon$. 

In the following, we consider the cases \(k=1\) and \(k\geq 2\) separately. Assume first that $k\geq 2$. For $s,t>0$ we abbreviate $A(s,t)\coloneqq B(Z,s)\cap B(y_0,t)$. By the observation made at the beginning of the proof there exists a filling $T'\in\bI_k(Y)$ of the cycle $\langle T,\rho,r\rangle$ with \(\mass(T')\leq \varepsilon\) and 
$\spt T'\subset X\cap A(\eta,r+\eta)$. Define cycles by $$T_1\coloneqq T\on\{\rho>r\} + T'\quad\text{and}\quad T_2\coloneqq T\on\{\rho\leq r\} - T'$$ and notice that $T=T_1+T_2$. We will find a filling of $T$ by constructing suitable fillings of $T_1$ and $T_2$. Since $T_1$ has mass at most $2\varepsilon$ and support in $X\cap B(Z,\eta)$ it has a filling $\bar{S}_1$ of mass at most $2\varepsilon$ and support in $X\cap B(Z,2\eta)$, again by the observation at the beginning of the proof. Next, we construct a filling of $T_2$. Notice that $T_2$ has support in the set $Z'\coloneqq X\cap A(\eta, r+5\eta)$. The cycle $ T' + \langle S,\rho,r\rangle$  is contained in \(\bI_k(Y)\), has mass  less than or equal to $ 2\varepsilon$ and is supported in $A(\eta, r+\eta)$, so it has a filling $S'\in\bI_{k+1}(Y)$ with $\mass(S')\leq 2\varepsilon$ and support in $A(2\eta,r+2\eta)$. Thus, $S_2\coloneqq S\on\{\rho\leq r\}-S'$ is a filling of $T_2$ satisfying $\spt S_2\subset A(2\eta, r+2\eta)\subset B(Z', 2\eta)$ and $\mass(S_2)\leq \mass(S) + 2\varepsilon$. Hence, by Theorem~\ref{thm:technical-version-main-thm}, there exists a filling $\bar{S}_2\in\bI_{k+1}(Y)$ of $T_2$ with support in $X$ such that $$\mass(\bar{S}_2)\leq C\mass(S_2)\leq C\mass(S) + 2C\varepsilon$$ and $\spt \bar{S}_2\subset B(Z',2C\eta)\subset B(Z,3C\eta)$ for some constant $C\geq 1$ only depending on the data of $X$. It follows that $\bar{S}\coloneqq \bar{S}_1 + \bar{S}_2$ is a filling of $T$ with support in $X$ and satisfying $\spt \bar{S}\subset B(Z,3C\eta)$ and $$\mass(\bar{S})\leq \mass(\bar{S}_1) + \mass(\bar{S}_2)\leq C\mass(S) + 2(C+1)\varepsilon\leq 2C\mass(S),$$ where the last inequality holds provided $\varepsilon>0$ was chosen small enough.

In order to prove the last statement in the theorem, suppose $Y$ is a Banach space. Set $R_1\coloneqq \bar{S}_1 - S' - S\on\{\varrho>r\}$ and notice that $R_1$ is an integral cycle of mass at most $5\varepsilon$ and with support in $B(Z,2\eta)$. Therefore, if $\varepsilon>0$ was chosen sufficiently small, then $R_1$ has a filling $W_1\in\bI_{k+2}(Y)$ with $\mass(W_1)\leq 5\varepsilon$ and $\spt W_1\subset B(Z,3\eta)$, compare with the observation at the beginning of the proof. Moreover, by Theorem~\ref{thm:technical-version-main-thm}, there exists $W_2\in \bI_{k+2}(Y)$ with $\partial W_2 = \bar{S}_2 - S_2$ and $\spt W_2\subset B(Z, 3C\eta)$ and such that $\mass(W_2)\leq 3\eta C\mass(S_2)$. Consequently, if $\varepsilon>0$ was chosen small enough, the integral current $W\coloneqq W_1+W_2$ satisfies $\partial W = \bar{S}-S$, has support in $B(Z,C\eta)$ and mass bounded by $\eta C\mass(S)$ for some $C$ only depending on the data of $X$. This concludes the proof of the case $k\geq 2$. 
 
If \(k=1\), then \(\mass( \langle T,\rho,r\rangle)\) is a non-negative integer satisfying \(\mass( \langle T,\rho,r\rangle)< \varepsilon\). Hence, \(\langle T,\rho,r\rangle=0\) and it follows that \(T_1= T\on\{\rho> r\}\), \(T_2=T\on \{ \rho \leq r\}\) and \(\langle S,\rho,r\rangle\) are integral cycles. Now, the same reasoning as in the proof of the case \(k\geq 2\) yields a filling \(\bar{S}\) of $T$ with the desired properties and, if $Y$ is a Banach space, a filling $W$ of $\bar{S}- S$ with the desired properties. This completes the proof when $k\geq 1$.

 We finally treat the remaining case $k=0$. Let $\rho$ be as above. Similarly to the case $k=1$ there exist arbitrarily large $r>0$ such that $\langle S,\rho,r\rangle=0$. Thus, there clearly exists $r$ such that the integral current $S'\coloneqq S\on\{\rho\leq r\}$ satisfies $\partial S'=\partial S$ and has bounded support. Therefore, applying Theorem~\ref{thm:technical-version-main-thm} to $S'$ we obtain a suitable filling of $\partial S$ in $X$. The moreover part can  easily be proved as above.
\end{proof}

We can finally prove Theorem~\ref{thm:intro-undistorted-main}.

\begin{proof}[Proof of Theorem~\ref{thm:intro-undistorted-main}]
Let $X$ and $Y$ be as in the statement of the theorem and let $T\in\bI_m(X)$ be a non-trivial cycle for some \(m=0, \dots, k\). By replacing $Y$ by $\ell_\infty(Y)$ we may assume that $Y$ is quasiconvex and has $(\EI_k)$. In particular, $\Fillvol_Y(T)$ is finite. To apply Theorem~\ref{thm:undistorted-unbounded-set}, we must find a filling $S\in\bI_{m+1}(Y)$ of $T$ with $\mass(S)\leq 2\Fillvol_Y(T)$ such that $\spt S\subset B(\spt T, \eta)$ for some finite $\eta>0$. When $m\geq 1$, Lemma~\ref{lem:wenger-bounded-distance-to-support} implies that such an $S$ exists. When $m=0$ then for any filling $S$ of $T$ with $\mass(S)\leq 2\Fillvol_Y(T)$ one constructs a filling of $T$  with smaller mass and bounded support by restricting $S$ to a suitable ball as at the end of the proof of Theorem~\ref{thm:undistorted-unbounded-set}. In either case, we apply Theorem~\ref{thm:undistorted-unbounded-set} to $S$ to show that there exists a filling $\bar{S}\in\bI_{m+1}(Y)$ of $T$ with support in $X$ such that $\mass(\bar{S})\leq C\mass(S)\leq 2C\Fillvol_Y(T)$ for some constant $C$ depending only on the data of $X$. This completes the proof.
\end{proof}

\subsection{Proof of Theorem~\ref{thm:technical-version-main-thm}}\label{subsec:proof-thm-tech-version-main}

Let $Z$, $X$, $Y$, $S$, $\eta$  be as in the statement of Theorem~\ref{thm:technical-version-main-thm}. We may of course assume that $\partial S\not= 0$ and thus $k\leq \dim_N(Z)$; see Section~\ref{sec:3.1-ref-nagata}.
By possibly embedding $Y$ isometrically into $\ell_\infty(Y)$ we may assume that $Y$ itself is a Banach space. 
In particular, $Y$ has \((\LC_n)\) and $(\EI_n)$ for all $n\in\N$, with constants only depending on $n$ (see \cite{MR2153909}). 
Let $\rho\colon Y\to\R$ be the distance function from the set $Z$.

\begin{lemma}\label{lem:bigger-space-to-fill}
 We may assume that
 \begin{equation}\label{eq:lem-vanishing-mass}
     \|S\|\big(\{\rho\leq r\}\big)\leq r\,(k+1)\mass(\partial S)
\quad \text{ and } \quad
\mass\bigl(\partial(S\on \{\rho>r\})\bigr)\leq \mass(\partial S)
 \end{equation}
 for almost every $r>0$ small enough.
\end{lemma}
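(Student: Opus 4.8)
The plan is to realize \eqref{eq:lem-vanishing-mass} by replacing $S$ with a carefully chosen competitor $\tilde S$ having the same boundary and comparable mass and support, at the (harmless) cost of enlarging the already Banach space $Y$ by one extra isometric coordinate. Write $T\coloneqq\partial S$ (so $T\neq0$), and pass to the Banach space $Y'\coloneqq Y\oplus_\infty\R$, with $Y$ isometrically embedded as $Y\times\{0\}$; note that $X$ stays quasiconvex and retains $(\LC_k)$ or $(\EI_k)$, that $Z$ keeps its Nagata dimension, and that $\rho'\coloneqq d_{Y'}(\cdot,Z)$ is given by $\rho'(y,u)=\max\{\rho(y),|u|\}$, hence restricts to $\rho$ on $Y\times\{0\}$. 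Fix the unit vector $e\coloneqq(0,1)\in Y'$, set $\sigma_s(w)\coloneqq w+se$ (an isometry of $Y'$) for $s\in\R$, and $H(t,w)\coloneqq w+te$ for $(t,w)\in[0,r_0]\times Y'$, where $r_0>0$ will be chosen small. The competitor is
\[
\tilde S\coloneqq\sigma_{r_0\#}S-H_\#\bigl(\bb{0,r_0}\times T\bigr)\in\bI_{k+1}(Y').
\]

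First I would record the elementary properties of $\tilde S$. Proposition~\ref{prop:homotopy} together with $\partial T=0$ gives $\partial H_\#(\bb{0,r_0}\times T)=\sigma_{r_0\#}T-T$, whence $\partial\tilde S=\sigma_{r_0\#}T-(\sigma_{r_0\#}T-T)=T=\partial S$. Since both pointwise Lipschitz constants of $H$ equal $1$, Lemma~\ref{lem:mass-of-pushforward-of-product} gives $\mass(H_\#(\bb{0,r_0}\times T))\le(k+1)r_0\mass(T)$, so $\mass(\tilde S)\le\mass(S)+(k+1)r_0\mass(T)$. Moreover $\sigma_{r_0}$ maps $\spt S\subset B(Z,\eta)$ into $\{\rho'\ge r_0\}\cap B(Z,\eta)$ and $H$ maps $[0,r_0]\times\spt T$ into $\{\rho'\le r_0\}$, so $\spt\tilde S\subset B(Z,\max\{\eta,r_0\})$. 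Choosing $r_0\le\min\{\eta,\ \mass(S)/((k+1)\mass(T))\}$ — both positive because $\partial S\neq0$ — the current $\tilde S$ again satisfies all the hypotheses of Theorem~\ref{thm:technical-version-main-thm} with the same $Z$, $X$ and $\eta$, and $\mass(\tilde S)\le2\mass(S)$.

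Next I would verify \eqref{eq:lem-vanishing-mass} for $\tilde S$ and every $r$ with $0<r<r_0$. The key observation is that $x\in\spt T\subset Z$ forces $\rho(x)=0$, so $\rho'(H(t,x))=t$ exactly; this is the one point where the auxiliary coordinate is essential, since inside $Y$ alone one only gets $\rho(H(t,x))\le t$, which is the wrong direction for the estimate. Consequently $\sigma_{r_0\#}S$ contributes nothing to $\{\rho'\le r\}$, and Lemma~\ref{lem:mass-of-pushforward-of-product} gives $\|\tilde S\|(\{\rho'\le r\})\le\|H_\#(\bb{0,r_0}\times T)\|(\{\rho'\le r\})\le(k+1)\,r\,\mass(T)$, which is the first inequality (recall $\mass(\partial\tilde S)=\mass(\partial S)=\mass(T)$). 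For the second, $\spt T\subset\{\rho'=0\}$ yields $\partial(\tilde S\on\{\rho'>r\})=-\langle\tilde S,\rho',r\rangle$; the $\sigma_{r_0\#}S$-part of this slice vanishes for $r<r_0$, while compatibility of slicing with push-forward, the identity $\rho'\circ H=\pi_1$ on $\spt(\bb{0,r_0}\times T)$ (projection to $[0,r_0]$), and the standard slice identity $\langle\bb{0,r_0}\times T,\pi_1,r\rangle=\pm\bb{r}\times T$ give $\langle\tilde S,\rho',r\rangle=\mp H_{r\#}T$; since $\Lip(H_r)=1$, Lemma~\ref{lem:rajala-wenger} then gives $\mass(\partial(\tilde S\on\{\rho'>r\}))=\mass(H_{r\#}T)\le\mass(T)=\mass(\partial S)$. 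Thus \eqref{eq:lem-vanishing-mass} holds for a.e.\ $r\in(0,r_0)$, i.e.\ for a.e.\ small $r$.

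It remains to see that this substitution is legitimate, and this bookkeeping is the only delicate part. The conclusion of Theorem~\ref{thm:technical-version-main-thm} applied to $(Y',\tilde S,\eta)$ supplies $\bar S\in\bI_{k+1}(Y')$ with $\spt\bar S\subset X\subset Y$, $\partial\bar S=\partial\tilde S=\partial S$, $\mass(\bar S)\le C\mass(\tilde S)\le2C\mass(S)$ and $\spt\bar S\subset B(Z,C\eta)$; as $\spt\bar S\subset Y$, this $\bar S$ lies in $\bI_{k+1}(Y)$ and is precisely the filling required for $S$ (rename $2C$ as $C$). For the ``moreover'' statement one checks $\tilde S-S=\partial\bigl(G_\#(\bb{0,r_0}\times S)\bigr)$ with $G(s,w)\coloneqq w+se$ — again from Proposition~\ref{prop:homotopy} and $G_\#(\bb{0,r_0}\times T)=H_\#(\bb{0,r_0}\times T)$ — where $W_0\coloneqq G_\#(\bb{0,r_0}\times S)$ satisfies $\mass(W_0)\le(k+2)r_0\mass(S)\le(k+2)\eta\mass(S)$ and $\spt W_0\subset B(Z,\eta)$ by Lemma~\ref{lem:mass-of-pushforward-of-product}; combining $W_0$ with the filling $W_1$ of $\bar S-\tilde S$ returned by the theorem for $\tilde S$, and applying the $1$-Lipschitz retraction $p\colon Y'\to Y$, $(w,u)\mapsto(w,0)$, which fixes $\spt\bar S\cup\spt S$, produces $W\coloneqq p_\#(W_0+W_1)\in\bI_{k+2}(Y)$ with $\partial W=\bar S-S$, $\mass(W)\le\eta C\mass(S)$ and $\spt W\subset B(Z,C\eta)$. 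Each individual estimate above is immediate from Lemmas~\ref{lem:mass-of-pushforward-of-product} and \ref{lem:rajala-wenger} and the standard slicing identities; the work, and the main obstacle, lies in organizing this reduction — verifying that the modified data still meets every hypothesis with comparable constants, and that the extra $\R$-factor can be discarded at the end.
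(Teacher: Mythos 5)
Your construction is, up to an isometric change of model, exactly the paper's: your $\tilde S=\sigma_{r_0\#}S-H_\#\bigl(\bb{0,r_0}\times\partial S\bigr)$ in $Y\oplus_\infty\R$ is the image of the paper's competitor $\hat S=\llbracket\varepsilon\rrbracket\times S-\bb{0,\varepsilon}\times\partial S$ in $[0,\varepsilon]\times Y$ under $(t,y)\mapsto y+te$, and the proof is correct. The only differences are cosmetic: you verify \eqref{eq:lem-vanishing-mass} through slicing identities (whose product-slice ingredient is the same small approximation argument the paper writes out for $\hat S\on\{\hat\rho\leq r\}=-\bb{0,r}\times\partial S$), and you spell out the substitution bookkeeping, including the Banach-space ``moreover'' clause, which the paper subsumes in ``it is clearly enough to find \dots''.
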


\begin{proof} 
 It is clearly enough to find a metric space $\hat{Y}$ containing $Y$ and a current $\hat{S}\in\bI_{k+1}(\hat{Y})$ satisfying the properties stated in the lemma and with $\partial \hat{S} = \partial S$, $\mass(\hat{S})\leq 2\mass(S)$ as well as $\spt\hat{S}\subset B(Z,2\eta)$. 
 
 Let $0<\varepsilon<\eta$ be so small that $\varepsilon(k+1)\mass(\partial S)\leq \mass(S)$.
 We define \(\hat{Y} \coloneqq [0,\varepsilon]\times Y\) and identify $Y$ with the subset $\{0\}\times Y\subset \hat{Y}$. We set
 \begin{equation}\label{eq:DefinitionOfS}
\hat{S}\coloneqq \llbracket \varepsilon\rrbracket \times S-\bb{0, \varepsilon} \times \partial S.
\end{equation}
 Clearly, \(\hat{S}\in \bI_{k+1}(\hat{Y})\) and \(\partial \hat{S}=\llbracket 0 \rrbracket \times \partial S\). It follows from Lemma~\ref{lem:mass-of-pushforward-of-product} that $$\mass(\hat{S})\leq \mass(S) + \varepsilon(k+1)\mass(\partial S) \leq 2\mass(S).$$ Since \(\spt\hat{S}\subset [0,\varepsilon]\times \spt S\), we infer $\spt\hat{S} \subset B(Z, 2\eta)$.  We now show that \(\hat{S}\) satisfies \eqref{eq:lem-vanishing-mass}. Let \(\hat{\rho}\colon \hat{Y}\to \R\) denote the distance function from \(Z\). We claim that 
 \begin{equation}\label{eq:slices-1}
 \hat{S}\on \{\hat{\rho} \leq r\}=-\bb{0,r}\times\partial S
 \end{equation}
 for every \(r\in (0, \varepsilon)\). To simplify the notation, we set \(Q_r\coloneqq \bb{0, r}\times \partial S\). To establish \eqref{eq:slices-1}, it suffices to show that \(Q_\varepsilon \on \{\hat{\rho}\leq r\}=Q_r\), as \(\spt \bigl(\bb{\varepsilon}\times S\bigr)  \subset \{\varepsilon\}\times \spt S\) and thus \(\bigl(\bb{\varepsilon}\times S\bigr)\on \{\hat{\rho}\leq r\}=0\) for all \(r\in (0,\varepsilon)\). Choose a sequence \(\ell_j\colon \R\to [0,1]\) of Lipschitz functions such that \(\ell_j\to \mathbbm{1}_{[0,r]}\) in \(L^1(\R)\) as \(j\to \infty\).
 Letting \(\hat{\rho}_j\coloneqq \ell_j\circ \hat{\rho}\) we observe that \(\hat{\rho}_j\to \mathbbm{1}_{\{\hat{\rho}\leq r\}}\) in \(L^1\bigl(\hat{Y}, \lVert Q_\varepsilon \rVert\bigr)\) as \(j\to \infty\). Moreover, using that \(\hat{\rho}_{jt}=\ell_j(t)\) on \(Z\), we find that 
 \(Q_\varepsilon( \hat{\rho}_j\,\pi_0, \pi_1, \dots, \pi_{k+1})\) is equal to 
 \[
 \sum_{i=1}^{k+1}(-1)^{i+1}\int_{0}^\varepsilon  \ell_j(t) \cdot \partial S\Bigl(\pi_{0t} \frac{\partial \pi_{it}}{\partial t}, \pi_{1t}, \dots, \pi_{(i-1)t}, \pi_{(i+1)t}, \dots, \pi_{(k+1)t} \Bigr)\, dt
 \]
 for all \((\pi_0, \dots, \pi_{k+1})\in \mathcal{D}^{k+1}(\hat{Y})\).
Hence, by letting \(j\to \infty\), we obtain \(Q_\varepsilon \on \{\hat{\rho}\leq r\}=Q_r\) and \eqref{eq:slices-1} follows.  Because of \eqref{eq:slices-1}, we have
\begin{align*}
\lVert \hat{S} \rVert \bigl(\{ \hat{\rho} \leq r\}\bigr)&=\mass(\bb{0,r}\times \partial S)\leq r\, (k+1) \mass(\partial S),\\
\mass\bigl(\partial(\hat{S}\on \{\hat{\rho}>r\})\bigr)&= \mass\bigl(\bb{r}\times \partial S)=\mass(\partial S)
\end{align*}
for every \(r\in (0, \varepsilon)\). This completes the proof.
\end{proof}

Next, denote by $n$ the Nagata dimension of $Z$. By applying Theorem~\ref{thm:factorization} with $B=B(Z,\eta)$, we obtain an \((n+1)\)-dimensional simplicial complex \(\Sigma\) equipped with the \(\ell_2\)-metric and maps $f\colon Y\to Y$, $g\colon Y\setminus Z\to \Sigma$, and $h\colon \Sigma\to Y$ with $h(\Sigma^{(0)})\subset Z$ satisfying the properties listed in the statement of that theorem. In particular, \(\Sigma\) is quasiconvex, $g(B\setminus Z)\subset \Sigma$ and $h$ is Lipschitz on $\Sigma$. For the remainder of this section we denote by \(C\) the constant of Theorem~\ref{thm:factorization}. Notice that \(C\) does not depend on the diameter of \(Z\), but the Lipschitz constant of $h$ might.

For almost every $r>0$ the current $S_r\coloneqq S\on \{\rho>r\}$ belongs to $\bI_{k+1}(Y)$ and $\spt(\partial S_r)\subset \{\rho=r\}$. 
Fix a decreasing sequence \((r_i)\) of positive real numbers converging to zero such that \(S_{\hspace{-0.1em}r_i}\in \bI_{k+1}(Y)\) and \eqref{eq:lem-vanishing-mass} holds with $r=r_i$ for all \(i\geq 1\). Notice that $S_{\hspace{-0.1em}r_i}$ is supported on $\{\rho\geq r_i\}$ and $g$ is $Cr_{i}^{-1}$-Lipschitz on this set, so the current $S_i'\coloneqq g_\#S_{\hspace{-0.1em}r_i}$ is well-defined and an element of $\bI_{k+1}(\Sigma)$. Let \(P_i^\prime \in\poly_{k+1}(\Sigma^{(k+1)})\), \(R_i^\prime\in \bI_{k+1}(\Sigma)\), \(Q_i^\prime\in \bI_{k+2}(\Sigma)\) with  
\begin{equation}\label{eq:FFSplitting}
S_i^\prime=P_i^\prime+R_i^\prime+\partial Q_i^\prime
\end{equation}
be the integral currents obtained by applying the deformation theorem (Theorem \ref{thm:Federer-Fleming}) to \(S_i^\prime\).
Further, let $\Lambda_m$ and $\Gamma_m$ be the homomorphisms from Proposition~\ref{prop:homomorphisms-transporting} and define
\begin{equation}\label{eq:definition-of-Pi}
P_i\coloneqq \Lambda_{k+1}(P_i')\in\bI_{k+1}(X).
\end{equation}
In what follows, we will show that the sequence $(P_i)$ converges to a filling of $T$ which has the desired properties.

To keep track of the constants, we denote by \(K\) the maximum of the constants appearing in Theorem~\ref{thm:Federer-Fleming} and  Proposition~\ref{prop:homomorphisms-transporting}. By construction, \(K\) depends only on the data of \(X\) and on the Nagata dimension and constant of $Z$. The next two lemmas show that the \(P_i\)'s defined in \eqref{eq:definition-of-Pi} have uniformly bounded mass and boundary mass and the sequence \((\partial P_i)\) converges weakly to \(\partial S\). 

\begin{lemma}\label{lem:bound-P_r}
For all sufficiently large  \(i\geq 1\) we have $\spt P_i\subset B(Z,\bar{C}\eta)$ as well as
 $$
 \mass(P_i)\leq \bar{C}\mass(S)\quad\text{and}\quad \mass(\partial P_i)\leq \bar{C}\mass(\partial S),
 $$
 where \(\bar{C}\) is a constant depending only on \(n\), \(C\) and \(K\). 
\end{lemma}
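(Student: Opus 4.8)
The plan is to trace the mass and support of $P_i$ backwards through the three constructions defining it: first through $\Lambda_{k+1}$ (Proposition~\ref{prop:homomorphisms-transporting}), then through the deformation theorem (Theorem~\ref{thm:Federer-Fleming}) producing $P_i'$ from $S_i'$, and finally through the map $g$ producing $S_i' = g_\#S_{r_i}$ from the slices of $S$. I would write $P_i' = \sum_{\sigma\in\mathcal F_{k+1}(\Sigma)} \theta_\sigma^i \bb{\sigma}$ with $\theta_\sigma^i\in\Z$, and similarly $\partial P_i' = \sum_{\tau\in\mathcal F_k(\Sigma)} \mu_\tau^i\bb{\tau}$; by property~(4) of Proposition~\ref{prop:homomorphisms-transporting} and $\partial\circ\Lambda_{k+1} = \Lambda_k\circ\partial$ we get
\begin{align*}
\mass(P_i) &\leq \sum_\sigma |\theta_\sigma^i|\, \mass(\Lambda_{k+1}(\bb{\sigma})) \leq K \sum_\sigma |\theta_\sigma^i|\, \bigl[\Lip(h|_\sigma)\bigr]^{k+1},\\
\mass(\partial P_i) &\leq K\sum_\tau |\mu_\tau^i|\,\bigl[\Lip(h|_\tau)\bigr]^k.
\end{align*}
The key point is that $\Lip(h|_\sigma) \leq K r_\sigma$ by property~\eqref{it:PropOfH} of Theorem~\ref{thm:factorization} (after absorbing constants into $K$), where $r_\sigma = \inf\{\rho(y): y\in g^{-1}(\st\sigma)\}$; so $[\Lip(h|_\sigma)]^{k+1}|\theta_\sigma^i|$ is controlled by $[r_\sigma]^{k+1}\mathbf M_\sigma(P_i')$, the mass that $P_i'$ puts on $\sigma$. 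This is exactly where the local mass estimates of the refined deformation theorem (the whole point of the Appendix) enter: Theorem~\ref{thm:Federer-Fleming} should give $\mathbf M_\sigma(P_i') \lesssim \varepsilon^{-(k+1)}\|S_i'\|(\st\sigma)$ (with $\varepsilon$ the side length, here $\varepsilon\asymp 1$ since $\Sigma = \Sigma^{(n+1)}(I)$ has unit simplices), hence $[r_\sigma]^{k+1}|\theta_\sigma^i| \lesssim [r_\sigma]^{k+1}\|S_i'\|(\st\sigma)$, and similarly for the faces $\tau$ of $\partial P_i'$, using the local bound on $\partial P_i'$ from the deformation theorem.

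Next I would push the estimate back through $g$. By Lemma~\ref{lem:rajala-wenger}, for a Borel set $B\subset\Sigma$,
\[
\|S_i'\|(B) = \|g_\#S_{r_i}\|(B) \leq \int_{g^{-1}(B)} \bigl[\lip g(y)\bigr]^{k+1}\,d\|S_{r_i}\|(y),
\]
and on the annular region $\{s < \rho \leq 2s\}$ the map $g$ is $Cs^{-1}$-Lipschitz by property~\eqref{it:PropOfG}, so $\lip g(y) \lesssim \rho(y)^{-1}$. Since $y\in g^{-1}(\st\sigma)$ forces $r_\sigma \leq \rho(y) \leq R_\sigma \leq C r_\sigma$ by property~\eqref{it:R-sigma}, the weight $[r_\sigma]^{k+1}\,[\lip g(y)]^{k+1}$ is bounded by a constant depending only on $C$. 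Summing over $\sigma$ (using that the stars $\st\sigma$ have bounded overlap — at most some $N(n)$ of them meet any point, since $\Sigma$ is $(n+1)$-dimensional) then yields
\[
\sum_\sigma [r_\sigma]^{k+1}\mathbf M_\sigma(P_i') \lesssim \sum_\sigma [r_\sigma]^{k+1}\|S_i'\|(\st\sigma) \lesssim N(n)\,\mass(S_{r_i}) \leq \mass(S),
\]
and the analogous computation for $\partial P_i'$, whose relevant mass is controlled by $\|\partial S_i'\|$ after the deformation, gives the boundary bound — here I would use $\partial S_i' = \partial(g_\# S_{r_i}) = g_\#(\partial S_{r_i})$, that $\partial S_{r_i}$ is supported on $\{\rho = r_i\}$ with $\mass(\partial S_{r_i}) = \mass(\partial(S\on\{\rho>r_i\})) \leq \mass(\partial S)$ by \eqref{eq:lem-vanishing-mass}, and that $g$ is $Cr_i^{-1}$-Lipschitz there so the relevant weight is again bounded. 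For the support: property~(5) of Proposition~\ref{prop:homomorphisms-transporting} places $\spt\Lambda_{k+1}(\bb{\sigma})$ in $B(h(\sigma^{(0)}), K\Lip(h|_\sigma)) \subset B(h(\sigma^{(0)}), K^2 r_\sigma)$; since some point $y\in g^{-1}(\st\sigma)$ has $\rho(y)\leq R_\sigma \leq Cr_\sigma$ and $r_\sigma \leq R_\sigma \leq \eta$ for all $\sigma$ meeting $g(B\setminus Z)$ (as $g(B\setminus Z)\subset\Sigma'$ and $B = B(Z,\eta)$), and $d(h(\sigma^{(0)}), Z)$ is comparably small by \eqref{eq:distanceToX}-type estimates inside the proof of Theorem~\ref{thm:factorization}, each such $\spt\Lambda_{k+1}(\bb{\sigma})$ lies in $B(Z,\bar C\eta)$. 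The only $\sigma$ with $\theta_\sigma^i\neq 0$ are (for $i$ large, so that $\|S_{r_i}\|$ is supported well inside $B(Z,\eta)$) among those meeting $g$ of the relevant region, giving $\spt P_i\subset B(Z,\bar C\eta)$.

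The main obstacle is bookkeeping the deformation theorem correctly: I need the \emph{local} mass bounds $\mathbf M_\sigma(P_i')\lesssim \varepsilon^{-(k+1)}\|S_i'\|(\text{nbhd of }\sigma)$ and the companion bound on $\partial P_i'$, and I must make sure the ``neighbourhood of $\sigma$'' on the right is still contained in a bounded union of stars lying in the same annulus (so that the scale $r_\sigma$ genuinely controls $\lip g$ there) — this is a matter of checking that the deformation only moves mass a bounded simplicial distance, which is part of the statement of Theorem~\ref{thm:Federer-Fleming}. A secondary subtlety is that $\Lip(h|_\sigma)$, not $r_\sigma$ itself, is what appears in Proposition~\ref{prop:homomorphisms-transporting}, but property~\eqref{it:PropOfH} of Theorem~\ref{thm:factorization} bridges the two at the cost of a factor $C$, so this is harmless. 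Everything else is summation with bounded-overlap, and the constant $\bar C$ that emerges depends only on $n$, $C$, and $K$, as claimed.
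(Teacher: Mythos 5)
Your proposal is correct and follows essentially the same route as the paper's proof: the local mass estimates \eqref{eq:MassOfFace} of the refined deformation theorem, the cancellation of the weight $[\Lip(h|_\sigma)]^{k+1}\cdot r_\sigma^{-(k+1)}\lesssim C^{2(k+1)}$ coming from Theorem~\ref{thm:factorization}\eqref{it:PropOfG}--\eqref{it:PropOfH}, the bounded overlap of open stars, and Proposition~\ref{prop:homomorphisms-transporting}(5) for the support. Two cosmetic remarks: the relevant fact for the support bound is $r_\sigma\leq\eta$ (not $R_\sigma\leq\eta$, which need not hold), obtained because $P_i'\on\sigma\neq 0$ forces $g^{-1}(\st\sigma)$ to meet $\spt S\subset B(Z,\eta)$, and since $h(\Sigma^{(0)})\subset Z$ one has $h(\sigma^{(0)})\subset Z$ exactly, so no \eqref{eq:distanceToX}-type estimate is needed.
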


\begin{proof}
 For each \(\sigma\in \mathcal{F}_{k+1}\) for which $P_i'\on\sigma\neq 0$ we have
 \begin{equation*}
         0\neq \mass(P_i'\on \sigma) \leq K\mass(S_i'\on\st \sigma)= K\mass\bigl(g_\#(S_{\hspace{-0.1em}r_i}\on g^{-1}(\st \sigma))\bigr).
 \end{equation*}
 In particular, $g^{-1}(\st \sigma)\cap B(Z,\eta)$ is nonempty, and so $r_\sigma\coloneqq d(Z, g^{-1}(\st \sigma))$ satisfies $r_\sigma\leq \eta$.  It furthermore follows from Theorem~\ref{thm:factorization}(4) that $r_\sigma>0$.
 Since $g^{-1}(\st \sigma)\subset \{\rho\geq r_\sigma\}$ and $g$ is $Cr_\sigma^{-1}$-Lipschitz on $\{\rho\geq r_\sigma\}$ it follows with the above that
 \[
 \mass(P_i'\on \sigma) \leq K\,C^{k+1} r_\sigma^{-(k+1)} \mass\bigl(S_{\hspace{-0.1em}r_i}\on g^{-1}(\st \sigma)\bigr),
 \]
Since $h|_\sigma$ is $Cr_\sigma$-Lipschitz, we conclude 
 \begin{align*}
  \mass\bigl(\Lambda_{k+1}(P_i'\on\sigma)\bigr) &\leq K C^{k+1}\, r_\sigma^{k+1}\mass(P_i'\on\sigma)\\
  &\leq K^2 C^{2(k+1)}\mass\bigl(S_{\hspace{-0.1em}r_i}\on g^{-1}(\st \sigma)\bigr).
  \end{align*}
  Summing over all \(\sigma\in \mathcal{F}_{k+1}\) and observing that each open simplex in $\Sigma$ is in the open star of at most $C_{n,k}$ different $(k+1)$-simplices we obtain
 \begin{align*}
      \mass(P_i)\leq C_{n,k} K^2 C^{2(k+1)}\mass(S_{\hspace{-0.1em}r_i}),
 \end{align*}
 and hence $\mass(P_i)\leq \bar{C}\mass(S)$ for all $i$ large enough, where \(\bar{C}\coloneqq 2C_{n,k} K^2 C^{2(k+1)}\). The same argument as above shows that for each \(\sigma\in \mathcal{F}_k\) we have
 \[
 \mass\bigl(\Lambda_k\bigl((\partial P_i')\on\sigma\bigr)\bigr)\leq K^2 C^{2k} \mass\bigl((\partial S_{\hspace{-0.1em}r_i})\on g^{-1}(\st \sigma)\bigr).
 \]
 From this one concludes as above that $$\mass(\partial P_i) \leq C_{n,k} K^2 C^{2k} \mass(\partial S_{\hspace{-0.1em}r_i})\leq \bar{C}\mass(\partial S).$$
Finally, for every $\sigma$ for which $P'_i\on\sigma\not=0$ we have $\Lip(h|_{\sigma})\leq Cr_\sigma\leq C\eta$ and therefore $$\spt(\Lambda_{k+1}(P'_i\on \sigma))\subset B(h(\sigma^{(0)}),K C\eta)\subset B(Z, \bar{C}\eta).$$ This shows that $\spt P_i\subset B(Z,\bar{C}\eta)$ and completes the proof.
\end{proof}

\begin{lemma}\label{lem:fillvol-T-bdry-P_r}
We have $\Fillvol_Y(\partial S -\partial P_i)\to 0$ as \(i\to \infty\). In particular, the sequence $(\partial P_i)$ converges weakly to $\partial S$.
\end{lemma}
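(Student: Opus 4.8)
The plan is to produce, for every sufficiently large $i$, an explicit filling $V_i\in\bI_{k+1}(Y)$ of the cycle $\partial S-\partial P_i$ and to show that $\mass(V_i)\to 0$. The asserted weak convergence is then immediate, since for every $(\pi_0,\dots,\pi_{k-1})\in\mathcal{D}^{k-1}(Y)$ we have $(\partial S-\partial P_i)(\pi_0,\dots,\pi_{k-1})=V_i(1,\pi_0,\dots,\pi_{k-1})$, which is bounded in absolute value by $\prod_{j}\Lip(\pi_j)\,\mass(V_i)$.

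The first step is to record the algebraic identities supplied by the construction. Since $P_i=\Lambda_{k+1}(P_i')$ and $\partial P_i'$ is a polyhedral $k$-cycle, properties (2) and (3) of Proposition~\ref{prop:homomorphisms-transporting} give $\partial P_i=\Lambda_k(\partial P_i')=h_\#(\partial P_i')+\partial\Gamma_k(\partial P_i')$; for $k=0$ the last term is absent because $\Lambda_0=h_\#$. From the deformation splitting~\eqref{eq:FFSplitting} one has $\partial P_i'=\partial S_i'-\partial R_i'=g_\#(\partial S_{r_i})-\partial R_i'$, hence $h_\#(\partial P_i')=(h\circ g)_\#(\partial S_{r_i})-h_\#(\partial R_i')$. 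Because $f$ agrees with $h\circ g$ on $Y\setminus Z\supseteq\spt(\partial S_{r_i})$ and with the identity on $Z\supseteq\spt(\partial S)$, and the push-forward of a current of finite mass is unchanged when the domain of the map is restricted to a closed set containing the support, we have $(h\circ g)_\#(\partial S_{r_i})=f_\#(\partial S_{r_i})$ and $f_\#(\partial S)=\partial S$; together with $\partial S=\partial S_{r_i}+\partial(S\on\{\rho\le r_i\})$ this gives $\partial S-f_\#(\partial S_{r_i})=\partial\bigl(f_\#(S\on\{\rho\le r_i\})\bigr)$. Recalling that $h$ is globally Lipschitz on the bounded quasiconvex complex $\Sigma$, so that $\partial$ commutes with $h_\#$, a short computation then shows that
\[
V_i\coloneqq f_\#\bigl(S\on\{\rho\le r_i\}\bigr)+h_\#(R_i')-\Gamma_k(\partial P_i')\in\bI_{k+1}(Y)
\]
satisfies $\partial V_i=\partial S-\partial P_i$.

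It then remains to estimate the three summands of $\mass(V_i)$. For the first, Lemma~\ref{lem:bigger-space-to-fill} yields $\mass\bigl(f_\#(S\on\{\rho\le r_i\})\bigr)\le\Lip(f)^{k+1}\norm{S}(\{\rho\le r_i\})\le C^{k+1}(k+1)r_i\mass(\partial S)\to 0$. The other two are controlled by running the estimate of Lemma~\ref{lem:bound-P_r}, but gaining one extra power of $r_i$. For $\sigma\in\mathcal{F}_{k+1}$ with $R_i'\on\sigma\ne 0$, the local mass estimates of the refined deformation theorem (Theorem~\ref{thm:Federer-Fleming}) give $\mass(R_i'\on\sigma)\le K\,\mass(\partial S_i'\on\st\sigma)$ and force $g^{-1}(\st\sigma)$ to meet $\spt(\partial S_{r_i})\subset\{\rho=r_i\}$, so that $r_\sigma\le r_i$ and hence $\Lip(h|_\sigma)\le Cr_\sigma\le Cr_i$ by Theorem~\ref{thm:factorization}; moreover $g$ is $Cr_i^{-1}$-Lipschitz near $\{\rho=r_i\}$ and $\partial S_i'=g_\#(\partial S_{r_i})$ is a $k$-current, so $\mass(\partial S_i'\on\st\sigma)\le(Cr_i^{-1})^k\mass\bigl(\partial S_{r_i}\on g^{-1}(\st\sigma)\bigr)$. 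Combining these,
\[
\mass\bigl(h_\#(R_i'\on\sigma)\bigr)\le\Lip(h|_\sigma)^{k+1}\mass(R_i'\on\sigma)\le KC^{2k+1}\,r_i\,\mass\bigl(\partial S_{r_i}\on g^{-1}(\st\sigma)\bigr),
\]
and summing over $\sigma$ (each open simplex lies in the open star of at most $C_{n,k}$ different $(k+1)$-simplices) gives $\mass(h_\#R_i')\le KC^{2k+1}C_{n,k}r_i\mass(\partial S)\to 0$. The term $\Gamma_k(\partial P_i')$ is handled identically: write $\partial P_i'=\sum_\sigma\theta_\sigma\bb{\sigma}$ with $\abs{\theta_\sigma}\mass(\bb{\sigma})=\mass(\partial P_i'\on\sigma)\le K\mass(\partial S_i'\on\st\sigma)$, use the bound $\mass(\Gamma_k(\bb{\sigma}))\le C\Lip(h|_\sigma)^{k+1}$ from Proposition~\ref{prop:homomorphisms-transporting}(4) with $\Lip(h|_\sigma)\le Cr_\sigma\le Cr_i$ on the surviving simplices, and sum as before to obtain $\mass(\Gamma_k(\partial P_i'))\le C'r_i\mass(\partial S)\to 0$. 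Hence $\mass(V_i)\to 0$, so $\Fillvol_Y(\partial S-\partial P_i)\le\mass(V_i)\to 0$, and the weak convergence follows.

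The main obstacle is exactly this last estimate. A priori $g$ distorts the mass of the $k$-dimensional currents $\partial S_{r_i}$ by a factor $\sim r_i^{-k}$ near $\{\rho=r_i\}$, which blows up as $i\to\infty$, so the ``error'' currents $h_\#(R_i')$ and $\Gamma_k(\partial P_i')$ could in principle carry non-negligible mass. The resolution is that the \emph{local} mass estimates in the refined deformation theorem confine these currents to simplices $\sigma$ at scale $r_\sigma\lesssim r_i$ from $Z$, where $h$---and $\Gamma_k$, which is built from isoperimetric fillings of $h$-push-forwards---contracts $(k+1)$-dimensional mass by $\sim r_i^{k+1}$, one power more than $g$ expands; this surplus power of $r_i$ is what makes the errors vanish. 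It is the absence of any such surplus for $P_i'$ itself---where the $r_\sigma$ powers cancel exactly---that accounts for the non-vanishing bound $\mass(P_i)\lesssim\mass(S)$ in Lemma~\ref{lem:bound-P_r}.
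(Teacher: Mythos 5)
Your proof is correct and is essentially the paper's argument: the filling you call $V_i$ is exactly the current $A_i$ with $\partial A_i=\partial S-\partial P_i$ used there, and the three mass bounds (for $f_\#(S\on\{\rho\le r_i\})$, $h_\#R_i'$, and $\Gamma_k(\partial P_i')$) are obtained in the same way, each gaining the extra factor $r_i$ from the interplay of the $Cr_i^{-1}$-Lipschitz bound on $g$ and the $Cr_i$-Lipschitz bound on $h$ near $\{\rho=r_i\}$. The only cosmetic difference is in bounding $h_\#R_i'$: since $R_i'$ may carry mass on interiors of simplices of every dimension (not only $(k+1)$-simplices), your simplex-wise sum should range over all of $\mathcal{F}$ --- or, as the paper does, one uses $\spt R_i'\subset\Hull\bigl(g(\{\rho=r_i\})\bigr)$ together with Proposition~\ref{prop:h-general-little-lip} and Lemma~\ref{lem:rajala-wenger} --- but the resulting bound of order $r_i\,\mass(\partial S)$ is identical.
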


\begin{proof}
 Since $\partial P'_i = \partial S_i' - \partial R_i'$, we find $$h_\#(\partial P_i') = \partial(h_\#S_i') - \partial(h_\#R_i') = \partial(f_\#S_{\hspace{-0.1em}r_i}) - \partial(h_\#R_i')$$ and thus, by Proposition~\ref{prop:homomorphisms-transporting}, 
 \begin{multline*}
         \partial S-\partial P_i = \partial S- \Lambda_k(\partial P_i')
         = \partial S - h_\#(\partial P_i') - \partial \Gamma_k(\partial P_i')\\
         = \partial(f_\#(S-S_{\hspace{-0.1em}r_i})) + \partial(h_\#R_i') - \partial \Gamma_k(\partial P_i') =: \partial A_i.
 \end{multline*}
 It follows that 
 $$
 \Fillvol_Y(\partial S-\partial P_i) \leq \mass(A_i) \le \mass(f_\#(S-S_{\hspace{-0.1em}r_i})) + \mass(h_\#R_i') + \mass(\Gamma_k(\partial P_i')).
 $$ 
 We claim that there is a constant \(\bar{C}>0\) such that for every $i$ each term on the right hand side in the inequality above is bounded by $\bar{C} r_i\mass(\partial S)$. 
 Clearly, we have $$
 \mass(f_\#(S-S_{\hspace{-0.1em}r_i}))\leq C^{k+1} \mass(S-S_{\hspace{-0.1em}r_i}) = C^{k+1} \|S\|(\{\rho\leq r_i\}) \leq C^{k+1}(k+1)\,r_i\mass(\partial S),
 $$
 where we have used \eqref{eq:lem-vanishing-mass} in the last inequality.
 Next, set $M_i\coloneqq \spt R_i'$ and observe that 
 \begin{equation}\label{eq:hull-containment}
     M_i\subset\Hull(\spt\partial S_i')\subset\Hull\bigl(g(\spt\partial S_{\hspace{-0.1em}r_i})\bigr)\subset \Hull\bigl(g(\{\rho= r_i\})\bigr).
 \end{equation}
Thanks to Theorem~\ref{thm:factorization}\eqref{it:PropOfH}, \(h\) is \(Cr_i\)-Lipschitz on every \(\sigma\in \mathcal{F}\) for which the set \( \st \sigma \cap g\bigl(\{\rho =r_i\}\bigr)\) is nonempty. 
It thus follows from \eqref{eq:hull-containment} and Proposition~\ref{prop:h-general-little-lip} that $\lip(h|_{M_i})(x)\leq Cr_i$ for all $x\in M_i$, 
and so, by Lemma~\ref{lem:rajala-wenger}, we get
$$
\mass(h_\#R_i')\leq C^{k+1}r_i^{k+1}\mass(R_i')\leq KC^{k+1}r_i^{k+1}\mass(\partial S_i')
$$
and hence $\mass(h_\#R_i')\leq KC^{2k+1}r_i\mass(\partial S).$
Finally, one calculates exactly as in the proof of Lemma~\ref{lem:bound-P_r} that for each \(\sigma\in \mathcal{F}_k\) 
$$
\mass\bigl(\Gamma_k((\partial P_i')\on\sigma)\bigr)\leq K^2C^{2k}\, r_i\mass\bigl((\partial S_{\hspace{-0.1em}r_i})\on g^{-1}(\st \sigma)\bigr)
$$
and 
$$
\mass\bigl(\Gamma_k(\partial P_i')\bigr)\leq C_{n,k}K^2C^{2k} r_i \mass(\partial S).
$$
Thus, $\mass(A_i)\to 0$ as $i$ goes to infinity. This proves the claim and completes the proof.
\end{proof}

 The proof of all except the last statement of Theorem~\ref{thm:technical-version-main-thm} can now easily be concluded if we can show that $(P_i)$ has a weakly converging subsequence, which we will show in the following lemma. Indeed, if a subsequence of $(P_i)$ converges weakly to $\bar{S}$, then $\bar{S}$ belongs to $\bI_{k+1}(X)$ and satisfies 
  \[
 \mass(\bar{S})\leq \bar{C}\mass(S) \quad \text{ and } \quad \spt\bar{S} \subset B(Z, \bar{C}\eta),
 \]
as follows from Lemma~\ref{lem:bound-P_r}, the closure theorem \cite[Theorem~8.5]{MR1794185}, and the lower semi-continuity of mass. Moreover, we have $\partial \bar{S} = \partial S$ thanks to Lemma~\ref{lem:fillvol-T-bdry-P_r}, and thus $\bar{S}$ satisfies the properties stated in Theorem~\ref{thm:technical-version-main-thm}.

\begin{lemma}\label{lem:cauchy-seq}
The sequence \((P_{i})\) has a weakly converging subsequence.
\end{lemma}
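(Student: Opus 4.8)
The plan is to prove the slightly stronger statement that the whole sequence $(P_i)$ converges \emph{in mass}; weak convergence of $(P_i)$, and in particular of any subsequence, follows at once since $\mass(P_i-\bar S)\to 0$ forces $P_i(\pi_0,\dots,\pi_{k+1})\to\bar S(\pi_0,\dots,\pi_{k+1})$ for every $(\pi_0,\dots,\pi_{k+1})\in\mathcal{D}^{k+1}(Y)$. Recall from Lemma~\ref{lem:bound-P_r} that for all sufficiently large $i$ one has $\mass(P_i)\leq\bar C\mass(S)$, $\mass(\partial P_i)\leq\bar C\mass(\partial S)$ and $\spt P_i\subset B(Z,\bar C\eta)$, and from Lemma~\ref{lem:fillvol-T-bdry-P_r} that $\partial P_i\to\partial S$ weakly. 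The idea is to split each $P_i'$ into a \emph{coarse} part, carried by $(k+1)$-simplices $\sigma$ with $r_\sigma$ bounded below, which stabilizes as $i\to\infty$, and a \emph{fine} part, carried by simplices close to $Z$, whose image under $\Lambda_{k+1}$ has uniformly small mass.

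First I would arrange that the deformation theorem (Theorem~\ref{thm:Federer-Fleming}) is applied to all the currents $S_i'=g_\#S_{\hspace{-0.1em}r_i}$ with one and the same choice of center points, chosen generically enough that the local mass estimates of Theorem~\ref{thm:Federer-Fleming} hold for every $i$ at once (this imposes only countably many conditions, each of full measure). Then $P_i'=\mathcal{D}(S_i')$ in \eqref{eq:FFSplitting} for a single linear chain map $\mathcal{D}$, and $\mathcal{D}$ is local: the coefficient of a $(k+1)$-simplex $\sigma$ in $\mathcal{D}(S')$ depends only on $S'\on\st\sigma$. Now fix $\delta>0$. If $\sigma$ is a $(k+1)$-simplex with $r_\sigma>\delta$ then, since $r_\sigma=d\bigl(Z,g^{-1}(\st\sigma)\bigr)$ by Theorem~\ref{thm:factorization}\eqref{it:PropOfH}, every point of $g^{-1}(\st\sigma)$ lies in $\{\rho>r_i\}$ as soon as $r_i<\delta$, so $S_{\hspace{-0.1em}r_i}\on g^{-1}(\st\sigma)=S\on g^{-1}(\st\sigma)$ and hence $S_i'\on\st\sigma=g_\#\bigl(S\on g^{-1}(\st\sigma)\bigr)$ is independent of $i$. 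By locality, the coefficient of every such $\sigma$ in $P_i'$ is independent of $i$ for all $i$ with $r_i<\delta$. Writing $P_i'=A_i+B_i$, where $A_i$ collects the $(k+1)$-simplices $\sigma$ with $r_\sigma>\delta$ and $B_i$ those with $r_\sigma\leq\delta$, it follows that $\Lambda_{k+1}(A_i)$ does not depend on $i$ once $r_i<\delta$.

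It remains to estimate $\mass\bigl(\Lambda_{k+1}(B_i)\bigr)$. Arguing exactly as in the proof of Lemma~\ref{lem:bound-P_r}, for each $(k+1)$-simplex $\sigma$ with $r_\sigma\leq\delta$ one has $\mass\bigl(\Lambda_{k+1}(P_i'\on\sigma)\bigr)\leq K^2C^{2(k+1)}\mass\bigl(S_{\hspace{-0.1em}r_i}\on g^{-1}(\st\sigma)\bigr)$, while every point of $g^{-1}(\st\sigma)$ satisfies $\rho\leq R_\sigma\leq Cr_\sigma\leq C\delta$ by Theorem~\ref{thm:factorization}\eqref{it:R-sigma}; summing over these $\sigma$ and invoking \eqref{eq:lem-vanishing-mass} yields
\[
\mass\bigl(\Lambda_{k+1}(B_i)\bigr)\leq C_{n,k}K^2C^{2(k+1)}\,\|S\|\bigl(\{\rho\leq C\delta\}\bigr)\leq C'\delta\,\mass(\partial S)
\]
for all sufficiently large $i$, with $C'$ depending only on $n$, $C$ and $K$. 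Consequently, for all sufficiently large $i$ and $j$,
\[
\mass(P_i-P_j)=\mass\bigl(\Lambda_{k+1}(B_i)-\Lambda_{k+1}(B_j)\bigr)\leq 2C'\delta\,\mass(\partial S).
\]
As $\delta>0$ was arbitrary and $\partial S\neq 0$, the sequence $(P_i)$ is Cauchy in $\bM_{k+1}(Y)$ with respect to the mass norm, hence converges in mass — and a fortiori weakly — to some $\bar S\in\bM_{k+1}(Y)$. Testing against $(1,\pi_0,\dots,\pi_k)$ and using Lemma~\ref{lem:fillvol-T-bdry-P_r} gives $\partial\bar S=\partial S\in\bM_k(Y)$, so $\bar S$ is normal, and the closure theorem \cite[Theorem~8.5]{MR1794185} then shows $\bar S\in\bI_{k+1}(Y)$.

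The step I expect to be most delicate is the stabilization of the coarse part: one has to check that the refined deformation theorem can be set up as a single \emph{local} linear chain map with center points valid for all the $S_i'$ simultaneously, so that $\Lambda_{k+1}(A_i)$ is genuinely eventually constant in $i$ and not merely convergent. If one prefers to sidestep this point, the same coarse/fine decomposition shows directly that the families $\bigl(\|P_i\|\bigr)$ and $\bigl(\|\partial P_i\|\bigr)$ are equi-tight — the coarse part being a fixed integral current, hence tight, and the fine part of uniformly small mass — so that a weakly convergent subsequence may be extracted from the compactness theorem for equi-tight sequences of integral currents, which is exactly the assertion of the lemma.
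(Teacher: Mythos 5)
Your overall strategy is the same as the paper's: split $P_i'$ into the part carried by $(k+1)$-simplices with $r_\sigma$ bounded below by $\delta$ and the part carried by simplices with $r_\sigma\leq\delta$, control the latter exactly as in Lemma~\ref{lem:bound-P_r} together with Theorem~\ref{thm:factorization}\eqref{it:R-sigma} and \eqref{eq:lem-vanishing-mass}, and conclude that $(P_i)$ is Cauchy in mass. Your estimate for the fine part is essentially the paper's ``second claim'' and is fine. The gap is in the coarse part. You assert that the coefficient of each $\sigma$ with $r_\sigma>\delta$ in $P_i'$ is \emph{eventually constant} in $i$, on the grounds that the deformation of Theorem~\ref{thm:Federer-Fleming} can be realized as a single local chain map with one choice of projection centers ``chosen generically enough that the local mass estimates hold for every $i$ at once (countably many conditions, each of full measure).'' That justification is false: the Federer--Fleming selection of a center $b$ in a simplex only produces a set of admissible centers of \emph{positive} measure (a Chebyshev-type argument on $\int|y-b|^{1-m}d\mu$), not full measure, so the good sets for the countably many currents $S_i'$ (and for all the intermediate currents arising at each stage of the deformation) need not have a common point with a uniform constant. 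Making the ``single deformation map'' idea rigorous would require reworking the Appendix, e.g.\ choosing centers adapted to the stabilized local data $g_\#(S\on g^{-1}(\st\tau))$ on simplices with $r_\tau$ large and per-$i$ centers only near $Z$; you flag this as the delicate point but do not carry it out. Your fallback does not sidestep it either: it again invokes ``the coarse part being a fixed integral current,'' and in addition the compactness-theorem route is precisely what the paper remarks is only available when $X$ is proper (the citation \cite[Theorem~5.2]{MR1794185} needs equi-compact supports/equi-tightness, and here equi-tightness is exactly what hinges on the unproven stabilization).

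The paper avoids this issue by proving something weaker, which still suffices because only a \emph{subsequence} is required: since $P_i'\on\sigma=\theta_i^\sigma\bb{\sigma}$ with $\theta_i^\sigma\in\Z$, the local estimate \eqref{eq:MassOfFace} bounds $|\theta_i^\sigma|$ by $K\mass(S_i'\on\st\sigma)/\mathscr{L}(\Delta)$, and for $\sigma$ with $r_\sigma>r$ the quantity $\mass(S_i'\on\st\sigma)$ stabilizes once $r_i<r$ (this uses only Theorem~\ref{thm:factorization}, not any compatibility of the deformations). Summability of these stabilized local masses then shows that all but finitely many coefficients vanish and the remaining ones take finitely many integer values, so $\{P_i'\on A_r : i\geq 1\}$ is a finite set; a diagonal argument over $r\to 0$ extracts a subsequence along which the coarse part is eventually constant for every $r$, and the Cauchy-in-mass conclusion follows as in your argument. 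So either adopt that finitely-many-values-plus-diagonal step in place of your stabilization claim, or genuinely construct the compatible deformations; as written, the proof has a gap at exactly the point you identified as delicate.
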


 Notice that when $X$ is proper the lemma follows directly from Lemma~\ref{lem:bound-P_r} and the compactness theorem \cite[Theorem~5.2]{MR1794185}. 

\begin{proof}
 Fix $r>0$, let \(\mathcal{A}_{r}\) be the collection of all \(\sigma\in \mathcal{F}_{k+1}\) satisfying \(r_\sigma > r\), and denote by $A_r$ the union of all simplices in $\mathcal{A}_r$. We first claim that the set \(\bigr\{ P_i^\prime \on A_r : i\geq 1 \bigl\}\) contains only finitely many elements. To see this, let $\sigma\in\mathcal{A}_r$ and notice that $P^\prime_i\on\sigma = \theta_i^\sigma\llbracket \sigma\rrbracket$ for some $\theta_i^\sigma\in\Z$. By \eqref{eq:MassOfFace} we have 
 \begin{equation}\label{eq:lem-Cauchy-P_i}
  \mathscr{L}(\Delta)\cdot|\theta_i^\sigma| = \mass(P^\prime_i\on\sigma)\leq K\, \mass\bigl(S^\prime_{i}\on \st \sigma\bigr)=: KM_i^\sigma,
 \end{equation}
 where \(\mathscr{L}(\Delta)\) denotes the Lebesgue measure of the Euclidean \((k+1)\)-simplex \(\Delta\). By construction, there exists $i_0\geq 1$ depending on $r$ such that for all $\sigma\in\mathcal{A}_r$ and all $i,j\geq i_0$ we have $S'_i\on \st\sigma = S'_j\on\st\sigma$. In particular, $M_i^\sigma = M_{i_0}^\sigma =:M^\sigma$ for all $i\geq i_0$. Furthermore, since $M^\sigma\neq 0$ for at most countably many $\sigma\in\mathcal{A}_r$, we see that $$\sum_{\sigma\in\mathcal{A}_r}M^\sigma=\sum_{\sigma\in\mathcal{A}_r}\mass\bigl(S^\prime_{i_0}\on \st \sigma\bigr)\leq C_{n,k}\sum_{\tau\in\mathcal{F}\hspace{-0.2em},\, r_\tau>r}\mass\bigl(S^\prime_{i_0}\on \interior\tau\bigr)\leq C_{n,k}\mass(S^\prime_{i_0})<\infty,$$ where we have used that each open simplex in $\Sigma$ is in the open star of at most $C_{n,k}$ different $(k+1)$-simplices. Thus, the collection $\mathcal{C}\coloneqq \bigl\{\sigma\in\mathcal{A}_r: KM^\sigma\geq \mathscr{L}(\Delta)\bigr\}$ is finite and letting $M\coloneqq\sup_{\sigma'\in \mathcal{A}_r}M^{\sigma'}<\infty$,  by virtue of \eqref{eq:lem-Cauchy-P_i} we find that for all $i\geq i_0$, $\theta_i^\sigma=0$ if $\sigma\not\in\mathcal{C}$ and $|\theta_i^\sigma|\leq KM$ for all $\sigma\in\mathcal{C}$. This implies our first claim.

Next, we claim that for almost every $r>0$ small enough and every $i$ sufficiently large we have $$\sum_{\sigma\in\mathcal{F}_{k+1}\setminus\mathcal{A}_r}\mass(\Lambda_{k+1}(P^\prime_i\on\sigma)) \leq \bar{C}r\mass(\partial S),$$ where $\bar{C}$ only depends on $n$, $C$, and $K$. Indeed, the proof of Lemma~\ref{lem:bound-P_r} shows that 
$$
\mass\bigl(\Lambda_{k+1}(P_i'\on\sigma)\bigr)
  \leq K^2 C^{2(k+1)}\mass\bigl(S_{\hspace{-0.1em}r_i}\on g^{-1}(\st \sigma)\bigr).
$$ 
Summing over all $\sigma\in\mathcal{F}_{k+1}\setminus\mathcal{A}_r$ and noting that $g^{-1}(\st\sigma)\subset\{\varrho\leq Cr\}$, because of Theorem~\ref{thm:factorization}\eqref{it:R-sigma}, yields 
  \begin{equation*}
          \sum_{\sigma\in\mathcal{F}_{k+1}\setminus\mathcal{A}_r}\mass(\Lambda_{k+1}(P^\prime_i\on\sigma)) \leq C_{n,k}K^2C^{2(k+1)}\lVert S_{r_i}\rVert(\{\varrho\leq Cr\}),
  \end{equation*}
and together with Lemma~\ref{lem:bigger-space-to-fill} we obtain the second claim.

Finally, it follows from the first claim and a standard diagonal sequence argument, that \((P_{i}^\prime)\) has a subsequence \((P^\prime_{i(m)})\) with the following property. For every \(r>0\) we have
\[
P^\prime_{i(m_1)}-P^\prime_{i(m_2)}=\sum_{\sigma\in \mathcal{F}_{k+1}\setminus\mathcal{A}_r} \bigl(P^\prime_{i(m_1)}-P^\prime_{i(m_2)}\bigr)\on \sigma
 \]
for all \(m_1\), \(m_2\) sufficiently large (depending on $r$).
This together with the second claim shows that for every $r>0$ sufficiently small, the mass of \(P_{i(m_1)}-P_{i(m_2)}\) is bounded by
\begin{equation*}
    \begin{split}
        \sum_{\sigma\in \mathcal{F}_{k+1}\setminus\mathcal{A}_r} \Bigl[\mass(\Lambda_{k+1}(P^\prime_{i(m_1)}\on\sigma)) + \mass(\Lambda_{k+1}(P^\prime_{i(m_2)}\on \sigma))\Bigr]\leq 2\bar{C}r\mass(\partial S)
    \end{split}
\end{equation*}
whenever $m_1,m_2$ are sufficiently large.
Consequently, \((P_{i(m)})\) is a Cauchy sequence in \(\bM_{k+1}(X)\) and, in particular, is a weakly converging subsequence of $(P_i)$.
\end{proof}

The following lemma proves the last statement in Theorem~\ref{thm:technical-version-main-thm}.

\begin{lemma}
 If $Y$ is a Banach space then there exists $W\in\bI_{k+2}(Y)$ with $\partial W = \bar{S} - S$ and such that $$\mass(W)\leq \eta\bar{C}\mass(S)\quad \text{ and } \quad \spt W\subset B(Z,\bar{C}\eta)$$ for some $\bar{C}$ depending only on $n$, $C$ and $K$.
\end{lemma}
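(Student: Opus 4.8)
The plan is to obtain $W$ as a weak limit of fillings $W_i$ of the currents $P_i-S$, up to errors that vanish in mass, built from the chain homotopy $\Gamma_*$ of Proposition~\ref{prop:homomorphisms-transporting}, the Federer--Fleming splitting \eqref{eq:FFSplitting}, and one fixed ``straight-line'' homotopy current. First I would construct the fixed piece. Since $Y$ is a Banach space and $f$ restricts to the identity on $Z=\spt(\partial S)$, the homotopy $H\colon[0,1]\times\spt S\to Y$ given by $H(t,y)=(1-t)y+tf(y)$ is Lipschitz (its Lipschitz constant in the $t$-direction is $\sup_{y\in\spt S}d(f(y),y)\leq C_*\eta$ by \eqref{eq:distanceToX}, using $\spt S\subset B(Z,\eta)$), so by Proposition~\ref{prop:homotopy} and Lemma~\ref{lem:mass-of-pushforward-of-product} the current $V\coloneqq H_\#(\bb{0,1}\times S)$ lies in $\bI_{k+2}(Y)$ and satisfies $\partial V=f_\# S-S$; here the term $H_\#(\bb{0,1}\times\partial S)$ vanishes because $\lip H_x\equiv 0$ on $\spt\partial S\subset Z$. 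Moreover $\mass(V)\leq C_0\,\eta\,\mass(S)$ and $\spt V\subset B(Z,C_0\eta)$, where $C_0$ depends only on $k$ and the data of $Z$.

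Next, for each index $i$ I would put $W_i\coloneqq\Gamma_{k+1}(P_i')-h_\# Q_i'+V$. Using $h_\# S_i'=h_\# g_\# S_{\hspace{-0.1em}r_i}=f_\# S_{\hspace{-0.1em}r_i}$, the splitting $S_i'=P_i'+R_i'+\partial Q_i'$, and property (3) of Proposition~\ref{prop:homomorphisms-transporting} at level $k+1$, a direct computation gives
\[
\partial W_i=P_i-S+f_\#\bigl(S\on\{\rho\leq r_i\}\bigr)-\bigl(\Gamma_k(\partial P_i')-h_\# R_i'\bigr).
\]
By \eqref{eq:lem-vanishing-mass} the current $f_\#(S\on\{\rho\leq r_i\})$ has mass at most $C^{k+1}(k+1)r_i\mass(\partial S)\to 0$, and by the estimates in the proof of Lemma~\ref{lem:fillvol-T-bdry-P_r} the current $\Gamma_k(\partial P_i')-h_\# R_i'$ also has mass $\to 0$; both are supported in $B(Z,\bar C\eta)$.

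For the mass and support of $W_i$ themselves I would argue simplex by simplex exactly as in the proofs of Lemmas~\ref{lem:bound-P_r} and \ref{lem:fillvol-T-bdry-P_r}. If $\sigma\in\mathcal F_{k+2}$ satisfies $Q_i'\on\sigma\neq 0$, then $g^{-1}(\st\sigma)$ meets $B(Z,\eta)$, hence $r_\sigma\leq\eta$; combining $\Lip(h|_\sigma)\leq Cr_\sigma$, the local mass estimate $\mass(Q_i'\on\sigma)\leq K\,\mass(S_i'\on\st\sigma)$ from Theorem~\ref{thm:Federer-Fleming}, and the fact that $g$ is $Cr_\sigma^{-1}$-Lipschitz on $g^{-1}(\st\sigma)\subset\{\rho\geq r_\sigma\}$ gives $\mass(h_\#(Q_i'\on\sigma))\leq KC^{2k+3}r_\sigma\,\mass(S_{\hspace{-0.1em}r_i}\on g^{-1}(\st\sigma))\leq \bar C\eta\,\mass(S_{\hspace{-0.1em}r_i}\on g^{-1}(\st\sigma))$; summing over $\sigma$ with the bounded-overlap constant $C_{n,k}$ yields $\mass(h_\# Q_i')\leq\bar C\eta\,\mass(S)$. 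The same exponent bookkeeping (a factor $r_\sigma^{-(k+1)}$ from $g$ against $r_\sigma^{k+2}$ from property (4) of Proposition~\ref{prop:homomorphisms-transporting}, leaving a single factor $r_\sigma\leq\eta$) gives $\mass(\Gamma_{k+1}(P_i'))\leq\bar C\eta\,\mass(S)$, and property (5) together with $h(\sigma^{(0)})\subset Z$ gives $\spt W_i\subset B(Z,\bar C\eta)$, with $\bar C$ depending only on $n$, $C$, $K$.

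Finally I would pass to a weak limit. Running the diagonal-subsequence argument of Lemma~\ref{lem:cauchy-seq} — which applies because $V$ is fixed while $P_i'$ and $Q_i'$ stabilize on every ``far'' family $\{\sigma:r_\sigma>r\}$ along a subsequence (by the locality of the refined deformation theorem), whereas the total mass of $\Gamma_{k+1}(P_i')-h_\# Q_i'$ on the remaining ``near'' simplices is at most $\bar C\eta\,\|S_{\hspace{-0.1em}r_i}\|(\{\rho\leq Cr\})\leq\bar C\eta\,Cr(k+1)\mass(\partial S)\to 0$ as $r\to 0$ by \eqref{eq:lem-vanishing-mass} — one gets, after passing to a further subsequence of the one along which $P_{i(m)}\rightharpoonup\bar S$, that $(W_{i(m)})$ is Cauchy in $\bM_{k+2}(Y)$ and hence converges weakly to some $W$. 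Then $\partial W=\lim_m\partial W_{i(m)}=\bar S-S$ since the two error currents tend to $0$ in mass, $\mass(W)\leq\liminf_m\mass(W_{i(m)})\leq\bar C\eta\,\mass(S)$, $\spt W\subset B(Z,\bar C\eta)$, and $W\in\bI_{k+2}(Y)$ by the closure theorem \cite[Theorem~8.5]{MR1794185}. One technical point remains: by the reduction in Lemma~\ref{lem:bigger-space-to-fill} the argument has been carried out with $S$ replaced by a current $\hat S$ in the enlarged space $\hat Y=[0,\varepsilon]\times Y$; since $\hat S=S+\partial(\bb{0,\varepsilon}\times S)$ with $\mass(\bb{0,\varepsilon}\times S)\leq C\varepsilon\mass(S)$ and there is a $1$-Lipschitz retraction $q\colon\hat Y\to Y$ fixing $Y$, the current $q_\#\bigl(W+\bb{0,\varepsilon}\times S\bigr)\in\bI_{k+2}(Y)$ has boundary $\bar S-S$ and obeys the asserted bounds once $\varepsilon$ is chosen small enough. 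I expect the main obstacle to be precisely this non-compactness step: extracting an honest weakly convergent subsequence of $(W_i)$ without a properness hypothesis forces the Cauchy-in-mass argument of Lemma~\ref{lem:cauchy-seq}, and in particular the stabilization of the $Q_i'$ on far simplices, which is where one must lean on the locality built into the refined Federer--Fleming deformation theorem of the Appendix; everything else is careful but routine constant-tracking.
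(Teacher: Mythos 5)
Your construction of $V=H_\#(\bb{0,1}\times S)$, the identity $\partial\bigl(\Gamma_{k+1}(P_i')-h_\#Q_i'+V\bigr)=P_i-S+A_i$, and the simplex-by-simplex mass/support estimates for $\Gamma_{k+1}(P_i')$ and $h_\#Q_i'$ all match the paper's proof. The gap is in the final limiting step. To make $(W_i)$ Cauchy in mass you need, besides the stabilization of $P'_{i(m)}$ on the far simplices $\{\sigma: r_\sigma>r\}$, the analogous stabilization (or at least mass-Cauchyness) of $h_\#Q'_{i(m)}$. For $P_i'$ this is exactly what Lemma~\ref{lem:cauchy-seq} provides, but its mechanism is integrality: $P_i'\on\sigma=\theta_i^\sigma\bb{\sigma}$ with bounded \emph{integer} coefficients, so the restrictions to a far subcomplex take only finitely many values and a diagonal subsequence stabilizes. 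The currents $Q_i'\in\bI_{k+2}(\Sigma)$ are not polyhedral, so no finiteness argument is available, and Theorem~\ref{thm:Federer-Fleming} asserts only local mass estimates — it does not assert that the deformation output on a far subcomplex depends only on the input there, nor that the (non-canonical) choices in the appendix (the centers $a_\sigma$, the function $u$, the admissible slicing radii) can be made consistently for $S_i'$ and $S_j'$ even where these currents agree. A uniform mass bound on $h_\#Q_i'$ alone gives no weakly convergent subsequence in the non-proper space $Y$. So the step you flag as "the locality built into the refined Federer--Fleming deformation theorem" is not actually contained in that theorem; establishing it would require reworking the appendix, and as written your proof of $\partial W=\bar S-S$ does not go through.

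The paper avoids this entirely: it never passes to a limit of the $W_i$. Fix a single sufficiently large $i$. By (the proofs of) Lemmas~\ref{lem:cauchy-seq} and \ref{lem:fillvol-T-bdry-P_r}, along a subsequence $P_i\to\bar S$ in mass and $\mass(A_i)\to0$, so $\bar S-P_i-A_i$ is an integral \emph{cycle} of arbitrarily small mass supported in $B(Z,\bar C\eta)$; since $Y$ is a Banach space it has $(\EI_{k+1})$, and Lemma~\ref{lem:wenger-bounded-distance-to-support} yields a filling $U\in\bI_{k+2}(Y)$ with $\mass(U)\leq\eta\bar C\mass(S)$ and $\spt U\subset B(Z,2\bar C\eta)$. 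Then $W\coloneqq U+\Gamma_{k+1}(P_i')-h_\#Q_i'+V$ for that fixed $i$ satisfies $\partial W=\bar S-S$ together with your mass and support bounds. Replacing your limit step by this one filling of a small cycle repairs the argument; your treatment of the reduction from Lemma~\ref{lem:bigger-space-to-fill} via $\hat S-S=\partial(\bb{0,\varepsilon}\times S)$ and a $1$-Lipschitz retraction is fine.
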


\begin{proof}
 Let $H\colon [0,1]\times Y\to Y$ be the straight line homotopy defined by $H(t,y) = (1-t)y + tf(y)$.  Since $\spt\partial S\subset Z$ and $H(t,z) = z$ for every $z\in Z$ it follows that $H_\#(\bb{0,1} \times \partial S) =0$ and thus, by Proposition~\ref{prop:homotopy}, the integral current $V\coloneqq H_\#(\bb{0,1}\times S)$ satisfies $\partial V = f_\#S - S$. We have $d(y,f(y))\leq (C+1)\eta$ for every $y\in B(Z,\eta)$ and thus, by  Lemma~\ref{lem:mass-of-pushforward-of-product},  
 $$
 \mass(V)\leq (k+2)C^{k+1}(C+1)\eta\mass(S)
 $$ 
 as well as 
 $$
 \spt V\subset H([0,1]\times\spt S)\subset B(Z, (C+1)C\eta).
 $$
 Fix $i$ sufficiently large, to be determined later. By \eqref{eq:FFSplitting} and Proposition~\ref{prop:homomorphisms-transporting} we have 
 $$P_i = f_\#S_{r_i} + \Gamma_k(\partial P'_i) - h_\#R'_i + \partial\Gamma_{k+1}(P'_i) - \partial h_\#Q'_i = f_\#S - A_i + \partial\Gamma_{k+1}(P'_i) - \partial h_\#Q'_i,$$ 
 where $A_i$ is as in the proof of Lemma~\ref{lem:fillvol-T-bdry-P_r}. Hence, 
 \begin{equation}\label{eq:Sbar-S}
 \bar{S} - S = (\bar{S} - P_i) - A_i + \partial\Gamma_{k+1}(P'_i) - \partial h_\#Q'_i + \partial V.
 \end{equation}
 Using the same arguments as in the proof of Lemma~\ref{lem:bound-P_r} one easily shows that $\Gamma_{k+1}(P'_i)$ and $h_\#Q'_i$ are supported in $B(Z,\bar{C}\eta)$ and satisfy $\mass(h_\#Q'_i)\leq \eta\bar{C}\mass(S)$ as well as 
 $
 \mass(\Gamma_{k+1}(P'_i))\leq \bar{C}\eta\mass(S)
 $
 for some $\bar{C}$ depending on $n$, $C$ and $K$.

 It follows from the proof of Lemma~\ref{lem:cauchy-seq} that, after passing to a subsequence, $P_i$ converges to $\bar{S}$ in mass, so we can take $\mass(\bar{S} - P_i)$ arbitrarily small. By the proof of Lemma~\ref{lem:fillvol-T-bdry-P_r}, $\mass(A_i)$ converges to zero as $i$ tends to infinity. 
 Moreover, \eqref{eq:Sbar-S} implies that \(\bar{S} - P_i - A_i\) is a cycle whose support is contained in $B(Z,\bar{C}\eta)$. Hence, by Lemma~\ref{lem:wenger-bounded-distance-to-support} and the remark following it, we can choose an arbitrarily large $i$ such that $\bar{S} - P_i - A_i$ has a filling $U\in\bI_{k+2}(Y)$ satisfying $\mass(U)\leq \eta\bar{C}\mass(S)$ and $\spt U\subset B(Z,2\bar{C}\eta)$. Consequently, the integral current $W\coloneqq U+\Gamma_{k+1}(P'_i) -h_\#Q'_i + V$ satisfies $\partial W=\bar{S} - S$ as well as $\mass(W) \leq \eta\bar{C}\mass(S)$ and $\spt W\subset B(Z,\bar{C}\eta)$ for some $\bar{C}$ depending on $n$, $C$ and $K$. 
\end{proof}

\section{Approximating finite-dimensional spaces by simplicical complexes}\label{sec:approx-simplicial-complexes}

The aim of this section is to prove Theorem~\ref{thm:factorization-side-length-r} from the introduction. To this end, we first establish the following approximation result:

\begin{proposition}\label{prop:factorization2}
 Let \(X\subset Y\) be metric spaces such that \(X\) has Nagata dimension \(\leq n\) and \(Y\) is Lipschitz \((n-1)\)-connected for some \(n\geq 0\). Then there is a constant \(C\) such that for every \(\varepsilon>0\) there exist a simplicial complex \(\Sigma\) equipped with the \(\ell_2\)-metric and maps \(\psi\colon X\to \Sigma\) and \(\phi\colon \Sigma\to Y\) with the following properties:
 \begin{enumerate}
 
\item\label{it:containment} \(\phi(\Sigma^{(0)})\subset X\), \(\Hull\bigl(\psi(X)\bigr)=\Sigma\) and \(\Sigma\) has dimension \(\leq n\);
 
\item\label{it:psi-1} \(\psi\) is \(C \varepsilon^{-1} \)-Lipschitz and \(\phi\) is \(C \varepsilon\)-Lipschitz on every \(\sigma\in \mathcal{F}\);

\item\label{it:fr} \(\phi\circ \psi\) is \(C\)-Lipschitz and \(d\bigl(x, \phi(\psi(x))\bigr)\leq C\varepsilon\) for all \(x\in X\).
\end{enumerate}
The constant \(C\) depends only on the data of \(X\) and \(Y\).
\end{proposition}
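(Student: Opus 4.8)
The plan is to construct, at scale $\varepsilon$, a nerve-type map of $X$ into a simplicial complex directly from the finite Nagata dimension of $X$, and to realise it back in $Y$ using the $(\LC_{n-1})$ property of $Y$; the argument runs parallel to the proof of Theorem~\ref{thm:factorization}, but is simpler because everything happens at a single fixed scale.

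First I would invoke $\dim_N(X)\le n$ to obtain, at a scale comparable to $\varepsilon$, a covering $\mathcal{B}=(B_i)_{i\in I}$ of $X$ together with a colouring $I=I_0\sqcup\cdots\sqcup I_n$ such that each $B_i$ has diameter at most a fixed multiple of $\varepsilon$, the sets within each colour class are separated by at least a fixed multiple of $\varepsilon$, and suitable shrinkings of the $B_i$ still cover $X$; this is the form of finite Nagata dimension used by Lang--Schlichenmaier \cite{MR2200122}. We may assume each $B_i$ is nonempty and fix $x_i\in B_i$. From the covering one builds $1$-Lipschitz functions $\tau_i\colon X\to[0,\infty)$, with $\{\tau_i>0\}$ a controlled neighbourhood of $B_i$, such that $\bar\tau:=\sum_{i}\tau_i$ is bounded below by a constant multiple of $\varepsilon$ on $X$ and, at every point, at most one $\tau_i$ per colour -- hence at most $n+1$ in total -- is positive. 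Setting $\psi(x):=\bar\tau(x)^{-1}\bigl(\tau_i(x)\bigr)_{i\in I}$, the map $\psi$ sends $X$ into the $n$-skeleton of the full simplex $\Sigma(I)$; I then put $\Sigma:=\Hull(\psi(X))$ with the $\ell_2$-metric, so that $\dim\Sigma\le n$ and $\Hull(\psi(X))=\Sigma$ by construction. That $\psi$ is $C\varepsilon^{-1}$-Lipschitz follows exactly as in the proof of Theorem~\ref{thm:factorization}: one bounds $|\psi(x)-\psi(y)|_{\ell_2}\le|\psi(x)-\psi(y)|_{\ell_1}$ by a sum over the at most $2(n+1)$ indices active at $x$ or $y$, using that the $\tau_i$ are $1$-Lipschitz and $\bar\tau$ is bounded below by a multiple of $\varepsilon$.

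Next I would set $\phi(e_i):=x_i\in X$ on $\Sigma^{(0)}$. If a finite set $J\subset I$ spans a simplex of $\Sigma=\Hull(\psi(X))$, then some point of $X$ lies in the positivity-neighbourhood of $B_i$ for every $i\in J$, so the points $x_i$, $i\in J$, all lie within a fixed multiple of $\varepsilon$ of that point; hence the vertices of any $\sigma\in\mathcal{F}$ are sent by $\phi$ into a subset of $X$ of diameter $\le C\varepsilon$, i.e.\ $\phi|_{\sigma^{(0)}}$ is $C\varepsilon$-Lipschitz for the $\ell_2$-metric. Since $\dim\Sigma\le n$ and $Y$ has $(\LC_{n-1})$, the skeleton-by-skeleton extension procedure of Lang--Schlichenmaier (already used in the proof of Theorem~\ref{thm:factorization}) extends $\phi$ to a map $\phi\colon\Sigma\to Y$ that is $C\varepsilon$-Lipschitz on every $\sigma\in\mathcal{F}$; together with the previous paragraph this gives \eqref{it:containment} and \eqref{it:psi-1}. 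The estimate $d(x,\phi(\psi(x)))\le C\varepsilon$ in \eqref{it:fr} is then immediate: $\psi(x)$ lies in the closed simplex $\sigma(\psi(x))$, which has a vertex $e_i$ with $x\in B_i$, so
\[
d\bigl(x,\phi(\psi(x))\bigr)\le d(x,x_i)+\Lip\bigl(\phi|_{\sigma(\psi(x))}\bigr)\,\bigl|\psi(x)-e_i\bigr|\le \diam(B_i)+\sqrt2\,C\varepsilon\le C'\varepsilon.
\]

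The delicate point is the remaining claim in \eqref{it:fr}, that $\phi\circ\psi$ is $C$-Lipschitz with $C$ independent of $\varepsilon$; here, unlike in Theorem~\ref{thm:factorization}, one cannot appeal to Lemma~\ref{lem:littleLip}, since $X$ need not be quasiconvex, and one must argue directly. For $x,y\in X$ with $d(x,y)\ge c_0\varepsilon$ (a fixed constant $c_0>0$) the triangle inequality and the bound just proved give $d(\phi\psi(x),\phi\psi(y))\le 2C'\varepsilon+d(x,y)\le (2C'/c_0+1)\,d(x,y)$. For $d(x,y)<c_0\varepsilon$ the plan is to show that $\psi(x)$ and $\psi(y)$ lie in a common closed simplex $\tau$ of $\Sigma$ -- equivalently, that the positivity-index sets $I_x,I_y$ are contained in a common simplex of $\Sigma$ -- which yields $d(\phi\psi(x),\phi\psi(y))\le\Lip(\phi|_\tau)\,|\psi(x)-\psi(y)|\le C\varepsilon\cdot C\varepsilon^{-1}d(x,y)$. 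Establishing this ``stable nerve'' statement is the step I expect to require the most care: a naive choice such as $\tau_i=\max\{0,\lambda\varepsilon-d(\cdot,B_i)\}$ does not suffice, because nearby points need not have compatible positivity-index sets, and one must choose the covering $\mathcal{B}$ and the functions $\tau_i$ so that the combinatorics of the positivity-neighbourhoods is controlled at scale $\varepsilon$ -- which is precisely the content of the partition-of-unity constructions of Lang--Schlichenmaier \cite{MR2200122}. Once \eqref{it:containment}--\eqref{it:fr} are verified, inspection of the construction shows that $C$ depends only on $n$, the Nagata constant of $X$, and the $(\LC_{n-1})$-constant of $Y$, as claimed.
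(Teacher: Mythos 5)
Your construction of $\psi$, $\Sigma=\Hull(\psi(X))$, the vertex map $\phi(e_i)=x_i$, the skeleton-by-skeleton extension of $\phi$ using $(\LC_{n-1})$, the distance bound $d(x,\phi(\psi(x)))\leq C\varepsilon$, and the large-scale case $d(x,y)\gtrsim\varepsilon$ of \eqref{it:fr} all match the paper's proof. The problem is the small-scale case of \eqref{it:fr}, which you yourself flag as the delicate point and then leave open: you propose to show that $\psi(x)$ and $\psi(y)$ lie in a \emph{common closed simplex} of $\Sigma$, assert that the naive choice $\tau_i=\max\{0,\tfrac{\varepsilon}{2}-d(\cdot,B_i)\}$ cannot achieve this, and defer the needed ``stable nerve'' property to the partition-of-unity constructions of \cite{MR2200122} without an argument. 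That property is genuinely problematic: it would require some single point $z\in X$ with $\tau_i(z)>0$ for \emph{all} indices active at $x$ or at $y$, which a nerve construction does not provide in general, and it is not what Lang--Schlichenmaier establish. So as written the key step is both unproven and stronger than what is needed.

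The paper closes this step with the naive $\tau_i$ and a weaker, easily verified fact. Let $\sigma,\sigma'$ be the carriers of $\psi(x),\psi(y)$ with $d(x,y)\leq\varepsilon/8$. Since the $B_i$ cover $X$, pick $i$ with $x\in B_i$; then $\tau_i(x)=\tfrac{\varepsilon}{2}>0$ and $\tau_i(y)\geq\tfrac{\varepsilon}{2}-d(x,y)>0$, so $e_i$ is a vertex of both carriers and in particular $\sigma\cap\sigma'\neq\varnothing$. One does \emph{not} need a common simplex: it suffices that there is a point $p\in\sigma\cap\sigma'$ with $\abs{\psi(x)-p}+\abs{p-\psi(y)}\leq 2n\abs{\psi(x)-\psi(y)}$ (a purely Euclidean fact about two simplices of $\Sigma(I)$ sharing a face, with constant depending only on $n$). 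Then
\[
d\bigl(\phi(\psi(x)),\phi(\psi(y))\bigr)\leq \Lip(\phi|_{\sigma})\,\abs{\psi(x)-p}+\Lip(\phi|_{\sigma'})\,\abs{p-\psi(y)}\leq C\varepsilon\cdot 2n\,\abs{\psi(x)-\psi(y)},
\]
and combining with the $C\varepsilon^{-1}$-Lipschitz bound on $\psi$ gives the uniform Lipschitz constant for $\phi\circ\psi$, with no quasiconvexity of $X$ and no refinement of the covering. If you replace your ``common simplex'' step by this shared-vertex-plus-bounded-detour argument (and prove the detour estimate, which your proposal currently lacks), the rest of your outline goes through as in the paper.
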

Notice that in the case \(n=0\), we do not impose any  assumption on the Lipschitz connectedness of \(Y\). If \(X\subset Y\) is a closed subset such that \(X\) is Lipschitz \(k\)-connected for some \(k\leq n\), then we can choose \(\phi\) such that, in addition, \(\phi(\Sigma^{(k+1)})\subset X\). 
The proof of Proposition~\ref{prop:factorization2} is similar to the proof of the factorization theorem in Section~\ref{sec:factorization}.

\begin{proof}[Proof of Proposition~\ref{prop:factorization2}]
Let \(c\) denote the Nagata constant of X. Fix \(\varepsilon>0\) and let \(\mathcal{B}=(B_i)_{i\in I}\) be a \(c\varepsilon\)-bounded covering of \(X\) with \(\varepsilon\)-multiplicity \(n+1\).
We begin by constructing a map $\psi\colon X\to\Sigma(I)$.
For each \(i\in I\) we define \(\tau_i\colon X\to \R
\) via \(x\mapsto \max\bigl\{\frac{\varepsilon}{2}-d(x, B_i),0\bigr\}.\)
If \(\tau_i(x)>0\), then there is \(b_i\in B_i\) with \(d(x, b_i)\leq \frac{\varepsilon}{2}\); consequently, for any \(x\in X\) there are at most \((n+1)\) indices \(i\in I\) such that \(\tau_i(x)>0\), as \(\mathcal{B}\) has \(\varepsilon\)-multiplicity \(n+1\). We set \(\bar{\tau}(x)=\sum_{i\in I} \tau_i(x)\). Clearly, \(\bar{\tau}(x)>0\) for every \(x\in X\). By the above,  \(\psi\colon X\to \Sigma(I)\) given by  \(x\mapsto \bar{\tau}(x)^{-1}\bigl(\tau_i(x)\bigr)_{i\in I}\) is well-defined and \(\Sigma\coloneqq \Hull\bigl(\psi(X)\bigr)\) is an \(n\)-dimensional simplicial complex. In what follows, we equip \(\Sigma\) with the \(\ell_2\)-metric and view \(\psi\) as a map from \(X\) to \(\Sigma\). One calculates exactly as in the proof of Theorem~\ref{thm:factorization} that for all \(x\), \(x'\in X\), 
\[
\abs{\psi(x)-\psi(x')}\leq \frac{4(n+1)}{\bar{\tau}(x)} d(x,x').
\]
Since \(\mathcal{B}\) is a covering of \(X\), it follows that \(\bar{\tau}(x)\geq \frac{\varepsilon}{2}\) for all \(x\in X\), so by the estimate above, \(\psi\) is \(C_0 \varepsilon^{-1}\)-Lipschitz, as desired.

Next, we construct \(\phi\colon \Sigma\to Y\). As \(Y\) is Lipschitz \((n-1)\)-connected, the same argument as in the proof of Theorem~\ref{thm:factorization} yields a map \(\phi\colon \Sigma\to Y\) such that \(h(e_i)\in B_i\) and \(\Lip( \phi|_\sigma)\leq C_1 \Lip(\phi|_{\sigma^{(0)}})\) for every \(\sigma\in \mathcal{F}(\Sigma)\).
Here, \(C_1\) denotes a constant depending only the data of \(X\) and \(Y\). Fix \(\sigma\in \mathcal{F}\) and let \(e_i\), \(e_j\) be two vertices of \(\sigma\). Clearly, there is \(x\in \psi^{-1}(\st \sigma)\) such that \(\tau_i(x)>0\) and \(\tau_j(x)>0\).  We obtain
\[
d(\phi(e_i), \phi(e_j))\leq d(\phi(e_i), x)+d(x, \phi(e_j))\leq 2\Big( c\varepsilon+\frac{\varepsilon}{2}\Big),
\]
since \(\mathcal{B}\) is \(c\varepsilon\)-bounded. Hence, by the above, \(\phi\) is \(C_2 \varepsilon\)-Lipschitz on \(\sigma\), where \(C_2\coloneqq \sqrt{2}(c+1)C_1\). We proceed by showing \eqref{it:fr}. Let \(\delta\coloneqq \phi \circ \psi\). Fix \(x\in X\) and let \(\sigma\in \mathcal{F}\) be the unique simplex such that \(\psi(x)\in \interior \sigma\). Since there exists some \(i\in I\) with \(\tau_i(x)>0\), we find that
\begin{equation}\label{eq:f-dist-to-id}
d(\delta(x), x)\leq d(\delta(x), \phi(e_i))+d(\phi(e_i), x) \leq C_3\varepsilon,
\end{equation}
as \(\phi\) is \(C_2 \varepsilon\)-Lipschitz on \(\sigma\) and \(d(\phi(e_i), x)\leq c\varepsilon+\frac{\varepsilon}{2}\). 
In particular, if \(d(x,x')\geq \frac{\varepsilon}{8}\), then \eqref{eq:f-dist-to-id} yields \(d(\delta(x), \delta(x'))\leq C_4 d(x,x')\), where \(C_4\coloneqq 1+ 16 C_3\). 
Now, suppose that \(x\), \(x'\in X\) satisfy \(d(x,x')\leq \frac{\varepsilon}{8}\), and let \(\sigma\), \(\sigma'\in \mathcal{F}\) denote the unique simplices such that \(\psi(x)\in \interior \sigma\) and \(\psi(x')\in \interior \sigma'\), respectively. If \(x\in B_i\) for some \(i\in I\), then \(\tau_i(x')>0\); thus,
\(\sigma \cap \sigma' \neq \varnothing\).
There is \(p\in \sigma \cap \sigma^\prime\) such that
\(\abs{\psi(x)-p}+\abs{p-\psi(x')}\leq 2n \abs{\psi(x)-\psi(x')}\), and we can estimate
\[
d(\delta(x), \delta(x'))\leq d(\delta(x), \phi(p))+d(\phi(p), \delta(x')) \leq C_2 \varepsilon\,\Big( \abs{\psi(x)-p}+\abs{p-\psi(x')}\Big). 
\]
Hence, \(d(\delta(x), \delta(x'))\leq 2C_0 C_2 n\, d(x,x')\), for \(\psi\) is \(C_0\varepsilon^{-1}\)-Lipschitz. By setting, \(C_5\coloneqq 2 C_0 C_2 C_4 n\), we conclude that \(\delta=\phi\circ \psi\) is \(C_5\)-Lipschitz, as desired. We put \(C\coloneqq \max\bigl\{ C_i : i=0, \dots, 5\bigr\}\), which does not depend on \(\varepsilon\). 
This finishes the proof of Proposition~\ref{prop:factorization2}.
\end{proof}

We conclude this section with the proof of Theorem~\ref{thm:factorization-side-length-r}.

\begin{proof}[Proof of Theorem~\ref{thm:factorization-side-length-r}]
By Proposition~\ref{prop:factorization2} there is \(C>0\) such that for every \(\varepsilon>0\) there exist a simplicial complex \(\Sigma\) of dimension \(\leq n\) equipped with the \(\ell_2\)-metric and maps \(\psi\colon X\to \Sigma\) and \(\phi\colon \Sigma\to Y\) such that statements \eqref{it:containment} -- \eqref{it:fr} of Proposition~\ref{prop:factorization2} hold. Fix \(\varepsilon >0\). Notice that $\psi(X)$ is rectifiably connected and every $z\in\Sigma$ lies in a simplex of $\Sigma$ which intersects $\psi(X)$, so $\Sigma$ is also rectifiably connected. Let $d_\varepsilon$ be the length metric on $\Sigma$, scaled by the factor $\varepsilon\cdot 2^{-1/2}$. Then $d_\varepsilon$ is a length metric and $(\Sigma, d_\varepsilon)$ satisfies (1). In the following, we show that $\psi\colon X\to(\Sigma, d_\varepsilon)$ and $\varphi\colon (\Sigma, d_\varepsilon)\to Y$ have the desired properties.

Let $x,x'\in X$ and let $\alpha$ be a curve in $X$ from $x$ to $x'$ satisfying $\ell(\alpha)\leq c d(x,x')$, where $c$ is the quasiconvexity constant of $X$. Then 
 $$
 \sqrt{2} \, d_\varepsilon(\psi(x),\psi(x'))\leq \varepsilon \,\ell(\psi\circ \alpha)\leq C \ell(\alpha)\leq Cc \,d(x,x').
 $$
 This shows that $\psi\colon X\to(\Sigma, d_\varepsilon)$ is $2^{-1/2}\cdot Cc$-Lipschitz. Next, let $z,z'\in\Sigma$. It follows from Proposition~\ref{prop:h-general-little-lip} and Lemma~\ref{lem:littleLip} that for every curve $\gamma$ in $\Sigma$ connecting $z$ and $z'$ we have $d(\phi(z),\phi(z')) \leq C\varepsilon \ell(\gamma)$. By taking the infimum over all such curves $\gamma$, we conclude
 $$
 d(\phi(z),\phi(z'))\leq  C\hspace{-0.2em}\sqrt{2} \,d_\varepsilon(z,z').
 $$ 
 In particular, the map $\varphi\colon (\Sigma, d_\varepsilon)\to Y$ is $C\hspace{-0.2em}\sqrt{2}$-Lipschitz and it is clear that $\varphi$ and $\psi$ satisfy condition (2) and the remaining statements in the theorem. 
\end{proof}

\section{Deformation theorem in spaces of finite Nagata dimension}\label{sec:def-thm-finite-Nagata}

The aim of this section is to prove Theorem~\ref{thm:Def-thm-finite-Nagata-dim}. The proof will rely on Theorem~\ref{thm:factorization-side-length-r} from the introduction and the deformation theorem for bilipschitz triangulated metric spaces (see Theorem~\ref{thm:Federer-Fleming}). We follow the proof strategy outlined in Section~\ref{sec:proof-outline}.

Let \(X\) be as in Theorem~\ref{thm:Def-thm-finite-Nagata-dim}. We define $Y\coloneqq \ell_\infty(X)$ and view $X$ as a subset of $Y$. By Theorem~\ref{thm:factorization-side-length-r} there is a constant \(C>0\) depending only on the data of $X$ such that for every  $\varepsilon>0$ there exist a metric simplicial complex $\Sigma$ and \(C\)-Lipschitz maps $\psi\colon X\to \Sigma$ and $\varphi\colon \Sigma\to Y$ such that \eqref{it:thm-1.6-1} and \eqref{it:thm-1.6-2} of Theorem~\ref{thm:factorization-side-length-r} hold. Fix \(\varepsilon>0\). Let $\Lambda_m$ and $\Gamma_m$ be the homomorphisms from Proposition~\ref{prop:homomorphisms-transporting} when applied to \(\phi\). These homomorphisms,  satisfy $$\mass(\Lambda_m(\llbracket \sigma\rrbracket))\leq C\varepsilon^m,\quad\quad \mass(\Gamma_m(\llbracket\sigma\rrbracket))\leq C\varepsilon^{m+1}$$ and $\spt\Lambda_m(\llbracket \sigma\rrbracket)$, $\spt\Gamma_m(\llbracket\sigma\rrbracket)\subset B\bigl(\varphi(\sigma^{(0)}),  C\varepsilon\bigr)$ for every $m=0,\dots, k+1$ and each $m$-simplex $\sigma$ in $\Sigma$. The proof of Theorem~\ref{thm:Def-thm-finite-Nagata-dim} relies on the following two auxiliary lemmas.

\begin{lemma}\label{lem:spt-phi-hull-Lambda-Gamma}
 For every subset $A\subset X$ we have $\varphi\bigl(\Hull(\psi(A))\bigr)\subset B(A, C\varepsilon)$. Moreover, if $Q\in\poly_m(\Sigma)$ has support in $\Hull(\psi(A))$, then 
 $$
 \spt(\Lambda_m(Q)),\, \spt(\Gamma_m(Q))\subset B(A, C \varepsilon),
 $$
 where $C$ is a constant only depending on the data of $X$.
\end{lemma}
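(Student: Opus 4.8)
The plan is to treat the two assertions separately; the first follows essentially at once from Theorem~\ref{thm:factorization-side-length-r}, and the second is a short bookkeeping reduction to it. Throughout I would let $C$ denote a constant depending only on the data of $X$ that is allowed to increase from line to line, and recall from the setup of this section that $\psi$ and $\varphi$ are $C$-Lipschitz, that $d(x,\varphi(\psi(x)))\leq C\varepsilon$ for all $x\in X$, and that each closed simplex of $\Sigma$ is a Euclidean simplex of side length $\varepsilon$ on which the metric of $\Sigma$ is the Euclidean metric, so that any two points of a common closed simplex are at distance $\leq\varepsilon$ in $\Sigma$. For the first assertion I would first record the elementary description $\Hull(\psi(A))=\bigcup_{a\in A}\overline{\sigma(\psi(a))}$, where for $w\in\Sigma$ I write $\sigma(w)$ for the unique simplex whose relative interior contains $w$ and $\overline{\sigma(w)}$ for its closure: the right-hand side is a subcomplex (it is closed under taking faces) containing $\psi(A)$, and any subcomplex containing $\psi(a)$ must contain $\overline{\sigma(\psi(a))}$, so it is the smallest such. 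Then, given $z\in\Hull(\psi(A))$, pick $a\in A$ with $z\in\overline{\sigma(\psi(a))}$; since $z$ and $\psi(a)$ lie in one common closed simplex, $d(z,\psi(a))\leq\varepsilon$ in $\Sigma$, whence
$d(\varphi(z),a)\leq d(\varphi(z),\varphi(\psi(a)))+d(\varphi(\psi(a)),a)\leq C\varepsilon+C\varepsilon$
by Theorem~\ref{thm:factorization-side-length-r}\eqref{it:thm-1.6-2} and the $C$-Lipschitz property of $\varphi$, so $\varphi(z)\in B(A,C\varepsilon)$ after enlarging $C$.

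For the second assertion I would write $Q=\sum_i\theta_i\bb{\sigma_i}$ in reduced form, with pairwise distinct $m$-simplices $\sigma_i$ and $\theta_i\in\Z\setminus\{0\}$. Since two distinct $m$-simplices meet only in faces of dimension $<m$, one has $Q\on\interior\sigma_i=\theta_i\bb{\sigma_i}$, hence $\lVert Q\rVert\on\interior\sigma_i=|\theta_i|\,\Haus^m\on\sigma_i$, which is nonzero; it follows that $\interior\sigma_i\subset\spt Q$ and therefore $\overline{\sigma_i}\subset\spt Q\subset\Hull(\psi(A))$. In particular every vertex of $\sigma_i$ lies in $\Hull(\psi(A))$, so by the first assertion $\varphi(\sigma_i^{(0)})\subset B(A,C\varepsilon)$. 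Now I invoke the support bounds for $\Lambda_m$ and $\Gamma_m$ recorded at the start of this section (Proposition~\ref{prop:homomorphisms-transporting}(5) applied with $h=\varphi$, using $\Lip(\varphi|_{\sigma_i})\leq C$): $\spt\Lambda_m(\bb{\sigma_i})$ and $\spt\Gamma_m(\bb{\sigma_i})$ are contained in $B(\varphi(\sigma_i^{(0)}),C\varepsilon)\subset B(A,C\varepsilon)$. Since $\Lambda_m$ and $\Gamma_m$ are homomorphisms, $\spt\Lambda_m(Q)\subset\bigcup_i\spt\Lambda_m(\bb{\sigma_i})$ and likewise for $\Gamma_m(Q)$, which finishes the proof after one final enlargement of $C$.

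I do not anticipate a genuine obstacle: every estimate is soft and the constant only ever acquires absolute multiplicative factors, so it remains controlled by the data of $X$, uniformly in $\varepsilon$. The only two points deserving a moment's care are the identification of the hull used in the first part (handled by the one-line argument above) and the inclusion $\overline{\sigma_i}\subset\spt Q$ for each simplex occurring in the reduced form of $Q$; the latter holds because $\lVert Q\rVert$ restricted to $\interior\sigma_i$ equals $|\theta_i|\,\Haus^m\on\sigma_i$, which assigns positive measure to every open neighbourhood of a point of $\interior\sigma_i$, so $\interior\sigma_i$, and hence its closure, lies in $\spt Q$ — note that no local finiteness of $\Sigma$ is needed for this, since one only uses monotonicity of $\lVert Q\rVert$ on the sets $\interior\sigma_i\cap U(z,r)$.
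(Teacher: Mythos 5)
Your proof is correct and takes essentially the same route as the paper's: the first inclusion via a common simplex of side length \(\varepsilon\) and the triangle inequality with \(d(x,\varphi(\psi(x)))\leq C\varepsilon\), and the second via the support bound of Proposition~\ref{prop:homomorphisms-transporting}(5) applied to each simplex together with linearity of \(\Lambda_m\) and \(\Gamma_m\). The only difference is that you explicitly justify the step, left implicit in the paper, that every simplex occurring in the reduced representation of \(Q\) lies in \(\spt Q\subset\Hull(\psi(A))\).
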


\begin{proof}
 Let $A\subset X$ be a nonempty set and let $z\in\Hull(\psi(A))$. Then there exists $x\in A$ such that $\psi(x)$ and $z$ lie in  a common simplex in $\Hull(\psi(A))$. In particular, we have $d(z,\psi(x))\leq \varepsilon$ and hence $$d(\varphi(z),x)\leq d(\varphi(z), \varphi(\psi(x)))+ d(\varphi(\psi(x)),x)\leq 2 C \varepsilon.$$ This implies that $\varphi\bigl(\Hull(\psi(A))\bigr)\subset B(A,2C\varepsilon)$, as claimed. In order to prove the second statement, let $\sigma\subset \Hull(\psi(A))$ be an $m$-simplex.
Since $\varphi(\sigma^{(0)})\subset B(A,2C\varepsilon )$ it follows from Proposition~\ref{prop:homomorphisms-transporting} that 
$$
\spt(\Lambda_m(\llbracket \sigma\rrbracket)),\, \spt(\Gamma_m(\llbracket \sigma\rrbracket))\subset B(A,3 C\varepsilon).
$$ 
This implies the second statement.
\end{proof}

\begin{lemma}
 The triple $(\Sigma, \varphi|_{\Sigma^{(0)}}, \Lambda_*)$, where \(\Lambda_\ast\coloneqq\{\Lambda_0, \Lambda_1, \dots, \Lambda_k\}\), defines a $(k,\varepsilon)$-polyhedral structure with a constant only depending on the data of $X$.
\end{lemma}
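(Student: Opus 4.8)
The plan is to verify, one at a time, the four conditions in the definition of a $(k,\varepsilon)$-polyhedral structure for the triple $(\Sigma,\varphi|_{\Sigma^{(0)}},\Lambda_*)$. Each of them is essentially a restatement of something already proved—Theorem~\ref{thm:factorization-side-length-r} for conditions (1) and (2), and Proposition~\ref{prop:homomorphisms-transporting} for conditions (3) and (4)—so the argument amounts to checking hypotheses and tracking constants.

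Condition (1) is exactly part~\eqref{it:thm-1.6-1} of Theorem~\ref{thm:factorization-side-length-r}. For condition (2), the inequality $d(\varphi(z),\varphi(w))\le C\,d(z,w)$ holds because $\varphi$ is $C$-Lipschitz. To see that $\varphi(\Sigma^{(0)})$ is $(C\varepsilon)$-dense in $X$, fix $x\in X$; the point $\psi(x)$ lies in some closed simplex $\sigma$ of $\Sigma$, so any vertex $z$ of $\sigma$ satisfies $d(z,\psi(x))\le\diam\sigma=\varepsilon$ and hence $d(\varphi(z),x)\le d(\varphi(z),\varphi(\psi(x)))+d(\varphi(\psi(x)),x)\le C\varepsilon+C\varepsilon$, using that $\varphi$ is $C$-Lipschitz and $d(x,\varphi(\psi(x)))\le C\varepsilon$ by part~\eqref{it:thm-1.6-2} of Theorem~\ref{thm:factorization-side-length-r}. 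For the lower bound in (2), I would first record that since $\Hull(\psi(X))=\Sigma$ every vertex of $\Sigma$ is a vertex of the carrier simplex of $\psi(x)$ for some $x\in X$ and hence lies within $\varepsilon$ of $\psi(X)$; thus, given vertices $z,w$, choose $x,x'\in X$ with $d(z,\psi(x))\le\varepsilon$ and $d(w,\psi(x'))\le\varepsilon$, and chain triangle inequalities through $\psi(x)$, $\psi(x')$, $\varphi(\psi(x))$, $\varphi(\psi(x'))$, using that $\psi$ and $\varphi$ are $C$-Lipschitz and $d(x,\varphi(\psi(x)))\le C\varepsilon$. This yields a quasi-isometry estimate $d(z,w)\le C\,d(\varphi(z),\varphi(w))+C\varepsilon$, which after enlarging $C$ is condition~(2).

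Conditions (3) and (4) I would obtain from Proposition~\ref{prop:homomorphisms-transporting} applied with $h=\varphi$ and $Y=\ell_\infty(X)$, noting that its hypotheses hold: $X$ is a closed (being complete) quasiconvex subset of the Banach space $Y$, and $Y$ has $(\EI_{k+1})$; $X$ has $(\LC_k)$ or $(\EI_k)$ by assumption; and $\varphi(\Sigma^{(0)})\subset X$ with $\varphi$ Lipschitz on every simplex. Condition (3) is then items (1) and (2) of that proposition: $\Lambda_0=\varphi_\#$ gives $\Lambda_0(\bb z)=\varphi_\#\bb z=\bb{\varphi(z)}$ for $z\in\Sigma^{(0)}$, and $\partial\circ\Lambda_{m+1}=\Lambda_m\circ\partial$ for $m=0,\dots,k$. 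Condition (4) is the mass and support estimate for $\Lambda_m(\bb\sigma)$ recorded in the paragraph preceding the lemma, which follows from items (4) and (5) of the proposition together with $\Lip(\varphi|_\sigma)\le C$. Since $Y=\ell_\infty(X)$ and its isoperimetric constants are absolute, all constants produced depend only on $k$ and the data of $X$.

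There is no substantial obstacle; the only step requiring even a short computation is the lower quasi-isometry estimate in condition (2), where one has to combine the $\varepsilon$-coarse density of $\psi(X)$ in $\Sigma$—which is where $\Hull(\psi(X))=\Sigma$ and the normalization that each simplex have side length $\varepsilon$ come in—with the global Lipschitz bounds on $\psi$ and $\varphi$. Everything else is a direct citation of Theorem~\ref{thm:factorization-side-length-r} and Proposition~\ref{prop:homomorphisms-transporting}.
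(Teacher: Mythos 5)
Your proposal is correct and follows essentially the same route as the paper: conditions (1), (3), (4) are read off from Theorem~\ref{thm:factorization-side-length-r} and from Proposition~\ref{prop:homomorphisms-transporting} applied to $h=\varphi$ with $Y=\ell_\infty(X)$, and condition (2) is verified by the same triangle-inequality argument through $\psi(x)$ and $\varphi(\psi(x))$, using $d(x,\varphi(\psi(x)))\le C\varepsilon$ together with $\Hull(\psi(X))=\Sigma$. The only caveat is your phrase ``after enlarging $C$'': the estimate $d(z,w)\le C\,d(\varphi(z),\varphi(w))+C\varepsilon$ has a multiplicative constant that cannot be absorbed into the purely additive form displayed in the definition, but the paper's own proof likewise only establishes the coarse bound $d(\varphi(z),\varphi(w))\ge d(x,y)-4C\varepsilon$ (with $x,y\in X$ chosen so that $\psi(x)$, $\psi(y)$ share simplices with $z$, $w$), so this reflects how the quasi-isometry condition is meant to be read rather than a gap in your argument.
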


\begin{proof}
We only need to show that \(\varphi|_{\Sigma^{(0)}}\) has the desired properties. Let $z,w\in \Sigma^{(0)}$. Then there exist $x,y\in X$ such that $z$ and $\psi(x)$ lie in a common simplex and $w$ and $\psi(y)$ lie in a common simplex. In particular, we have 
 \begin{equation*}
     \begin{split}
         d(\varphi(z), \varphi(w))&\geq d(\varphi(\psi(x)),\varphi(\psi(y))) - d(\varphi(z), \varphi(\psi(x))) - d(\varphi(w),\varphi(\psi(y)))\\
         &\geq d(x,y) - 4 C \varepsilon.
     \end{split}
 \end{equation*}
Moreover, for every $x\in X$ we have $$d(x,\varphi(\Sigma^{(0)}))\leq C\varepsilon+ d(\varphi(\psi(x)), \varphi(\Sigma^{(0)})) \leq  C\varepsilon + Cd(\psi(x),\Sigma^{(0)})\leq 2 C \varepsilon.$$ 
This completes the proof. 
\end{proof}

Now we are in a position to prove Theorem~\ref{thm:Def-thm-finite-Nagata-dim}.

\begin{proof}[Proof of Theorem~\ref{thm:Def-thm-finite-Nagata-dim}] In what follows, $C$ and $C'$ will denote constants depending only on $k$ and the data of $X$ and they may change from one appearance to another.
Let $1\leq m\leq k$ and let $T\in\bI_m(X)$. Define $T'\coloneqq \psi_\#T$ and write $T' = P'+R'+\partial S'$, where $P'$, $R'$, $S'$ are as in Theorem~\ref{thm:Federer-Fleming}. If $m$ is strictly larger than the dimension of $\Sigma$ then $T'=0$ by the comment after Theorem~\ref{thm:Federer-Fleming} and in this case we let $P'$, $R'$, and $S'$ be the zero currents. Define $P\coloneqq \Lambda_m(P')$. By Proposition~\ref{prop:homomorphisms-transporting} and Theorem~\ref{thm:Federer-Fleming} 
we have that 
\begin{align*}
    &\mass(P) \leq C \mass(T), && \mass(\partial P)\leq C\mass(\partial T).
\end{align*}
Moreover, $\spt P'\subset\Hull(\psi(\spt T))$ and $\spt \partial P'\subset\Hull(\psi(\spt \partial T))$ and thus Lemma~\ref{lem:spt-phi-hull-Lambda-Gamma} implies that 
\begin{align*}
    &\spt P\subset B(\spt T, C \varepsilon),&& \spt \partial P \subset B(\spt \partial T, C \varepsilon).
\end{align*}
Now, let $H\colon [0,1]\times X\to Y$ be the straight line homotopy 
$$
H(t,x)\coloneqq (1-t)x+t\varphi(\psi(x)).
$$ The integral currents in $Y$ defined by $U\coloneqq H_\#(\bb{0,1}\times \partial T)$ and $V\coloneqq H_\#(\bb{0,1}\times T)$ satisfy $$T-\varphi_\#T' = - U - \partial V$$ and, by Lemma~\ref{lem:mass-of-pushforward-of-product}, $\mass(U)\leq \varepsilon C\mass(\partial T)$ and $\mass(V)\leq \varepsilon C\mass(T)$. Moreover, we have $$\spt U\subset H([0,1]\times\spt \partial T)\subset B(\spt \partial T, C \varepsilon)$$ and similarly $\spt V\subset H([0,1]\times \spt T)\subset B(\spt T, C \varepsilon)$. Proposition~\ref{prop:homomorphisms-transporting} yields $$\varphi_\#P' = P - \partial\Gamma_m(P') - \Gamma_{m-1}(\partial P')$$ and hence 
\begin{equation*}
        T = T-\varphi_{\#}T' + \varphi_{\#}T'
        = -U -\partial V + \varphi_{\#}P' + \varphi_{\#}R' + \partial \varphi_{\#}S'
        = P + \hat{R} + \partial \hat{S},
\end{equation*}
where we have set $\hat{R}\coloneqq \varphi_\#R' - U - \Gamma_{m-1}(\partial P')$ and $\hat{S}\coloneqq \varphi_\#S' - V - \Gamma_m(P')$. A direct calculation shows that 
\begin{align*}
&\mass(\hat{R})\leq \varepsilon \,C\mass(\partial T),&& \mass(\hat{S})\leq \varepsilon \,C\mass(T).    
\end{align*}
Moreover, Lemma~\ref{lem:spt-phi-hull-Lambda-Gamma} implies that \(\spt \hat{R}\subset B(\spt \partial T, C \varepsilon)\) and \(\spt \hat{S}\subset B(\spt T, C \varepsilon)\). 

Since $\partial \hat{R} = \partial (T-P)$ is supported in $X$,  Theorem~\ref{thm:undistorted-unbounded-set} implies the following two statements. There exists $R\in\bI_m(X)$ with $\partial R = \partial \hat{R}$ and such that $\mass(R)\leq C\mass(\hat{R})\leq \varepsilon \,C'\mass(\partial T)$ as well as $\spt R\subset B(\spt \partial T, C \varepsilon)$. Moreover, there is $\bar{S}\in\bI_{m+1}(Y)$ satisfying $\partial\bar{S} = \hat{R}-R$ and 
$$
\mass(\bar{S})\leq \varepsilon C\mass(\hat{R})\leq \varepsilon^2C'\mass(\partial T)
$$ 
as well as 
$\spt\bar{S}\subset B\bigl(\spt\partial T, C'\varepsilon\bigr)$.
By construction, $T= P + R+ \partial(\bar{S} + \hat{S})$ and hence $\partial(\bar{S}+\hat{S})$ is supported in $X$ and satisfies 
$$
\spt (\bar{S}+\hat{S})\subset B\bigl(\spt T, C\varepsilon\bigr).
$$ 
Thus, by Theorem~\ref{thm:undistorted-unbounded-set}, there exists $S\in\bI_{m+1}(X)$ with $\partial S = \partial(\bar{S}+\hat{S})$ and 
$$
\mass(S)\leq C\mass(\bar{S}+\hat{S})\leq  \varepsilon C'\mass(T) + \varepsilon^2C'\mass(\partial T)
$$
as well as 
$$
\spt S\subset B\bigl(\spt T, C'\varepsilon\bigr).
$$ 
Since $T= P+\hat{R}+\partial \hat{S} = P+ R + \partial \bar{S} + \partial \hat{S} = P + R+ \partial S$ we have a decomposition as in the statement of the theorem. 
\end{proof}

Theorem~\ref{thm:Def-thm-finite-Nagata-dim} implies the following approximation result mentioned in the introduction. 

\begin{corollary}\label{cor:flat-approximation}
 Suppose $X$ is a complete metric space of finite Nagata dimension which has $(\LC_k)$ for some $k\geq 1$, and let $T\in\bI_k(X)$. Then there exists a sequence of Lipschitz $k$-chains in $X$ such that the induced integral currents $T_i$ satisfy $\flatnorm_X(T-T_i) \to 0$ as $i\to\infty$ and 
 $$
\sup_{i}\,\Bigl[\mass(T_i) + \mass(\partial T_i)\Bigr]<\infty.
 $$
 If $T$ is a cycle, then the $T_i$ are cycles and $\Fillvol_X(T-T_i)\to 0$ as $i\to \infty$.
\end{corollary}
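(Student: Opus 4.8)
The plan is to apply Theorem~\ref{thm:Def-thm-finite-Nagata-dim} with a sequence $\varepsilon_i \to 0$ and extract from each application a Lipschitz chain approximant. Fix $T \in \bI_k(X)$. Since $X$ has $(\LC_k)$ it has finite Nagata dimension by hypothesis, so Theorem~\ref{thm:Def-thm-finite-Nagata-dim} applies with some fixed constant $C$; for each $\varepsilon_i \to 0^+$ it yields a $(k,\varepsilon_i)$-polyhedral structure and a decomposition $T = P_i + R_i + \partial S_i$ with $P_i \in \poly_k(X)$, $R_i \in \bI_k(X)$, $S_i \in \bI_{k+1}(X)$ satisfying $\mass(P_i) \leq C\mass(T)$, $\mass(\partial P_i) \leq C\mass(\partial T)$, $\mass(S_i) \leq \varepsilon_i C(\mass(T) + \varepsilon_i\mass(\partial T))$, and $\mass(R_i) \leq \varepsilon_i C \mass(\partial T)$. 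As noted in the introduction, when $X$ has $(\LC_k)$ the map $\varphi$ in the polyhedral structure is defined on all of $\Sigma^{(k)}$ and $\Lambda_k$ is simply $\varphi_\#$; hence the polyhedral chain $P_i = \Lambda_k(P_i')$ is the current induced by a Lipschitz $k$-chain in $X$ (push $P_i'$, a sum of oriented simplices, forward under the Lipschitz map $\varphi|_{\Sigma^{(k)}}$). Set $T_i \coloneqq P_i$.

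Next I would verify flat convergence and the uniform mass bound. From $T - T_i = R_i + \partial S_i$ we immediately get $\flatnorm_X(T - T_i) \leq \mass(R_i) + \mass(S_i) \leq \varepsilon_i C'(\mass(T) + \mass(\partial T)) \to 0$. For the uniform bound, $\mass(T_i) = \mass(P_i) \leq C\mass(T)$ and $\mass(\partial T_i) = \mass(\partial P_i) \leq C\mass(\partial T)$ directly from the theorem, so $\sup_i [\mass(T_i) + \mass(\partial T_i)] \leq C(\mass(T) + \mass(\partial T)) < \infty$. This handles the general statement.

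For the final sentence, suppose $T$ is a cycle, so $\partial T = 0$. Then the mass bounds force $\mass(\partial P_i) \leq C\mass(\partial T) = 0$ and $\mass(R_i) \leq \varepsilon_i C\mass(\partial T) = 0$, so $\partial P_i = 0$ and $R_i = 0$; thus each $T_i = P_i$ is a cycle and $T - T_i = \partial S_i$. Moreover the error term in the mass bound for $S_i$ involving $\mass(\partial T)$ also vanishes, giving $\mass(S_i) \leq \varepsilon_i C\mass(T) \to 0$, so $\Fillvol_X(T - T_i) \leq \mass(S_i) \to 0$ since $S_i \in \bI_{k+1}(X)$ with $\partial S_i = T - T_i$.

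I do not expect a serious obstacle here; the content is essentially bookkeeping on top of Theorem~\ref{thm:Def-thm-finite-Nagata-dim}. The one point requiring a little care is the assertion that $P_i = \Lambda_k(P_i')$ is genuinely a Lipschitz chain rather than merely a polyhedral chain in the abstract sense — this rests on the remark (stated in the introduction and underlying the $(\LC_k)$ case of the polyhedral-structure construction) that $\varphi$ extends Lipschitz-continuously over the $k$-skeleton and $\Lambda_k = \varphi_\#$, so that $P_i$ is the pushforward under a Lipschitz map of a finite integer combination of Euclidean simplices, i.e.\ (the current induced by) a Lipschitz $k$-chain. The only mild subtlety is ensuring we are invoking the $(\LC_k)$ branch of Proposition~\ref{prop:homomorphisms-transporting} rather than the $(\EI_k)$ branch, which is why the hypothesis of the corollary is $(\LC_{k-1})$ upgraded appropriately — in fact one should apply Theorem~\ref{thm:Def-thm-finite-Nagata-dim} noting that $(\LC_{k-1})$ together with finite Nagata dimension suffices to run its proof with $\Lambda_k = \varphi_\#$ on the $k$-skeleton, which is exactly what the cited construction provides.
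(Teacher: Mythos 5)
Your proposal is correct and follows essentially the same route as the paper: apply Theorem~\ref{thm:Def-thm-finite-Nagata-dim} with $\varepsilon_i\to 0$, observe that under $(\LC_k)$ the map $\varphi$ is Lipschitz on $\Sigma^{(k)}$ and $\Lambda_k=\varphi_\#$ (so the $P_i$ are Lipschitz $k$-chains), set $T_i=P_i$, and read off the flat/mass bounds, with the cycle case forced by $\mass(\partial P_i)\leq C\mass(\partial T)=0$ and $\mass(R_i)\leq\varepsilon_i C\mass(\partial T)=0$. Only your closing remark conflating the hypothesis with ``$(\LC_{k-1})$ upgraded'' is slightly off — the corollary assumes $(\LC_k)$, exactly what Theorem~\ref{thm:Def-thm-finite-Nagata-dim} and the identification of $\poly_k(X)$ with Lipschitz chains require — but this does not affect the argument.
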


In the above, $\flatnorm_X(T)$ is the flat norm in $X$ of a current $T\in\bI_k(X)$ and is defined by $$\flatnorm_X(T)\coloneqq \inf\{\mass(U)+\mass(V): U\in\bI_k(X), V\in\bI_{k+1}(X), T=U+\partial V\}.$$  In any complete metric space, convergence with respect to the flat norm implies weak convergence. Moreover, in metric spaces with local coning inequalities the two notions of convergence are equivalent for sequences with uniformly bounded mass and boundary mass (see \cite{MR2284563} for more information).

\begin{proof}[Proof of Corollary~\ref{cor:flat-approximation}]
 Since $X$ has $(\LC_k)$ the map $\varphi$ constructed above satisfies $\varphi(\Sigma^{(k+1)})\subset X$; see Theorem~\ref{thm:factorization-side-length-r}. Moreover, the homomorphisms $\Lambda_m$ in the polyhedral structure $(\Sigma, \varphi|_{\Sigma^{(0)}}, \Lambda_*)$ can be chosen such that $\Lambda(\llbracket\sigma\rrbracket)= \varphi_\#\llbracket \sigma\rrbracket$ for every $m$-simplex $\sigma$. See the begining of the proof of Proposition~\ref{prop:homomorphisms-transporting}. It thus follows that $\poly_k(X) = \{\Lambda_k(Q): Q\in\poly_k(\Sigma)\}$ consists of integral currents induced by Lipschitz chains in $X$. Now, the corollary follows from Theorem~\ref{thm:Def-thm-finite-Nagata-dim}.
\end{proof}

We finally turn to the proof of Corollary~\ref{cor:strong-approximation}. Given \(k\geq 0\) we denote the space of Lipschitz \(k\)-chains in $X$ by \(C_k^{\Lip}(X)\). Precisely, \(C_k^{\Lip}(X)\) is the free abelian group generated by all Lipschitz maps \(\phi\colon\Delta^k \to X\), where \(\Delta^k\) is the Euclidean standard $k$-simplex. We view $C_k^{\Lip}(X)$ as a subcomplex of the singular chain complex of $X$ and equip it with the singular boundary operator $\partial$ (see, for instance, \cite[Ch.\ 2.1]{HatcherAT}).

As already mentioned in Section~\ref{sec:currents}, Lipschitz chains induce integral currents. Indeed, if \(\alpha=\sum_{i=1}^N \theta_i \phi_i\) is a Lipschitz \(k\)-chain in $X$ then
\[
\bb{\alpha}\coloneqq \sum_{i=1}^N \theta_i\, \phi_{i\#} \bb{\Delta^k}
\]
is a \(k\)-dimensional integral current in \(X\). Moreover, one has that
$\partial \bb{\alpha}=\bb{\partial \alpha}$
for all \(\alpha\in C_{k}^{\Lip}(X)\). This can readily be verified by combining Stokes' theorem for chains with \cite[Theorem 11.1]{MR1794185}.
The following lemma proves Corollary~\ref{cor:strong-approximation} in the special case when \(\partial T\) is induced by a Lipschitz cycle.

\begin{lemma}\label{lem:moreover}
Let \(X\) be a complete metric space of finite Nagata dimension and let \(T\in \bI_{k}(X)\) for some \(k\geq 1\). Suppose that there is a cycle \(\delta\in C_{k-1}^{\Lip}(X)\) such that \(\partial T=\bb{\delta}\). If \(X\) has \((\LC_{k-1})\) then for every \(\varepsilon>0\) there exists \(\rho\in C_{k}^{\Lip}(X)\) such that
\[
\partial \rho=\delta \quad\text{ and }\quad \mass(T-\bb{\rho})\leq \varepsilon.
\]
\end{lemma}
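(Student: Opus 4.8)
The plan is to combine the rectifiability of the measure $\|T\|$ (which is what produces mass, rather than merely flat, approximations) with the simplicial approximation and deformation machinery developed in the paper, used only at parameter $k-1$ so that the hypothesis $(\LC_{k-1})$ suffices.

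\emph{Step 1: a first Lipschitz approximation with uncontrolled boundary.} Since $T\in\bI_k(X)$, $\|T\|$ is concentrated on a countably $\Haus^k$-rectifiable set, so after partitioning this set into Borel pieces one may write $T=\sum_{j\ge 1}(f_j)_\#\bb{\theta_j}$, where the $f_j$ are Lipschitz maps from bounded Borel subsets of $\R^k$ into $X$ which are injective on the relevant sets, $\theta_j\in L^1(\R^k,\Z)$, and the masses add up. Truncating the sum at a large $N$ and approximating each $\theta_j$ in $L^1(\R^k)$ by a polyhedral chain (a finite $\Z$-combination of indicators of cubes), Lemma~\ref{lem:rajala-wenger} yields a \emph{finite} Lipschitz $k$-chain $\rho_0\in C_k^{\Lip}(X)$ with $\mass(T-\bb{\rho_0})$ as small as desired and, by choosing the decomposition and truncation suitably, with $\spt\bb{\rho_0}$ in a prescribed neighbourhood of $\spt T$. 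Its boundary $\partial\rho_0\in C_{k-1}^{\Lip}(X)$ need not equal $\delta$. Set $\gamma\coloneqq\delta-\partial\rho_0\in C_{k-1}^{\Lip}(X)$; this is a cycle, and $\bb{\gamma}=\partial(T-\bb{\rho_0})$, so $\bb{\gamma}$ bounds the integral current $G\coloneqq T-\bb{\rho_0}$, whose mass we have arranged to be $<\varepsilon/2$.

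\emph{Step 2: reduction to an efficient Lipschitz filling.} It now suffices to establish: whenever $\gamma\in C_{k-1}^{\Lip}(X)$ is a cycle with $\bb{\gamma}=\partial G$ for some $G\in\bI_k(X)$, there is $\tau\in C_k^{\Lip}(X)$ with $\partial\tau=\gamma$ and $\mass(\bb{\tau})\le C\mass(G)$, the constant depending only on the data of $X$. Given this, $\rho\coloneqq\rho_0+\tau$ is a finite Lipschitz $k$-chain with $\partial\rho=\partial\rho_0+\gamma=\delta$ and $\mass(T-\bb{\rho})=\mass(G-\bb{\tau})\le(1+C)\mass(G)\le\varepsilon$ provided $\rho_0$ was chosen fine enough. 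To produce $\tau$ I would apply Theorem~\ref{thm:factorization-side-length-r} with $Y=\ell_\infty(X)$ at a small scale $r>0$; since $X$ has $(\LC_{k-1})$ this gives a finite-dimensional metric simplicial complex $\Sigma$ with simplices of side $r$ and $C$-Lipschitz maps $\psi\colon X\to\Sigma$, $\varphi\colon\Sigma\to Y$ with $\varphi(\Sigma^{(k)})\subset X$ and $d(x,\varphi(\psi(x)))\le Cr$. Transporting $\bb{\gamma}$ and $G$ to $\Sigma$, one has $\psi_\#\bb{\gamma}=\partial(\psi_\#G)$, and the classical deformation theorem in $\Sigma$ (Theorem~\ref{thm:Federer-Fleming}), applied a bounded number of times — first to push $\psi_\#\bb{\gamma}$ into $\Sigma^{(k-1)}$, then to write the resulting polyhedral $(k-1)$-cycle as $\partial$ of a polyhedral $k$-chain by deforming $\psi_\#G$ and its error currents into $\Sigma^{(k)}$, each time using that the boundary in play is a cycle or is already polyhedral so the $R$-type error vanishes — produces $Q\in\poly_k(\Sigma)$ with $\mass(Q)\le C(\mass(G)+r\mass(\bb{\gamma}))$ and $\partial Q$ the skeletal deformation of $\psi_\#\bb{\gamma}$. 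Pushing forward, $\varphi_\#Q=\bb{q}$ with $q\in C_k^{\Lip}(X)$ and $\mass(\bb{q})\le C^k\mass(Q)$; combining with the Lipschitz prism of the homotopy $H(t,x)=(1-t)x+t\varphi(\psi(x))$ (thickness $\le Cr$) shows that $\gamma-\partial q$ is a Lipschitz $(k-1)$-cycle whose induced current bounds a chain of mass $\le Cr\mass(\bb{\gamma})$ in $Y$, hence — by Theorem~\ref{thm:intro-undistorted-main}, applicable since $X$ is quasiconvex, of finite Nagata dimension and has $(\LC_{k-1})$ — bounds a current $G_1\in\bI_k(X)$ of mass $\le Cr\mass(\bb{\gamma})$. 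Choosing $r$ small enough forces $\mass(G_1)<\tfrac12\mass(G)$ while keeping $\mass(\bb{q})\le C\mass(G)$; iterating yields Lipschitz $k$-chains $q^{(j)}$ and cycles $\gamma_j=\gamma_{j-1}-\partial q^{(j)}$ with $\mass(\bb{q^{(j)}})\le C2^{1-j}\mass(G)$ and $\gamma_j$ bounding a current of mass $<2^{-j}\mass(G)$.

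\emph{Step 3: the main obstacle.} The series $\sum_j q^{(j)}$ converges in the mass norm to some $T_\infty\in\bI_k(X)$ with $\partial T_\infty=\bb{\gamma}$ and $\mass(T_\infty)\le 2C\mass(G)$, but $T_\infty$ is an infinite sum, not a finite Lipschitz chain, while a finite partial sum $\tau^{(N)}$ only satisfies $\partial\tau^{(N)}=\gamma-\gamma_N$ with $\bb{\gamma_N}$ flat-small yet possibly nonzero. Producing a \emph{finite} $\tau$ with $\partial\tau=\gamma$ \emph{exactly} is the crux: I would handle it by arranging, at a sufficiently late stage of the iteration, that the error cycle $\gamma_N$ lies in a fixed finite subcomplex at a fixed scale and that the tiny current it bounds can be deformed there with its (now polyhedral) boundary left fixed, so that one further application of the deformation theorem gives an \emph{exact} polyhedral cap for $\gamma_N$ of controlled mass, which is added to $\tau^{(N)}$. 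Finally, the case $k=1$ is treated separately and more simply: decompose the integral $1$-current $T$ into countably many Lipschitz arcs and loops, truncate the loops, and keep the finitely many arcs realising the $0$-cycle $\delta$; this gives $\rho$ directly. I expect Step 3 — converting the flat-convergent iteration into one that terminates with the boundary matched on the nose — to be the delicate point, and the place where the precise interplay between $(\LC_{k-1})$, finite Nagata dimension, and the polyhedral structure has to be exploited.
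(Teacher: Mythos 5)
Your overall architecture --- first approximate $T$ in mass by a Lipschitz chain with uncontrolled boundary (your Step 1 is essentially the paper's Lemma~\ref{lem:m-dense}), then correct the boundary using Theorem~\ref{thm:factorization-side-length-r} and the deformation theorem --- is the same as the paper's. But there is a genuine gap at exactly the point you flag as the crux. Your iteration only shrinks the \emph{filling volumes} of the error cycles $\gamma_j$, not their masses, so the remaining task of capping $\gamma_N$ by a finite Lipschitz chain with $\partial\tau=\gamma_N$ \emph{exactly} is a problem of the very same type you started with; and the proposed fix --- deform the small filling in a fixed finite subcomplex ``with its (now polyhedral) boundary left fixed'' --- does not go through as stated, because $\gamma_N$ is a Lipschitz cycle in $X$, not a polyhedral cycle of $\Sigma$: Theorem~\ref{thm:Federer-Fleming} applied to a filling of $\bb{\gamma_N}$ moves the boundary (producing the $R$-term and a deformed boundary), and every step of your scheme produces identities between \emph{induced currents}, never an equality of \emph{chains} at the boundary. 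Converting the current-level identity into the chain-level identity $\partial\tau=\gamma$ is precisely the unsolved difficulty (the same issue makes your Step 2 claim, an exact Lipschitz-chain filling with uniform constant, unsubstantiated).

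The paper closes this gap with an ingredient your argument lacks: Lemma~3 of \cite{MR3548470}. Applied to the Lipschitz cycle $\alpha=\partial\tilde\alpha-\delta$ (which bounds the small-mass integral current $\bb{\tilde\alpha}-T$), it yields, for every $\varepsilon>0$, Lipschitz chains $\gamma\in C_{k}^{\Lip}(\Sigma)$ and $\lambda\in C_{k}^{\Lip}(X)$ of mass at most $c_\alpha\varepsilon$ and a cycle $\alpha'\in C_{k-1}^{\Lip}(\Sigma)$ with $\bb{\alpha'}\in\poly_{k-1}(\Sigma)$ satisfying the \emph{chain-level} identities $\partial\gamma=\psi_\#(\alpha)-\alpha'$ and $\partial\lambda=\alpha-\phi_\#(\alpha')$. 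Because $\alpha'$ is polyhedral, a single application of Theorem~\ref{thm:Federer-Fleming} to $T'=\psi_\#(\bb{\tilde\alpha}-T)-\bb{\gamma}$ gives $P'\in\poly_k(\Sigma)$ with $\partial P'=\bb{\alpha'}$ and $\mass(P')\leq K\mass(T')$; writing $\beta=q(P')$ as a Lipschitz chain with $\partial\beta=\alpha'$ and setting $\rho=\tilde\alpha-\lambda-\phi_\#(\beta)$ gives $\partial\rho=\delta$ on the nose together with the mass estimate (note $c_\alpha$ may depend on $\alpha$, which is harmless since $\varepsilon$ is chosen after $\alpha$). So the decisive step is this chain-level boundary correction; without it, or a proof of an equivalent statement, your iteration does not terminate and the proof does not close.
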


We proceed with the proof of Corollary~\ref{cor:strong-approximation}.

\begin{proof}[Proof of Corollary~\ref{cor:strong-approximation}]
Let \(\varepsilon >0\) and \(k\geq 2\). 
By Lemma~\ref{lem:moreover}, there exists \(\rho'\in C_{k-1}^{\Lip}(X)\) with \(\partial \rho'=0\) and \(\mass(\partial T-\bb{\rho'})\leq \varepsilon\). Using Corollary~\ref{cor:lip-implies-EI} we find that \(X\) has \((\EI_{k-1})\), and thus there is \(S'\in \bI_{k}(X)\) satisfying \(\partial S'=\partial T-\bb{\rho'}\) and \(\mass(S) \leq D\, \varepsilon^{k/(k-1)}\). Let \(T'=T-S'\). Then \(\partial T'= \bb{\rho'}\), and so by Lemma~\ref{lem:moreover} there exists \(\rho\in C_{k}^{\Lip}(X)\) with \(\partial \rho=\rho'\) and \(\mass(T'-\bb{\rho})\leq \varepsilon\). Therefore,
\begin{align*}
\mass(T-\bb{\rho})&\leq \varepsilon+\mass(S')\leq 2 D \varepsilon \\
\mass(\partial T-\partial\bb{ \rho})&=\mass(\partial T-\bb{\rho'})\leq \varepsilon.
\end{align*}
This completes the proof in the case when \(k\geq 2\). If \(k=1\), then \(\partial T\in \bI_0(X)\) and thus there are finitely many \(x_i\in X\) and \(\theta_i\in \Z\) such that \(\partial T=\sum \theta_i \bb{x_i}\). In particular, \(\partial T= \bb{\delta}\) for some cycle \(\delta\in C_0^{\Lip}(X)\). Hence, by Lemma~\ref{lem:moreover}, for every \(\varepsilon >0\) there exists \(\rho\in C_1^{\Lip}(X)\) with \(\partial \rho=\delta\) and \(\mass(T-\bb{\rho})\leq \varepsilon\), as desired.
\end{proof}

Thus, we are left to prove Lemma~\ref{lem:moreover}. The main components in its proof are Theorem~\ref{thm:factorization-side-length-r} and \cite[Lemma~3]{MR3548470}. Moreover, we will need the following straightforward approximation result.

\begin{lemma}\label{lem:m-dense}
Let \(X\) be a complete metric space and let \(T\in \bI_k(X)\) for some \(k\geq 1\). If \(X\) has \((\LC_{k-1})\) then for every \(\varepsilon >0\) there exists \(\alpha\in C_{k}^{\Lip}(X)\) such that \(\mass(T-\bb{\alpha})\leq \varepsilon\). 
\end{lemma}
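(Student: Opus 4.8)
The plan is to combine the rectifiable structure of a metric integral current with the Lipschitz extension theorem of Lang and Schlichenmaier \cite{MR2200122}. The decisive simplification compared with the total-mass statement of Corollary~\ref{cor:strong-approximation} is that here only closeness in mass is demanded: nothing is required of $\partial\bb{\alpha}$, so the (possibly enormous) perimeter of a polyhedral approximation in $\R^k$ is irrelevant and the argument becomes soft.

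First I would invoke the structure theory of integer rectifiable metric currents \cite{MR1794185} to write $T=\sum_{i=1}^\infty\varphi_{i\#}\bb{\theta_i}$, where each $A_i\subset\R^k$ is compact, $\varphi_i\colon A_i\to X$ is bilipschitz with the images $\varphi_i(A_i)$ pairwise disjoint, $\theta_i\in L^1(\R^k,\Z)$ vanishes outside $A_i$, and the partial sums converge to $T$ in mass with $\sum_i\mass(\varphi_{i\#}\bb{\theta_i})<\infty$. Given $\varepsilon>0$, fix $N$ with $\mass\bigl(T-\sum_{i\le N}\varphi_{i\#}\bb{\theta_i}\bigr)\le\varepsilon/2$; it then suffices to approximate each single piece $T_i:=\varphi_{i\#}\bb{\theta_i}$ (for $i\le N$) within $\varepsilon/(2N)$ in mass by a Lipschitz chain in $X$ and to sum the results.

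Fix such a piece and abbreviate $\varphi=\varphi_i$, $A=A_i$, $\theta=\theta_i$. Since $\R^k$ has Nagata dimension $k$ and $X$ is complete with $(\LC_{k-1})$, the Lipschitz extension theorem \cite{MR2200122} — extending $\varphi$ over the skeleta of a Whitney-type decomposition of $\R^k\setminus A$ using $(\LC_0),\dots,(\LC_{k-1})$ successively, exactly as in the proof of Theorem~\ref{thm:factorization} — yields a Lipschitz map $\bar\varphi\colon\R^k\to X$ extending $\varphi$, with $L:=\Lip(\bar\varphi)\le c_k\Lip(\varphi)$. Because $\bar\varphi=\varphi$ on $A\supset\{\theta\neq0\}$, one has $\bar\varphi_\#\bb{\theta}=\varphi_\#\bb{\theta}=T_i$ (by the very definition of the push-forward of a current supported in $A$). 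Next I would approximate $\theta$ in $L^1(\R^k)$ by a polyhedral integer function $u=\sum_j m_j\mathbbm{1}_{\sigma_j}$, a finite sum of indicators of Euclidean $k$-simplices with $m_j\in\Z$, so that $\|\theta-u\|_{L^1}\le\varepsilon\,(2NL^k)^{-1}$; this is elementary since $\theta$ is integrable, integer-valued and compactly supported. With $P=\sum_j m_j\bb{\sigma_j}\in\poly_k(\R^k)$ and affine parametrizations $\ell_j\colon\Delta^k\to\sigma_j$, the Lipschitz chain $\alpha_i:=\sum_j m_j(\bar\varphi\circ\ell_j)\in C^{\Lip}_k(X)$ satisfies $\bb{\alpha_i}=\bar\varphi_\#P$, whence by Lemma~\ref{lem:rajala-wenger}
\[
\mass(T_i-\bb{\alpha_i})=\mass\bigl(\bar\varphi_\#(\bb{\theta}-P)\bigr)\le L^k\mass(\bb{\theta}-P)=L^k\|\theta-u\|_{L^1}\le\frac{\varepsilon}{2N}.
\]
Summing over $i\le N$ produces $\alpha\in C^{\Lip}_k(X)$ with $\mass(T-\bb{\alpha})\le\varepsilon$.

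I expect the only genuinely non-routine ingredient to be the existence of the $X$-\emph{valued} Lipschitz extension $\bar\varphi\colon\R^k\to X$: this is precisely the step that consumes the hypothesis $(\LC_{k-1})$, and it plays the role of "extending a Lipschitz map off a closed set", which is legitimate here because the domain $\R^k$ has finite Nagata dimension $k$. The remaining pieces — the rectifiable decomposition of $T$, the $L^1$-approximation of $\theta$ by a polyhedral integer function, and the mass estimates for push-forwards — are standard bookkeeping.
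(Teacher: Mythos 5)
Your proposal is correct and takes essentially the same route as the paper: the paper also reduces via the Ambrosio--Kirchheim decomposition (together with inner regularity of Lebesgue measure) to pushforwards of compact pieces of $\R^k$, extends the parametrizing Lipschitz map to all of $\R^k$ using $(\LC_{k-1})$ (the Lang--Schlichenmaier extension theorem, since $\R^k$ has Nagata dimension $k$), and then pushes forward a polyhedral approximation in $\R^k$ with the same $\Lip^k$ mass estimate. The only difference is bookkeeping: the paper approximates the compact set by a finite union of cubes of nearly the same measure, whereas you approximate the integer-valued density $\theta$ in $L^1$ by a polyhedral integer function, which amounts to the same thing.
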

\begin{proof}
Let \(K\subset \R^k\) be a compact subset and \(\phi \colon K\to X\) a Lipschitz map. Due to \cite[Theorem 4.5]{MR1794185} and the inner regularity of the Lebesgue measure on \(\R^k\), to prove the lemma it suffices to show the following statement: for every \(\varepsilon >0\) there exists \(\alpha\in C_k^{\Lip}(X)\) such that \(\mass(\phi_\# \bb{K}-\bb{\alpha})\leq \varepsilon\). Fix \(\varepsilon >0\). By a straightforward construction, we find finitely many \(k\)-cubes \(Q_i\subset \R^k\) such that \(K\subset \bigcup Q_i\), \(\interior Q_i \cap \interior Q_j=\varnothing\) whenever \(i\neq j\), and \(\mathscr{H}^k\bigl(\bigcup Q_i\bigr) \leq \mathscr{H}^k(K)+\varepsilon\). 
 Clearly, there exists \(\gamma\in C_{k}^{\Lip}(\R^k)\) with \(\bb{\gamma}=\sum \bb{Q_i}\). Now, since \(X\) has \((\LC_{k-1})\) there is a \(C\)-Lipschitz map \(\hat{\phi}\colon \R^k\to X\) extending \(\phi\), and so by letting \(\alpha\coloneqq \hat{\phi}_{\#} \gamma\), we obtain
 \[
 \mass\bigl( \phi_\# \bb{K}-\bb{\alpha}\bigr)\leq C^k \mass\bigl( \bb{K}- \sum \bb{Q_i}\bigr)\leq C^k \,\varepsilon,\]
 where in the first inequality we used that \(\phi_\# \bb{K}=\hat{\phi}_{\#} \bb{K}\). 
\end{proof}

The proof of Lemma~\ref{lem:moreover}  will use the following construction. Let $\Sigma$ be a simplicial complex. By choosing a total ordering on the vertices of $\Sigma$, we can construct an isomorphism from $\poly_*(\Sigma)$ to a subcomplex of $C_*^{\Lip}(\Sigma)$. Namely, for every $k$ and every $k$-simplex $\sigma$ of $\Sigma$, there is a unique isometry $\phi_\sigma\colon \Delta^{k}\to \Sigma$ that preserves the ordering of the vertices. The ordering also lets us fix an orientation for $\sigma$ so that $\bb{\sigma}=(\phi_\sigma)_\#\bb{\Delta^k}$. Let $q\colon \poly_k(\Sigma) \to C_k^{\Lip}(\Sigma)$ be defined by
\begin{equation}\label{eq:def-q}
  q\bigg(\sum_{\sigma} \theta_\sigma \bb{\sigma}\bigg) = \sum_{\sigma} \theta_\sigma \phi_\sigma.
\end{equation}
Then $q$ is a homomorphism of chain complexes and $\bb{q(P)} = P$ for any $P\in \poly_*(\Sigma)$.

\begin{proof}[Proof of Lemma~\ref{lem:moreover}]
By Theorem~\ref{thm:factorization-side-length-r} there exists \(C>0\) such that for every \(\varepsilon >0\) there are a metric simplicial complex \(\Sigma=\Sigma(\varepsilon)\) and \(C\)-Lipschitz maps \(\psi\colon X\to \Sigma\) and \(\phi\colon \Sigma^{(k)} \to X\) satisfying the properties listed in Theorem~\ref{thm:factorization-side-length-r}. 
Fix \(\varepsilon_0>0\). By Lemma~\ref{lem:m-dense} there exists \(\tilde{\alpha}\in C_{k}^{\Lip}(X)\) with \(\mass(T-\bb{\tilde{\alpha}})\leq \varepsilon_0\). Let
\(\alpha\coloneqq \partial\tilde{\alpha}-\delta\). Then \(\partial \alpha=0\), and thus by \cite[Lemma 3]{MR3548470} there is a \(c_\alpha>0\) such that for every \(\varepsilon>0\) there are \(\gamma\in C_{k}^{\Lip}(\Sigma)\) and \(\lambda\in C_{k}^{\Lip}(X)\), and a \((k-1)\)-cycle \(\alpha'\in C_{k-1}^{\Lip}(\Sigma)\), such that \(\bb{\alpha'}\in \poly_{k-1}(\Sigma)\) and 
\begin{align*}
\partial \gamma&=\psi_\#(\alpha)-\alpha',& \mass(\bb{\gamma})&\leq c_\alpha \varepsilon,    \\
\partial \lambda&=\alpha-\phi_\#(\alpha'),&\mass(\bb{\lambda})&\leq c_\alpha\varepsilon.
\end{align*}
After fixing a total ordering on the vertices of $\Sigma$, we may suppose that $\alpha'=q(\bb{\alpha'})$, where $q$ is as in \eqref{eq:def-q}.

 Fix \(\varepsilon>0\) small enough such that \(c_\alpha \varepsilon \leq \varepsilon_0\). The boundary of \(T'\coloneqq \psi_\#\bigl(\bb{\tilde{\alpha}}-T\bigr)-\bb{\gamma}\) is equal to \(\bb{\alpha'}\) and one has \(\mass(T')\leq C^k \varepsilon_0+\varepsilon_0\). Now, on account of Theorem~\ref{thm:Federer-Fleming}, there exist \(P'\in \poly_k(\Sigma)\) and \(S'\in \bI_{k+1}(\Sigma)\) with \(T'=P'+\partial S'\) and such that \(\mass(P')\leq K \mass(T')\), where \(K\) is a constant depending only on the Nagata dimension of \(X\). Let $\beta\coloneqq q(P')\in C_{k}^{\Lip}(\Sigma)$ and \(\rho\coloneqq \tilde{\alpha}-\lambda-\phi_\#(\beta)\). Then $\partial \beta = q(\partial P') = q(\bb{\alpha'}) =\alpha'$, and 
 so \(\partial \rho=\delta\) and 
    \[
    \mass(T-\bb{\rho})\leq \varepsilon_0+c_\alpha \varepsilon+C^k\mass(P')\leq 2\varepsilon_0+(C^k K)\cdot \mass(T')\leq \bar{C} \varepsilon_0,
    \]
    where \(\bar{C}\) is a constant which is independent of \(\varepsilon_0\). 
    \end{proof}

\appendix

\section{Deformation theorem in bilipschitz triangulated metric spaces}\label{sec:appendix}

The aim of this appendix is to provide a detailed proof of the classical Federer-Fleming deformation theorem in the setting of metric spaces admitting a bilipschitz triangulation in the sense below. In fact, we establish a version of the deformation theorem which moreover includes local mass estimates. Such estimates are of crucial importance in the proofs of Theorems~\ref{thm:intro-undistorted-main} and \ref{thm:Def-thm-finite-Nagata-dim}.

\begin{definition}\label{def:triangulation} A metric space $X$ is said to admit an \((n,D, \varepsilon)\)-triangulation, where $D\geq 1$, \(n\in \N\), \(\varepsilon >0\), if there exist an \(n\)-dimensional simplicial complex \(\Sigma\) and a homeomorphism \(p\colon \Sigma\to X\) which is \(D\)-bilipschitz on every \(\sigma\in \mathcal{F}\) if \(\Sigma\) is equipped with the scaled \(\ell_2\)-metric such that each simplex has side length \(\varepsilon\).
\end{definition}

Notice that in a path-connected simplicial complex the $\ell_2$-metric and the length metric induce the same topology and they agree on each simplex. Thus, for path connected spaces $X$ the definition above could equivalently be stated with the scaled $\ell_2$-metric replaced by the scaled length metric on $\Sigma$. If $X$ is quasiconvex then the homeomorphism $p\colon \Sigma\to X$ in the definition above is a bilipschitz homeomorphism when $\Sigma$ is equipped with the scaled length metric; see Corollary~\ref{cor:triangulation-bilip-length}.

We will use the following notation for simplices in \((n,D, \varepsilon)\)-triangulated spaces. A subset \(\sigma\subset X\) of a metric space \(X\) admitting a triangulation \(p\colon \Sigma \to X\) in the sense above is called \(k\)-simplex in \(X\) if \(p^{-1}(\sigma)\subset \Sigma\) is a \(k\)-simplex. We denote by $\mathcal{F}$ the family of simplices in $X$ and by $\mathcal{F}_k$ the collection of $k$-simplices in $X$. For every \(\sigma\in \mathcal{F}\) we write \(\interior \sigma\coloneqq p\bigl(\interior p^{-1}(\sigma)\bigr)\) to denote the relative interior of \(\sigma\) in \(X\). 
Given \(\sigma\in\mathcal{F}\), the set
\[
\st \sigma\coloneqq \bigcup \Bigl\{ \interior \tau : \tau\in \mathcal{F} \text{ and } \sigma \subset \tau \Bigr\}
\]
is called the \textit{open star} of \(\sigma\). Further, the \textit{hull} of a subset $B\subset X$ is the union of all simplices $\sigma\in \mathcal{F}$ such that $\interior \sigma\cap B\not=\varnothing$ and is denoted $\Hull(B)$. For $k\geq 0$ we denote by $\poly_k(X)$ the abelian subgroup of $\bI_k(X)$ generated by currents of the form $p_\#\llbracket\sigma\rrbracket$, where $\sigma\subset\Sigma$ is a $k$-simplex.

\begin{theorem}\label{thm:Federer-Fleming} 
Let \(X\) be a \(c\)-quasiconvex \((n,D, \varepsilon)\)-triangulated metric space. Then there is \(C= C(c, n, D)>0\) such that for every $k=1, \dots, n$ and every \(T\in \bI_k(X)\) there exist \(P\in\poly_k(X)\), \(R\in\bI_k(X)\), and \(S\in\bI_{k+1}(X)\) with $T=P+R+\partial S$ and
\begin{alignat}{2}\label{eq:massEstimates}
 \mass(P) &\leq C \mass(T), &\quad \mass(\partial P) &\leq C \mass(\partial T),  \\	
 \mass(S)					&\leq \varepsilon \, C \mass(T), 								&\quad \mass(R) &\leq \varepsilon\, C \mass(\partial T), \nonumber
\end{alignat}
as well as $\spt R, \, \spt \partial P\subset \Hull(\spt\partial T)$ and $\spt P,\, \spt S\subset \Hull(\spt T)$. Moreover, 
\begin{alignat}{2}\label{eq:MassOfFace}
 \lVert P \rVert(\interior \sigma) &\leq C\,\lVert T \rVert(\st \sigma), &\quad \lVert \partial P\rVert(\interior \sigma) &\leq C\, \norm{\partial T}(\st \sigma),  \\	
 \lVert S\rVert(\interior \sigma)					&\leq \varepsilon \, C\, \norm{T}(\st \sigma), 								&\quad \norm{R}(\interior \sigma) &\leq \varepsilon \, C\,\norm{\partial T}(\st \sigma) \nonumber
\end{alignat}
for all \(\sigma\in \mathcal{F}\). 
\end{theorem}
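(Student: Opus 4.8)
The plan is to transport the problem to the triangulating simplicial complex and then run a Federer--Fleming pushing scheme with projection centers chosen carefully enough to produce the local estimates \eqref{eq:MassOfFace}, not merely the global ones. Since $X$ is $c$-quasiconvex, Corollary~\ref{cor:triangulation-bilip-length} shows that the triangulating homeomorphism $p\colon\Sigma\to X$ is $cD$-bilipschitz once $\Sigma$ carries the length metric, scaled so that each simplex has side length $\varepsilon$; hence $p_\#$ and $(p^{-1})_\#$ move integral currents between $X$ and $\Sigma$ with masses distorted by at most a factor $(cD)^{k+1}$, and they match up $\poly_*$, open stars, hulls and relative interiors on the two sides. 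Rescaling the metric by $\lambda$ multiplies the mass of an $m$-current by $\lambda^m$, so after rescaling we may assume $\varepsilon=1$; the factors of $\varepsilon$ in the bounds for $S$ and $R$ in \eqref{eq:massEstimates}--\eqref{eq:MassOfFace} then reappear upon rescaling exactly because $S$ has dimension one above $T$ and $R$ one above $\partial T$. It thus suffices to treat $T\in\bI_k(\Sigma)$ with $1\le k\le n$ and $\varepsilon=1$.

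First suppose $k<n$ and push $T$ down the skeleta in $n-k$ steps. With $T^{(0)}:=T$, assume $T^{(i-1)}\in\bI_k(\Sigma)$ is supported in $\Sigma^{(j)}$ for $j:=n-i+1\ge k+1$. For each closed $j$-simplex $\Delta$, a Fubini argument over a fixed concentric subsimplex of $\interior\Delta$ produces a center $a_\Delta$ with $\int_\Delta\operatorname{dist}(x,a_\Delta)^{-k}\,d\|T^{(i-1)}\|\le C\|T^{(i-1)}\|(\interior\Delta)$ and the analogous bound with $(k,T^{(i-1)})$ replaced by $(k-1,\partial T^{(i-1)})$; both averages are finite because $j>k$, so $a\mapsto|a|^{-k}$ is integrable over a $j$-dimensional region. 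Let $\pi$ be the map on $\Sigma^{(j)}$ equal on each $j$-simplex $\Delta$ to the radial retraction onto $\partial\Delta$ with center $a_\Delta$ (which satisfies $\lip\pi(x)\le C\operatorname{dist}(x,a_\Delta)^{-1}$) and equal to the identity on $\Sigma^{(j-1)}$; it is continuous and locally Lipschitz off the closed set of centers, which is null for $\|T^{(i-1)}\|$ and $\|\partial T^{(i-1)}\|$. Then $T^{(i)}:=\pi_\#T^{(i-1)}$ is a well-defined integral current in $\Sigma^{(j-1)}$, obtained as the mass limit of $\pi_\#\bigl(T^{(i-1)}\on(\Sigma^{(j)}\setminus B(\text{centers},\delta))\bigr)$ as $\delta\downarrow 0$, with $\mass(T^{(i)})\le C\mass(T^{(i-1)})$, $\mass(\partial T^{(i)})\le C\mass(\partial T^{(i-1)})$ and $\partial T^{(i)}=\pi_\#\partial T^{(i-1)}$. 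Writing $h(t,x):=(1-t)\pi(x)+tx$ for the straight-line homotopy carried out inside each simplex, and $S_i:=h_\#(\bb{0,1}\times T^{(i-1)})$, $R_i:=h_\#(\bb{0,1}\times\partial T^{(i-1)})$, Proposition~\ref{prop:homotopy} gives $T^{(i-1)}=T^{(i)}+\partial S_i+R_i$, while Lemma~\ref{lem:mass-of-pushforward-of-product} applied inside each simplex with $\lip h_x(t)\le\diam\Delta$ and $\lip h_t(x)\le C\operatorname{dist}(x,a_\Delta)^{-1}$, together with the averaging bounds, yields $\mass(S_i)\le C\mass(T^{(i-1)})$ and $\mass(R_i)\le C\mass(\partial T^{(i-1)})$. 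Iterating, $T=T_k+\partial S+R_0$ with $T_k:=T^{(n-k)}$ supported in $\Sigma^{(k)}$, $S:=\sum_{i=1}^{n-k}S_i$, $R_0:=\sum_{i=1}^{n-k}R_i$, and global mass bounds of the asserted form.

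It remains to replace $T_k$ by a polyhedral chain. On each $k$-simplex $\sigma$ the current $T_k\on\interior\sigma$ is $\bb{\theta_\sigma}$ with $\theta_\sigma\in L^1(\interior\sigma,\Z)$; let $\bar\theta_\sigma$ be the mean value of $\theta_\sigma$ on $\sigma$ and put $P:=\sum_\sigma m_\sigma\bb{\sigma}$, where $m_\sigma$ is an integer nearest to $\bar\theta_\sigma$ (when $k=n$ this rounding step is the entire construction and $S=0$). Since $m_\sigma\ne 0$ forces $\|T_k\|(\interior\sigma)\gtrsim\operatorname{vol}(\sigma)$ and then $|m_\sigma|\le 2|\bar\theta_\sigma|$, one gets $\mass(P)\le C\mass(T_k)\le C\mass(T)$, and by the Poincaré and trace inequalities on a fixed-shape Euclidean simplex $\mass(T_k-P)\le C\mass(\partial T_k)$ and $\mass(\partial P)\le C\mass(\partial T_k)\le C\mass(\partial T)$; moreover $\partial P$ is supported in $\Sigma^{(k-1)}$, which also re-proves (via the constancy of the density of an integral $k$-current on a $k$-simplex whose boundary sits in the topological boundary) that $P\in\poly_k(\Sigma)$, hence $P\in\poly_k(X)$ back in $X$. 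Setting $R:=R_0+(T_k-P)$ gives $T=P+R+\partial S$ with $\mass(R)\le\varepsilon C\mass(\partial T)$ after rescaling. The support statements are clear: each $\pi$ moves mass from $\interior\tau$ into $\overline\tau$ and each $h$ keeps mass inside the current simplex, so $\spt P,\spt S\subset\Hull(\spt T)$ and $\spt\partial P,\spt R\subset\Hull(\spt\partial T)$. Finally, \eqref{eq:MassOfFace} follows from the observation that for any simplex $\sigma$ the open star $\st\sigma$ is \emph{almost absorbing} under a projection $\pi$ of the above type: mass landing in $\st\sigma$ can only come from $j$-simplices $\mu$ with $\interior\mu\cap\st\sigma\ne\varnothing$, such $\mu$ satisfy $\mu\supseteq\sigma$ and hence $\interior\mu\subset\st\sigma$, and summing $\|\pi_\#(T'\on\interior\mu)\|(\st\sigma)\le C\|T'\|(\interior\mu)$ over the pairwise disjoint sets $\interior\mu$ gives $\|\pi_\#T'\|(\st\sigma)\le C\|T'\|(\st\sigma)$, with no local finiteness of $\Sigma$ used. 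Iterating this along the steps, and running the same argument for $S_i$, $R_i$ (which carry an extra factor $\varepsilon$ coming from $\lip h_x\le\diam\Delta$), for $T_k-P$ (via Poincaré), and for $\partial P=\sum_\sigma m_\sigma\partial\bb{\sigma}$ (via the trace estimate), produces all the inequalities in \eqref{eq:MassOfFace}.

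I expect the main difficulty to be precisely the local estimates \eqref{eq:MassOfFace}, which are the feature beyond the classical deformation theorem: the projection centers must be chosen good simultaneously for $T$, for $\partial T$, and for all of their intermediate pushforwards; the radial retractions are only locally Lipschitz away from the centers, forcing a limiting definition of the pushforwards and of the homotopy currents (and a check that $\partial$ commutes with these pushforwards in the limit); and, since $\Sigma$ need not be locally finite, all of the combinatorial bookkeeping must be carried out via additivity of mass over disjoint open stars rather than via bounded local combinatorics. The reduction to $\Sigma$ using quasiconvexity, and the verification that $P$ is genuinely polyhedral with the sharp mass bound $\mass(P)\le C\mass(T)$, are the other points requiring care, but they are essentially routine given Corollary~\ref{cor:triangulation-bilip-length} and the slicing and Poincaré estimates on simplices.
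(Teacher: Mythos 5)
Your proposal is correct in outline and, for the skeleton-pushing stage, follows essentially the same route as the paper: radial projections with Federer--Fleming selection of centers made good simultaneously for the current and its boundary, a limiting definition of the pushforwards and homotopies, and the open-star absorption argument (preimages of $\st\sigma$ under the projections and homotopies stay in $\st\sigma$) to get the local bounds \eqref{eq:MassOfFace}; the paper packages the limiting step you flag in Proposition~\ref{prop:HomotopyFormula-new}, where the identity $\partial T^{(i)}=\pi_\#\partial T^{(i-1)}$ is only obtained along a good sequence $r_m$ with $r_m^{1-k}\mass(\langle T,u,r_m\rangle)\to 0$, so this point does need the subsequence argument you anticipate rather than a limit over all $\delta\downarrow 0$. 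Where you genuinely diverge is the final step, replacing the $k$-skeletal current $T_k$ by a polyhedral chain: you round the mean value of the integer-valued density $\theta_\sigma$ and control $T_k-P$, $\partial P$ by the Poincar\'e and trace inequalities for BV functions on a unit simplex, while the paper (Proposition~\ref{prop:final-inductive-step}) instead chooses the integer $z_\sigma$ as the density of $\theta_\sigma$ at a well-chosen Lebesgue point and controls $T_k-P$ by the expanding homotopy $h^r$, exploiting that $h^r_\#(\bb{0,1}\times T_k)=0$ for dimension reasons, so that everything is expressed through pushforwards of $\partial T_k$ and no BV trace theory is invoked. Your route is closer to the classical Euclidean treatment and yields $\mass(P)\leq C\mass(T)$ and the $\varepsilon$-gain for $T_k-P$ very transparently, but it rests on three facts you assert rather than prove in the metric-current setting: that $\theta_\sigma$ is BV on $\interior\sigma$ with $|D\theta_\sigma|(\interior\sigma)\leq C\,\lVert\partial T_k\rVert(\interior\sigma)$ (this needs a locality argument for the possibly non-normal restriction $T_k\on(X\setminus\interior\sigma)$, testing only against tuples whose first entry vanishes near its support), that $\partial T_k$ restricted to the relative interior of a $(k-1)$-face is the signed sum of the boundary traces of the adjacent $\theta_\sigma$ (needed for your bound on $\lVert\partial P\rVert(\interior\tau)$), and the trace--Poincar\'e inequality transported through the bilipschitz chart; all three are true and fillable, and the rounding error is indeed dominated by $\lVert\theta_\sigma-\bar\theta_\sigma\rVert_{L^1}$ because $\theta_\sigma$ is integer valued, so the argument closes. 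Two small points: the inclusions $\spt\partial P,\ \spt(T_k-P)\subset\Hull(\spt\partial T)$ are not ``clear'' from the geometry of the projections alone, but they do follow from your local estimates (a simplex whose star misses $\spt\partial T$ carries no mass of $T_k-P$ or $\partial P$); and finiteness of the sum defining $P$ follows from $m_\sigma\neq 0\Rightarrow\lVert T_k\rVert(\interior\sigma)\gtrsim\vol(\sigma)$, as you note.
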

If $X$ is as in the theorem and $T\in\bI_k(X)$ for some $k>n$ then $T=0$; see Remark~\ref{rem:zero-current-above-dim}. Hence, the statement of the theorem holds also in this case with $P$, $R$, $S$ equal to zero.

If \(M\) is a smooth Riemannian manifold of dimension \(n\) admitting a geometric group action by isometries, then \(M\) admits an \((n, D, \varepsilon)\)-triangulation for every \(\varepsilon>0\), where \(D\geq 1\) depends only on \(M\). For such manifolds a variant of Theorem~\ref{thm:Federer-Fleming} without the local mass estimates was established in \cite[Theorem 10.3.3]{MR1161694}. 
We furthermore remark that
any \(n\)-dimensional smooth Riemannian manifold of bounded geometry admits an \((n, D, \varepsilon)\)-triangulation for every \(\varepsilon>0\), for some \(D\geq 1\) depending only on \(n\) and the parameters of bounded geometry (see \cite{Bowditch-bilipschitz-2020} and the references therein).

We prove Theorem~\ref{thm:Federer-Fleming} by analyzing radial projections from simplices of $X$ to their boundaries. 
Let \(\Delta^m\) be the Euclidean standard \(m\)-simplex and denote by $o\in \interior{\Delta^m}$ and $r_m>0$ the incenter and inradius of $\Delta^m$, respectively. There exists $K>0$ only depending on $m$ with the following property. If $b\in \Delta^m\cap U(o,r_m/2)$ then the radial projection $\rho_b\colon \Delta^m\setminus\{b\}\to\partial \Delta^m$ with center \(b\) is $Kr^{-1}$-Lipschitz on $\Delta^m\setminus U(b,r)$ for every $r>0$. The radial projection $\rho_b$ is homotopic to the identity map on $\Delta^m$ by a straight-line homotopy $h$. Like the radial projection itself, the Lipschitz constant of $h$ depends on the distance from $b$, but Proposition~\ref{prop:HomotopyFormula-new} below gives conditions under which we can define the pushforward of $T$ under $\rho_b$ and the pushforward of $[0,1]\times T$ under $h$.

\subsection{Limit homotopy formula}\label{sec:limit-homotopy}
Let $X$ be a complete metric space and let $L,\varepsilon>0$. Suppose that $u\colon X\to[0,1]$ is $1$-Lipschitz and $h\colon [0,1]\times \{u\not=0\}\to X$ is a continuous map which is Lipschitz on $[0,1]\times \{u>r\}$ for every $r>0$. Suppose furthermore that $\lip h_t(x)\leq Lu(x)^{-1}$ and $\lip h_x(t)\leq L\varepsilon$ for all $t\in[0,1]$ and all $x\in \{u\not=0\}$, where we have abbreviated $h_t=h(t,\cdot)$ and $h_x=h(\cdot, x)$.

Let $k\geq 1$ and $T\in\bI_k(X)$. Then $T\on\{u>r\}$ and $(\partial T)\on\{u>r\}$ are integral currents for almost every $r>0$ and thus $T^i_r\coloneqq h_{i\,\#}(T\on\{u>r\})$ for $i=0,1$ and 
$$
S_r\coloneqq h_\#\bigl(\bb{0,1}\times (T\on\{u>r\})\bigr),\quad R_r\coloneqq h_\#\bigl(\bb{0,1}\times ((\partial T)\on\{u>r\})\bigr)
$$ 
are also integral currents. We will call such $r>0$ admissible.

\begin{proposition}\label{prop:HomotopyFormula-new}
 Suppose $\bigl\{B_l\bigr\}_{l\in\N}$ is a Borel partition of $X$ (for example, the partition of a countable simplicial complex into the interiors of its faces) such that for every \(l\geq 1\),
\[
\int_{B_l} u(x)^{-k} \, d\lVert T \rVert (x) \leq L \,\lVert T\rVert(B_l), \quad \quad 
\int_{B_l} u(x)^{1-k} \, d\lVert \partial T \rVert (x) \leq L \,\lVert\partial T\rVert(B_l).
\]
Then there exist admissible $r_m\searrow 0$ such that $T^i_{r_m}\to T^i$, $R_{r_m} \to R$, $S_{r_m}\to S$ for some $T^0,T^1,R\in \bI_k(X)$ and  $S\in\bI_{k+1}(X)$ with
\begin{equation}\label{eq:homotopyFormula}
\partial S+R=T^1-T^0,
\end{equation}
and the following property holds for some $C=C(L,k)$. If \(B\subset X\) is Borel then
\begin{align}\label{eq:localMassEstimate-1}
\lVert T^i\rVert(B) &\leq C \lVert T\rVert (A), &\lVert\partial T^i\rVert(B) \leq C \lVert\partial T\rVert (A),
\end{align}
where \(A\subset X\) is the union of all \(B_l\) such that \(B_l\cap h_i^{-1}(B)\neq \varnothing\), and 
\begin{align}\label{eq:localMassEstimate-2}
\lVert S \rVert (B) &\leq \varepsilon \, C \lVert T\rVert (A'), &\lVert R\rVert (B) \leq \varepsilon \, C \lVert\partial T\rVert (A'),
\end{align}
where \(A'\subset X\) is the union of all \(B_l\) such that \(B_l \cap h_t^{-1}(B)\neq \varnothing\) for some \(t\in [0,1]\).
\end{proposition}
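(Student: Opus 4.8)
The plan is to control the masses of $T^i_r$, $S_r$, $R_r$ uniformly in $r$ by using the mass estimates for pushforwards of integral currents (Lemma~\ref{lem:rajala-wenger}) and of products (Lemma~\ref{lem:mass-of-pushforward-of-product}), then extract a convergent subsequence, and finally pass to the limit in the homotopy formula of Proposition~\ref{prop:homotopy}. First I would fix a Borel set $B\subset X$ and estimate $\lVert T^i_r\rVert(B)$. Since $T\on\{u>r\}\in\bI_k(X)$ and $h_i$ is Lipschitz on $\{u>r\}$, Lemma~\ref{lem:rajala-wenger} gives
\[
\lVert T^i_r\rVert(B)\leq \int_{h_i^{-1}(B)\cap\{u>r\}} \bigl[\lip h_i(x)\bigr]^k\, d\lVert T\rVert(x)\leq L^k\int_{h_i^{-1}(B)} u(x)^{-k}\, d\lVert T\rVert(x),
\]
using the hypothesis $\lip h_t(x)\leq L u(x)^{-1}$. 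Summing the per-block hypothesis $\int_{B_l} u^{-k}\, d\lVert T\rVert\leq L\lVert T\rVert(B_l)$ over all $B_l$ meeting $h_i^{-1}(B)$ bounds the right-hand side by $L^{k+1}\lVert T\rVert(A)$, which is \eqref{eq:localMassEstimate-1} for $T^i$ with $C=L^{k+1}$; the estimate for $\partial T^i=h_{i\#}((\partial T)\on\{u>r\})$ (valid since $h_i$ restricted to $\{u>r\}$ is Lipschitz and pushforward commutes with boundary) follows identically using the second per-block hypothesis with exponent $1-k$. Likewise, for $S_r=h_\#(\bb{0,1}\times(T\on\{u>r\}))$, Lemma~\ref{lem:mass-of-pushforward-of-product} combined with $\lip h_x(t)\leq L\varepsilon$ and $\lip h_t(x)\leq Lu(x)^{-1}$ gives
\[
\lVert S_r\rVert(B)\leq (k+1)\int_{h^{-1}(B)} \lip h_x(t)\cdot\bigl[\lip h_t(x)\bigr]^k\, d(\mathscr{L}^1\times\lVert T\rVert)(t,x)\leq (k+1)L^{k+1}\varepsilon\int_{A'} u(x)^{-k}\, d\lVert T\rVert(x),
\]
and summing the per-block hypothesis bounds this by $\varepsilon\,C\lVert T\rVert(A')$; the analogous bound for $R_r$ uses the per-block hypothesis for $\partial T$.

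With these uniform bounds in hand, the next step is compactness and convergence. Taking $B=X$ shows $\mass(T^i_r)$, $\mass(\partial T^i_r)$, $\mass(S_r)$, $\mass(R_r)$ are bounded independently of $r$. For a sequence $r_m\searrow 0$ of admissible values, I would first show the sequences are Cauchy in mass by the same device used in the proof of Lemma~\ref{lem:cauchy-seq}: for $0<r'<r$, the difference $T^i_r-T^i_{r'}=h_{i\#}\bigl(T\on\{r'<u\leq r\}\bigr)$ has mass bounded by $C\lVert T\rVert(\{0<u\leq r\})$ via the localized estimate above, and this tends to $0$ as $r\to 0$ since $\lVert T\rVert$ is a finite measure concentrated on $\{u\not=0\}$ up to a set that does not contribute (more precisely, $\lVert T\rVert(\{0<u\leq r\})\to\lVert T\rVert(\{u=0\})$'s complement issue is handled because the pushforward only sees $\{u>r\}$; the point is $\lVert T\rVert(\{0<u\leq r\})\to 0$). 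Similarly $S_r-S_{r'}$ and $R_r-R_{r'}$ have masses controlled by $\varepsilon\,C\lVert T\rVert(\{0<u\leq r\})$ and $\varepsilon\,C\lVert\partial T\rVert(\{0<u\leq r\})$. Hence all four sequences converge in mass to limits $T^0,T^1,R\in\bM_k(X)$ and $S\in\bM_{k+1}(X)$; since mass convergence implies weak convergence and the boundary masses stay bounded, the closure theorem \cite[Theorem~8.5]{MR1794185} shows the limits are integral currents. The local estimates \eqref{eq:localMassEstimate-1}, \eqref{eq:localMassEstimate-2} for the limits follow from lower semicontinuity of mass on Borel sets, applied to $B$ and the corresponding saturated sets $A$, $A'$ (which depend only on the partition and $h$, not on $r$).

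Finally, the homotopy formula. For each admissible $r$, Proposition~\ref{prop:homotopy} applied to the current $T\on\{u>r\}$ on the space $[0,1]\times X$, followed by pushforward under $h$, yields
\[
\partial S_r + R_r = T^1_r - T^0_r,
\]
using that $h$ is Lipschitz on $[0,1]\times\{u>r\}$, that pushforward commutes with boundary, and that $h_{1}\circ(\text{inclusion of }\{1\}\times X)$ and $h_0\circ(\text{inclusion of }\{0\}\times X)$ give $T^1_r$ and $T^0_r$ respectively while $h_\#(\bb{0,1}\times\partial(T\on\{u>r\}))=h_\#(\bb{0,1}\times((\partial T)\on\{u>r\}))=R_r$ (here one uses $\partial(T\on\{u>r\})=(\partial T)\on\{u>r\}+\langle T,u,r\rangle$, but the slice term contributes a product current whose $h$-pushforward we must check vanishes or is absorbed — this is the delicate point). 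Passing to the limit $r_m\searrow 0$, weak convergence of all four sequences and continuity of $\partial$ under weak convergence give \eqref{eq:homotopyFormula}. The main obstacle I anticipate is precisely this last point: controlling the slice term $\langle T,u,r\rangle$ and ensuring that its contribution to the homotopy identity either vanishes in the limit along a suitable subsequence of admissible $r$ (using $\int_0^1\mass(\langle T,u,r\rangle)\,dr\leq\Lip(u)\lVert T\rVert(\{0<u<1\})<\infty$, so $\liminf_{r\to 0}\mass(\langle T,u,r\rangle)=0$ forces a subsequence with $r_m\mass(\langle T,u,r_m\rangle)\to 0$ or even $\mass(\langle T,u,r_m\rangle)\to 0$) or is exactly the reason one works with $T\on\{u>r\}$ rather than trying to restrict to $\{u=0\}$ directly. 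One must choose the subsequence $r_m$ to simultaneously make the slice small and retain admissibility, and then verify that $h_\#(\bb{0,1}\times\langle T,u,r_m\rangle)\to 0$ in mass using the product mass estimate and $\lip h_t(x)\leq Lu(x)^{-1}=Lr_m^{-1}$ on $\{u=r_m\}$, which gives a bound $\varepsilon\,C\,r_m^{-k}\mass(\langle T,u,r_m\rangle)$ — so one actually needs $r_m^{-k}\mass(\langle T,u,r_m\rangle)\to 0$, obtainable from the per-block hypotheses applied to the slice or from a more careful choice of $r_m$ exploiting the integrability $\int_0^\delta r^{-k}\mass(\langle T,u,r\rangle)\,dr<\infty$, which is exactly what the block hypothesis $\int_{B_l}u^{1-k}\,d\lVert\partial T\rVert\leq L\lVert\partial T\rVert(B_l)$ and the coarea-type slice inequality are designed to provide.
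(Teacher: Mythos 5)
Your overall strategy is the same as the paper's: uniform local mass bounds from Lemma~\ref{lem:rajala-wenger} and Lemma~\ref{lem:mass-of-pushforward-of-product} together with the per-block hypotheses, a Cauchy-in-mass argument from the vanishing tails $\int_{\{u\leq r\}}u^{-k}\,d\lVert T\rVert$ and $\int_{\{u\leq r\}}u^{1-k}\,d\lVert\partial T\rVert$, a special choice of $r_m$ to kill the slice contribution, and a passage to the limit in the homotopy formula of Proposition~\ref{prop:homotopy}; the closure theorem in place of the paper's Cauchy-in-$\bN_k$ argument is a harmless variant.

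One step, however, fails as written. In your first paragraph you assert $\partial T^i_r=h_{i\,\#}\bigl((\partial T)\on\{u>r\}\bigr)$ because ``pushforward commutes with boundary''; but boundary does not commute with restriction: $\partial\bigl(T\on\{u>r\}\bigr)=(\partial T)\on\{u>r\}+\langle T,u,r\rangle$, so $\partial T^i_r$ carries the extra term $h_{i\,\#}\langle T,u,r\rangle$, whose mass is only bounded by $L^{k-1}r^{1-k}\mass(\langle T,u,r\rangle)$. Hence the boundary estimate in \eqref{eq:localMassEstimate-1} does not ``follow identically'': it, and likewise the Cauchy/mass-convergence argument for $\partial T^i_{r_m}$ (which you need anyway, since lower semicontinuity of mass under weak convergence is only valid for open sets, not arbitrary Borel sets), require the slice term to vanish in the limit. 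The cure is exactly the device you introduce later for the homotopy formula, but it must be built in from the start: choose the admissible $r_m$ so that $r_m^{1-k}\mass(\langle T,u,r_m\rangle)\to 0$, which is what your coarea-type integrability argument actually yields (note also that the relevant exponent is $1-k$, not $-k$: the slice is $(k-1)$-dimensional, so $\mass\bigl(h_\#(\bb{0,1}\times\langle T,u,r\rangle)\bigr)\leq k\varepsilon L^{k}r^{1-k}\mass(\langle T,u,r\rangle)$, and the stronger decay $r_m^{-k}\mass(\langle T,u,r_m\rangle)\to 0$ you ask for is neither needed nor delivered by that integrability). With the subsequence fixed once at the outset and the slice term carried through the estimates for $\partial T^i_{r_m}$, your argument matches the paper's proof.
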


\begin{proof}
 By the slicing theorem we have for every $r>0$ that 
 \begin{equation*}
  \frac{1}{r}\int_r^{2r} t^{1-k}\mass(\langle T,u,t\rangle)\,dt\leq 
         r^{-k}\lVert T\rVert (\{r\leq u\leq 2r\})
         \leq 2^k\int_{\{u\leq 2r\}}u(x)^{-k}\,d\lVert T\rVert (x).
 \end{equation*}
 The last integral tends to zero with $r\to 0$, so there exist admissible $r_m\searrow 0$ such that 
 \begin{equation*}
 \lim_{m\to\infty} r_m^{1-k}\mass(\langle T,u,r_m\rangle) = 0.
 \end{equation*}
 Set $T^i_m\coloneqq T^i_{r_m}$ and observe that if $l>m$ then $T^i_l - T^i_m = h_{i\,\#}(T\on\{r_l<u\leq r_m\})$ and $$\partial T^i_l - \partial T^i_m = h_{i\,\#}((\partial T)\on\{r_l<u\leq r_m\}) + h_{i\,\#}\langle T,u,r_m\rangle - h_{i\,\#}\langle T,u,r_l\rangle,$$ so Lemma~\ref{lem:rajala-wenger} implies $$\mass(T^i_l - T^i_m)\leq L^k\int_{\{u\leq r_m\}}u(x)^{-k}\,d\lVert T\rVert(x)$$ and \begin{equation*}
     \begin{split}
         \mass(\partial T^i_l - \partial T^i_m)\leq&\, L^{k-1}\int_{\{u\leq r_m\}}u(x)^{1-k}\,d\lVert \partial T\rVert(x)\\
         & + L^{k-1}\Bigl[ r_m^{1-k}\mass(\langle T, u, r_m\rangle) + r_l^{1-k}\mass(\langle T,u,r_l\rangle)\Bigr].
     \end{split}
 \end{equation*}
The right hand side in both inequalities tends to zero as both $m,l\to\infty$, thus $(T^i_m)$ is a Cauchy sequence in $\bN_k(X)$ and hence converges to some $T^i\in\bN_k(X)$.
Next, set $S_m\coloneqq S_{r_m}$ and notice that $$S_l - S_m = h_\#\bigl(\bb{0,1}\times (T\on\{r_l<u\leq r_m\})\bigr)$$ for $l>m$ and thus, by Lemma~\ref{lem:mass-of-pushforward-of-product},
$$
\mass(S_l-S_m)\leq (k+1)L^{k+1}\varepsilon \int_{\{u\leq r_m\}} u(x)^{-k}\,d\lVert T\rVert(x).
$$ 
Since the integral on the right tends to $0$ as $m\to\infty$ it follows that $(S_m)$ is a Cauchy sequence in $\bM_{k+1}(X)$ and converges to some $S\in\bM_{k+1}(X)$. One shows analogously that $R_m\coloneqq R_{r_m}$ converges to some $R\in\bM_k(X)$ as $m\to\infty$. Since 
$$
\partial S_m = T^1_m - T^0_m - R_m + h_\#(\bb{0,1}\times \langle T,u,r_m\rangle)
$$
by the homotopy formula and since 
$$
\mass\bigl(h_\#(\bb{0,1}\times \langle T,u,r_m\rangle)\bigr)\leq k\varepsilon L^{k} r_m^{1-k}\mass(\langle T,u,r_m\rangle)\to 0
$$ 
as $m\to\infty$, it follows that $\partial S +R = T^1-T^0$. From this we conclude that $S\in\bN_{k+1}(X)$ and $R\in\bN_k(X)$. As $T_m^i$, $S_m$, $R_m$ are integral currents for all $m$ and the convergence is in mass it follows that $T^i$, $S$, $R$ are also integral currents.

Finally, if $B\subset X$ is Borel and $A\subset X$ is the union of all $B_l$ such that $B_l\cap h_i^{-1}(B)\neq\varnothing$ then, by Lemma~\ref{lem:rajala-wenger},
\begin{equation*}
    \lVert T^i_m\rVert(B)
    \leq L^k\int_{h_i^{-1}(B)}u(x)^{-k}\,d\lVert T\rVert(x)\leq L^{k+1}\lVert T\rVert(A)
\end{equation*}
and
\begin{equation*}
    \begin{split}
    \lVert \partial T^i_m\rVert(B)&\leq L^{k-1}\int_{h_i^{-1}(B)}u(x)^{1-k}\,d\lVert \partial T\rVert(x) + L^{k-1}r_m^{1-k}\mass(\langle T,u,r_m\rangle),
    \end{split}
\end{equation*}
which implies \eqref{eq:localMassEstimate-1} upon taking $m\to\infty$. The inequalities in \eqref{eq:localMassEstimate-2} are proved analogously using Lemma~\ref{lem:mass-of-pushforward-of-product}.
\end{proof}

\subsection{Special case of the deformation theorem}

We will prove the deformation theorem by successive applications of the following proposition, which we prove by applying Proposition~\ref{prop:HomotopyFormula-new} to a map from $X$ to $X$ which restricts to a radial projection on each $m$-simplex.

\begin{proposition}\label{prop:inductive-step}
Given $m>k\geq 1$ and $D,c\geq 1$ there is a positive constant $C$ with the following property. Let $X$ be a complete $c$-quasiconvex metric space admitting an $(m,D,\varepsilon)$-triangulation and $T\in\bI_k(X)$. Then there exist $P,R\in\bI_k(X)$ and $S\in\bI_{k+1}(X)$ with $\spt P\subset X^{(m-1)}$ and such that $T=P+R+\partial S$ as well as 
 \begin{alignat*}{2}\label{eq:mass-B2}
 \mass(P)&\leq C\mass(T),&\quad \mass(\partial P)&\leq C \mass(\partial T),\\
 \mass(S)&\leq \varepsilon\, C \mass(T), &\quad \mass(R) &\leq \varepsilon\, C\mass(\partial T)
 \end{alignat*}
and $\spt R, \, \spt \partial P\subset \Hull(\spt\partial T)$, and $\spt P,\, \spt S\subset \Hull(\spt T)$.
Moreover,
\begin{alignat*}{2}
\lVert P\rVert(\interior{\sigma})& \leq C\,\lVert T\rVert(\st \sigma),& \quad  \lVert \partial P\rVert (\interior{\sigma}) &\leq C\,\lVert \partial T\rVert(\st \sigma), \\ \lVert S\rVert(\interior{\sigma}) &\leq \varepsilon\, C\, \lVert T\rVert( \st \sigma),& \quad \lVert R\rVert(\interior{\sigma}) &\leq \varepsilon \, C \,\lVert \partial T\rVert(\st \sigma),
\end{alignat*}
for every \(\sigma\in \mathcal{F}\).
\end{proposition}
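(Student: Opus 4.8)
The plan is to realize the required decomposition by a single "push to the $(m{-}1)$-skeleton" via radial projections, modelled on the Euclidean simplices of the triangulation, and to control it through the limit homotopy formula of Proposition~\ref{prop:HomotopyFormula-new}. Before starting I would make two reductions. First, rescaling the metric by $\varepsilon^{-1}$ I may assume $\varepsilon=1$: the identity $T=P+R+\partial S$ is scale-invariant and the mass of a $j$-current scales by $\varepsilon^{-j}$, so the $\varepsilon$-factors in all the estimates are recovered at the end. Second, by Corollary~\ref{cor:triangulation-bilip-length} I may replace $X$ by the simplicial complex $\Sigma$ with the length metric (each simplex a unit Euclidean simplex, all constants henceforth allowed to depend on $D$ and $c$) and, since $\spt T$ is separable and hence meets only countably many open simplices (cf.\ Lemma~\ref{lem:GeometryOfDeltaN}), by the countable subcomplex $\Hull(\spt T)$; the partition of $\Sigma$ into the relative interiors of its faces then serves as the partition $\{B_l\}_{l\in\N}$ of Proposition~\ref{prop:HomotopyFormula-new}.

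Next I would set up the map. Fixing the incenter $o$ and inradius $r_m$ of the standard $m$-simplex $\Delta^m$, for every $m$-simplex $\tau$ of $\Sigma$ and every $b\in U(o,r_m/2)$ the radial projection $\rho^\tau_b\colon\tau\setminus\{b\}\to\partial\tau$ is $Kt^{-1}$-Lipschitz on $\tau\setminus U(b,t)$ with $K=K(m)$, is the identity on $\partial\tau$, and its straight-line homotopy to $\id_\tau$ has $t$-speed at most $\diam\Delta^m$. The delicate point is the choice of the center $b_\tau$: since $k<m$, the function $b\mapsto d(x,b)^{-k}$ (and, for $k\geq 2$, $b\mapsto d(x,b)^{1-k}$) is integrable over $U(o,r_m/2)$ uniformly in $x\in\Delta^m$, so by Fubini and Chebyshev there is a positive-measure set of $b$ for which $\int_{\interior\tau}d(\cdot,b)^{-k}\,d\norm{T}\leq C_1\norm{T}(\interior\tau)$ and $\int_{\interior\tau}d(\cdot,b)^{1-k}\,d\norm{\partial T}\leq C_1\norm{\partial T}(\interior\tau)$ both hold, with $C_1=C_1(m,k)$; I fix such a $b_\tau$ for each $\tau$ and set $B\coloneqq\{b_\tau\}$. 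The maps $\rho^\tau_{b_\tau}$ and their homotopies agree with the identity on $X^{(m-1)}$, hence glue to $\pi=h_1$ and a continuous $h\colon[0,1]\times(\Sigma\setminus B)\to\Sigma$ which is Lipschitz on $[0,1]\times\{u>t\}$ for every $t>0$ (via Lemma~\ref{lem:littleLip}, using that $\{u>t\}$ is uniformly quasiconvex for small $t$), where $u\coloneqq\min(1,d(\cdot,B))$. Because every $b_\tau$ lies at distance $\geq r_m/2$ from $X^{(m-1)}$, on $\interior\tau$ one has $u\asymp\min(1,d(\cdot,b_\tau))$ while on $X^{(m-1)}$ one has $u\geq c_0>0$; this gives $\lip h_t(x)\leq L\,u(x)^{-1}$ and $\lip h_x(t)\leq L$ with $L=L(m,D)$, and — together with the choice of the $b_\tau$ — the two integrability hypotheses of Proposition~\ref{prop:HomotopyFormula-new} for the partition $\{\interior\sigma:\sigma\in\mathcal{F}\}$ (on $m$-simplices via the center estimate; on lower faces $u$ is bounded below, so $u^{-k},u^{1-k}$ are bounded).

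Applying Proposition~\ref{prop:HomotopyFormula-new} then produces admissible $r_j\searrow0$ and limits $T^0,T^1,R\in\bI_k(\Sigma)$, $S\in\bI_{k+1}(\Sigma)$ with $\partial S+R=T^1-T^0$ together with the local mass bounds \eqref{eq:localMassEstimate-1}, \eqref{eq:localMassEstimate-2}. Since $\norm{T}(B)=\norm{\partial T}(B)=0$ one has $T^0_{r_j}=T\on\{u>r_j\}\to T$, so $T^0=T$; and each $h_{1\#}(T\on\{u>r_j\})$ is supported in $X^{(m-1)}$, hence so is $P\coloneqq T^1$. Replacing $(S,R)$ by $(-S,-R)$ gives $T=P+R+\partial S$ with $\spt P\subset X^{(m-1)}$. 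For the support statements I would use that $h_t$ maps each simplex into itself, so $\spt P,\spt S\subset\Hull(\spt T)$; and, since $\langle T,u,r_j\rangle\to0$ in mass and $h_1,h$ fix the skeleton, $\partial P$ and $R$ are mass-limits of pushforwards of $(\partial T)\on\{u>r_j\}$, whence $\spt\partial P,\spt R\subset\Hull(\spt\partial T)$. For the quantitative bounds, for a face $\sigma$ with $\dim\sigma\leq m-1$ the sets $A$ and $A'$ attached to $B=\interior\sigma$ in \eqref{eq:localMassEstimate-1}, \eqref{eq:localMassEstimate-2} are contained in $\st\sigma$ — because $h_t(\interior\tau)\subset\tau$ and $h_1(\interior\tau)=\partial\tau$ for $m$-simplices $\tau$, while $h_t$ fixes lower faces — and $\norm{P}(\interior\sigma)=0$ when $\dim\sigma=m$; this yields $\norm{P}(\interior\sigma)\leq C\norm{T}(\st\sigma)$, $\norm{\partial P}(\interior\sigma)\leq C\norm{\partial T}(\st\sigma)$, $\norm{S}(\interior\sigma)\leq\varepsilon C\norm{T}(\st\sigma)$, $\norm{R}(\interior\sigma)\leq\varepsilon C\norm{\partial T}(\st\sigma)$, and summing over $\sigma$ (each point lying in $\st\sigma$ for at most $2^{m+1}$ faces $\sigma$) gives the global estimates. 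Undoing the rescaling finishes the argument.

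The main obstacle I anticipate is not a single deep step but the bookkeeping in the middle paragraph: arranging a single center $b_\tau$ per simplex that simultaneously tames $\norm{T}$ and $\norm{\partial T}$ with a uniform constant, verifying that the glued map $h$ genuinely satisfies the pointwise-Lipschitz and integrability hypotheses of Proposition~\ref{prop:HomotopyFormula-new} (in particular that $\{u>t\}$ is uniformly quasiconvex so that Lemma~\ref{lem:littleLip} applies), and checking that the neighborhoods $A$, $A'$ produced by that proposition are no larger than $\st\sigma$, so that the finite-multiplicity summation closes the local estimates into the stated global ones.
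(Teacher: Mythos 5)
Your proposal is correct and follows essentially the same route as the paper's proof: choose a center in each $m$-simplex controlling both $\lVert T\rVert$ and $\lVert \partial T\rVert$ via the Federer--Fleming averaging argument, glue the radial projections and their straight-line homotopies into a map $h$ with $\lip h_t\lesssim u^{-1}$ and bounded $t$-speed, and feed this into Proposition~\ref{prop:HomotopyFormula-new} with the partition into open faces to get $P=T^1$, $R$, $S$ together with the support and local mass estimates from the fact that $h$ preserves each simplex. The only (harmless) cosmetic differences are that the paper keeps the ambient space and adds $X\setminus\Hull(\spt T)$ as one partition piece instead of restricting to the hull, and states the global bounds directly rather than summing the local ones.
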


 Let $o\in \interior{\Delta^m}$ and $r_m>0$ be the incenter and inradius of $\Delta^m$ as above. As in \cite{MR123260}, one can establish that there exists $K>0$ only depending on $m$ such that if $\mu_1,\mu_2$ are finite Borel measures on $\Delta^m$ then there exists $b\in \Delta^m\cap U(o,r_m/2)$ such that for $i=1,2$, $$\int_{\interior{\Delta^m}}|y-b|^{1-m}\,d\mu_i(y)\leq K\mu_i(\interior{\Delta^m}).$$ 
 \begin{proof}[Proof of Proposition~\ref{prop:inductive-step}]
 Denote the metric on $X$ by $d$. By possibly considering $X$ equipped with the metric $\varepsilon^{-1}d$ we may assume that $\varepsilon=1$. Let $q\colon X\to \Sigma$ be the inverse of an $(m,D,1)$-triangulation of $X$, and let $T\in\bI_k(X)$. By the discussion preceding the proof there exists, for every $\sigma\in \mathcal{F}_m$, some point $a_\sigma\in\interior{\sigma}$ such that $$\int_{\interior{\sigma}}d(x,a_\sigma)^{-k}\,d\lVert T\rVert (x)\leq KD^k\lVert T\rVert(\interior{\sigma})$$ and $$\int_{\interior{\sigma}}d(x,a_\sigma)^{1-k}\,d\lVert \partial T\rVert (x)\leq KD^k\lVert \partial T\rVert(\interior{\sigma}).$$
 Set $Z\coloneqq \{a_\sigma: \sigma\in \mathcal{F}_m\}$ and define a $1$-Lipschitz function $u\colon X\to[0,1]$ by $$u(x)\coloneqq \min\{1, d(x,Z)\}.$$ Set $r'\coloneqq \frac{r_m}{2cD}$ and observe that $u(x)\geq r'$ for every $x\in X^{(m-1)}$ and if $\sigma\in \mathcal{F}_m$ then $$\min\{d(x,a_\sigma), r'\}\leq u(x)\leq d(x,a_\sigma)$$ for every $x\in\sigma$. With the above it follows that for every \(\tau\in \mathcal{F}\) we have $$\int_{\interior{\tau}}u(x)^{-k}\,d\lVert T\rVert (x)\leq K'\lVert T\rVert(\interior{\tau}),\quad \int_{\interior{\tau}}u(x)^{1-k}\,d\lVert \partial T\rVert (x)\leq K'\lVert \partial T\rVert(\interior{\tau})$$ for some $K'=K'(D,m,c)$.
 
Let $h\colon [0,1]\times \{u\not=0\}\to X$ be the unique map such that for each \(\sigma\in \mathcal{F}_m\), 
 $$
 q(h(t,x))=(1-t)q(x) + t\rho_{q(a_\sigma)}(q(x))
 $$ 
 for all $t\in[0,1]$ and all $x\in \sigma$, where we have naturally identified $q(\sigma)$ with $\Delta^m$. We have $h(t,x) =x$ for all $t\in[0,1]$ and all $x\in X^{(m-1)}$, so this is well-defined. Then $h_0(x) = x$ and $h_1(x)\in X^{(m-1)}$ for all $x\in\{u\not=0\}$. Moreover, if $A\subset\{u\not=0\}$ then 
 $$
 h([0,1]\times A)\subset \Hull(A)
 $$ 
 and if $\sigma,\sigma'\in \mathcal{F}$ are simplices such that $h([0,1]\times \interior{\sigma'}) \cap \interior{\sigma}\not=\varnothing$ then $\sigma'=\sigma$ or $\sigma'$ is $m$-dimensional and contains $\sigma$.
 
 Let $t\in[0,1]$ and $x\in\bigl\{u\not=0\bigr\}$. Then clearly we have $\lip h_x(t)\leq D^2$.
 We next show that $\lip h_t(x)\leq K''u(x)^{-1}$ for some $K''=K''(D,m,c)$. For this, given $y\in X$ let $\sigma(y)\subset X$ be the unique simplex containing $y$ in its interior. Let $x\in\bigl\{u\neq 0\bigr\}$. For all $y\in X$ sufficiently close to $x$ we have $\sigma(x)\subset\sigma(y)$. If $\sigma(y)\subset X^{(m-1)}$ then $h_t(x)=x$ and $h_t(y)=y$ and therefore $d(h_t(x),h_t(y)) = d(x,y)\leq r'u(x)^{-1}\cdot d(x,y)$. If $\sigma(y)$ is $m$-dimensional then, letting $b= q(a_{\sigma(y)})$ and identifying $q(\sigma(y))$ with $\Delta^m$, we have 
 \begin{equation*}
 \begin{split}
  d(h_t(x),h_t(y))&\leq D\Bigl((1-t)\,\abs{q(x)-q(y)} + t\,\abs{\rho_b(q(x)) -\rho_b(q(y))}\Bigr)\\
         &\leq D^2\biggl(1+\frac{tKD}{\min\bigl\{d(x,a_\sigma),d(y,a_\sigma)\bigr\}}\biggr)\, d(x,y),
 \end{split}
 \end{equation*}
 which implies \(d(h_t(x),h_t(y))\leq D^2(1 + tKD \min\{u(x),u(y)\}^{-1}) d(x,y)\).
As a result, $\lip h_t(x)\leq K''(1 + t u(x)^{-1}) \leq 2K''u(x)^{-1}$ for some $K''=K''(D,m,c)$.
 Next, since $\{u\leq r\} = \bigcup B(a_\sigma, r)$ for sufficiently small $r>0$, it follows that $\{u>r\}$ is quasiconvex and hence, by Lemma~\ref{lem:littleLip}, $h$ is Lipschitz on $[0,1]\times\{u>r\}$ for sufficiently small $r>0$. 
 
 Finally, let $T^i_r$, $S_r$, $R_r$ be defined as at the beginning of Section~\ref{sec:limit-homotopy}. By the properties of $h$ listed above, $T^0_r = T\on\{u>r\}$, $\spt T^1_r\subset X^{(m-1)}$, and $\spt T^1_r$, $\spt S_r\subset \Hull(\spt T)$ as well as $\spt R_r\subset \Hull(\spt \partial T)$ for almost every $r>0$.
By Lemma~\ref{lem:GeometryOfDeltaN}, it follows that \(q\bigl(\Hull(\spt T) \bigr)\) is a countable simplicial complex and thus \(\mathcal{A}\coloneqq \bigl\{ \interior \sigma : \sigma \in \mathcal{F} \text{ and } \sigma\subset \Hull(\spt T) \bigr\}\) is countable. Applying Proposition~\ref{prop:HomotopyFormula-new} with the countable Borel partition \(\mathcal{A} \cup \{X\setminus \Hull(\spt T)\}\) shows that there exists a sequence of admissible $r_s\searrow 0$ such that the currents $T^i_{r_s}$, $S_{r_s}$, $R_{r_s}$ converge to currents $T^i$, $S$, $R$ satisfying the properties listed in Proposition~\ref{prop:HomotopyFormula-new}. 

Set $P\coloneqq T^1$. It follows that $T^0=T$, $\spt P\subset X^{(m-1)}$, and \(\spt P\), \(\spt S \subset \Hull(\spt T)\) as well as $ \spt R\subset \Hull(\spt \partial T)$. Since $T = P - R-\partial S$ it moreover follows that $\spt \partial P\subset \spt\partial T\cup \spt\partial R\subset \Hull(\spt\partial T)$. The remaining properties of $P$, $S$, $R$ listed in the statement of the proposition easily follow from Proposition~~\ref{prop:HomotopyFormula-new} and the properties of $h$ listed above.
\end{proof}

When $k=m$, we can use similar techniques to approximate $T$ by $P\in \poly_m(X)$.
\begin{proposition}\label{prop:final-inductive-step}
Given $m\geq 1$ and $D,c\geq 1$ there is a positive constant $C$ with the following property. Let $X$ be a complete $c$-quasiconvex metric space admitting an $(m,D,\varepsilon)$-triangulation and $T\in\bI_m(X)$. Then there exist $P,R\in\bI_m(X)$ with $P\in \poly_m(X)$ such that $T=P+R$ as well as 
 \begin{align*}
 \mass(P)&\leq C\mass(T),&
 \mass(\partial P)&\leq C \mass(\partial T),&
 \mass(R) &\leq \varepsilon\, C\mass(\partial T)
 \end{align*}
 and $\spt R, \, \spt \partial P\subset \Hull(\spt\partial T)$, and $\spt P \subset \Hull(\spt T)$.
 Moreover, 
 \begin{align*}
   \lVert P\rVert(\sigma) & \leq C\,\lVert T\rVert(\sigma),&
   \lVert \partial P\rVert (\tau) & \leq C\,\lVert \partial T\rVert(\st\tau), &
   \lVert R\rVert(\sigma) &\leq \varepsilon \, C \,\lVert \partial T\rVert(\interior{\sigma}) &
 \end{align*}
 for every \(\sigma\in \mathcal{F}_m\) and every \(\tau\in \mathcal{F}_{m-1}\).
\end{proposition}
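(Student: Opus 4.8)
\emph{Strategy.} The proposition is the classical top-dimensional step of the Federer--Fleming deformation: on each top simplex $\sigma$ one replaces $T$ by an integer multiple of $\bb{\sigma}$, the error being governed by a $BV$-Poincar\'e inequality. As in the proof of Proposition~\ref{prop:inductive-step}, by passing to the rescaled metric $\varepsilon^{-1}d$ we may assume $\varepsilon=1$; let $q\colon X\to\Sigma$ be the inverse of the triangulation, so that for each $\sigma\in\mathcal{F}_m$ the restriction $q|_\sigma$ is a $D$-bilipschitz homeomorphism onto the standard Euclidean simplex $\Delta^m\subset\R^m$.

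\emph{Local $BV$ structure.} For an open set $U\subset X$ the restriction $T\on U$ need not be normal, but one has $\partial(T\on U)\on U=(\partial T)\on U$ (write $T=T\on U+T\on(X\setminus U)$ and use that the boundary of a current supported in a closed set is supported there), so $\partial(T\on U)$ has finite mass inside $U$, at most $\lVert\partial T\rVert(U)$. Taking $U=\interior\sigma$ and pushing forward, $q_\#(T\on\interior\sigma)=\bb{\theta_\sigma}$ for an integer-valued function $\theta_\sigma$ with $\lvert D\theta_\sigma\rvert(\interior\Delta^m)\leq C\lVert\partial T\rVert(\interior\sigma)$, where $C$ absorbs the bilipschitz Jacobian of $q|_\sigma$; likewise $q_\#(T\on\st\tau)=\bb{\widetilde\theta_\tau}$ for a facet $\tau\in\mathcal{F}_{m-1}$, with total variation at most $C\lVert\partial T\rVert(\st\tau)$.

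\emph{Construction and estimates.} For each $\sigma$ let $\theta_\sigma^*$ be an integer median of $\theta_\sigma$ on $\Delta^m$ and put
\[
P\coloneqq\sum_{\sigma\in\mathcal{F}_m}\theta_\sigma^*\,\bb{\sigma},\qquad R\coloneqq T-P.
\]
The median choice gives $\lvert\theta_\sigma^*\rvert\leq C\int_{\Delta^m}\lvert\theta_\sigma\rvert$, whence $\lVert P\rVert(\sigma)\leq C\lVert T\rVert(\sigma)$ and, summing, $\mass(P)\leq C\mass(T)$; in particular only finitely many coefficients are nonzero and $P\in\poly_m(X)$. The key analytic input is the weak relative isoperimetric inequality for integer $BV$ functions on a bounded Lipschitz domain $\Omega$ of unit size, $\min_{k\in\Z}\int_\Omega\lvert\theta-k\rvert\leq C\lvert D\theta\rvert(\interior\Omega)$, obtained by applying $\min(\lvert E\cap\Omega\rvert,\lvert\Omega\setminus E\rvert)\leq C\,\diam(\Omega)\,P(E;\interior\Omega)$ to the superlevel sets $E=\{\theta>k\}$ and summing via the coarea formula. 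With $\Omega=\Delta^m$ this yields $\lVert R\rVert(\sigma)=\int_{\Delta^m}\lvert\theta_\sigma-\theta_\sigma^*\rvert\leq C\lvert D\theta_\sigma\rvert(\interior\Delta^m)\leq C\lVert\partial T\rVert(\interior\sigma)$; reinstating $\varepsilon$ and summing gives $\lVert R\rVert(\sigma)\leq\varepsilon C\lVert\partial T\rVert(\interior\sigma)$ and $\mass(R)\leq\varepsilon C\mass(\partial T)$. For a facet $\tau$ shared by $\sigma$ and $\sigma'$ the coefficient of $\partial P$ on $\tau$ is $\pm(\theta_\sigma^*-\theta_{\sigma'}^*)$; applying the Poincar\'e inequality on $q(\st\tau)$ (two copies of $\Delta^m$ glued along $q(\tau)$) to $\widetilde\theta_\tau$ bounds $\lvert\theta_\sigma^*-\theta_{\sigma'}^*\rvert$ by $C\lVert\partial T\rVert(\st\tau)$, and for a facet $\tau$ in the boundary of the complex one uses in addition the $BV$-trace inequality (the jump of $\widetilde\theta_\tau$ across $q(\tau)$ equals its interior trace) to bound $\lvert\theta_\sigma^*\rvert$ by $C\lVert\partial T\rVert(\st\tau)$; hence $\lVert\partial P\rVert(\tau)\leq C\lVert\partial T\rVert(\st\tau)$ and $\mass(\partial P)\leq C\mass(\partial T)$ by bounded overlap of stars. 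The support statements are then formal: $R\on\sigma\neq0$ forces $\theta_\sigma$ to be essentially non-constant on $\interior\Delta^m$, so $\lvert D\theta_\sigma\rvert(\interior\Delta^m)>0$ by the constancy theorem, so $\spt\partial T$ meets $\interior\sigma$ and $\sigma\subset\Hull(\spt\partial T)$; similarly $\theta_\sigma^*\neq0$ forces $\lVert T\rVert(\sigma)>0$ and a nonzero coefficient of $\partial P$ on $\tau$ forces $\lVert\partial T\rVert(\st\tau)>0$, giving $\spt P\subset\Hull(\spt T)$ and $\spt\partial P\subset\Hull(\spt\partial T)$, while $\spt R\subset\Hull(\spt\partial T)$ by the argument just given.

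\emph{Main difficulty.} The real work is the local $BV$ bookkeeping: showing that $q_\#(T\on U)$ is represented by an integer $BV$ function whose total variation is controlled by $\lVert\partial T\rVert$ on $U$ (so that although $T\on U$ is not normal, its boundary is still controlled inside $U$), and deploying the Poincar\'e and trace inequalities on simplices, on pairs of adjacent simplices and on half-open stars with the correct $\varepsilon$-scaling. The quasiconvexity hypothesis on $X$ plays no essential role here, since every estimate is carried out on a single simplex where the induced metric is Euclidean.
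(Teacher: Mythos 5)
Your route is genuinely different from the paper's. You take the classical Federer--Fleming/White path: on each top cell represent $T$ by an integer-valued function $\theta_\sigma$, replace it by an integer median $\theta_\sigma^*$, and control the error by a relative isoperimetric (Poincar\'e) inequality and trace inequalities for $BV$ functions, all in the bilipschitz chart. The paper instead chooses the coefficient $z_\sigma$ as the Lebesgue density value of $\theta_\sigma$ at a carefully selected center $a_\sigma$ (chosen, as in the proof of Proposition~\ref{prop:inductive-step}, so that $d(\cdot,a_\sigma)^{1-m}$ is $\lVert\partial T\rVert$-integrable on $\interior\sigma$), and then derives \emph{all} error and support estimates, including the local ones for $R=T-P$ and $\partial P$, from the radial-projection homotopy applied to $\partial T$ via Proposition~\ref{prop:HomotopyFormula-new}, together with a limiting argument ($g_r$, $h^r$, and the vanishing of $(m+1)$-currents in an $m$-complex) showing $T-P$ coincides with the homotopy image of $\partial T$. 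The paper's route reuses machinery already set up for the lower-dimensional steps and avoids Poincar\'e/trace inequalities altogether; yours is more self-contained analytically and makes the role of $\lVert\partial T\rVert$ on each cell very transparent. Your remark that quasiconvexity is not essential for the cell-by-cell estimates is fair (the paper uses it only through Corollary~\ref{cor:triangulation-bilip-length}, as a convenience).

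Two points need repair. First, and most importantly, your computation of $\partial P$ assumes that an $(m-1)$-face $\tau$ has at most two top-dimensional cofaces (``$\tau$ shared by $\sigma$ and $\sigma'$'', or a single $\sigma$ at the boundary). A general $(m,D,\varepsilon)$-triangulated space is an arbitrary $m$-dimensional complex, and in the intended application inside the proof of Theorem~\ref{thm:Federer-Fleming} the proposition is applied to skeleta $X^{(k)}$, where a $(k-1)$-face typically has many cofaces; there the coefficient of $\partial P$ on $\interior\tau$ is a signed sum $\sum_i\epsilon_i\theta_{\sigma_i}^*$ over all cofaces, and the ``two glued simplices'' Poincar\'e domain does not exist. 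The fix is to write $\partial P=\partial T-\partial R$ on $\interior\tau$: the constant coefficient of $\partial P$ on $\tau$ is bounded by $\lVert\partial T\rVert(\interior\tau)$ plus the sum over cofaces $\sigma_i\supset\tau$ of the $L^1(\tau)$-traces of $\theta_{\sigma_i}-\theta_{\sigma_i}^*$, and each trace is bounded via the trace theorem and your median-Poincar\'e inequality by $C\,\lvert D\theta_{\sigma_i}\rvert(\interior\Delta^m)\leq C\,\lVert\partial T\rVert(\interior\sigma_i)$; summing gives $\lVert\partial P\rVert(\tau)\leq C\lVert\partial T\rVert(\st\tau)$ with no hypothesis on the number of cofaces. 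Second, the ``local $BV$ structure'' step is stated too quickly: $\partial(T\on\interior\sigma)$ need not have finite mass, so the identity $\partial(T\on U)\on U=(\partial T)\on U$ cannot be read off from supports alone. The correct localization is to exhaust $\interior\sigma$ by sets $\{u>r\}$ (with $u$ the distance to $X\setminus\interior\sigma$), use $\partial(T\on\{u>r\})=(\partial T)\on\{u>r\}-\langle T,u,r\rangle$, and note that for a test tuple whose first entry is compactly supported in $\interior\Delta^m$ the slice term drops out for all small $r$; this yields $\lvert D\theta_\sigma\rvert(\interior\Delta^m)\leq C\lVert\partial T\rVert(\interior\sigma)$ as you need. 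With these two repairs your argument gives all the stated mass, local mass, and support conclusions.
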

\begin{proof}
  We may suppose that $\varepsilon=1$. After possibly changing the metric on $X$ in a bilipschitz way we may assume by Corollary~\ref{cor:triangulation-bilip-length} that $X$ is isometric to $\Sigma$ equipped with the length metric. Let $q\colon X\to\Sigma$ be as above and let $\sigma\in \mathcal{F}_m$. Then $q|_\sigma$ is an isometry and we may view it as a map from $\sigma$ to $\Delta^m$. There exists a function $\theta_\sigma \in L^1(\sigma,\Z)$ such that $T\on \interior{\sigma} = \llbracket \theta_\sigma \rrbracket$ and $\|\theta_\sigma\|_1 \le K_1 \lVert T\rVert(\sigma)$ for some $K_1=K_1(m)$, where $\llbracket \theta_\sigma \rrbracket$ is the $m$-current in $X$ given by integrating $\theta_\sigma$ along $\sigma$.
  
  By the discussion preceding the proof of Proposition~\ref{prop:inductive-step}, there exists $K_2=K_2(m)$ such that for every $\sigma\in \mathcal{F}_m$ there are a point $a_\sigma\in\interior{\sigma}$ such that 
  \begin{equation}\label{eq:singular-T}
    \int_{\interior{\sigma}}d(x,a_\sigma)^{1-m}\,d\lVert \partial T\rVert (x)\leq K_2 \lVert \partial T\rVert(\interior{\sigma})
  \end{equation}
  and an integer $z_\sigma$ such that $|z_\sigma| \le K_2 \lVert T\rVert(\sigma)$ and
  \begin{equation}\label{eq:density-point-T}
    \lim_{r\to 0} \frac{1}{r^m} \int_{U(a_\sigma,r)} |\theta_\sigma- z_\sigma|\,d\hspace{-0.2em}\Haus^m =0.
  \end{equation}
 In particular, $z_\sigma= 0$ for all but finitely many $\sigma\in\mathcal{F}_m$.
  We claim that 
  $$P=\sum_{\sigma\in \mathcal{F}_m} z_\sigma \llbracket\sigma\rrbracket$$
  and $R=T-P$ satisfy the proposition. By construction, for every $\sigma\in \mathcal{F}_m$, we have $\lVert P\rVert(\sigma) \leq C\,\lVert T\rVert(\sigma)$, so $\spt P \subset \Hull(\spt T)$ and $\mass(P)\leq C\mass(T)$ for some $C=C(m)$. 
  
  Let $u$ and $h$ be as above.   
  We bound $\partial P$ and $R$ by applying Proposition~\ref{prop:HomotopyFormula-new} to $\partial T$. For every admissible $r>0$, let
  \begin{align*}
    (\partial T)_r & \coloneqq h_{1\,\#}(\partial T\on\{u>r\}), &
    S_r&\coloneqq h_\#\bigl(\bb{0,1}\times (\partial T\on\{u>r\})\bigr).
  \end{align*}
  As above, there is a $K_3=K_3(m)$ such that for every $\tau\in \mathcal{F}_m$,
  $$\int_{\interior{\tau}}u(x)^{1-m}\,d\lVert \partial T\rVert (x)\leq K_3\lVert \partial T\rVert(\interior{\tau}).$$
  Therefore, by Proposition~\ref{prop:HomotopyFormula-new} applied to $\partial T$, there exist admissible $r_i\searrow 0$, $Q\in \bI_{m-1}(X)$, and $S\in \bI_{m}(X)$ with $\spt Q\subset X^{(m-1)}$ and such that $(\partial T)_{r_i}\to Q$, $S_{r_i}\to S$, $\partial S=Q-\partial T$, and
  \begin{align}\label{eq:local-mass-final-ind}
    \lVert S \rVert (\interior{\sigma}) & \leq C \lVert \partial T\rVert (\interior{\sigma}) & 
    \lVert Q \rVert (\interior{\tau}) &\leq C \lVert \partial T\rVert (\st\tau)
  \end{align}
  for every $\sigma\in \mathcal{F}_{m}$ and every $\tau\in \mathcal{F}_{m-1}$. We let $R=S$ and claim that $S = P-T$; it follows that $Q=\partial P$, and \eqref{eq:local-mass-final-ind} implies the desired bounds on $\lVert R\rVert$ and $\lVert \partial P\rVert$.
  
  For sufficiently small $r>0$, let $g_r \colon X\to X$ be the continuous map such that $$q(g_r(x))=(1-t_r(x))q(a_\sigma) + t_r(x)\varrho_{q(a_\sigma)}(q(x))$$ for all $x\in\sigma\setminus\{a_\sigma\}$, where $t_r(x) = \min\{1,r^{-1} u(x)\}$. Let moreover $h^r\colon [0,1]\times X \to X$ be the straight-line homotopy from $\id_X$ to $g_r$, i.e., $q(h^r(t,x))=(1-t)q(x) + tq(g_r(x))$. Then for $\sigma\in \mathcal{F}_m$, we have $g_r(\sigma\cap \{u\ge r\}) = h_1(\sigma\cap \{u\ge r\}) \subset \partial \sigma$ and $g_r(\sigma\cap \{u\le r\}) = \sigma$. Furthermore, it follows as in the proof of Proposition~\ref{prop:inductive-step} that there exists $K_4=K_4(m)$ such that $\lip g_r(x)\leq K_4r^{-1}$ and $\lip h^r_t(x) \le K_4 r^{-1}$ as well as $\lip h^r_x(t) \le K_4$, where we have used the notation $h^r_t(x)=h^r(t,x)=h^r_x(t)$ as above. In particular, $g_r$ and $h^r$ are Lipschitz. 
  
  The currents $U_i = g_{r_i\,\#}(T)$ and $V_i= h^{r_i}_{\#}(\bb{0,1}\times \partial T)$ satisfy $$U_i = T + V_i$$ because $W_i=h^{r_i}_\#(\bb{0,1}\times T)$ is an $(m+1)$-chain in $X$, hence $W_i=0$ and therefore
  $$0 = \partial W_i = h^{r_i}_{1\,\#}(T) - h^{r_i}_{0\,\#}(T) - h^{r_i}_\#(\bb{0,1}\times \partial T) = U_i - T - V_i.$$
  Since $h^r(t,x)=h(t,x)$ whenever $u(x)>r$, we have $V_i - S_{r_i} = h^{r_i}_{\#}(\bb{0,1}\times (\partial T \on \{u\le r\}))$. Therefore, letting \(K_5\coloneqq m K_4^m\),
  \begin{multline*}
    \mass(V_i-S_{r_i}) \le K_5 r_i^{1-m}\mass(\partial T \on \{u\le r_i\}) \\
    \le K_5 \sum_{\sigma \in \mathcal{F}_m} \int_{\sigma\cap \{u\le r_i\}} d(x,a_\sigma)^{1-m}\,d\lVert \partial T\rVert (x) \le K_5 K_2\mass(\partial T),
  \end{multline*}
  where we use \eqref{eq:singular-T} in the last inequality. As $i\to \infty$, each term in the sum goes to zero, so by dominated convergence, $\mass(S_{r_i} - V_i)\to 0$. Thus $V_i\to S$ in mass and $(U_i)$ converges in mass too.
 Finally, since $g_r$ is injective on $U(a_\sigma, r)$, we have for each $\sigma\in \mathcal{F}_m$ that
  $U_i\on \interior{\sigma} = \llbracket \theta_\sigma \circ g_{r_i}^{-1}|_{\interior{\sigma}} \rrbracket.$
  By \eqref{eq:density-point-T}, for every $\sigma\in\mathcal{F}_m$ we have that $\|\theta_\sigma \circ g_{r_i}^{-1}|_{\interior{\sigma}} - z_\sigma\|_1\to 0$ as $i\to \infty$. Since $(U_i)$ converges in mass this implies that $U_i\to P$ in mass. This shows that $S = P - T$, as desired.
\end{proof}

\begin{remark}\label{rem:zero-current-above-dim}
 It is not difficult to see that if $X$ is an $(n,D,\varepsilon)$-triangulated metric space and $T\in\bI_k(X)$ for some $k>n$ then $T=0$. Indeed, for every simplex $\sigma$ in $X$ we have $\lVert T\rVert(\sigma)=0$ because the Hausdorff dimension of $\sigma$ is strictly smaller than $k$. Now, since $\spt T$ is separable it follows from Lemma~\ref{lem:GeometryOfDeltaN} that $\Hull(\spt T)$ is a countable simplicial complex and hence $\mass(T)=\lVert T\rVert(\Hull(\spt T)) =0$. Thus, \(T=0\), as desired.
\end{remark}

\subsection{Proof of the deformation theorem}

Let $p\colon\Sigma\to X$ be an $(n,D,\varepsilon)$-triangulation of $X$ and  $T\in\bI_k(X)$ for some $1\leq k\leq n$. It is not difficult to show that for each $i=1,\dots, n-k$ the restriction $p|_{\Sigma^{(n-i)}}$ is an $(n-i,D,\varepsilon)$-triangulation of $X^{(n-i)}$ and that $X^{(n-i)}$ is quasiconvex with constant only depending on $c$, $n$, and $D$.

In the following, $C$ and $C_i$ will denote a constant depending on $D$, $c$, $n$ and the constant may change from one occurrence to another. Set $P_0\coloneqq T$. By Proposition~\ref{prop:inductive-step}, applied to $X^{(n-i+1)}$ and $P_{i-1}$ for $i=1,\dots, n-k$, we have $P_{i-1} = P_i + R_i + \partial S_i$  for some $P_i, R_i\in\bI_k(X^{(n-i+1)})$, $S_i\in\bI_{k+1}(X^{(n-i+1)})$ such that $\spt P_i\subset X^{(n-i)}$ and 
\begin{align}
\label{eq:intermediate-global-estimates}\mass(P_i)&\leq C_i\mass(T),&\quad \mass(\partial P_i)&\leq C_i\mass(\partial T),\;\\ \mass(S_i)&\leq \varepsilon \,C_i\mass(T),&\quad \mass(R_i)&\leq \varepsilon \,C_i\mass(\partial T).\notag
\end{align}
Moreover, $\spt P_i, \spt S_i\subset \Hull(\spt T)$ and $\spt \partial P_i, \spt R_i\subset \Hull(\spt\partial T)$ and the following property holds. For each simplex $\sigma\in\mathcal{F}$ we have 
\begin{align}\label{eq:intermediate-local-estimates}
\lVert P_i\rVert(\interior{\sigma}) &\leq C \lVert P_{i-1}\rVert(\st{\sigma}),&\quad \ \ \lVert \partial P_i\rVert (\interior{\sigma}) &\leq C \lVert \partial P_{i-1}\rVert(\st{\sigma}), \\
\lVert S_i\rVert(\interior{\sigma}) &\leq \varepsilon C \lVert P_{i-1}\rVert(\st{\sigma}),& \quad \lVert R_i\rVert(\interior{\sigma}) &\leq \varepsilon C \lVert \partial P_{i-1}\rVert(\st{\sigma}). \nonumber
\end{align}
By Proposition~\ref{prop:final-inductive-step}, we may write $P_{n-k} = P_{n-k+1} + R_{n-k+1}$ with $P_{n-k+1} \in \poly_k(X)$ and $R_{n-k+1}\in\bI_k(X)$ with $\spt R_{n-k+1}\subset X^{(k)}$. These likewise satisfy $\spt P_{n-k+1} \subset \Hull(\spt T)$ and $\spt \partial P_{n-k+1}, \, \spt R_{n-k+1} \subset \Hull(\spt\partial T)$, and their masses satisfy \eqref{eq:intermediate-global-estimates} and \eqref{eq:intermediate-local-estimates}. Let $P\coloneqq P_{n-k+1}$, $R\coloneqq R_1+\dots+R_{n-k+1}$, and $S\coloneqq S_1+\dots+S_{n-k}$. Then $T = P+R+\partial S$ and \eqref{eq:massEstimates} as well as \(\spt P, \, \spt S\subset \Hull(\spt T)\) and \(\spt \partial P,\, \spt R\subset\Hull(\spt\partial T)\) are satisfied. Finally, we show the local mass estimates in \eqref{eq:MassOfFace}.
For any $i\ge 1$ and any $\sigma\in \mathcal{F}$,
$$\lVert P_i\rVert(\interior{\sigma}) \le C \lVert P_{i-1}\rVert(\st{\sigma}) = C \sum_{\sigma\subset \sigma_1} \lVert P_{i-1}\rVert(\interior{\sigma_1}),$$
where $\sigma_1\in \mathcal{F}$. By induction,
$$\lVert P_i\rVert(\interior{\sigma}) \le C^i \sum_{\sigma\subset \sigma_1\subset \dots \subset \sigma_i} \lVert T\rVert(\interior{\sigma_i}),$$
where the sum is taken over all chains of simplices $\sigma_i\in \mathcal{F}$. Let $N$ be the number of chains of simplices $\sigma_0 \subset \sigma_1\subset \dots \subset \sigma_{n-k} \subset \Delta^n$. Then for any $\sigma\in \mathcal{F}$, 
$$\lVert P \rVert(\interior{\sigma}) \le C^{n-k+1} N \lVert T \rVert(\st{\sigma}).$$
The same argument with $P_i$ replaced by $\partial P_i$ gives
$$\lVert \partial P \rVert(\interior{\sigma}) \le C^{n-k+1} N \lVert \partial T \rVert(\st{\sigma}).$$
The estimates of \(\norm{S}\) and \(\norm{R}\) then follow from \eqref{eq:intermediate-local-estimates}.

\bibliographystyle{plain}

\end{document}